\theoremstyle{plain}
\newtheorem{Theorem}{Theorem}[section]
\newtheorem{Proposition}[Theorem]{Proposition}
\newtheorem{Lemma}[Theorem]{Lemma}
\newtheorem{Corollary}[Theorem]{Corollary}
\newtheorem{Conjecture}[Theorem]{Conjecture}
\newtheorem{Definition}[Theorem]{Definition}
\theoremstyle{definition}
\newtheorem{Remark}[Theorem]{Remark}
\newtheorem{Example}[Theorem]{Example}
\newtheorem{Question}[Theorem]{Question}
\newcommand{\calF}{\mathcal{F}}
\newcommand{\calO}{\mathcal{O}}
\newcommand{\calX}{\mathcal{X}}
\newcommand{\bbR}{\mathbb{R}} 
\newcommand{\bbZ}{\mathbb{Z}}
\newcommand{\frakg}{\mathfrak{g}}
\newcommand{\frakh}{\mathfrak{h}}
\renewcommand{\phi}{\varphi}
\DeclareMathOperator{\id}{id}
\DeclareMathOperator{\Lie}{\mathcal{L}}
\DeclareMathOperator{\Ad}{Ad}
\DeclareMathOperator{\End}{{End}}
\DeclareMathOperator{\Der}{{Der}}
\DeclareMathOperator{\Aut}{{Aut}}
\DeclareMathOperator{\Supp}{{supp}}
\DeclareMathOperator{\Hom}{{Hom}}
\DeclareMathOperator{\Diff}{{Diff}}
\newcommand{\DA}{\mathbf{d}}
\newcommand{\IA}{\mathbf{i}}
\newcommand{\LA}{\mathbf{L}}
\renewcommand{\nabla}{D}
\DeclareMathOperator{\Hor}{{hor}}
\newcommand{\DW}{\Hat{d}}
\newcommand{\IW}{\Hat{\iota}}
\newcommand{\LW}{\Hat{\Lie}}
\begin{document}

\title{Hamiltonian Lie algebroids}

\author[C.~Blohmann]{Christian Blohmann}
\address{Max-Planck-Institut f\"ur Mathematik, Vivatsgasse 7, 53111 Bonn, Germany}
\email{blohmann@mpim-bonn.mpg.de}

\author[A.~Weinstein]{Alan Weinstein}
\address{Department of Mathematics, University of California, Berkeley, CA 94720, USA\\ Department of Mathematics, Stanford University, Stanford, CA 94305 {\rm (Visiting)}}
\email{alanw@math.berkeley.edu}

\subjclass[2010]{53D20; 37J15, 55N91}

\date{\today}

\keywords{Lie algebroid, symplectic manifold, momentum map, hamiltonian action, symplectic reduction, equivariant cohomology}

\begin{abstract}

In previous work with M.C.~Fernandes, we found a Lie algebroid symmetry for the Einstein evolution equations.  The present work was motivated by the effort to combine this symmetry with the hamiltonian structure of the equations to explain the coisotropic structure of the constraint subset for the initial value problem. In this paper, we extend the notion of hamiltonian structure from Lie algebra actions to general Lie algebroids over presymplectic manifolds.  Application of this construction to the problem in general relativity is still work in progress.

After comparing a number of possible compatibility conditions between an anchor map $A\to TM$ on a vector bundle $A$ and a presymplectic structure on the base $M$, we choose the most natural of them, best formulated in terms of a suitably chosen connection on $A$.  We define a notion of momentum section of $A^*$, and, when $A$ is a Lie algebroid, we specify a condition for compatibility with the Lie algebroid bracket. Compatibility conditions on an anchor, a Lie algebroid bracket, a momentum section, a connection, and a presymplectic structure are then the defining properties of a hamiltonian Lie algebroid. For an action Lie algebroid with the trivial connection, the conditions reduce to those for a hamiltonian action.  We show that the clean zero locus of the momentum section of a hamiltonian Lie algebroid is a coisotropic submanifold. To define morphisms of hamiltonian Lie algebroids, we express the structure in terms of a bigraded algebra generated by Lie algebroid forms and de Rham forms on its base. We give an Atiyah-Bott type characterization of a bracket-compatible momentum map; it is equivalent to a closed basic extension of the presymplectic form, within the generalization of the BRST model of equivariant cohomology to Lie algebroids. We show how to construct a groupoid by reduction of an action Lie groupoid $G\times M$ by a subgroup $H$ of $G$ which is not necessarily normal, and we find conditions which imply that a hamiltonian structure descends to such a reduced Lie algebroid.
  
We consider many examples and, in particular, find that the tangent Lie algebroid over a symplectic manifold is hamiltonian with respect to some connection if and only if the symplectic structure has a nowhere vanishing primitive.  Recent results of Stratmann and Tang show that this is the case whenever the symplectic structure is exact.
\end{abstract} 
\maketitle

\newpage

\tableofcontents


\section{Introduction}
\label{sec:Introduction}

We introduce in this article a notion of hamiltonian Lie algebroid over a presymplectic manifold.  The definition consists of three conditions, each of which generalizes a standard condition in the case where a Lie algebroid is the  action Lie algebroid $\mathfrak g\times M$ 
associated with an action of a Lie algebra $\mathfrak g$ on a presymplectic manifold $M$.
 
Recall that an action of a Lie algebra $\frakg$ on a presymplectic manifold $(M,\omega)$ is called hamiltonian when the following three conditions are satisfied:
\begin{enumerate}
\item 
the action of $\frakg$ leaves $\omega$ invariant;
\item
there is a momentum map $\mu:M\to \frakg^*$ for the action;
\item
$\mu$ is equivariant for the infinitesimal coadjoint action of $\mathfrak{g}$.
\end{enumerate}

To express these conditions in terms of the action Lie algebroid $A = \frakg \times M$, we first notice that the action is encoded in the anchor $\rho:A\to TM$. Next, the momentum map can be considered as a section of $A^*$; thus, for a general Lie algebroid we will look for a ``momentum section'' of $A^*$.  To express conditions (1-3) above in terms of the action Lie algebroid, though, we need to restrict attention to constant sections of $A$, which correspond to elements of $\frakg$.  

In a general Lie algebroid, which may not even admit a trivialization, there is no natural notion of constant section.  Instead, we will add to our structure a vector bundle connection $D$ on $A$. When $D$ is the product connection on $\frakg \times M$, the constant sections are the horizontal ones.  In general, we have only sections which are ``horizontal at a given point,'' so we may attempt to impose our conditions point-wise.  It is more useful, as we will see, to express conditions (1-3) by formulas involving $D$, applied to general sections.  The other ingredients in these formulas are $\omega$, $\rho$, and the $A^*$-valued 1 form $\gamma$ defined by $\langle\gamma(v),a\rangle = \omega(v,\rho(a))$ for vector fields $v$ and sections $a$ of $A$.  Starting with $D$, we get a dual connection on $A^*$, and from there a degree-1 operator (which we also denote by $D$) on the graded vector space $\Omega^\bullet(M,A^*)$ of $A^*$-valued differential forms on $M$.

We can now make the following definitions, which reduce, for an action Lie algebroid with the trivial connection, to the usual definitions for Lie algebra actions.

\begin{Definition}
\label{def:HamLAcomplete}
Let $(A,\rho, [~,~])$ be a Lie algebroid over a presymplectic manifold $(M,\omega)$. With $D$ and $\gamma$ as defined above:
\begin{itemize}

\item[(H1)]
$A$ is \textbf{presymplectically anchored} with respect to $D$ if
\begin{equation*}
  D\gamma = 0 \,.
\end{equation*}

\item[(H2)]
A section $\mu \in \Gamma(A^*) = \Omega^0(M, A^*)$ is a \textbf{$D$-momentum section} if
\begin{equation*}
  D\mu = \gamma \,.
\end{equation*}

\item[(H3)]
A $D$-momentum section $\mu$ is \textbf{bracket-compatible} if
\begin{equation*}
  (\DA \mu)(a,b) = 
  - \langle \gamma(\rho a), b \rangle
\end{equation*}
for all sections $a$ and $b$ of  $A$, where $\DA$ is the Lie algebroid differential, a degree-1 differential on the graded algebra $\Gamma(\wedge^\bullet A^*)$.
\end{itemize}
A Lie algebroid together with a connection $D$ and a section $\mu$ of $A^*$ satisfying (H1) and (H2) is called \textbf{weakly hamiltonian}. It is called \textbf{hamiltonian} if it satisfies (H1)-(H3).
\end{Definition}

Note that $\langle \gamma(\rho a), b \rangle = \omega(\rho a, \rho b)$, which shows that the right hand side of condition (H3) is always antisymmetric in $a$ and $b$.

In Section \ref{sec:anchored}, we arrive at a version of the condition (H1) in terms of local sections, choosing one from among several possible notions of what it should mean for a Lie algebroid to be compatible with a presymplectic structure. This choice is based partly on examples, but we see in Section \ref{sec:connections} that the definition has a very natural formulation in terms of connections.  

In Section \ref{sec:MomPre}, we introduce the notion of $D$-momentum section for a Lie algebroid presymplectically anchored with respect to a connection $D$, and we define in Section \ref{sec:HamMomMap} what it means for a momentum section to be compatible with a Lie algebroid bracket.   (Until this point, the bracket on sections of $A$ is irrelevant, and we assume simply that $A$ is provided with a connection and a bundle map $A\to TM$.)

We reach in Section \ref{sec:ZeroLocus} one of the original goals of our work, which is to show that, like the zero set of the momentum map for a hamiltonian Lie algebra action, the zero locus of the momentum section of any hamiltonian Lie algebroid is coisotropic.   It turns out to be useful here to express bracket-compatibility in terms of a torsion tensor $T:\wedge^2 A\to A$ associated to the connection $D$.  

Having established the definitions and some properties of hamiltonian Lie algebroids, we proceed in Section \ref{sec:Examples} to study a wide range of examples.  After dealing with the rather simple case of bundles of Lie algebras (where $\rho = 0$), we move to the other extreme case, where $\rho$ is an isomorphism, so that $A$ is (isomorphic to) the tangent Lie algebroid.  We will see that $TM$ is presymplectically anchored with respect to a connection $D$ exactly when the presymplectic form $\omega$ is parallel with respect to an ``opposite connection'' $\check{D}$.  Such a connection exists if and only if $\omega$ has constant rank.  In fact, it is known that the connection can be chosen so that its torsion takes values in the kernel of the map $\gamma$, which is here the map $\tilde{\omega}: TM\to T^*M$ associated to $\omega$. When $\omega$ is symplectic, the torsion can thus be made zero.  For our purposes, though, when it comes to finding bracket-compatible momentum sections (which are in this case 1-forms on $M$), it will be essential to admit connections with nonvanishing torsion.  We come to some surprising conclusions, including the result that the tangent bundle of a symplectic manifold has the structure of a hamiltonian Lie algebroid if and only if $\omega = d\mu$ for a {\em nowhere vanishing} 1-form $\mu$.  This raises a question of symplectic topology: for which exact symplectic structures does a nowhere vanishing primitive $\mu$ exist?  We give some classes of examples for which it does exist (such as the cotangent bundles of noncompact manifolds); we know of no example for which it does not.

To further develop the theory of hamiltonian Lie algebroids, we introduce and study in Section \ref{sec:Category} a notion of morphism, using a bicomplex which provides a unified framework for the various structures that make up a hamiltonian Lie algebroid.  The section ends with the example of transitive Lie algebroids.

Our original motivation came from previous work on the Einstein evolution equations for a gravitational field in a spacetime free of matter, which are a hamiltonian system on a symplectic phase space $\mathcal{P}$ of Cauchy data.  Because the corresponding lagrangian functional is degenerate, the initial conditions must belong to a constraint subset $\mathcal C \subset \mathcal{P}$.  It is known that the constraint subset is coisotropic and has conic singularities, geometric properties which are characteristic of the zero level sets of momentum maps for hamiltonian actions of Lie algebras  (see e.g.~\cite{OrtegaRatiu:Momentum}).  But the constraint functionals cannot actually be the components of a momentum map.  When their Poisson brackets are written as linear combinations of the constraint functions themselves, the coefficients are functions on $\mathcal{P}$ rather than constants, as would be the case with the momentum components of a hamiltonian action of a Lie algebra action on $\mathcal{P}$. 

In  \cite{BFW}, with Marco Cezar Fernandes, we found a Lie algebr{\em oid} over an enlarged space $\mathcal{P}'$ mapping onto $\mathcal{P}$, with a natural trivialization for which the bracket relations among constant sections match the bracket relations among the constraint functions.  What was missing was a connection between our Lie algebroid and the symplectic geometry of $\mathcal{P}$ which would be analogous to that of a hamiltonian action of a Lie algebra.  The present paper begins to fill that gap by introducing a notion of hamiltonian Lie algebroid for which the zero set of what we call the momentum section is coisotropic.  So far, though, we have not succeeded in finding the required presymplectic structure which is compatible with the Lie algebroid.

Section \ref{sec:ReductionAction}, independent of the earlier material, is closely related to the application to general relativity. It is easy to see that, if a group $G$ acts on a set $M$, and that $H$ is a normal subgroup of $G$, then the action descends to an action of $G/H$ on $M/H$.  We show that, even if $H$ is not normal, if it acts freely on $M$, then the action groupoid $G\times M$ still descends to a groupoid over $M/H$ (which is the action groupoid $G/H \times M/H$ when $H$ is normal).  This result holds for Lie groupoids\footnote{The corresponding result for Lie algebroids was obtained by Lu \cite{Lu:2008}.  See Section \ref{sec:related} for further discussion.} when the action is free and proper.  In any case, the reduced groupoid is Morita equivalent to the original one.  The example of interest for the Einstein equations is that where $G$ is the group of diffeomorphisms of a space time $S$, $\Sigma$ is a distinguished hypersurface in $S$, $M$ is the space of Lorentz metrics on $S$ for which $\Sigma$ is spacelike, and $H$ is the subgroup leaving $\Sigma$ pointwise fixed; we give more details in Example \ref{ex:grel}.  Then, in Sec. \ref{sec:HamHReduced}, we discuss the reduction of hamiltonian structures from action groupoids to their reductions.

In Section \ref{sec:CohomInterp}, the central result is an extension to Lie algebroids of the theorem of Atiyah and Bott \cite{AtiyahBott:1984}
that equivariant momentum maps for a symplectic action of a Lie algebra $\frakg$ on $(P,\omega)$ (i.e., maps  which make the action hamiltonian) correspond to extensions of $\omega$ to closed basic elements in a Weil algebra whose cohomology is, under a properness assumption, the equivariant cohomology for a group action which integrates the $\frakg$-action.  For Lie algebroids, the usual Weil algebra is replaced by the one introduced by Mehta 
\cite{MehtaAlgebroids:2009}.  We show that, in this setting,  bracket compatible momentum sections for presymplectically anchored Lie algebroids (i.e., sections  which make the Lie algebroid hamiltonian) correspond to closed basic elements in this more general Weil algebra.  This takes considerable effort, including developing a Cartan calculus extending that from Section \ref{sec:Category} in order to give suitable definitions of what it means for Weil algebra elements to be closed and basic.

The  paper ends with Section \ref{sec:OpenQuestions} on open questions of varying degrees of difficulty, followed by a discussion of previous work related to ours.

\subsection*{Acknowledgements}

For comments, encouragement, and advice we would like to thank Alejandro Cabrera, Marius Crainic, Noriaki Ikeda, Madeleine Jotz Lean, Honglei Lang, Kirill Mackenzie, Ioan Marcut, Jo\~ao Nuno Mestre, Michele Schiavina, and Thomas Strobl as well as the audiences over several years who heard us present preliminary versions of this work and gave invaluable feedback.  We also thank the referee for many useful suggestions, which have improved the paper substantially.

C.B.~would like to thank the math department of UC Berkeley for its hospitality.  A.W.'s research was supported in part by the UC Berkeley Committee on Research.

\section{Anchored vector bundles over presymplectic manifolds}
\label{sec:anchored}

\subsection{A review of presymplectic geometry}
\label{sec:conventions}

All manifolds will be assumed to be finite dimensional and second countable, which implies that the space $\Gamma(M,E)$ of smooth sections of a finite rank vector bundle $E$ over a manifold $M$ is finitely generated as a $C^\infty(M)$-module. (We will need this assumption for the proofs of Propositions \ref{prop:CompatStrength} and~\ref{prop:Connection1}.) For convenience, we will also assume that our manifolds are connected.

A \textbf{presymplectic form} on $M$ is a closed 2-form $\omega \in \Omega^2(M)$.  We denote by $\tilde{\omega}: TM \to T^*M$, $v \mapsto \iota_v \omega$ the associated map of vector bundles. When the rank of $\tilde{\omega}$ is constant, we will call  $\omega$ \textbf{regular}.\footnote{Unlike some authors, we will not reserve the term ``presymplectic'' for the regular case.} As usual, when $\omega$ is nondegenerate in the sense that $\tilde{\omega}$ is an isomorphism (equivalently, the characteristic distribution is zero), we call $\omega$ \textbf{symplectic}.  

Given a subspace $V \subseteq T_m M$, the subspace $\{v \in T_m M |\omega(v,w) = 0$ {\mbox for all} $w \in V$\} will be called the \textbf{presymplectic orthogonal} of $V$  and will be denoted by $V^\perp$.   The subspaces  $(T_m M)^\perp$, which form the kernel of $\tilde{\omega}$, will be called the \textbf{characteristic subspaces}. 

 The \textbf{characteristic distribution} of all characteristic subspaces will be denoted by $TM^\perp$, where we omit the parentheses to lighten the notation. For regular $\omega$,  $TM^\perp$ is integrable because $\omega$ is closed; the leaves form the \textbf{characteristic foliation}.   {The local leaf spaces (and the global one, if the foliation is globally a fibration) then inherit symplectic structures which pull back to $\omega$.   We will call these \textbf{reduced spaces} of $(M,\omega)$.  
In the regular case, any presymplectic structure descends to symplectic structures on these reduced spaces.

A subspace $V$ in any presymplectic space will be called symplectic if the induced 2-form is symplectic (equivalently, if $V\cap V^{\perp}$ is the zero subspace).  For instance, any complement of $(T_m M)^\perp$ in $T_m M$ is symplectic.
When $V\subseteq T_m M$ is contained in $V^\perp$, i.e.~ when the pullback of $\omega$ to $V$ is zero, $V$ is called \textbf{isotropic}.
When $V$ contains $V^\perp$, $V$ is called \textbf{coisotropic}.  Note that, since the characteristic subspace $(T_m M)^\perp$ is contained in $V^\perp$ for any $V$, every coisotropic subspace must contain the characteristic subspace.  

$V$ is called \textbf{lagrangian} when it is equal to its presymplectic orthogonal.  A subspace is lagrangian if and only if it is the sum of the characteristic subspace and a lagrangian subspace of a symplectic complement.

\begin{Remark}
Since $V^{\perp\perp}=V$ for any subspace of a symplectic space, it is immediate that a subspace is isotropic if and only if its symplectic orthogonal\footnote{We will sometimes replace the adjective ``presymplectic'' by ``symplectic'' when the ambient space is symplectic.} is coisotropic.  In the general presymplectic case, we have only $V^{\perp\perp} \subseteq V$.  It does follow from this inclusion that $V^{\perp\perp\perp}= V^\perp$, and from there that $V$ is isotropic if and only if $V^\perp$ is coisotropic, and that $V$ being coisotropic implies that $V^\perp$ is isotropic.   But $V^\perp$ can be isotropic without $V$ being coisotropic.  For instance, if $S$ is any complement of the characteristic subspace $C$, the latter being assumed nonzero, then $S^\perp = C$, which is certainly isotropic.  But $S$ does not contain its presymplectic orthogonal $C$, so it is not coisotropic.  
\end{Remark}

Passing from vector spaces to manifolds, we call a submanifold $N$ of $M$ (co)isotropic if all of its tangent spaces are (co)isotropic in $TM$.  For instance, the leaves of the characteristic foliation of a coisotropic submanifold $C$ (that is, the foliation tangent to the characteristic subbundle $(TC)^\perp$) are always isotropic.   Since coisotropic subspaces contain the characteristic subspaces, every (closed) coisotropic submanifold is a union of characteristic leaves.  Thus, coisotropic submanifolds correspond to certain subsets of the reduced space; when there is a nice reduced manifold, these subsets are coisotropic submanifolds of it.  

A vector field $v \in \calX(M)$ for which $\iota_v \omega$ is closed, i.e.~for which the Lie derivative $\Lie_v \omega$ is zero, is called \textbf{presymplectic}. When, additionally, $\iota_v\omega$ is exact, $v$ is called \textbf{hamiltonian}; any function $f$ for which $df = \iota_v \omega$ is called a \textbf{generator} for $v$, and the pair $(f,v)$ is called a hamiltonian pair.  Not every $f$ belongs to a hamiltonian pair; when it does, it is called a hamiltonian function. 

For two hamiltonian functions $f$ and $g$, the Poisson bracket $\{f,g\} := \omega(v,w)$, where $(f,v)$ and $(g,w)$ are hamiltonian pairs, is well-defined, i.e.~it does not depend on the choice of the hamiltonian vector fields $v$ or $w$. The Poisson bracket equips the vector space of hamiltonian functions with the structure of a Lie algebra.  

\begin{Remark}
 Note that a hamiltonian function, since its differential $df$ is in the image of $\tilde{\omega}$, must be constant along each leaf of the characteristic foliation of $M$.  If the characteristic foliation is not simple, then, non-constant hamiltonian functions may be scarce or even nonexistent.
\end{Remark}

When $\omega$ is symplectic, every function $f$ belongs to a unique hamiltonian pair $(f,v_f)$. Note that the map $C^\infty(M) \to \calX(M)$, $f \mapsto v_f$ is an \emph{anti}homomorphism of Lie algebras. On the other hand, to be consistent with the usual definition of Lie algebroids, we define an action of a Lie algebra $\frakg$ on a manifold $M$ to be a map of vector bundles $\rho:\frakg \times M \to TM$ that induces a \emph{homo}morphism of Lie algebras also denoted by $\rho: \frakg \to \calX(M)$. This will make it necessary to introduce minus signs at some points, e.g.~in Proposition \ref{def:MomentumEquiv}.

\subsection{A hierarchy of compatibility conditions}

The most basic compatibility condition for an action $\rho:\frakg \to \calX(M)$ of a Lie algebra on a presymplectic manifold is that the  vector fields $\rho(a)$ be presymplectic for all $a \in \frakg$.  We will often think of the action as a vector bundle map $\frakg \times M \to TM$, which we will also denote by $\rho$.\footnote{Given a map $\phi: A \to B$ of vector bundles over $M$, we will notationally distinguish between the value  $\phi(a)$ of $\phi$ at $a \in A$ and the composition $\phi \circ a$ of $\phi$ with a section $a \in \Gamma(M,A)$. If no confusion can arise we will also write, more succinctly, $\phi a$ for $\phi \circ a$.}  From this point of view, if we consider  $\frakg \times M$ simply as a vector bundle $A$, the condition that the action be symplectic is expressed in terms of the trivialization of $A$ in which the elements of $\frakg$ correspond to the constant sections of $A$.  (Note that this condition does not involve the Lie bracket.)  As a first step toward a notion of hamiltonian Lie algebroid, we have to find a suitable condition on a Lie algebroid anchor $\rho:A \to TM$ when $A$ is not equipped with a chosen trivialization. 

We will next identify five reasonable compatibility conditions on an anchor $\rho:A\to M$, ordered by decreasing strength, all of which are satisfied by the anchor of an action Lie algebroid associated to a presymplectic action of a Lie algebra: the first is that $\Gamma(M,A)$ be generated by sections that are mapped to presymplectic vector fields; the second is that the $\Gamma(M,A)$ be \emph{locally} generated by sections that are mapped to presymplectic vector fields (in which case subsets of the generators give  local frames around each point of $M$); the third is that the ideal generated by the 1-forms $\iota_{\rho a}\omega$, $a \in \Gamma(M,A)$ be differential, i.e.~closed under the exterior derivative $d$; the fourth is that the presymplectic orthogonal of the image of the anchor be involutive.  We will show that the last three conditions are equivalent if the anchor has constant rank, and we will give examples to show that all four conditions are mutually inequivalent.  We will see that the first three conditions imply that $A$ is equipped with a natural class of linear connections in terms of which the conditions can be formulated in a natural way; this formulation will then play a fundamental role in our theory. The fifth condition is that the hamiltonian functions that are constant on the leaves of the characteristic distribution $\rho(A)$ are closed under the Poisson bracket.

Until Section~\ref{sec:HamMomMap}, we will make use only of the anchor $\rho:A\to TM$ and not of the bracket on sections of $A$.  Therefore, we need now only the following structure, weaker than that of a Lie algebroid.

\begin{Definition}
Let $A \to M$ be a smooth vector bundle. A map of vector bundles $\rho: A \to TM$ is called an \textbf{anchor} for $A$. A smooth vector bundle together with an anchor is called an \textbf{anchored} vector bundle.  An anchored vector bundle will be called \textbf{regular} if the  rank of its anchor is constant.
\end{Definition}

\begin{Definition}
\label{def:DualizedAnchor}
For an anchored vector bundle $A$ over a presymplectic manifold $M$, the map  $\gamma := \tilde{\omega}\circ \rho:A \to T^*M$, $a \mapsto \iota_{\rho(a)}\omega$ will be called the \textbf{$\omega$-dualized anchor} (or simply ``dualized anchor'').  When $\gamma$ has constant rank, we  will call the anchored vector bundle \textbf{presymplectically regular.}
\end{Definition}

Since no bracket is involved, any linear map from a Lie algebra $\mathfrak g$ (or any vector space, for that matter) to the vector fields on $M$ gives the trivial bundle $\mathfrak{g} \times M$ the structure of an anchored vector bundle over $M$.

\begin{Definition}
\label{def:CompatCond}
Let $(A,\rho)$ be an anchored vector bundle over the  presymplectic manifold $(M,\omega)$. We will consider the following compatibility conditions on $\rho$ and $\omega$:
\begin{enumerate}

\item[(C1)] There is a set  $S \subseteq \Gamma(M,A)$ of sections generating $\Gamma(M,A)$ as a $C^\infty(M)$-module such that $\rho a$ is a presymplectic vector field for every $a \in S$.

\item[(C2)] Every point $m \in M$ has a neighborhood $U$ with a local frame $\{a_1, \dots, a_r \} \subseteq \Gamma(U,A)$ such that $\rho a_1, \ldots, \rho a_r$ are presymplectic vector fields. 

\item[(C3)] The algebraic ideal of $\Omega(M)$ generated by $\{ \gamma a\,|\, a \in \Gamma(M,A) \}$ is closed under the de Rham differential.

\item[(C4)] The presymplectic orthogonal $\rho(A)^\perp$ is an involutive distribution in the sense that its space of sections is closed under the bracket of vector fields.

\item[(C5)] For every open set $U \subseteq M$ the vector space of all hamiltonian functions on $U$ that are annihilated by $\rho(A)$ is closed under the Poisson bracket.

\end{enumerate}
\end{Definition}

\begin{Remark}
Recall that  there are two natural notions of involutivity for a family 
$\{\gamma_i\}$ of 1-forms  on $M$: (i) The ideal in $\Omega(M)$ generated by the $\gamma_i$ is closed under the de Rham differential; (ii) the space of vector fields that are annihilated by all $\gamma_i$ is closed under the Lie bracket. The difference between these two notions is the difference between (C3) and (C4). To make this paper reasonably self-contained, we include below proofs for the well-known facts that (i) implies (ii) (Proposition \ref{prop:CompatStrength}), that (i) and (ii) are equivalent when the annihilator is regular (Proposition \ref{prop:CompatRegular}), and that in the non-regular case (ii) does not imply (i) (Example~\ref{ex:C4example}). 
\end{Remark}

\begin{Proposition}
\label{prop:CompatStrength}
We have the following implications between the compatibility conditions: (C1) $\Rightarrow$ (C2) $\Rightarrow$ (C3) $\Rightarrow$ (C4) $\Rightarrow$ (C5).
\end{Proposition}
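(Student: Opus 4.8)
The plan is to prove the four implications one at a time, since each concerns a different pair of conditions and needs its own argument.

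\textbf{(C1) $\Rightarrow$ (C2).} Starting from a global generating set $S$ of presymplectically-mapped sections, I would fix a point $m$ and extract a local frame near $m$. Since $\Gamma(M,A)$ is finitely generated over $C^\infty(M)$ (the standing second-countability assumption), finitely many elements of $S$ already generate; among their values at $m$, choose $r = \operatorname{rank} A$ of them, say $a_1,\dots,a_r$, that form a basis of $A_m$. By continuity and the fact that ``being a basis'' is an open condition (nonvanishing of a determinant of coordinates), $a_1,\dots,a_r$ remain linearly independent, hence a frame, on some neighborhood $U$ of $m$; and each $\rho a_i$ is presymplectic by hypothesis. Note that one does \emph{not} need all of $S$ to map to presymplectic fields here, just the chosen frame elements, which it does.

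\textbf{(C2) $\Rightarrow$ (C3).} This is local: the ideal generated by $\{\gamma a \mid a \in \Gamma(M,A)\}$ is computed locally, and near each $m$ it is generated by $\gamma a_1,\dots,\gamma a_r = \iota_{\rho a_1}\omega,\dots,\iota_{\rho a_r}\omega$ for the frame of (C2). Each of these is closed since $\rho a_i$ is presymplectic. A general element of the ideal is locally $\sum_i \eta_i \wedge \gamma a_i$ with $\eta_i \in \Omega(M)$; applying $d$ and using $d(\gamma a_i) = 0$ gives $\sum_i (\pm d\eta_i) \wedge \gamma a_i$, which lies again in the ideal. I would remark that since $\gamma a$ for an arbitrary (not necessarily presymplectic) section $a$ is a $C^\infty(U)$-combination $\sum f_i\, \gamma a_i$ of the frame 1-forms, the globally-defined ideal really is, locally, the ideal on $\gamma a_1,\dots,\gamma a_r$.

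\textbf{(C3) $\Rightarrow$ (C4).} This is the ``(i) $\Rightarrow$ (ii)'' implication flagged in the Remark, and I expect it to be the main obstacle — it is the one genuinely non-formal step, and where the finite-generation hypothesis is again used. Let $u,w$ be vector fields in $\rho(A)^\perp$, i.e.\ $\iota_u(\gamma a) = \iota_w(\gamma a) = 0$ for all sections $a$; equivalently $(\gamma a)$ annihilates $u$ and $w$. I want $\iota_{[u,w]}(\gamma a) = 0$ for all $a$. Using Cartan's formula $\iota_{[u,w]} = \Lie_u \iota_w - \iota_w \Lie_u$ and $\Lie_u = d\iota_u + \iota_u d$, one computes $\iota_{[u,w]}(\gamma a) = \Lie_u\iota_w(\gamma a) - \iota_w(d\iota_u + \iota_u d)(\gamma a) = -\iota_w\iota_u\, d(\gamma a)$, using $\iota_w(\gamma a) = \iota_u(\gamma a) = 0$. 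So it suffices to show $\iota_u\iota_w\, d(\gamma a) = 0$. Now (C3) gives $d(\gamma a) = \sum_j \beta_j \wedge \gamma a_j$ locally for some 1-forms $\beta_j$ and sections $a_j$ (this expansion, with finitely many terms, is exactly what finite generation of the ideal buys us, degreewise). Contracting with $u$ then $w$: every term $\beta_j \wedge \gamma a_j$ contracted twice against vectors annihilated by each $\gamma a_j$ vanishes, because in $\iota_u\iota_w(\beta_j\wedge\gamma a_j)$ each surviving monomial contains a factor $\iota_u(\gamma a_j)$ or $\iota_w(\gamma a_j)$, both zero. Hence $\iota_{[u,w]}(\gamma a) = 0$ for all $a$, so $[u,w] \in \rho(A)^\perp$.

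\textbf{(C4) $\Rightarrow$ (C5).} Let $f,g$ be hamiltonian functions on $U$ annihilated by $\rho(A)$, with hamiltonian pairs $(f,v)$, $(g,w)$, so $df = \iota_v\omega = \gamma_v$, $dg = \iota_w\omega$. That $f$ is annihilated by $\rho(A)$ means $0 = (\rho a)f = \langle df, \rho a\rangle = \omega(v,\rho a)$ for all $a$, i.e.\ $v \in \rho(A)^\perp$; likewise $w \in \rho(A)^\perp$. Then $\{f,g\} = \omega(v,w)$, and I need this function to be again hamiltonian and annihilated by $\rho(A)$. For the bracket of presymplectic vector fields one has the standard identity $\iota_{[v,w]}\omega = d(\omega(v,w))$ (which follows from $\Lie_v\iota_w\omega - \iota_w\Lie_v\omega = \iota_{[v,w]}\omega$ and $\Lie_v\omega = 0$, together with $\Lie_v\iota_w\omega = d\iota_v\iota_w\omega + \iota_v d\iota_w\omega = d(\omega(w,v)\cdot(-1)^{?})$ — I'd write this cleanly), so $\{f,g\} = \omega(v,w)$ is a generator for $[v,w]$, hence hamiltonian. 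Finally, by (C4), $[v,w] \in \rho(A)^\perp$, so for all $a$, $(\rho a)\{f,g\} = \langle d\{f,g\}, \rho a\rangle = \omega([v,w],\rho a) = 0$, i.e.\ $\{f,g\}$ is annihilated by $\rho(A)$. This closes the last implication. (I'd note that hamiltonian vector fields are in particular presymplectic, so there is no issue applying the presymplectic identities to $v,w$.)
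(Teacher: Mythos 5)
Three of your four implications — (C1)$\Rightarrow$(C2), (C3)$\Rightarrow$(C4), and (C4)$\Rightarrow$(C5) — are correct and are essentially the paper's own arguments: extracting a local frame from the generating set, the computation $\iota_{[u,w]}(\gamma a) = -\iota_w\iota_u\, d(\gamma a)$ followed by contracting the ideal expansion, and the observation that $f$ is annihilated by $\rho(A)$ iff its hamiltonian vector field lies in $\rho(A)^\perp$. (The sign you were unsure of in the identity $\iota_{[v,w]}\omega = \pm\, d(\omega(v,w))$ is immaterial, since $\rho(A)^\perp$ is a linear subspace.)

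The one step with a genuine gap is (C2)$\Rightarrow$(C3). Condition (C3) concerns the \emph{algebraic} ideal of $\Omega(M)$ generated by the globally defined $1$-forms $\gamma a$, $a \in \Gamma(M,A)$: its elements are \emph{finite} sums $\sum_j \eta_j \wedge \gamma b_j$ with global $\eta_j$ and global sections $b_j$. Your argument shows that on each chart $U$ one has $d(\gamma a)|_U = \sum_i df_i \wedge \gamma a_i$ with $a_i$ the \emph{local} frame of (C2) — i.e.\ that $d(\gamma a)$ lies locally in a locally defined ideal. The assertion that ``the ideal is computed locally,'' which would let you conclude global membership from this, is precisely the nontrivial content of the implication and is not automatic: patching the local expressions with a partition of unity over an infinite cover produces a locally finite but possibly globally infinite sum, which need not lie in the algebraic ideal. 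This is why the paper's proof first invokes second countability and Ostrand's theorem to replace the cover by a \emph{finite} one, builds from it a finite global generating set $\{\chi_q a_{qk}\}$ of $\Gamma(M,A)$, and verifies $d\bigl(\gamma(\chi_q a_{qk})\bigr) = d\chi_q \wedge \gamma a_{qk}$ for these generators; every $d(\gamma a)$ is then a finite combination of terms already known to lie in the ideal. Your proof can be repaired by inserting exactly this globalization step (it is also the point where the paper's standing second-countability assumption is actually used), but as written the implication is not established.
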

\begin{proof}

Assume (C1).  Since the sections in $S$ are a generating set, we can choose for any $m$ a subset $\{a_1, \ldots, a_r \}$ of $S$ whose values form a basis of the fibre at $m$.  It follows that these sections are a frame when restricted to some neighborhood of $m$. The vector fields $\rho a_1, \ldots, \rho a_r$ are presymplectic, so (C2) holds.

Assume (C2). Let $\{ U_p \}$ be a collection of open sets covering $M$ such that every $U_p$ has a local frame $\{a_{p1}, \ldots, a_{pr_p} \}$ that is mapped by $\rho$ to presymplectic vector fields, i.e.\ all of the 1-forms $\gamma a_{pk}$  are closed. Since we assume all manifolds to be second countable, we can assume without loss of generality that the cover $\{U_p\}$ has locally finite intersections. By Ostrand's theorem from dimension theory \cite[Theorem~II.6, p.~22]{Nagata:Modern} we can find a finite open cover $\{V_1, \ldots, V_{n+1}\}$ of $M$, where every $V_q$ is the disjoint union of open subsets of the sets in $\{U_p\}$. Over every $V_q$ there is still a frame $\{a_{q1}, \ldots, a_{qr}\}$ that is mapped by $\rho$ to presymplectic vector fields. Let $\{ \chi_q : V_q \to [0,1] \}$ be a partition of unity. Then the finite collection $S := \{\chi_q a_{qk}\}$ of sections generates the $C^\infty(M)$-module $\Gamma(M,A)$. Moreover, $d (\gamma\circ(\chi_q a_{qk})) = d (\chi_q\, \gamma a_{qk}) = d\chi_q \wedge \gamma a_{qk}$ (no summation over $q$), which shows that the ideal generated by the 1-forms $\gamma a$ is closed under the de Rham differential, so (C3) holds.

Assume (C3). It suffices to check the involutivity of a distribution locally, so let $U \subseteq M$ be an open set with a local frame $\{a_1, \ldots, a_r \} \subseteq \Gamma(U,A)$. Let $\gamma_k := \gamma a_k$. Property (C3) means that $d\gamma_k = \Omega^j_k \wedge \gamma_j$ (summation over $j$) for some matrix of 1-forms $\Omega^j_k$, $1 \leq j,k \leq r$. By definition, two vector fields $v$, $w$ lie in the presymplectic orthogonal of $\rho(A)$ if $\iota_v\gamma_k = 0 = \iota_w \gamma_k$ for all $k$. A short calculation shows that
\begin{equation*}
\begin{split}
 \iota_{[v,w]} \gamma_k 
 &= (\Lie_v \iota_w - \iota_w \Lie_v)\gamma_k = - \iota_w \iota_v d \gamma_k
 = -\iota_w \iota_v \bigl( \Omega^j_k \wedge \gamma_k \bigr) \\
 &= \iota_v\Omega^j_k\, \iota_w\gamma_k
 - \iota_w\Omega^j_k\, \iota_v\gamma_k
 \\
 &= 0 \,,
\end{split}
\end{equation*}
so (C4) holds.

Assume (C4). Let $(f,v)$ be a hamiltonian pair over $U$. We have $\rho(a) \cdot f = \iota_{\rho(a)} df = \omega(v,\rho(a))$ for all $a \in A$, so that $f$ is annihilated by $\rho(A)$ if and only if $v$ takes values in $\rho(A)^\perp$. Let $(g,w)$ be another hamiltonian pair for which $g$ is also annihilated by $\rho(A)$. Then $(\{f,g\}, [v,w])$ is a hamiltonian pair which satisfies $\rho(a) \cdot \{f, g\} = \omega([v,w], \rho(a)) = 0$ for all $a \in A$, where in the last step we have used that $\rho(A)^\perp$ is involutive. This shows that (C5) holds.
\end{proof}

\begin{Corollary}
Let $\rho$ be a presymplectic action of the Lie algebra $\frakg$ on the presymplectic manifold $(M,\omega)$. Then the compatibility conditions (C1)-(C5) are all satisfied for the action Lie algebroid $A=\frakg \times M$.
\end{Corollary}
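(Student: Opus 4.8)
The plan is to reduce everything to condition (C1) and then invoke the chain of implications (C1) $\Rightarrow$ (C2) $\Rightarrow$ (C3) $\Rightarrow$ (C4) $\Rightarrow$ (C5) established in Proposition~\ref{prop:CompatStrength}. So it suffices to exhibit a set $S \subseteq \Gamma(M, \frakg \times M)$ that generates the $C^\infty(M)$-module of sections and such that $\rho a$ is a presymplectic vector field for every $a \in S$.

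First I would take $S$ to be the set of constant sections of the trivial bundle $A = \frakg \times M$, i.e.\ the sections $m \mapsto (\xi, m)$ with $\xi \in \frakg$. Choosing a basis $\xi_1, \dots, \xi_n$ of $\frakg$, the corresponding constant sections form a global frame for $A$, so in particular $S$ generates $\Gamma(M,A)$ as a $C^\infty(M)$-module. Next I would observe that, by the very definition of an action adopted in Sec.~\ref{sec:anchored}, $\rho$ carries the constant section $\xi$ to the fundamental vector field $\rho(\xi) \in \calX(M)$, and the hypothesis that the action $\rho:\frakg \to \calX(M)$ is presymplectic says precisely that $\Lie_{\rho(\xi)}\omega = 0$, equivalently that $\gamma(\xi) = \iota_{\rho(\xi)}\omega$ is closed, for every $\xi \in \frakg$. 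Hence $(S,\rho)$ witnesses (C1), and Proposition~\ref{prop:CompatStrength} then delivers (C2)--(C5) for $A = \frakg \times M$.

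There is essentially no obstacle here: all of the substantive work has already been packaged into Proposition~\ref{prop:CompatStrength}, and the only point requiring a moment's care is that $S$ is a genuine $C^\infty(M)$-generating set, which is immediate because a basis of $\frakg$ yields a global frame of the trivial bundle. (If one wished to avoid even mentioning a basis, one could instead note that any $\xi \in \frakg$ together with the constant function $1$ generates, and finitely many such sections suffice by finite-dimensionality of $\frakg$.)
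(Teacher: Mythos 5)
Your proof is correct and follows exactly the paper's own argument: verify (C1) using the global frame of constant sections attached to a basis of $\frakg$, noting that the presymplectic hypothesis on the action is precisely the statement that these are sent to presymplectic vector fields, and then invoke the implication chain of Proposition~\ref{prop:CompatStrength}. Nothing further is needed.
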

\begin{proof}
The constant sections corresponding to any basis of $\frakg$ form a basis of the $C^\infty(M)$-module $\Gamma(M,A)$ which is mapped to presymplectic vector fields, so (C1) is satisfied. By Proposition \ref{prop:CompatStrength} this implies the other compatibility conditions.
\end{proof}

\begin{Proposition}
\label{prop:CompatRegular}
For a  presymplectically regular anchored vector bundle over a presymplectic manifold $M$, conditions (C2), (C3), and (C4) of Definition \ref{def:CompatCond} are equivalent.
\end{Proposition}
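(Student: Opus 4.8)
The plan is to lean on Proposition~\ref{prop:CompatStrength}, which already supplies (C2)$\Rightarrow$(C3)$\Rightarrow$(C4) with no hypothesis on ranks, and to add the single converse arrow (C4)$\Rightarrow$(C2) needed to close the cycle; the constant-rank hypothesis will enter only in this last step. Since (C2)--(C4) are all local conditions, I would fix a point of $M$ and work throughout on a sufficiently small neighborhood $U$ of it.

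Write $n=\dim M$ and $k=\mathrm{rank}\,\gamma$. The first task is to spell out what presymplectic regularity gives: the values $(\gamma a)_m$ span $\mathrm{im}(\gamma_m)$, so over a small enough $U$ the submodule $\{\gamma a\mid a\in\Gamma(U,A)\}$ is the section module of a rank-$k$ subbundle $E^*\subseteq T^*M$; dually, $\rho(A)^\perp$ --- the common kernel of the $1$-forms $\gamma a$ --- is a rank-$(n-k)$ subbundle of $TM$ whose fibrewise annihilator is $E^*$; and $\ker\gamma\subseteq A$ is a subbundle of rank $\mathrm{rank}\,A-k$, so locally $A=\ker\gamma\oplus C$ with $\gamma|_C\colon C\to E^*$ an isomorphism.

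Now assume (C4), so that $\rho(A)^\perp$ is involutive and of constant rank. The Frobenius theorem then furnishes coordinates $x^1,\dots,x^n$ near the point with $\rho(A)^\perp=\langle\partial/\partial x^{k+1},\dots,\partial/\partial x^n\rangle$, so that $E^*$ is spanned on a neighborhood by the exact $1$-forms $dx^1,\dots,dx^k$. Applying $(\gamma|_C)^{-1}$ to these produces sections $c_1,\dots,c_k$ of $A$ with $\gamma c_i=dx^i$; since $\{c_i\}$ is then a local frame of $C$, adjoining any local frame $b_1,\dots,b_{\mathrm{rank}\,A-k}$ of $\ker\gamma$ gives a local frame of $A$. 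Each of these sections is mapped by $\rho$ to a presymplectic vector field, because $\gamma c_i=dx^i$ and $\gamma b_j=0$ are closed and $\Lie_{\rho a}\omega=d(\gamma a)+\iota_{\rho a}\,d\omega=d(\gamma a)$; hence (C2) holds, completing the equivalence. The only external input is the Frobenius theorem --- equivalently, the classical fact that, for a constant-rank distribution, the differential-ideal condition (C3) and the involutivity condition (C4) agree --- and the rest is constant-rank linear algebra over $C^\infty(U)$. I expect the one point requiring care to be the verification that $\{\gamma a\}$ is locally the section module of the subbundle $E^*$ (rather than a merely pointwise-defined object), since this is precisely where presymplectic regularity is indispensable, matching the fact that the conditions are inequivalent in the non-regular case.
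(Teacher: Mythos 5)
Your proof is correct and follows essentially the same route as the paper's: both rely on Prop.~\ref{prop:CompatStrength} for the forward implications and close the cycle by proving (C4)$\Rightarrow$(C2) via the Frobenius theorem, using constant rank to identify $\rho(A)^\perp$ as a regular involutive distribution, taking a foliation chart so that the image of $\gamma$ is spanned by exact coordinate differentials, and lifting these to a local frame of $A$ (completed by a frame of $\ker\gamma$) that $\rho$ sends to presymplectic vector fields. Your added care about $\ker\gamma$ being a subbundle and the splitting $A=\ker\gamma\oplus C$ merely makes explicit a step the paper leaves implicit.
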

\begin{proof}
Assume (C4). If $p$ is the constant rank of the dualized anchor  $\gamma$, the involutive distribution given by the presymplectic orthogonal of $\rho(A)$ has constant dimension $n-p$, $n = \dim M$, so it is integrable by the Frobenius theorem. Let $(x^1, \ldots, x^{n-p}, y^1, \ldots, y^p): U \to \bbR^n$ be a local foliation chart, where $x^i$ are coordinates along the leaves and $y^i$ are transverse coordinates.  Then the $dy^i$ span the range of $\gamma$, so we can find a local frame $\{a^1, \ldots, a^r\} \subseteq \Gamma(U,A)$ such that $\gamma a_k = dy^k$ for $1 \leq k \leq p$ and the $a_k$ for $k > p$ form a basis of the kernel of $\gamma$. We conclude that the anchor maps the local frame to presymplectic vector fields. This shows that (C4) implies (C2). By Proposition \ref{prop:CompatStrength}, this implies (C3).
\end{proof}

Proposition \ref{prop:CompatRegular} tells us that, in the symplectic case, (C2), (C3), and (C4) are equivalent near points where the anchor $\rho$ itself has constant rank.   We will now give examples to show that, even in the symplectic case, the conditions are inequivalent without the regularity assumption, and that (C2) does not imply (C1), even in the regular symplectic case.

\begin{Example}
\label{ex:C4example}
Let $M = \bbR^2$ with the symplectic form $\omega = dx \wedge dy$. Let $A = \bbR \times M$  be the trivial line bundle and $\rho: A \to TM$ the anchor taking the constant section $a=1$ to the vector field $\rho\circ 1 = x \frac{\partial}{\partial x} + y \frac{\partial}{\partial y}$. On the one hand, the symplectic orthogonal of $\rho(A)$ is a distribution which is 1-dimensional on a open dense subset, so it is involutive. On the other hand, $\gamma\circ 1  = x\, dy - y\, dx$ and $d(\gamma \circ 1) = 2 dx \wedge dy$, which is not of the form $\alpha \wedge (\gamma \circ 1)$ for any $\alpha\in \Omega^1(M)$. We conclude that $A$ satisfies (C4) but not (C3).
\end{Example}

\begin{Example}
Once again, let $M = \bbR^2$ with the symplectic form $\omega = dx \wedge dy$ and $A = \bbR \times M$.  Now let the anchor $\rho: A \to TM$ take the constant section $a = 1$ to the vector field $\rho\circ 1 = 
  \bigl( y \frac{\partial}{\partial x} 
  - x \frac{\partial}{\partial y} \bigr)
  - (x^2 + y^2) \bigl( x\frac{\partial}{\partial x} 
  + y \frac{\partial}{\partial y} \bigr).$
This yields the 1-form $\gamma \circ 1= xdx +ydy +(x^2+y^2)(ydx-xdy) = \frac{1}{2}r dr - r^4 d\theta$, where $(r,\theta)$ are the usual polar coordinates.   Its differential is 
\begin{equation*}
\begin{split}
  d(\gamma \circ 1) 
  &= -4 r^3 dr \wedge d\theta
  = 8r^2 d\theta \wedge \bigl( \tfrac{1}{2}r dr - r^4 d\theta \bigr)
  = \alpha \wedge (\gamma \circ 1) \,,
\end{split}
\end{equation*}
for $\alpha = 8r^2 d\theta = -8(y dx - x dy)$. This shows that (C3) holds.

If (C2) were satisfied, there would be a nowhere vanishing function $f$ defined near the origin such that $f(\gamma\circ 1)$ is closed, hence exact.  The kernel of $\gamma$ is spanned by the vector field  
$\xi = \frac{\partial }{\partial \theta} + 2r^3 \frac{\partial }{\partial r}$.  It is easy to see from this expression that the trajectories of $\xi$, which are the integral manifolds of $\gamma \circ 1$, spiral away from the origin, so any function $g$ for which $dg=f(\gamma\circ 1)$ must be a constant equal to its value at the origin. This is a contradiction, which shows that (C3) does not imply (C2).
\end{Example}

\begin{Example}
\label{ex:cylinder}
Let $M = T^* S^1 \cong  S^1 \times \bbR$ be the cylinder with the canonical symplectic form $\omega = d\phi \wedge dz$. Let $A = M \times \bbR$ with the anchor $\rho: A \to TM$ mapping the constant section $a = 1$  to the vector field $\rho\circ 1 = \frac{\partial}{\partial \phi} - z \frac{\partial}{\partial z}$, whose trajectories spiral toward the zero section $z=0$ as they repeatedly circumnavigate the cylinder.
Since a symplectic vector field must be area preserving, the contraction property shows that $\rho\circ 1$ is not symplectic.  The same problem must hold for $a$ positive or negative, hence for any nonvanishing multiple of $a$.  So condition (C1) is not satisfied for this Lie algebroid.

Since $\rho\circ 1$ is nowhere vanishing the symplectic orthogonal of $\rho(A)$ is a 1-dimensional distribution, so it is regular and involutive. It follows from Proposition \ref{prop:CompatRegular} that this example satisfies condition (C2), which therefore does not imply (C1).
\end{Example}

\begin{Remark}
We can extend the argument that the last example does not satisfy (C1), showing that the only section of $A$ taken by $\rho$ to a symplectic vector field is the zero section.
Any section is of the form $f 1$ for some smooth function $f$. This section is taken by $\gamma$ to $f(\gamma\circ 1)$, where $\gamma \circ 1 = dz + z d \phi.$  For the section to be mapped to a symplectic vector field, we must have 
\begin{equation*}
 0 = d \bigl(f(\gamma \circ 1)\bigr)
 = df \wedge (\gamma \circ 1) + f\, d (\gamma \circ 1) 
 = \Bigl( \frac{\partial f}{\partial \phi} - z \frac{\partial f}{\partial z}
 - f \Bigr) \, d\phi \wedge dz \,,
\end{equation*}
which is equivalent to
\begin{equation*}
 (\rho\circ 1) \cdot f =\frac{\partial f}{\partial \phi} - z \frac{\partial f}{\partial z}
 = f \,.
\end{equation*}
This equation implies that, along each trajectory of $\rho \circ 1$, $f$ is either identically zero or grows exponentially.  One such trajectory is the zero section, along which $f$ must be periodic, so we see that $f$ must vanish there.  On the other hand, we see that any solution $f$ must blow up along the zero section if it is nonzero somewhere, so the only symplectic field in the image of $\rho$ is identically zero.  
\end{Remark}

The following equivalence is already contained in the work of Libermann \cite{Libermann:1983} on symplectically complete foliations.

\begin{Proposition}
\label{prop:PoissReg}
Let $(A,\rho)$ be an anchored vector bundle over a symplectic manifold such that $\rho$ is regular and $\rho(A)$ involutive (e.g.~if $\rho$ is the anchor of a Lie algebroid). Then conditions (C4) and (C5) are equivalent.
\end{Proposition}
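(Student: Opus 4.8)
The plan is to observe first that the implication (C4) $\Rightarrow$ (C5) has already been established in Proposition~\ref{prop:CompatStrength} (with no regularity hypothesis, and even in the presymplectic case), so the entire content here is the reverse implication (C5) $\Rightarrow$ (C4), under the standing assumptions that $\omega$ is symplectic, $\rho$ has constant rank, and $\Delta := \rho(A)$ is involutive. Since involutivity of $\Delta^\perp$ is a local condition, I would fix a point $m \in M$ and, using Frobenius, choose a foliation chart $(U; x^1, \dots, x^k, y^1, \dots, y^{n-k})$ adapted to $\Delta$, so that $\Delta|_U = \operatorname{span}(\partial/\partial x^1, \dots, \partial/\partial x^k)$ and the conormal bundle $\mathrm{Ann}(\Delta)|_U$ is spanned by the \emph{exact} $1$-forms $dy^1, \dots, dy^{n-k}$.

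The key point is that, since $\omega$ is nondegenerate, $v \in \Delta^\perp$ if and only if $\tilde{\omega}(v) = \iota_v\omega \in \mathrm{Ann}(\Delta)$; hence over $U$ the distribution $\Delta^\perp$ is freely generated as a $C^\infty(U)$-module by the hamiltonian vector fields $v_{y^1}, \dots, v_{y^{n-k}}$, because $\tilde{\omega}(v_{y^j}) = dy^j$ and $\tilde{\omega}$ is an isomorphism. Now each $y^j$ is constant along the leaves of $\Delta$ — that is, $dy^j$ annihilates $\rho(A)$ — so $y^j$ is a hamiltonian function on $U$ annihilated by $\rho(A)$ (in the symplectic case every function is hamiltonian). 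Applying (C5) to the open set $U$, the Poisson brackets $\{y^i, y^j\}$ are again annihilated by $\rho(A)$, so $d\{y^i, y^j\}$ kills $\Delta$; combined with the standard identity that $(\{y^i,y^j\}, -[v_{y^i}, v_{y^j}])$ is a hamiltonian pair (from $\Lie_{v_{y^i}}\omega = 0$ and Cartan's formula), this gives $[v_{y^i}, v_{y^j}] \in \Delta^\perp|_U$ for all $i, j$.

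To conclude, I would take arbitrary sections $u = \sum_i f_i\, v_{y^i}$ and $w = \sum_j g_j\, v_{y^j}$ of $\Delta^\perp|_U$ and expand $[u, w]$ by the Leibniz rule: every term is either an $f_i g_j$-multiple of some $[v_{y^i}, v_{y^j}]$ or a function multiple of some $v_{y^i}$ or $v_{y^j}$, and all of these lie in $\Delta^\perp|_U$. Thus $\Delta^\perp$ is closed under the Lie bracket near $m$, and since $m$ was arbitrary, $\rho(A)^\perp$ is involutive, which is (C4). I do not expect a genuine obstacle here; the only thing demanding care is keeping the sign conventions straight in the hamiltonian-pair and Poisson-bracket identities (the paper uses $df = \iota_{v_f}\omega$ and makes $f \mapsto v_f$ an antihomomorphism), but signs are irrelevant for membership in $\Delta^\perp$. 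As noted just before the statement, this equivalence is essentially Libermann's characterization of symplectically complete foliations~\cite{Libermann:1983}.
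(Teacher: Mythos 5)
Your proposal is correct and follows essentially the same route as the paper: a Frobenius foliation chart adapted to $\rho(A)$, the observation that the hamiltonian vector fields of the transverse coordinates $y^j$ frame $\rho(A)^\perp$ (using nondegeneracy of $\omega$), and then (C5) to close their Poisson brackets and hence their Lie brackets. The only difference is that you spell out the Leibniz-rule step showing involutivity can be checked on a frame, which the paper leaves implicit.
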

\begin{proof}
Since $\rho(A)$ is regular and involutive, we can find in a neighborhood of every point local coordinates $(x^1, \ldots, x^p, y^1, \ldots, y^q)$ such that $\rho(A)$ is spanned by $\frac{\partial}{\partial x^1}, \dots, \frac{\partial}{\partial x^p}$. The hamiltonian vector fields generated by $(y^1, \ldots, y^q)$ are a local frame of the distribution $\rho(A)^\perp$. If the space of functions that depend only on the $y$-coordinates is closed under the Poisson bracket, then the distribution spanned by their hamiltonian vector fields is involutive. We conclude that (C5) implies (C4). The opposite direction was proved in Proposition \ref{prop:CompatStrength}.
\end{proof}

The following example shows that the condition in Proposition \ref{prop:PoissReg} that $\omega$ is non-degenerate is necessary:

\begin{Example}
\label{ex:PoissRegular1}
Let $M = \bbR^3$ with the presymplectic form $\omega = dx \wedge dy$. Let $A = M \times \bbR$ with the anchor $\rho: A \to TM$ mapping the constant section $a = 1$  to the vector field $\zeta := \rho\circ 1 = \frac{\partial}{\partial x} + z \frac{\partial}{\partial y}$. The anchor, the presymplectic form, and hence the dualized anchor are all regular. We will now show that this example satisfies (C5) but not (C4).

The presymplectic orthogonal $\rho(A)^\perp$ is a regular distribution which is spanned by the vector fields $\zeta$ and $\eta := \frac{\partial}{\partial z}$. Since $[\zeta,\eta] = - \frac{\partial}{\partial y}$, this distribution is not involutive (in fact, it is a version of the standard contact structure on $\bbR^3$), hence, $A$ does not satisfy (C4).

Let us now determine the hamiltonian pairs $(f,v)$ such that $f$ is annihilated by $\rho(A)$, that is, by $\zeta$. We have already seen in the proof of Proposition \ref{prop:CompatStrength} that this is the case iff $v$ takes its values in $\rho(A)^\perp$. Every vector field in $\rho(A)^\perp$ is of the form $v = g \zeta + h \eta$ for some $g,h \in C^\infty(M)$. The condition for $v$ to be locally hamiltonian is 
\begin{equation*}
\begin{split}
  0 
  &= d \iota_v \omega \\
  &= d \bigl[ g(dy - z\, dx) \bigr] \\
  &= dg \wedge (dy - z\, dx) - g\, dz \wedge dx \\
  &= 
  \Bigl(\frac{\partial g}{\partial x}
  -z \frac{\partial g}{\partial y}\Bigr)\, dx \wedge dy
  + \Bigl(g -z \frac{\partial g}{\partial z}\Bigr)\, dx \wedge dz
  - \frac{\partial g}{\partial z} \, dy \wedge dz
  \,,
\end{split}
\end{equation*}
which is satisfied if and only if $g = 0$. It follows that all hamiltonian pairs are of the form $(f,h\eta)$ where $f$ is a locally constant function. We conclude that the real vector space of hamiltonian functions annihilated by $\rho(A)$ is the space of locally constant functions, which is trivially closed under the Poisson bracket so that (C5) is satisfied.
\end{Example}

The next example,  a variant of Example~\ref{ex:PoissRegular1}, shows that the condition in Proposition \ref{prop:PoissReg} that $\rho(A)$ be involutive is necessary as well.

\begin{Example}
\label{ex:PoissRegular2}
Let $M = \bbR^4$ with the symplectic form $\omega = dx \wedge dy + dz \wedge dw$. Let $A = M \times \bbR^2$ with the anchor $\rho: A \to TM$ mapping the constant sections $(1,0)$ and $(0,1)$ to the vector fields $\zeta := \rho\circ (1,0) = \frac{\partial}{\partial x} + z \frac{\partial}{\partial y}$ and $\eta := \rho \circ (0,1) = \frac{\partial}{\partial z}$. The distribution spanned by $\zeta$ and $\eta$ is lagrangian, so that $\rho(A)^\perp = \rho(A)$. A calculation analogous to the one in example~\ref{ex:PoissRegular1} shows that all hamiltonian pairs $(f,v)$ for which $f$ is annihilated by $\rho(A)$ are of the form $\bigl(f,\frac{\partial f}{\partial w} \frac{\partial}{\partial z} \bigr)$, where $f$ is a smooth function of the coordinate $w$ only. We conclude that (C5) holds but not (C4).
\end{Example}

\section{Compatibility via connections}
\label{sec:connections}

It turns out that connections provide a basis-free way of interpreting some of the compatibility conditions above; this interpretation will lead us to choosing (C3) as our preferred condition.

\subsection{A brief review of connections}
\label{sec:ConnectionReview}
Although much of the material in this section is standard, we include it for the convenience of the reader and to set forth the notation which we will be using.

\subsubsection{Ordinary connections}

A linear connection on a vector bundle $A$ over a manifold $M$ may be given in terms of a covariant derivative $D: \Gamma(M,A) \to \Omega^1(M) \otimes \Gamma(M,A)$, satisfying $D(fa) = df \otimes a + f\,Da$ for every function $f$ and every section $a$ of $A$. For every vector field $v \in \calX(M)$, we denote by $D_v a := \iota_v Da$ the covariant derivative in the direction of $v$.

The connection $D$ on $A$ induces a dual connection on $A^*$, also denoted by $D$, which is defined by
\begin{equation*}
 d\langle \mu, a \rangle = \langle D\mu, a \rangle + \langle \mu, Da \rangle \,,
\end{equation*}
for all sections $\mu \in \Gamma(M,A^*)$ and $a \in \Gamma(M,A)$.  The dual connection extends to a degree 1 operator on the graded vector space 
\begin{equation*}
 \Omega^k(M,A^*) := \Gamma(M, \wedge^k T^* M \otimes A^*) \,,
\end{equation*}
of $A^*$-valued differential forms,\footnote{Of course, there is a similar extension of the original connection to the $A$-valued forms, but we will not be using it in this paper.}
as follows: $\Omega(M,A^*)$ is a graded module over the graded algebra $\Omega(M)$ with
\begin{equation*}
 \alpha \wedge (\beta \otimes \mu) = (\alpha \wedge \beta) \otimes \mu \,,
\end{equation*}
for all $\alpha, \beta \in \Omega(M)$ and $\mu \in \Gamma(M,A^*)$, where the tensor product of sections over $M$ is always understood to be the tensor product over $C^\infty(M)$. The covariant derivative extends by the graded Leibniz rule
\begin{equation}
\label{eq:ConnLeibniz}
 D(\alpha \otimes \mu) := 
 d\alpha \otimes \mu + (-1)^{\deg \alpha} \alpha \wedge D\mu
\end{equation}
to an $\bbR$-linear operator on $\Omega(M,A^*)$, which is well-defined since
\begin{equation*}
\begin{split}
 D(f\alpha \otimes \mu) 
 &= (df \wedge \alpha + f d\alpha) \otimes \mu + (-1)^{\deg\alpha} \alpha \otimes D\mu \\
 &= (-1)^{\deg\alpha} (\alpha \wedge df) \otimes \mu + d\alpha \otimes f\mu 
 + (-1)^{\deg\alpha} \alpha \otimes D\mu \\
 &= d\alpha \otimes f\mu + (-1)^{\deg\alpha} \alpha \otimes D(f\mu) \\
 &= D(\alpha \otimes f\mu)
\end{split}
\end{equation*}
for all $f \in C^\infty(M)$. The covariant derivative in the direction of a vector field $v \in \calX(M)$ extends to the graded vector space $\Omega(M,A^*)$ by a Cartan type formula,
\begin{equation*}
 D_v := \iota_v D + D \iota_v = [\iota_v, D] \,,
\end{equation*}
where the bracket denotes the graded commutator. The directional covariant derivative satisfies
\begin{equation}
\label{eq:DvSum}
\begin{split}
 D_v (\alpha \otimes \mu)
 &= \iota_v \bigl( d\alpha \otimes \mu + (-1)^{\deg \alpha} \alpha \wedge D\mu \bigr)
 + d \iota_v \alpha \otimes \mu + (-1)^{\deg \iota_v \alpha} \iota_v\alpha \wedge D\mu \\
 &= \Lie_v \alpha \otimes \mu + \alpha \otimes D_v \mu \,,
\end{split}
\end{equation}
where we have used that $\alpha \wedge D_v\mu = \alpha \otimes D_v\mu$. The extended curvature operator is defined by the usual expression
\begin{equation}
\label{eq:CurvDef}
 R(v,w) := [D_v, D_w] - D_{[v,w]} \,.
\end{equation}
Using Eq.~\eqref{eq:DvSum}, we see that
\begin{equation*}
 R(v,w) (\alpha \otimes \mu) = \alpha \otimes R(v,w) \mu \,,
\end{equation*}
that is, the curvature operator is $\Omega(M)$-linear. Identifying a section $\mu \in \Gamma(M,A^*)$ with the $0$-form $1 \otimes \mu \in \Omega^0(A^*)$ and using that $D_v \mu = \iota_v D\mu$, we obtain for the action of the curvature operator
\begin{equation*}
\begin{split}
 R(v,w) \mu
 &= ( \iota_v D \iota_w D - \iota_w D \iota_v D - \iota_{[v,w]}D ) \mu \\
 &= \bigl( \iota_v (D_w -  \iota_w D) - (D_w - D \iota_w ) \iota_v - \iota_{[v,w]}  \bigr)D\mu \\
 &= \iota_w \iota_v D^2\mu + (\iota_v D_w - D_w \iota_v - \iota_{[v,w]})D\mu + D \iota_v \iota_w D\mu \\
 &= \iota_w \iota_v D^2\mu \,,
\end{split}
\end{equation*}
where the second term vanishes due to Eq.~\eqref{eq:DvSum} and the the third term for degree reasons. We see that, on 1-forms $\mu \in \Omega^1(M,A^*) = \Gamma(M, A^*)$, the curvature operator acts by the square of the covariant derivative. When we apply $D^2$ to a general form we obtain
\begin{equation*}
\begin{split}
 D^2(\alpha \otimes \mu)
 &= D\bigl( d\alpha \otimes \mu + (-1)^{\deg\alpha} \alpha \wedge D\mu \bigr) \\
 &= d^2\alpha \otimes \mu + (-1)^{\deg\alpha + 1} d\alpha \wedge D\mu \\
 &\quad +(-1)^{\deg\alpha} d\alpha \wedge D\mu + (-1)^{2\deg\alpha} \alpha \wedge D^2 \mu \\
 &= \alpha \wedge D^2 \mu \,,
\end{split}
\end{equation*}
which shows that $D^2$ is $\Omega(M)$-linear. It follows that $D^2 = 0$ iff $D^2\mu = 0$ for all $\mu \in \Gamma(M, A^*)$, which is the case iff the connection $D$ has vanishing curvature.

\subsubsection{Associated Lie algebroid connections}

Let $(A,\rho)$ be an anchored vector bundle. An \textbf{$A$-connection} on a vector bundle $E \to M$ is given by a covariant derivative operator \cite{EvensLuWeinstein:1999,Fernandes:2002}
\begin{equation*}
\begin{aligned}
  D: \Gamma(M,A) \times \Gamma(M,E)
  &\longrightarrow \Gamma(M,E) \\
  (a,e) &\longmapsto D_a e
  \,,
\end{aligned}
\end{equation*}
which is $C^\infty(M)$-linear in $A$ and which satisfies
\begin{equation}
\label{eq:Aconnect}
  D_a (fe) = f D_a e + (\rho a \cdot f)\, e  \,.
\end{equation}
We call $D_a e$ the covariant derivative in the ``direction'' of $a$. When $A$ is the tangent bundle, with $\rho$ the identity, an $A$-connection is just an ordinary connection.

For later use in Sections \ref{sec:Category} and \ref{sec:HamHReduced}, we recall here a construction \cite[Sec.~1.2]{CrainicFernandes:2003} which takes an ordinary connection $D$ on an anchored vector bundle $A$ to an $A$-connection $\check{D}$ on $TM$.  When $A$ is $TM$ as well, this produces what is known in differential geometry as the ``opposite connection'' (see e.g.~\cite[p.~129]{deLeonRodriques:1989}), so we will use this term as well in the general case.  

\begin{Definition}
\label{def:OppConn}
Let $(A,\rho)$ be an anchored vector bundle over $M$.  For any connection $D$ on $A$, we define an $A$-connection $\check{D}$ on $TM$, called the 
\textbf{opposite connection}, by
\begin{equation}
\label{eq:DcheckDef}
  \check{D}_a v := [\rho a, v] + \rho( D_v a )
  \,,
\end{equation}
for every section $a$ of $A$ and vector field $v$. It is straightforward to check that $\check{D}$ satisfies the defining relation~\eqref{eq:Aconnect} of an $A$-connection.
\end{Definition}

In addition to the anchor, we need a Lie algebroid bracket on $A$ to generalize some other constructions with connections like the following: For any $A$-connection $D$ on a Lie algebroid $A$ the \textbf{Lie algebroid torsion} is defined as \cite{Fernandes:2002}
\begin{equation}
\label{eq:DefTorsion}
 T(a,b) = D_a b - D_b a - [a,b].
\end{equation}
for all $a,b \in \Gamma(M,A)$. $T$ is an $A$-valued 2-form on $A$. 

To any ordinary connection $D$ on $E$ we can associate an $A$-connection by composition with the anchor
\begin{equation*}
  \hat{D}_{a} e := D_{\rho a} e \,.
\end{equation*}
When $E = A$, we obtain an $A$-connection on $A$. The Lie algebroid torsion of $\hat{D}$ is 
\begin{equation}
\label{eq:DefTorsion2}
T(a,b) = D_{\rho a} b - D_{\rho b} a - [a,b].
\end{equation}
From here on, we will simply refer to this Lie algebroid torsion as ``the torsion'' of $D$.

\subsection{Induced connections}
\label{section-induced}

We return now to the setting where $\omega$ is a presymplectic form on $M$, which combines with the anchor $\rho$ of the anchored vector bundle $A$ to give the dualized anchor $\gamma=\tilde{\omega}\circ \rho:A \to T^*M$ as introduced in Definition \ref{def:DualizedAnchor}. Let $\{a_1, \ldots, a_r \} \in \Gamma(U,A)$ be a local frame and
\begin{equation*}
 \gamma_k = \gamma a_k \,.
\end{equation*}
where we recall that $\gamma a_k \equiv \gamma \circ a_k$.  Properties (C2) and (C3) can be described in terms of the Pfaffian system $\{ \gamma_1, \ldots, \gamma_r\}$. In view of Proposition \ref{prop:CompatRegular} we shall not assume that this Pfaffian system is regular. Property (C3) holds if there are 1-forms $\Omega^j_i \in \Omega^1(U)$, $1 \leq i,j \leq r$ such that
\begin{equation}
\label{omegaequation}
 d\gamma_i = \Omega^j_i \wedge \gamma_j \,.
\end{equation}
The 1-forms $\Omega^j_i$ are not unique. For example, we could also choose $\tilde{\Omega}^j_i = \Omega^j_i + B^{jk}_i \gamma_k$, where $B^{jk}_i$ are smooth functions with $B^{jk}_i = B^{kj}_i$.

Moreover, the 1-forms $\Omega^j_i$ depend on the chosen frame. Let $\{ \tilde{a}_i \}$ be another local frame. The frames are related as $\tilde{a}_i = A^j_i a_j$ by a smooth matrix function $(A^j_i) : U \to \mathrm{GL}(n,\bbR)$. Let $\tilde{\gamma}_i := \iota_{\rho\tilde{a}_i}\omega$. Then 
\begin{equation*}
 d\tilde{\gamma}_i 
 = d \bigl( A^j_i \gamma_j \bigr)
 = dA^j_i \wedge \gamma_j + A^j_i \Omega^k_j \wedge \gamma_i
 = \tilde{\Omega}^l_i \wedge \tilde{\gamma}_l \,,
\end{equation*}
where
\begin{equation*}
 \tilde{\Omega}^l_i = dA^j_i (A^{-1})^l_j + A^j_i \Omega^k_j\, (A^{-1})^l_k \,.
\end{equation*}
This tells us that we can interpret the 1-forms $\Omega^j_i$ as corresponding to a local connection $D: \Gamma(U,A) \to \Omega^1(U) \otimes \Gamma(U,A)$ given by
\begin{equation*}
 D a_i = \Omega^j_i \otimes a_j \,.
\end{equation*}
We recall that, if $\{ U_p \to M \}$ is an open cover with a subordinate partition of unity $\{\chi_p: U_p \to [0,1]\}$, and if $D_p$ is a local connection on each $A|_{U_p}$, then $D = \sum_p \chi_p D_p$ is a globally defined connection on $A$.  Let the connection 1-forms of $D_p$ be denoted by $\Omega^i_{j,p} \in \Omega^1(U_p)$. Then the connection 1-forms of $D$ are given by $\Omega^j_i = \sum_p \chi_p \Omega^i_{j,p}$, so that $d\gamma_i = \sum_p \chi_p d\gamma_i = \sum_p \chi_p \Omega^j_{i,p} \wedge \gamma_j = \Omega^j_i \wedge \gamma_j$, which shows that $\Omega^j_i$ still satisfies Eq.~\eqref{omegaequation}. We can use this connection to study the compatibility conditions (C1), (C2), and (C3).

\subsection{Connections and the compatibility conditions}
\label{sec:CompatConn}

 Using the natural isomorphisms $\Hom(A,T^*M) \cong \Gamma(M,T^*M \otimes A^*) = \Omega^1(M,A^*)$, we can identify the dualized anchor $\gamma$ with an $A^*$-valued 1-form, which, by abuse of notation, we will also denote by $\gamma$.  If $\{a_1, \ldots, a_n\}$ is a local frame of $A$ and $\{\theta^1, \ldots, \theta^n\}$ is the dual frame, $\langle \theta^i, a_j \rangle = \delta^i_j$, and $\gamma_i := \gamma a_i$, then
\begin{equation}
\label{eq:gammaFrame}
 \gamma =  \gamma_i \otimes \theta^i \,.
\end{equation}
We can now reexpress condition (C3) in terms of the induced connection.

\begin{Proposition}
\label{prop:Connection1}
Let $(A, \rho)$ be an anchored vector bundle over a  presymplectic manifold. Condition (C3) holds if and only if there is a linear connection $D$ on $A$ such that
\begin{equation}
\label{eq:Dsymplectic}
 D \gamma = 0 \,.
\end{equation}
\end{Proposition}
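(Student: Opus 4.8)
The plan is to prove the two implications separately, using the local coordinate/frame description of condition (C3) from Eq.~\eqref{omegaequation} together with the connection interpretation developed in Sec.~\ref{section-induced}.

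For the direction ``(C3) $\Rightarrow$ existence of $D$,'' I would first work locally. Cover $M$ by open sets $U_p$ admitting local frames $\{a_{p1},\ldots,a_{pr}\}$ of $A$; on each $U_p$, condition (C3) gives 1-forms $\Omega^j_{i,p}$ with $d\gamma_{i,p} = \Omega^j_{i,p}\wedge\gamma_{j,p}$, and as shown in Sec.~\ref{section-induced} these define a local connection $D_p$ on $A|_{U_p}$ with $D_p a_{pi} = \Omega^j_{i,p}\otimes a_{pj}$. The key computation is to check that, in the induced operator on $\Omega^\bullet(M,A^*)$, the condition $d\gamma_i = \Omega^j_i\wedge\gamma_j$ is exactly the statement $D_p\gamma = 0$: writing $\gamma = \gamma_i\otimes\theta^i$ in the dual frame and using that the dual connection satisfies $D\theta^i = -\Omega^i_j\otimes\theta^j$, one gets $D\gamma = d\gamma_i\otimes\theta^i - \gamma_i\wedge\Omega^i_j\otimes\theta^j = (d\gamma_i - \Omega^j_i\wedge\gamma_j)\otimes\theta^i$, which vanishes precisely under \eqref{omegaequation}. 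Then I would patch: choose a partition of unity $\{\chi_p\}$ subordinate to $\{U_p\}$ and set $D = \sum_p \chi_p D_p$, which is a globally defined connection (this uses second countability, hence paracompactness, to get the partition of unity — this is where the hypothesis quietly enters, parallel to the remark after Prop.~\ref{prop:CompatStrength}). Since $D\gamma = 0$ is a $C^\infty(M)$-linear (pointwise, in fact fiberwise-in-$A^*$) equation in the connection and $\sum_p\chi_p = 1$, the convex combination again satisfies $D\gamma = 0$.

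For the converse, ``existence of $D$ with $D\gamma = 0$ $\Rightarrow$ (C3),'' I would simply reverse the local computation: given any global connection $D$ with $D\gamma = 0$, restrict to a coordinate neighborhood $U$ with a local frame, read off the connection 1-forms $\Omega^j_i$ from $Da_i = \Omega^j_i\otimes a_j$, and the identity $D\gamma = (d\gamma_i - \Omega^j_i\wedge\gamma_j)\otimes\theta^i = 0$ forces $d\gamma_i = \Omega^j_i\wedge\gamma_j$. This says each $d\gamma_i$ lies in the ideal generated by the $\gamma_j$; since the $\gamma_i$ generate (over $C^\infty(U)$, hence over $\Omega^0$) the local piece of the ideal $\{\gamma a \mid a\in\Gamma(M,A)\}$, and $d$ is a derivation, the full ideal generated by all $\gamma a$ is closed under $d$. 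One should note here that the ideal is generated by finitely many global sections (again by the finite-generation assumption on $\Gamma(M,A)$), or alternatively observe that checking differential closure of an ideal of forms is a local condition, so the local statement suffices.

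The main obstacle, I expect, is the gluing step in the forward direction and being careful about the bookkeeping between the frame-dependent 1-forms $\Omega^j_i$ and the coordinate-free operator $D$ on $\Omega^\bullet(M,A^*)$ — in particular verifying that the transformation law for the $\Omega^j_i$ under a change of frame (already computed in Sec.~\ref{section-induced}) is exactly the cocycle condition making the $D_p$ restrictions of a single well-defined $\check{}$-independent object after averaging, and that the equation $D\gamma = 0$ is genuinely affine/linear in $D$ so that partitions of unity preserve it. The rest is the routine verification that $D\theta^i = -\Omega^i_j\otimes\theta^j$ and the resulting formula for $D\gamma$ in a frame, which I would state but not belabor.
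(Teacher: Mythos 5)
Your proposal is correct and follows essentially the same route as the paper's proof: extract local connection 1-forms $\Omega^j_i$ from \eqref{omegaequation}, verify $D\gamma = (d\gamma_i - \Omega^j_i\wedge\gamma_j)\otimes\theta^i$ in a frame, glue by a partition of unity (using that $D\gamma$ is affine in $D$), and reverse the computation for the converse. The only cosmetic difference is in the converse, where the paper works directly with a finite set of global generators of $\Gamma(M,A)$ (writing $Da = \sum_i \beta^i\otimes a_i$) rather than localizing and then re-globalizing, but both arguments rest on the same second-countability/finite-generation hypothesis.
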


\begin{proof}
Let $\gamma = \gamma_i \otimes \theta^i$ in some local frame as in Eq.~\eqref{eq:gammaFrame}. Assume that (C3) holds. Then $d\gamma_i$ is of the form~\eqref{omegaequation} and $Da_i = \Omega^j_i \otimes a_j$ defines a local connection. The dual connection is given by $D\theta^i = - \Omega^i_j \otimes \theta^j$. The covariant derivative of $\gamma$ is given locally by
\begin{equation*}
\begin{split}
  D\gamma 
  &= d\gamma_i \otimes \theta^i - \gamma_i \wedge D\theta^i \\
  &= d\gamma_i \otimes \theta^i 
    + (\gamma_i \wedge \Omega^i_j ) \otimes \theta^j \\
  &= (d\gamma_i - \Omega^j_i \wedge \gamma_j) \otimes \theta^i 
  \,,
\end{split}
\end{equation*}
which vanishes due to Eq.~\eqref{omegaequation}. By adding the local connections using a partition of unity, we obtain a globally defined connection $D$ on $A$ which satisfies $D\gamma = 0$.

By pairing $D\gamma$ with a section $a$ of $A$, we obtain
\begin{equation}
\label{eq:DgammaPaired}
\begin{split}
  \langle D\gamma, a \rangle
  &= (d\gamma_i - \Omega^j_i \wedge \gamma_j) \langle \theta^i, a \rangle
  \\
  &= d(\gamma_i a^i) + \gamma_i \wedge da^i  + \gamma_j \wedge \Omega^j_i a^i
  \\
  &= d(\gamma_i a^i) + \gamma_i \wedge (Da)^i
  \\
  &= d\langle\gamma, a\rangle + \langle \gamma, Da \rangle
  \,,
\end{split}    
\end{equation}
where $a^i = \langle \theta^i, a \rangle$ and
\begin{equation*}
 \langle \gamma, Da  \rangle = \gamma_i \wedge (Da)^i \,.
\end{equation*}
Assume now that we have a connection $D$ such that $D\gamma = 0$. Since $M$ is assumed to be second countable, the covariant derivative of a section $a \in \Gamma(M,A)$ is given by a finite sum $Da =  \sum_{i=1}^n \beta^i \otimes b_i \in \Omega^1(M) \otimes \Gamma(M,A)$. From $D\gamma = 0$ and Eq.~\eqref{eq:DgammaPaired} it follows that
\begin{equation*}
  d \langle\gamma, a \rangle 
  = - \langle \gamma, Da \rangle  
  = - \gamma_j \wedge \sum_{i=1}^n \beta^i \langle \theta^j, b_i \rangle
  =\sum_{i=1}^n \beta^i \wedge \gamma a_i 
  \,,
\end{equation*}
which lies in the ideal generated by $\gamma\bigl( \Gamma(M,A) \bigr)$.
\end{proof}

Based on the results above, it is natural to introduce the following terminology.

\begin{Definition}
A connection $D$ on an anchored vector bundle $(A,\rho)$ over a  presymplectic manifold $(M,\omega)$ is called a \textbf{presymplectically anchored connection} if $D\gamma = 0$, where $\gamma\in \Omega^1(M,A^*)$ is the dualized anchor $\gamma = \tilde\omega \circ \rho$. We will also say that $(A,\rho)$ is \textbf{presymplectically anchored with respect to} $D$.
\end{Definition}

Thus, condition (C3) is equivalent to the property of being presymplectically anchored with respect to some connection.

\begin{Remark}
In addition to the ones we have already seen, there are other natural conditions that are equivalent to $D\gamma = 0$. In Proposition \ref{prop:PointSymp}, we will show that $D$ is presymplectically anchored if and only if the anchor maps sections of $A$ that are horizontal (locally or at a point) to presymplectic vector fields on $M$. In Proposition \ref{prop:FolSympComplete}, we will see that a regular foliation of a symplectic manifold, when viewed as Lie algebroid, is presymplectically anchored with respect to some connection if and only if it is symplectically complete. Finally, Proposition \ref{prop:Dcheckomega} expresses the equation $D\gamma = 0$ as the invariance $\check{D}\omega = 0$ of the presymplectic form under the parallel transport of the opposite $A$-connection $\check{D}$ on $TM$. All this has motivated our terminology and our choice of (C3) as the most natural compatibility condition.
\end{Remark}

If $D\gamma = 0$, then Eq.~\ref{eq:DgammaPaired} implies that
\begin{equation}
\label{eq:Dsymp2}
 d \iota_{\rho a} \omega 
 = - \langle \gamma, Da  \rangle \,,
\end{equation}
for all sections $a \in \Gamma(M,A)$. This shows that, for a presymplectically anchored connection, every horizontal section of $A$ is mapped by $\rho$ to a presymplectic vector field.

\begin{Proposition}
\label{prop:Connection2}
Let $(A, \rho)$ be an anchored vector bundle over a  presymplectic manifold. Condition (C2) holds if and only if every point of $M$ has a neighborhood $U$ with a flat presymplectically anchored connection $D$ on $A|_U$.

\end{Proposition}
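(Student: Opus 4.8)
The plan is to prove both implications by passing between local frames of sections that are mapped to presymplectic vector fields and flat, presymplectically anchored connections, mirroring at the local level the partition-of-unity construction used in the proof of Prop.~\ref{prop:Connection1}.

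For the implication from (C2) to the existence of a local flat presymplectically anchored connection, I would fix $m \in M$ and take a neighborhood $U$ with a frame $\{a_1, \dots, a_r\} \subseteq \Gamma(U,A)$ such that every $\rho a_i$ is presymplectic, i.e.\ $d\gamma_i = 0$ for $\gamma_i := \gamma a_i$. Define the connection $D$ on $A|_U$ by $Da_i = 0$, that is, by declaring this frame to be horizontal; its connection $1$-forms vanish in the frame $\{a_i\}$, so $D$ is flat, and the dual connection satisfies $D\theta^i = 0$ on the dual frame. Then, exactly as in the computation in the proof of Prop.~\ref{prop:Connection1}, $D\gamma = d\gamma_i \otimes \theta^i - \gamma_i \wedge D\theta^i = 0$, so $D$ is a flat presymplectically anchored connection on $A|_U$.

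For the converse, I would assume that each point $m$ has a neighborhood $U$ carrying a flat presymplectically anchored connection $D$ on $A|_U$. Flatness of $D$ means the horizontal distribution in the total space of $A|_U$ is involutive, hence integrable by the Frobenius theorem; after shrinking $U$ to a simply connected neighborhood, parallel transport produces a frame $\{a_1, \dots, a_r\}$ of $D$-horizontal sections, $Da_i = 0$. Pairing $D\gamma = 0$ with $a_i$ and invoking Eq.~\eqref{eq:Dsymp2} gives $d\iota_{\rho a_i}\omega = -\langle \gamma \wedge Da_i\rangle = 0$, so each $\rho a_i$ is presymplectic and (C2) holds on $U$.

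I do not anticipate a serious obstacle: once Prop.~\ref{prop:Connection1} and the Cartan-type identities for $D$ on $\Omega(M,A^*)$ are in hand, the argument is essentially bookkeeping. The only ingredient imported from outside is the classical equivalence, used in the converse direction, between flatness of a linear connection and the existence of local horizontal frames; I would cite this rather than reprove it, taking care to note that the horizontal sections so obtained are pointwise linearly independent and hence form a genuine local frame, which is what (C2) requires.
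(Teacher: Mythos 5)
Your proof is correct and follows essentially the same route as the paper's: in the forward direction you declare the presymplectic frame horizontal (equivalently, set the connection $1$-forms $\Omega^j_i = 0$), and in the converse you extract a horizontal frame from a flat connection and apply $D\gamma = 0$ via Eq.~\eqref{eq:Dsymp2}. Your extra care about shrinking to a simply connected neighborhood before invoking the flat-connection/horizontal-frame equivalence is a reasonable precision the paper leaves implicit, but it does not change the argument.
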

\begin{proof}
The proof is analogous to that of Proposition \ref{prop:Connection1}. Assuming (C2), we have for every point in $M$ a neighborhood $U$ with a local frame $\{a_i\} \subseteq \Gamma(U,M)$ such that $d\gamma_i = 0$. This means that we can choose $\Omega^j_i = 0$, so that the induced connection is flat over $U$.

Conversely, assume that for every point $m \in M$ we have neighborhood $U$ with a flat connection satisfying $D\gamma = 0$. Then there is a local frame $\{a_i \}$ of horizontal sections, $D a_i = 0$. It follows that, $d \iota_{\rho a_i} \omega = d \gamma_i = 0$.
\end{proof}

From here on, we will use presymplectically anchored connections as our main tool for studying the compatibility of Lie algebroids and presymplectic structures.

\section{Momentum sections for presymplectically anchored Lie algebroids}
\label{sec:MomPre}

For a presymplectic action $\rho$ of a Lie algebra $\frakg$ on  $(M,\omega)$, a momentum map is a linear map $\frakg \to C^\infty(M)$ (often viewed as a map $\mu: M \to \frakg^*$) which maps every element $a$ of the Lie algebra to a function $f$ for which $(f,\rho a)$ is a hamiltonian pair.  When $\omega$ is symplectic, such a map always exists locally. In terms of the vector bundle $A = \frakg \times M$, the momentum map can be viewed as a section $\mu \in \Gamma(M,A^*)$. However, $\langle \mu, a \rangle$ for $a \in \Gamma(M,A)$ is a generator for the vector field $\rho a$ only when $a$ is a constant section of $A$. This tells us that a good notion of momentum ``map'' for a presymplectically anchored Lie algebroid will have to involve the connection $D$ on $A$ if it is to generalize the usual notion of momentum map for actions of Lie algebras.

We will define below the notion of momentum section for a pair $(A,D)$ and then motivate the definition by proving that it reduces to the usual notion for Lie algebra actions when $D$ is the trivial connection. However, for a different connection $D$, an action Lie algebroid may have a $D$-momentum section even when the action fails to have a momentum map in the usual sense. We will give an example for this phenomenon and then show that condition (C2) guarantees the existence of local momentum sections. In the following section, we will begin using the Lie algebroid bracket structure on $A$ and will investigate conditions for compatibility of a momentum section with the bracket. 

\subsection{Momentum sections}

So far we have considered the case that $\gamma= \tilde{\omega}\circ\rho$ is $D$-closed. A natural next step is to require that $\gamma$ be $D$-exact, i.e.
\begin{equation}
\label{eq:MomentumMapDef}
 D\mu = \gamma 
\end{equation}
for some $\mu \in\Omega^0(M,A^*)= \Gamma(M,A^*)$. We will call such a $\mu$ a \textbf{$D$-momentum section} for $A$. Note that, since $D^2 \neq 0$ unless the connection is flat, Eq.~\eqref{eq:MomentumMapDef} does not imply that $D\gamma = 0$, which must be required independently. Thus, unlike in the case of an action Lie algebroid with the trivial connection, the existence of a momentum section does not in general imply that $A$ is presymplectically anchored with respect to $D$, though we will often make this assumption.

When we evaluate Eq.~\eqref{eq:MomentumMapDef} on a section $a \in \Gamma(M,A)$ and use the definition of the dual connection, we obtain
\begin{equation}
\label{eq:MomentumMapDef2}
 d\langle\mu, a \rangle - \langle \mu, D a \rangle
  = \langle D\mu,a \rangle 
  = \langle \gamma, a\rangle = \iota_{\rho a} \omega \,,
\end{equation}
which makes it explicit how the anchor and the presymplectic structure enter into Eq.~\eqref{eq:MomentumMapDef}. In a local frame $\{a_i \}$, this relation can be further spelled out in terms of the functions $\mu_i = \langle\mu, a_i \rangle$, the 1-forms $\gamma_i = \gamma a_i$, and the connection 1-forms defined by $Da_i = \Omega^j_i \otimes a_j$. Eq.~\eqref{eq:MomentumMapDef2} now takes the form
\begin{equation*}
 d\mu_i - \mu_j \Omega^j_i = \gamma_i \,.
\end{equation*}

\bigskip

Our definition of a $D$-momentum section is justified by the following proposition, which shows that, for the action of a Lie algebra, it reduces to the usual notion of momentum map.

\begin{Proposition}
Let $\rho: \frakg \times M \to TM$ be an action of the Lie algebra $\frakg$ on the  presymplectic manifold $M$. Let $D$ be the trivial connection on $A := \frakg \times M$. Then: (i) $A$ is presymplectically anchored with respect to $D$ if and only if the action of $\frakg$ is presymplectic; (ii) when the action is presymplectic, $\mu \in \Gamma(U, A^*) = C^\infty(U, \frakg^*)$ is a $D$-momentum section if and only if it is a momentum map in the usual sense.  
\end{Proposition}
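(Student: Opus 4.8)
The plan is to carry out everything in the global constant frame $\{e_i\}$ of $A=\frakg\times M$ coming from a basis of $\frakg$. The trivial connection is, by definition, the unique connection whose connection $1$-forms vanish in this frame, so $De_i=0$; the defining relation of the dual connection then forces $D\theta^i=0$ for the dual frame $\{\theta^i\}$, because $0=d\langle\theta^i,e_j\rangle=\langle D\theta^i,e_j\rangle+\langle\theta^i,De_j\rangle=\langle D\theta^i,e_j\rangle$ for all $j$. By the Leibniz rule \eqref{eq:ConnLeibniz} the induced operator on $\Omega^\bullet(M,A^*)$ then acts ``coefficientwise'': writing $\gamma=\gamma_i\otimes\theta^i$ with $\gamma_i:=\iota_{\rho e_i}\omega$, and a section $\mu=\mu_i\theta^i\in C^\infty(M,\frakg^*)$ with $\mu_i:=\langle\mu,e_i\rangle$, one gets $D\gamma=d\gamma_i\otimes\theta^i$ and $D\mu=d\mu_i\otimes\theta^i$. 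After this reduction each assertion is a short de Rham computation.

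For (i): since the $\theta^i$ are pointwise linearly independent, $D\gamma=0$ holds iff $d\gamma_i=0$ for every $i$, i.e.\ iff $d\iota_{\rho e_i}\omega=0$; as $\omega$ is closed this is the condition $\Lie_{\rho e_i}\omega=0$, i.e.\ $\rho e_i$ is presymplectic. Presymplectic vector fields form a linear subspace of $\calX(M)$ and $\rho\colon\frakg\to\calX(M)$ is linear, so this holds on a basis iff $\rho\xi$ is presymplectic for every $\xi\in\frakg$, which is precisely the statement that the action is presymplectic.

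For (ii) I would assume the action presymplectic (so that $D\gamma=0$ by (i)) and work over an open set $U$. Pairing the equation $D\mu=\gamma$ with the constant section $e_i$ and using $De_i=0$ gives $d\mu_i=d\langle\mu,e_i\rangle=\langle D\mu,e_i\rangle=\langle\gamma,e_i\rangle=\iota_{\rho e_i}\omega$; conversely these scalar equations reassemble into $D\mu=\gamma$. Hence $D\mu=\gamma$ is equivalent to $d\langle\mu,\xi\rangle=\iota_{\rho\xi}\omega$ for all $\xi\in\frakg$ (again by linearity in $\xi$), i.e.\ to $\bigl(\langle\mu,\xi\rangle,\rho\xi\bigr)$ being a hamiltonian pair for every $\xi$ --- which is exactly the definition of a momentum map recalled at the start of Section~\ref{sec:MomPre}.

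I do not expect a genuine obstacle here; the only things to watch are bookkeeping points. One should check carefully that the dual of the trivial connection is again trivial, so that the terms $\langle\mu,Da\rangle$ in \eqref{eq:MomentumMapDef2} and $\gamma_i\wedge D\theta^i$ genuinely vanish rather than merely being discarded, and one should confirm that the sign conventions line up --- that ``hamiltonian pair'' means $df=\iota_v\omega$ and that $\gamma=\tilde\omega\circ\rho$ carries the matching sign --- so that no spurious minus sign creeps into part (ii). Once the constant frame is fixed, both parts are two-line calculations.
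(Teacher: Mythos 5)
Your proof is correct and follows essentially the same route as the paper's: both reduce the conditions $D\gamma=0$ and $D\mu=\gamma$ to the scalar equations $d\iota_{\rho a}\omega=0$ and $d\langle\mu,a\rangle=\iota_{\rho a}\omega$ by pairing with the constant (horizontal) sections, for which the term involving $Da$ drops out. Your version merely makes explicit the bookkeeping (triviality of the dual connection, linearity in $\xi$) that the paper leaves implicit via Eqs.~\eqref{eq:Dsymp2} and~\eqref{eq:MomentumMapDef2}.
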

\begin{proof}
(i) Let $a \in \frakg$ be viewed as a constant section of $A$. Since $a$ is horizontal, Eq.~\eqref{eq:Dsymp2} is satisfied iff $d \iota_{\rho a}\omega = 0$.
(ii) Eq.~\eqref{eq:MomentumMapDef2} is satisfied for every $a \in \frakg$ iff $d\langle\mu, a \rangle = \iota_{\rho a} \omega$, which is the usual definition of a momentum map. Since $D^2 = 0$, the existence of a momentum section implies that $D\gamma = 0$.
\end{proof}

For a presymplectic action of a Lie algebra $\frakg$ on $(M,\omega)$, a natural condition is that it be  weakly{\footnote{We will reserve the unmodified term ``hamiltonian'' for those actions having equivariant momentum maps.  Some authors do not use this convention, calling actions with equivariant momentum maps ``strongly hamiltonian.''}} hamiltonian in the sense that the presymplectic vector field associated to each element of $\frakg$ is hamiltonian.  This condition is equivalent to the existence of a momentum map.  We therefore make the following definition.

\begin{Definition}
A vector bundle $A$ over $M$ with presymplectically anchored connection $D$ is \textbf{weakly hamiltonian} when it admits a (global) $D$-momentum section.  It is \textbf{locally weakly hamiltonian} if every point in $M$ has a neighborhood on which the restriction of $A$ is weakly hamiltonian, possibly with different connections on different open subsets.
\end{Definition}

\begin{Remark}
As explained in the proof of Proposition \ref{prop:Connection1}, a set $\{D_i\}$ of local presymplectically anchored connections defined on a cover of $M$ can always be merged by a partition of unity to a global presymplectically anchored connection $D$. However, the locally defined $D_i$-momentum section $\mu_i$ will generally fail to be a momentum section for $D$.
\end{Remark}

The following example shows that, even if an action is not weakly hamiltonian in the usual sense, its action Lie algebroid may be weakly hamiltonian as an anchored vector bundle, via a $D$-momentum section in which the constant sections are not horizontal for $D$. 

\begin{Example}
\label{ex:cylinder2}
We revisit Example~\ref{ex:cylinder} of a trivial line bundle over the symplectic cylinder $(S^1\times \bbR, d\phi \wedge dz)$. In this example $\gamma \in \Gamma(M, T^*M \otimes A^*)$ is given by
\begin{equation*}
  \gamma = (z\, d\phi + dz) \otimes \theta \,,
\end{equation*}
where $\theta = 1$ is the constant section of the dual bundle $A^*$. 

For the constant section $a=1$ of $A$ itself, we have $\gamma\circ 1 =  z\, d\phi + dz$ and $d(\gamma\circ 1) = -d\phi \wedge (\gamma\circ 1)$ so that the induced connection from Section \ref{section-induced} is given by $D1 = -d\phi \otimes 1$. The connection is flat but not trivial, with holonomy group $\bbZ$. A local horizontal section is given by $\tilde{a} = e^\phi 1$, which is defined for $\phi \in (-\pi, \pi)$. We have
\begin{equation*}
  \iota_{\rho\tilde{a}} \omega 
  = e^\phi \bigl( z\, d\phi + dz \bigr) 
  = d (z e^\phi) \,,
\end{equation*}
so $\rho\tilde{a}$ is a symplectic vector field on $(-\pi,\pi) \times \bbR$. Since the connection has nontrivial holonomy, however, there is no global horizontal section of $A$ (other than the zero section), so there is also no global symplectic vector field in the image of $\rho$.

Let $\mu := z \theta \in \Gamma(M,A^*)$. The covariant derivative acts on $\theta$ by $D\theta = d\phi \otimes \theta$, so we get
\begin{equation*}
  D\mu = dz \otimes \theta + z \, d\phi \otimes \theta = \gamma \,.
\end{equation*}
We conclude that $\mu$ is a $D$-momentum section.
\end{Example}

\begin{Proposition}
\label{prop:C2impliesLocHam}
Let $(A, \rho)$ be an anchored vector bundle over a presymplectic manifold satisfying condition (C2). Then $(A,\rho)$ is locally weakly hamiltonian.
\end{Proposition}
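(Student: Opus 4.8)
The plan is to work locally and reduce the statement to a choice of primitive for a closed form. By condition (C2), around any point $m \in M$ we may choose a neighborhood $U$ with a local frame $\{a_1, \ldots, a_r\}$ of $A|_U$ such that each $\rho a_i$ is a presymplectic vector field, i.e.\ each $\gamma_i := \gamma a_i = \iota_{\rho a_i}\omega$ is closed. Shrinking $U$ to a contractible (e.g.\ coordinate-ball) neighborhood, the Poincar\'e lemma gives functions $\mu_i \in C^\infty(U)$ with $d\mu_i = \gamma_i$.

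Next I would use the frame to build the connection and the momentum section simultaneously. Since the $\gamma_i$ are closed, we may take the induced connection of Sec.~\ref{section-induced} with connection $1$-forms $\Omega^j_i = 0$ over $U$, i.e.\ the flat connection $D$ for which the $a_i$ are horizontal, $D a_i = 0$; by Prop.~\ref{prop:Connection2} this $D$ is presymplectically anchored on $A|_U$. Define $\mu := \mu_i \theta^i \in \Gamma(U, A^*)$, where $\{\theta^i\}$ is the dual frame. Because $D\theta^i = -\Omega^i_j \otimes \theta^j = 0$ as well, we compute $D\mu = d\mu_i \otimes \theta^i = \gamma_i \otimes \theta^i = \gamma$ (using Eq.~\eqref{eq:gammaFrame}), so $\mu$ is a $D$-momentum section on $U$. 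Hence $A|_U$ is weakly hamiltonian, and since $m$ was arbitrary, $(A,\rho)$ is locally weakly hamiltonian.

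There is essentially no hard step here: the content is entirely in invoking (C2) for the local frame of presymplectic sections, the Poincar\'e lemma for the primitives $\mu_i$, and the bookkeeping that in the flat frame both $Da_i$ and $D\theta^i$ vanish so that $D\mu$ collapses to $d\mu_i \otimes \theta^i$. The one point to be slightly careful about is that the connection furnished by (C2) via Sec.~\ref{section-induced} and the one needed to kill $D\gamma$ must be the \emph{same} connection used to define $\mu$; choosing the flat frame connection on the contractible $U$ makes this automatic, and it is consistent with the statement's proviso that different open sets may carry different connections. It is also worth remarking that one should not try to patch the $\mu_i$ into a global momentum section — the Remark after the definition of weakly hamiltonian already warns that merging local presymplectically anchored connections by a partition of unity need not preserve the momentum sections — so the conclusion is genuinely only the \emph{local} one.
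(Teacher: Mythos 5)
Your proof is correct and follows essentially the same route as the paper's: both reduce to the flat frame connection furnished by (C2) (equivalently, Prop.~\ref{prop:Connection2}), apply the Poincar\'e lemma on a contractible neighborhood to get primitives $\mu_i$ of the closed forms $\gamma_i$, and verify $D\mu = d\mu_i \otimes \theta^i = \gamma$ using that both frames are horizontal. The only cosmetic difference is the direction of travel — the paper starts from the flat presymplectically anchored connection of Prop.~\ref{prop:Connection2} and deduces $d\gamma_i = 0$, whereas you start from the closed $\gamma_i$ given by (C2) and build that connection — but these are the two sides of the same equivalence.
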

\begin{proof}
According to Proposition \ref{prop:Connection2}, there is for every $m\in M$ a flat presymplectically anchored connection $D$ over some neighborhood $U$ of $m$. Then there is a local frame $\{a_i\}$ in $\Gamma(U,A)$ of horizontal sections, $Da_i = 0$. The dual frame $\{\theta^i\}$ in $\Gamma(U,A^*)$ is also horizontal because $\langle \theta^i, a_j \rangle = \delta^i_j$. Expressing $D\gamma = 0$ in this frame reads
\begin{equation*}
 D(\gamma_i \otimes \theta^i) = d\gamma_i \otimes \theta^i = 0 \,, 
\end{equation*}
so that $d\gamma_i = 0$. Choosing $U$ to be contractible, we can find functions $\mu_i \in C^\infty(U)$ such that $\gamma_i = d\mu_i$. Let $\mu := \mu_i \theta^i$. The short calculation:
\begin{equation*}
 D\mu 
 = d\mu_i \otimes \theta^i + \mu \wedge D\theta^i 
 = \gamma_i \otimes \theta^i
 = \gamma \,,
\end{equation*}
shows that $\mu$ is a local $D$-momentum section.
\end{proof}

\subsection{Bracket-compatible momentum sections}
\label{sec:HamMomMap}

So far, we have studied anchored vector bundles over  presymplectic manifolds. Now we involve the bracket operation for a Lie algebroid and will define a compatibility condition with presymplectic structures which reduces to the usual equivariance condition for momentum maps of Lie algebra actions.

Up to now, we have used the complex of differential forms with values in $A^*$, which uses a connection on $A$ but no bracket.
When $A$ is a Lie algebroid, we have in addition the Lie algebroid complex,
which encodes all the information in the bracket as well as the anchor.
It consists of the sections of the exterior algebra of the dual $A^*$, with differential the unique derivation
\begin{equation*}
 \DA : \Gamma(M, \wedge^\bullet A^*) \to \Gamma(M, \wedge^{\bullet+1} A^*) \,,
\end{equation*}
of graded commutative algebras that satisfies
\begin{align}
 (\DA f)(a) 
 &= \rho a \cdot f \,, \notag \\
\label{eq:defDA}
 (\DA \mu)(a,b) 
 &= 
 \rho a \cdot \langle \mu,b \rangle - 
 \rho b \cdot \langle \mu,a \rangle - 
 \langle \mu, [a,b] \rangle \,,
\end{align}
for all $f \in C^\infty(M)$, $\mu \in \Gamma(M, A^*)$, and $a,b \in \Gamma(M, A)$.  

In terms of this Lie algebroid differential, we state our compatibility condition of a momentum section with the Lie algebroid bracket as follows:

\begin{Definition}
\label{def:MomentumEquiv}
Let $A$ be a Lie algebroid over the presymplectic manifold $(M,\omega)$. A $D$-momentum section $\mu \in \Gamma(M,A^*)$ is called \textbf{
bracket-compatible} if
\begin{equation}
\label{eq:MomEqui}
 (\DA\mu)(a,b) = -\omega(\rho a,\rho b)
\end{equation}
for all $a,b \in \Gamma(M,A)$.  
\end{Definition}

Note that, although the condition that $\mu$ be a momentum section involves the connection $D$, the bracket-compatibility condition does not. In particular, bracket-compatibility is independent of whether $A$ is presymplectically anchored with respect to $D$.

\begin{Remark}
Let $\rho^*: \Omega(M) \to \Omega(A)$ denote the map of differential complexes, induced by the anchor, from the de Rham complex of $M$ to the Lie algebroid cohomology complex. In terms of $\rho^*$, Eq.~\eqref{eq:MomEqui} can be written succinctly as $\DA \mu = - \rho^* \omega$. A necessary condition for \eqref{eq:MomEqui} is that $\rho^* \omega$ be $\DA$-closed, but since $\DA \rho^* \omega =  \rho^* d\omega = 0$, this is always the case.
\end{Remark}

Eq.~\eqref{eq:MomEqui} is equivalent to the vanishing of \begin{equation}
\label{eq:MomEquiv2}
\begin{split}
 (\DA\mu)(a,b) + \omega(\rho a, \rho b)
 &=
 (\DA\mu)(a,b) + \iota_{\rho b} \iota_{\rho a} \omega  \\
 &=
 (\DA\mu)(a,b) + \iota_{\rho b} \langle D \mu, a \rangle \\
 &= 
 (\DA\mu)(a,b)
 + \iota_{\rho b} (d\langle \mu, a \rangle 
    - \langle \mu, D a \rangle)  \\
 &=  
 \rho a \cdot \langle \mu, b\rangle
 - \langle \mu, [a,b] \rangle
 - \langle \mu, D_{\rho b} a \rangle \,,
\end{split}
\end{equation}
where we have used the defining relation~\eqref{eq:MomentumMapDef2} of a momentum section and definition~\eqref{eq:defDA} of the Lie algebroid differential. For an action Lie algebroid with the trivial connection we thus retrieve the usual notion of equivariance:

\begin{Proposition}
A momentum map $M\to \frakg^*$ for the action of a Lie algebra $\frakg$ on a  presymplectic manifold $M$ is equivariant if and only if it is bracket-compatible in the sense of Definition \ref{def:MomentumEquiv} when considered as a section of the dual of the action Lie algebroid $\frakg\times M$, with the trivial connection.
\end{Proposition}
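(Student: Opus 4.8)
The plan is to unwind both sides of Eq.~\eqref{eq:MomEqui} in the special case $A = \frakg\times M$ with the trivial connection $D$, and to match the resulting expression to the classical equivariance condition. First I would recall that for the action Lie algebroid the sections of $A$ are smooth functions $M\to\frakg$, and a section of $A^*$ is a smooth function $M\to\frakg^*$, i.e.\ exactly a momentum map $\mu$. For the trivial connection $D$, horizontal sections are the constant ones; thus it suffices, by $C^\infty(M)$-bilinearity of both sides of \eqref{eq:MomEqui} (the right-hand side $-\omega(\rho a,\rho b)$ is manifestly $C^\infty(M)$-bilinear, and $\DA\mu$ is a section of $\wedge^2 A^*$ hence also $C^\infty(M)$-bilinear), to check the identity on constant sections $a,b\in\frakg$.

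Next I would compute $(\DA\mu)(a,b)$ for constant $a,b$. By the defining formula \eqref{eq:defDA},
\begin{equation*}
 (\DA\mu)(a,b) = \rho a\cdot\langle\mu,b\rangle - \rho b\cdot\langle\mu,a\rangle - \langle\mu,[a,b]\rangle\,,
\end{equation*}
where $[a,b]$ is the constant section given by the Lie bracket in $\frakg$. Since $\mu$ is a momentum map, $d\langle\mu,c\rangle = \iota_{\rho c}\omega$ for every constant $c\in\frakg$, so $\rho a\cdot\langle\mu,b\rangle = \iota_{\rho a}\,d\langle\mu,b\rangle = \iota_{\rho a}\iota_{\rho b}\omega = \omega(\rho b,\rho a)$ and likewise $\rho b\cdot\langle\mu,a\rangle = \omega(\rho a,\rho b)$. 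The first two terms therefore contribute $\omega(\rho b,\rho a) - \omega(\rho a,\rho b) = -2\,\omega(\rho a,\rho b)$, and we obtain
\begin{equation*}
 (\DA\mu)(a,b) + \omega(\rho a,\rho b) = -\omega(\rho a,\rho b) - \langle\mu,[a,b]\rangle\,.
\end{equation*}
Hence bracket-compatibility \eqref{eq:MomEqui} holds (for constant, hence for all, sections) if and only if $\langle\mu,[a,b]\rangle = -\omega(\rho a,\rho b)$ for all $a,b\in\frakg$, i.e.\ $\langle\mu,[a,b]\rangle = -\langle d\langle\mu,a\rangle,\rho b\rangle = -\rho b\cdot\langle\mu,a\rangle$.

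Finally I would identify this last identity with infinitesimal coadjoint equivariance of $\mu:M\to\frakg^*$. Equivariance means that for every $a\in\frakg$ the Lie derivative of $\mu$ along $\rho(a)$ equals the action of $a$ via the infinitesimal coadjoint representation, which when paired with $b\in\frakg$ reads $\rho(a)\cdot\langle\mu,b\rangle = \langle\mu,-[a,b]\rangle$ (the sign coming from the coadjoint action being dual to $-\ad$, consistent with the sign conventions fixed in Sec.~\ref{sec:conventions} and anticipated in the remark before Prop.~\ref{def:MomentumEquiv}). Comparing with the displayed identity $\langle\mu,[a,b]\rangle = -\rho b\cdot\langle\mu,a\rangle$ after relabeling $a\leftrightarrow b$ and using antisymmetry of $[\,\cdot\,,\,\cdot\,]$, the two conditions coincide. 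The only point requiring care is bookkeeping of signs — matching the Lie algebroid sign conventions, the homo- versus antihomomorphism conventions for $\rho$, and the coadjoint sign — so that is where I would be most careful; everything else is a direct substitution using \eqref{eq:defDA} and the defining property of a momentum map.
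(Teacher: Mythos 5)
Your computation is correct and follows essentially the same route as the paper's proof: restrict to constant sections (the paper identifies these as the $D$-horizontal sections for the trivial connection and inserts them into Eq.~\eqref{eq:MomEquiv2}; your reduction via $C^\infty(M)$-bilinearity of both sides of \eqref{eq:MomEqui} is a valid, slightly more explicit justification of the same step), use $d\langle\mu,c\rangle=\iota_{\rho c}\omega$ to evaluate the first two terms of \eqref{eq:defDA}, and arrive at the condition $\langle\mu,[a,b]\rangle=-\omega(\rho a,\rho b)=\rho a\cdot\langle\mu,b\rangle$.

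There is, however, a sign slip in your final identification. The condition you actually derive (after your relabeling and antisymmetry) is $\rho a\cdot\langle\mu,b\rangle=\langle\mu,[a,b]\rangle$, but you state infinitesimal equivariance as $\rho(a)\cdot\langle\mu,b\rangle=\langle\mu,-[a,b]\rangle$ and then assert that the two coincide; as written they differ by a sign. With the paper's conventions --- $\rho:\frakg\to\calX(M)$ a Lie algebra \emph{homo}morphism, and equivariance taken with respect to the action on $\frakg^*$ given by $\langle\mathrm{ad}_a^*\mu,b\rangle=\langle\mu,[a,b]\rangle$, exactly as in the paper's own proof citing Ortega--Ratiu --- the correct infinitesimal equivariance condition is $\rho a\cdot\langle\mu,b\rangle=\langle\mu,[a,b]\rangle$, i.e.\ precisely what your computation produces, with no extra minus sign. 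So the derivation reaches the right target; only your statement of that target carries a spurious minus sign, which makes the closing claim that ``the two conditions coincide'' false for the conditions as you wrote them. Fixing the sign in your statement of equivariance completes the proof.
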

\begin{proof}
Let $\mu: M \to \frakg^*$ be a momentum map. Elements of $\frakg$ can be identified with the sections of the action Lie algebroid $A = \frakg \ltimes M$ that are horizontal with respect to the trivial connection $D$. Inserting $a,b \in \frakg$ into \eqref{eq:MomEquiv2}, we obtain
\begin{equation*}
 (\DA\mu)(a,b) + \omega(\rho a,\rho b) = 
  \rho a \cdot \langle \mu, b \rangle
  - \langle \mu, [a,b] \rangle \,.
\end{equation*}
The left hand side vanishes iff $\mu$ is bracket compatible in the sense of Definition \ref{def:MomentumEquiv}. The right hand side vanishes iff $\mu$ is equivariant with respect to the $\frakg$-action $\rho$ on $M$ and the adjoint action $\langle\mathrm{ad}_a^* \mu, b \rangle = \langle\mu, [a,b] \rangle$ on $\frakg^*$ (see e.g.~Sec.~4.5.18 in \cite{OrtegaRatiu:Momentum}).
\end{proof}

Since a Lie algebra action with an equivariant momentum map is called hamiltonian, we will extend the terminology to Lie algebroids with the following definition.

\begin{Definition}
A Lie algebroid $A$ over $M$ with a presymplectically anchored connection $D$ and a bracket-compatible $D$-momentum section is called \textbf{hamiltonian}. It is \textbf{locally hamiltonian} if every point in $M$ has a neighborhood on which the restriction of $A$ is hamiltonian, possibly with different connections on different open subsets.
\end{Definition}

\begin{Remark}
It is well known that a momentum map $\mu$ for an action of $\frakg$ on a symplectic manifold $M$ is equivariant if and only if it is a Poisson map.  Less well known, but also true, is that an equivariant momentum map for a presymplectic action is a forward Dirac map \cite{brahic2014integrability}.  It is not hard to verify that the converse is  true as well.  In fact, a forward Dirac map from a presymplectic manifold to a Poisson manifold is characterized by the property that the pullback of any function is hamiltonian, and that the Poisson bracket of two pullbacks is the pullback of their Poisson brackets.

The following propositions and the torsion formulation in Section \ref{subsec:torsion} provide analogous results for general Lie algebroids.
\end{Remark}

\begin{Proposition}
\label{prop:muHomFlat}
Let $A$ be a Lie algebroid over a presymplectic manifold $(M,\omega)$ and $D$ a flat connection on $A$. A $D$-momentum section $\mu$ is bracket-compatible if and only if, for  all horizontal local sections $a, b \in \Gamma(U,A)$ defined on open subsets $U\subseteq M$, we have
\begin{equation*}
 \langle \mu, [a,b] \rangle
 = - \{ \langle\mu, a\rangle, \langle\mu, b\rangle \} \,,
\end{equation*}
where the bracket is the Poisson bracket of hamiltonian functions (see~Section \ref{sec:conventions}).
\end{Proposition}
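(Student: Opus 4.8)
The plan is to use the identity already worked out in equation~\eqref{eq:MomEquiv2}, which expresses, for a $D$-momentum section $\mu$, the defect
\begin{equation*}
 (\DA\mu)(a,b) + \omega(\rho a,\rho b)
 = \rho a \cdot \langle \mu, b\rangle - \langle \mu, [a,b]\rangle - \langle \mu, D_{\rho b} a\rangle \,.
\end{equation*}
The point of specializing to a \emph{flat} connection $D$ and to \emph{horizontal} local sections $a,b$ is that then $Da = 0 = Db$, so in particular $D_{\rho b} a = 0$ and $D_{\rho a} b = 0$, and the last term drops out. So bracket-compatibility of $\mu$, i.e.~the vanishing of the left-hand side for all sections, is equivalent (since horizontal sections locally span $A$ over $C^\infty$, by flatness of $D$, and both sides of \eqref{eq:MomEquiv2} are tensorial in the appropriate sense once we know $\mu$ is a momentum section) to the vanishing of $\rho a \cdot \langle\mu,b\rangle - \langle\mu,[a,b]\rangle$ for all horizontal $a,b$ on all open sets.

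The second ingredient is to recognize the remaining expression as a Poisson bracket. Since $\mu$ is a $D$-momentum section and $a$ is horizontal, equation~\eqref{eq:MomentumMapDef2} gives $d\langle\mu,a\rangle = \iota_{\rho a}\omega$, so $(\langle\mu,a\rangle, \rho a)$ is a hamiltonian pair, and likewise $(\langle\mu,b\rangle, \rho b)$; here I am using that $D\gamma=0$ on the locally weakly hamiltonian piece, which is part of being a momentum section together with the earlier discussion — actually \eqref{eq:MomentumMapDef2} alone suffices to exhibit the hamiltonian pair. Hence by the definition of the Poisson bracket of hamiltonian functions in Sec.~\ref{sec:conventions},
\begin{equation*}
 \{\langle\mu,a\rangle, \langle\mu,b\rangle\} = \omega(\rho a, \rho b) = \iota_{\rho a}\iota_{\rho b}\,\omega^{\mathrm{op}}
\end{equation*}
— I will be careful with the sign convention — and $\omega(\rho a,\rho b) = \iota_{\rho b}\iota_{\rho a}\omega \cdot(-1) = -\rho a\cdot\langle\mu,b\rangle$ is not quite it; rather $\rho a \cdot \langle\mu,b\rangle = \iota_{\rho a}d\langle\mu,b\rangle = \iota_{\rho a}\iota_{\rho b}\omega = \omega(\rho b,\rho a) = -\{\langle\mu,a\rangle,\langle\mu,b\rangle\}$. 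So $\rho a\cdot\langle\mu,b\rangle - \langle\mu,[a,b]\rangle = -\{\langle\mu,a\rangle,\langle\mu,b\rangle\} - \langle\mu,[a,b]\rangle$, and setting this to zero is exactly the claimed identity $\langle\mu,[a,b]\rangle = -\{\langle\mu,a\rangle,\langle\mu,b\rangle\}$.

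Putting it together: the forward direction is immediate by plugging horizontal $a,b$ into \eqref{eq:MomEquiv2} and using the Poisson-bracket rewriting. For the converse, I need to check that if the stated identity holds for all horizontal local $a,b$, then \eqref{eq:MomEqui} holds for \emph{all} sections, not just horizontal ones; this follows because both $\DA\mu$ and $(a,b)\mapsto\omega(\rho a,\rho b)$ are $C^\infty(M)$-bilinear (they are tensors — $\DA\mu \in \Gamma(\wedge^2 A^*)$, and the right side is $\rho^*\omega$), so an identity between them that holds on a local frame of horizontal sections, which exists near every point by flatness of $D$, holds identically. The main obstacle is bookkeeping the signs correctly — in particular keeping straight the antihomomorphism-versus-homomorphism convention flagged in Sec.~\ref{sec:conventions} and the placement of the minus sign in the Poisson bracket — and making the "tensoriality lets us reduce to horizontal frames" argument airtight, since $\langle\mu,[a,b]\rangle$ and $\rho a\cdot\langle\mu,b\rangle$ are individually \emph{not} tensorial, only their combination is.
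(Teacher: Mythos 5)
Your proof is correct and follows essentially the same route as the paper: specialize Eq.~\eqref{eq:MomEquiv2} to horizontal sections so the $\langle\mu,D_{\rho b}a\rangle$ term drops, and rewrite $\rho a\cdot\langle\mu,b\rangle=\omega(\rho b,\rho a)=-\{\langle\mu,a\rangle,\langle\mu,b\rangle\}$ using the hamiltonian pairs supplied by \eqref{eq:MomentumMapDef2}. Your additional remark that the equivalence with bracket-compatibility for \emph{all} sections follows from tensoriality of $\DA\mu+\rho^*\omega$ together with the existence of local horizontal frames (by flatness) fills in a step the paper leaves implicit.
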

\begin{proof}
Since $a$ and $b$ are horizontal, $(\langle\mu, a \rangle, \rho a)$ and $(\langle\mu, b \rangle, \rho b)$ are hamiltonian pairs, so that $\rho a \cdot \langle\mu, b\rangle = \iota_{\rho a} d\langle\mu, b\rangle = \omega( \rho b, \rho a) = - \{ \langle\mu, a \rangle, \langle\mu, b \rangle\}$. The proposition now follows from Eq.~\eqref{eq:MomEquiv2}. 
\end{proof}

Proposition \ref{prop:muHomFlat} shows that the definition of bracket-compatible $D$-momentum sections looks like the usual definition when we evaluate it on horizontal local sections of the Lie algebroid. When the connection has nonzero curvature, however, there might not be any horizontal local sections other than the zero section. For every point $m \in M$, though, there is still a basis of local sections that are horizontal \emph{at} $m$. We can then express the defining relations of hamiltonian Lie algebroids by relations that hold at every point for all sections that are horizontal at that point.

\begin{Proposition}
\label{prop:PointSymp}
A Lie algebroid $A$ over a presymplectic manifold $(M,\omega)$ is presymplectically anchored with respect to a connection $D$ if and only if
\begin{equation}
\label{eq:pointHam}
\Lie_{\rho a} \omega \,\bigr|_m = 0
\end{equation}
for all $m \in M$ and all sections $a$ of $A$ that are $D$-horizontal at $m$.
\end{Proposition}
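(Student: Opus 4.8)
The statement is local and pointwise, so I would work in a neighborhood of a fixed point $m$. The core computational fact is the pairing identity \eqref{eq:Dsymp2}, namely $d\iota_{\rho a}\omega = -\langle \gamma \wedge Da\rangle$, which holds whenever $D\gamma = 0$; conversely, if this identity holds for enough sections one can recover $D\gamma = 0$. The plan is to translate the condition ``$a$ is $D$-horizontal at $m$'' into ``$(Da)|_m = 0$'' and then evaluate $d\iota_{\rho a}\omega = \Lie_{\rho a}\omega$ (using Cartan's formula and $d\omega = 0$) at the point $m$.

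For the forward direction, suppose $D\gamma = 0$. Given $m$ and a section $a$ with $(Da)|_m = 0$, pair $D\gamma = 0$ with $a$ to get \eqref{eq:Dsymp2}; in a local frame $\{a_i\}$ the right-hand side is $\gamma_i \wedge (Da)^i$, and since each component $(Da)^i$ vanishes at $m$, the right-hand side vanishes at $m$. Hence $(\Lie_{\rho a}\omega)|_m = (d\iota_{\rho a}\omega)|_m = 0$, which is \eqref{eq:pointHam}. (One should note that every point admits sections $D$-horizontal at it — e.g. extend a basis of the fiber using parallel transport along radial geodesics of an auxiliary metric, or simply take a local frame and correct by functions vanishing to first order — so the hypothesis is non-vacuous, though this isn't strictly needed for the implication.)

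For the converse, suppose \eqref{eq:pointHam} holds for all $m$ and all sections horizontal at $m$. Fix $m$ and a local frame $\{a_i\}$ of $A$ near $m$. I would like to show $(D\gamma)|_m = 0$. The subtlety is that the frame sections $a_i$ need not be horizontal at $m$, so I would replace them: for each $i$, choose a section $\tilde a_i$ that agrees with $a_i$ at $m$ and is $D$-horizontal at $m$ (so $\tilde a_i - a_i$ vanishes at $m$ and $(D\tilde a_i)|_m = 0$). Applying \eqref{eq:pointHam} to $\tilde a_i$ gives $(d\iota_{\rho \tilde a_i}\omega)|_m = 0$. Now expand $(D\gamma)|_m$ by pairing with the $\tilde a_i$: since $\gamma$ is $C^\infty(M)$-linear in its $A^*$ slot, $\langle D\gamma, \tilde a_i\rangle = d\langle\gamma,\tilde a_i\rangle - \langle\gamma, D\tilde a_i\rangle = d\iota_{\rho\tilde a_i}\omega - \langle\gamma, D\tilde a_i\rangle$, and at $m$ the second term vanishes because $(D\tilde a_i)|_m = 0$ while the first vanishes by \eqref{eq:pointHam}. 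Since the $\tilde a_i$ span $A_m$, this forces $(D\gamma)|_m = 0$; as $m$ was arbitrary, $D\gamma = 0$.

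\textbf{Main obstacle.} The only genuinely delicate point is handling the distinction between sections horizontal at $m$ and a general local frame in the converse direction — i.e. confirming that replacing $a_i$ by $\tilde a_i$ is legitimate and that the pairing $\langle D\gamma, -\rangle$ really is evaluated correctly at $m$, exploiting that $D\gamma$ is tensorial (a genuine element of $\Omega^2(M,A^*)$) so its value at $m$ depends only on the values of the test section at $m$. Once that tensoriality is invoked, the rest is the same two-line Cartan-formula computation as in \eqref{eq:Dsymp2}. I do not expect any analytic difficulty; everything is algebraic plus the elementary existence of sections with prescribed value and vanishing covariant derivative at a point.
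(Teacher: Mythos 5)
Your proof is correct and follows essentially the same route as the paper's: the forward direction is the evaluation of the paired identity $d\iota_{\rho a}\omega = -\langle\gamma\wedge Da\rangle$ at a point where $Da$ vanishes, and the converse uses a frame horizontal at $m$ together with the pointwise (tensorial) nature of $D\gamma$, just as the paper does by expanding an arbitrary section $a = f^i a_i$ in such a frame. The only quibble is a sign in your duality formula — with the paper's graded convention one has $\langle D\gamma, a\rangle = d\langle\gamma,a\rangle + \langle\gamma\wedge Da\rangle$ rather than with a minus — but since both terms vanish at $m$ this does not affect the argument.
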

\begin{proof}
Assume that $A$ is presymplectically anchored with respect to $D$. Then Eq.~\eqref{eq:Dsymp2} implies Eq.~\eqref{eq:pointHam} when $a$ is horizontal at $m$. Conversely, assume that Eq.~\eqref{eq:pointHam} holds. For every point $m \in M$ there is a local frame $\{a_i\}$ such that every $a_i$ is horizontal at $m$. Let $a = f^i a_i$ be an arbitrary local section. Then
\begin{equation*}
\begin{split}
 d \iota_{\rho a} \omega \,\bigr|_m
 &= df^i \wedge \gamma a_i\,\bigr|_m + f^i d \iota_{\rho a_i} \omega \,\bigr|_m \\
 &= df^i \wedge \gamma a_i \,\bigr|_m - f^i \langle \gamma \wedge Da_i \rangle \,\bigr|_m \\
 &= - \langle \gamma \wedge Da \rangle \,\bigr|_m \,,
\end{split}
\end{equation*}
which is Eq.~\eqref{eq:Dsymp2} at an arbitrary point $m$.
\end{proof}

\begin{Proposition}
\label{prop:PointHam}
 
Let $A$ be a Lie algebroid over a presymplectic manifold $(M,\omega)$. Then $\mu \in \Gamma(M,A^*)$ is a $D$-momentum section if and only if
\begin{equation*}
  d \langle \mu, a \rangle \bigr|_m
  = \iota_{\rho a} \omega \,\bigr|_m \,, 
\end{equation*}
for all $m \in M$ and all $a \in \Gamma(M,A)$ that are horizontal at $m$. The momentum section is bracket-compatible if and only if
\begin{equation*}
 \langle\mu, [a,b] \rangle \bigr|_m
 = \rho a \cdot \langle \mu, b \rangle \,\bigr|_m 
\end{equation*}
for all $m \in M$ and all sections $a$, $b$ of $A$ that are horizontal at $m$.
\end{Proposition}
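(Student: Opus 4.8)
The plan is to follow the template of the proof of Proposition~\ref{prop:PointSymp}, using the two identities already in hand: Eq.~\eqref{eq:MomentumMapDef2}, which unwinds the defining relation $D\mu = \gamma$ of a momentum section section by section, and Eq.~\eqref{eq:MomEquiv2}, which rewrites $(\DA\mu)(a,b) + \omega(\rho a,\rho b)$ in a form in which the only remaining connection term is $\langle\mu, D_{\rho b}a\rangle$. Throughout I will use that every point $m$ has a local frame of $A$ whose members are all $D$-horizontal at $m$.

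For the momentum-section equivalence, the ``only if'' direction is immediate: if $a$ is horizontal at $m$ then $Da|_m = 0$, so the middle term of Eq.~\eqref{eq:MomentumMapDef2} vanishes at $m$ and the asserted identity is exactly what remains. For the converse I would fix $m$, choose a frame $\{a_i\}$ horizontal at $m$, write an arbitrary local section as $a = f^i a_i$, and expand $d\langle\mu,a\rangle - \langle\mu, Da\rangle$ at $m$ by the Leibniz rule, just as in Proposition~\ref{prop:PointSymp}. The $df^i$-terms cancel, leaving $f^i(m)\, d\langle\mu,a_i\rangle\bigr|_m = f^i(m)\,\iota_{\rho a_i}\omega\bigr|_m = \iota_{\rho a}\omega\bigr|_m$, which is Eq.~\eqref{eq:MomentumMapDef2} evaluated at $m$; since $m$ is arbitrary, $D\mu = \gamma$.

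For the bracket-compatibility equivalence, the ``only if'' direction again comes from Eq.~\eqref{eq:MomEquiv2}: when $a$ is horizontal at $m$ one has $D_{\rho b}a\bigr|_m = \iota_{\rho b}(Da)\bigr|_m = 0$, so at $m$ the identity collapses to $(\DA\mu)(a,b) + \omega(\rho a,\rho b) = \rho a\cdot\langle\mu,b\rangle - \langle\mu,[a,b]\rangle$, and the left-hand side vanishes by the bracket-compatibility hypothesis~\eqref{eq:MomEqui}. The converse needs one additional observation: the expression $(\DA\mu)(a,b) + \omega(\rho a,\rho b)$ is $C^\infty(M)$-bilinear in $(a,b)$ — because $\DA\mu$ is a genuine Lie algebroid $2$-form in $\Gamma(M,\wedge^2 A^*)$ and $(a,b)\mapsto\omega(\rho a,\rho b)$ is tensorial — so its value at a point depends only on the values of $a$ and $b$ there. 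Fixing $m$ and a frame $\{a_i\}$ horizontal at $m$, Eq.~\eqref{eq:MomEquiv2} evaluated on $a_i,a_j$ shows (the $D_{\rho a_j}a_i$ term dying at $m$, the remainder being the hypothesis) that this quantity vanishes at $m$ on frame vectors; by bilinearity it then vanishes at $m$ for all sections, and since $m$ is arbitrary, Eq.~\eqref{eq:MomEqui} holds.

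The computations are all routine Leibniz-rule bookkeeping, so I do not expect a serious obstacle; the one step worth stating carefully is in the converse of the bracket-compatibility part, where the passage from frame sections horizontal at $m$ to arbitrary sections must go through the $C^\infty(M)$-bilinearity of $(\DA\mu)(a,b) + \omega(\rho a,\rho b)$, since neither $\langle\mu,[a,b]\rangle$ nor $\rho a\cdot\langle\mu,b\rangle$ is by itself tensorial.
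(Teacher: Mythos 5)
Your proposal is correct and matches the paper's proof, which simply says the argument is analogous to that of Prop.~\ref{prop:PointSymp}, using Eq.~\eqref{eq:MomentumMapDef2} for the first equivalence and Eq.~\eqref{eq:MomEquiv2} for the second; you have filled in exactly those details (frames horizontal at $m$, Leibniz-rule cancellation, and tensoriality of $(\DA\mu)(a,b)+\omega(\rho a,\rho b)$ for the converse). No gaps.
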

\begin{proof}
The proof is analogous to the proof Proposition \ref{prop:PointSymp}. For the first part we use Eq.~\eqref{eq:MomentumMapDef2}, for the second part Eq.~\eqref{eq:MomEquiv2}.
\end{proof}

\section{The zero momentum locus of a hamiltonian Lie algebroid}
\label{sec:ZeroLocus}

One of the most important constructions involving a hamiltonian action of a Lie group $G$ or its Lie algebra $\frakg$ on a symplectic manifold $(M,\omega)$ is symplectic reduction, which proceeds in two steps. In the first step, the symplectic form is pulled back to the zero locus $Z \subset M$ of the momentum map. (If $Z$ is not a smooth embedded submanifold, we can instead consider the clean zero locus $Z_\mathrm{cl}$, i.e.~the smooth points of the zero locus at which a vector is tangent to the submanifold if and only if it annihilates the differential of the momentum map.) For the second step, as was observed in \cite{MarsdenWeinstein:1974}, $Z_\mathrm{cl}$ is a coisotropic submanifold, and its characteristic distribution is spanned by the fundamental vector fields of the action. It follows that,  if the leaf space $Z_\mathrm{cl}/\frakg$ of the induced $\frakg$-action is a smooth manifold, then the presymplectic form on $Z_\mathrm{cl}$ descends to a symplectic form on $Z_\mathrm{cl}/\frakg$, which is then called the symplectic reduction of $M$. In this section, we will show how this procedure generalizes to hamiltonian Lie algebroids.

\subsection{Bracket-compatibility in terms of torsion}
\label{subsec:torsion}

First, we will give an equivalent characterization of the bracket-compatibility of a $D$-momentum section in terms of the torsion defined in Eq.~\eqref{eq:DefTorsion2}. To motivate our characterization, we note that, when $D$ is the trivial connection on an action Lie algebroid $\frakg \ltimes M$, the torsion is given on constant sections $a,b \in \frakg$ by $T(a,b) = -[a,b]$.  (Note that it is nonzero unless $\frakg$ is abelian.)  Thus, the Lie algebroid torsion can be viewed as a substitute, in the case of any Lie algebroid with connection, for the (negative of the) fibrewise Lie algebra bracket of an action Lie algebroid, as well as the fibrewise Lie algebra bracket on the kernel of the anchor.  In this general case, there may be no local horizontal sections, but every element of $A$ is still the value of a section which is horizontal {\em at its basepoint} $m$.  For any two such sections, we have $T(a(m),b(m))=-[a,b](m)$.

\begin{Proposition}
\label{prop:TorsionSymp}
A $D$-momentum section $\mu$ is bracket-compatible if and only if
\begin{equation}
\label{eq:TorsionSymp}
  \langle \mu, T(a,b) \rangle = \omega(\rho a, \rho b)
\end{equation}
for all sections $a$ and $b$ of $A$.
\end{Proposition}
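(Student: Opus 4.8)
The plan is to unwind the definition of the torsion $T$ from Eq.~\eqref{eq:DefTorsion2} and compare it directly with the formula for $\DA\mu$ from Eq.~\eqref{eq:defDA}, using the defining relation of a $D$-momentum section. Concretely, since $T(a,b) = D_{\rho a} b - D_{\rho b} a - [a,b]$, I would compute
\begin{equation*}
 \langle \mu, T(a,b) \rangle
 = \langle \mu, D_{\rho a} b \rangle
 - \langle \mu, D_{\rho b} a \rangle
 - \langle \mu, [a,b] \rangle \,.
\end{equation*}
The idea is to eliminate the two covariant-derivative terms in favor of Lie derivatives of the pairings $\langle \mu, a\rangle$ and $\langle \mu, b\rangle$ by invoking the momentum section condition in the form~\eqref{eq:MomentumMapDef2}, namely $\langle \mu, D_v a\rangle = v\cdot\langle\mu,a\rangle - \iota_v\iota_{\rho a}\omega$ (obtained by contracting $D\mu=\gamma$ with $a$ and then with the vector field $v$).

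First I would substitute $v=\rho a$ in the term $\langle\mu, D_{\rho a} b\rangle$ and $v = \rho b$ in $\langle \mu, D_{\rho b} a\rangle$, yielding
\begin{equation*}
 \langle\mu, D_{\rho a} b\rangle = \rho a\cdot\langle\mu,b\rangle - \omega(\rho a,\rho b),
 \qquad
 \langle\mu, D_{\rho b} a\rangle = \rho b\cdot\langle\mu,a\rangle - \omega(\rho b,\rho a).
\end{equation*}
Plugging these into the expansion of $\langle\mu,T(a,b)\rangle$ and using antisymmetry of $\omega$, the two $\omega$-terms add up to $-2\,\omega(\rho a,\rho b)$ (wait—one enters with a plus, one with a minus overall sign; I need to track signs carefully), leaving
\begin{equation*}
 \langle\mu,T(a,b)\rangle
 = \bigl(\rho a\cdot\langle\mu,b\rangle - \rho b\cdot\langle\mu,a\rangle - \langle\mu,[a,b]\rangle\bigr)
 - 2\,\omega(\rho a,\rho b)
 = (\DA\mu)(a,b) - 2\,\omega(\rho a,\rho b),
\end{equation*}
using the defining formula~\eqref{eq:defDA} for $\DA\mu$. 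Therefore $\langle\mu,T(a,b)\rangle = \omega(\rho a,\rho b)$ holds if and only if $(\DA\mu)(a,b) = 3\,\omega(\rho a,\rho b)$, which is not quite~\eqref{eq:MomEqui}; this mismatch signals that the $\omega$-terms must actually cancel in pairs differently, so the real bookkeeping point is that only \emph{one} net factor of $\omega(\rho a,\rho b)$ survives. Getting that sign/coefficient accounting exactly right, and in particular confirming that the surviving terms reorganize precisely into $(\DA\mu)(a,b) + \omega(\rho a,\rho b)$ — matching the vanishing condition~\eqref{eq:MomEquiv2} already derived in the excerpt — is the one genuinely delicate step.

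For that reason, I would structure the proof to route through Eq.~\eqref{eq:MomEquiv2} rather than starting from scratch: the excerpt has already shown that bracket-compatibility is equivalent to the vanishing of $\rho a\cdot\langle\mu,b\rangle - \langle\mu,[a,b]\rangle - \langle\mu,D_{\rho b}a\rangle$. So it suffices to show $\langle\mu,T(a,b)\rangle - \omega(\rho a,\rho b)$ equals (minus) that same expression. Expanding $\langle\mu,T(a,b)\rangle$ and substituting only the $v=\rho a$ instance of the momentum-section identity into $\langle\mu,D_{\rho a}b\rangle = \rho a\cdot\langle\mu,b\rangle - \omega(\rho a,\rho b)$, one gets
\begin{equation*}
 \langle\mu,T(a,b)\rangle - \omega(\rho a,\rho b)
 = \rho a\cdot\langle\mu,b\rangle - \omega(\rho a,\rho b) - \langle\mu,D_{\rho b}a\rangle - \langle\mu,[a,b]\rangle - \omega(\rho a,\rho b),
\end{equation*}
and since $-\omega(\rho a,\rho b)-\omega(\rho a,\rho b) = -2\omega(\rho a,\rho b)$ does not vanish, the cleanest route is in fact \emph{not} to substitute both, but to observe that $-\omega(\rho a,\rho b) = \iota_{\rho b}\iota_{\rho a}\omega = \iota_{\rho b}\langle D\mu,a\rangle = \iota_{\rho b}(d\langle\mu,a\rangle - \langle\mu,Da\rangle)$, exactly as in~\eqref{eq:MomEquiv2}, so that $-\omega(\rho a,\rho b) - \langle\mu,D_{\rho b}a\rangle = \rho b\cdot\langle\mu,a\rangle - 2\langle\mu,D_{\rho b}a\rangle$ — hmm, this is circular. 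The honest summary: the main obstacle is purely the sign/coefficient bookkeeping tying together the three identities \eqref{eq:defDA}, \eqref{eq:MomentumMapDef2}, and \eqref{eq:DefTorsion2}; once one commits to expanding $\langle\mu,T(a,b)\rangle = \langle\mu,D_{\rho a}b\rangle - \langle\mu,D_{\rho b}a\rangle - \langle\mu,[a,b]\rangle$ and feeds each covariant-derivative term through \eqref{eq:MomentumMapDef2}, the Lie-derivative terms assemble into $(\DA\mu)(a,b)$, the $\omega$-terms collapse to a single $-\omega(\rho a,\rho b)$ (because $D_{\rho a}b$ contributes $-\omega(\rho a,\rho b)$ and $-D_{\rho b}a$ contributes $+\omega(\rho b,\rho a) = -\omega(\rho a,\rho b)$, but one of them is already accounted for in rewriting $[a,b]$'s contribution — which is the subtle point), and the equivalence with~\eqref{eq:MomEqui} drops out. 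I would present it as a single clean display of equalities, carefully signed, concluding $\langle\mu,T(a,b)\rangle = \omega(\rho a,\rho b) \iff (\DA\mu)(a,b) = -\omega(\rho a,\rho b)$.
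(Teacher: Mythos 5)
Your overall strategy---expand $\langle\mu,T(a,b)\rangle$ via Eq.~\eqref{eq:DefTorsion2}, feed the covariant-derivative terms through the momentum-section identity~\eqref{eq:MomentumMapDef2}, and reassemble into $(\DA\mu)(a,b)$ plus a multiple of $\omega(\rho a,\rho b)$---is essentially the paper's own strategy, and it does work. The gap is that you never complete it: your first pass terminates in the false identity $(\DA\mu)(a,b)=3\,\omega(\rho a,\rho b)$, you correctly flag this as wrong, and the rest of the proposal is an unresolved hunt for the missing sign, ending in an admitted circularity and a hand-wave about the discrepancy being ``accounted for in rewriting $[a,b]$'s contribution'' (it is not). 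Since the entire content of this proposition is that bookkeeping, the proposal as written is not a proof.

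The error is localized in your two substitutions. Contracting $D\mu=\gamma$ with a section $c$ gives $\langle D\mu,c\rangle=\iota_{\rho c}\,\omega$, hence for any vector field $v$
\begin{equation*}
 \langle\mu,D_v c\rangle \;=\; v\cdot\langle\mu,c\rangle-\iota_v\iota_{\rho c}\,\omega
 \;=\; v\cdot\langle\mu,c\rangle-\omega(\rho c,\,v)\,,
\end{equation*}
with $\rho c$ in the \emph{first} slot of $\omega$. Applying this with $c=b$, $v=\rho a$ and with $c=a$, $v=\rho b$ yields
\begin{equation*}
 \langle\mu,D_{\rho a}b\rangle=\rho a\cdot\langle\mu,b\rangle+\omega(\rho a,\rho b)\,,
 \qquad
 \langle\mu,D_{\rho b}a\rangle=\rho b\cdot\langle\mu,a\rangle-\omega(\rho a,\rho b)\,;
\end{equation*}
you wrote both $\omega$-terms with the opposite sign, i.e.\ you evaluated $\iota_v\iota_{\rho c}\omega$ as $\omega(v,\rho c)$. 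With the correct signs the two $\omega$-contributions \emph{add} rather than cancel in $\langle\mu,T(a,b)\rangle=\langle\mu,D_{\rho a}b\rangle-\langle\mu,D_{\rho b}a\rangle-\langle\mu,[a,b]\rangle$, giving
\begin{equation*}
 \langle\mu,T(a,b)\rangle=(\DA\mu)(a,b)+2\,\omega(\rho a,\rho b)\,,
\end{equation*}
which is exactly the identity the paper derives (there written as $(\DA\mu)(a,b)=-2\,\omega(\rho a,\rho b)+\langle\mu,T(a,b)\rangle$, obtained by the equivalent rearrangement that moves the covariant derivative onto $\mu$). Subtracting $\omega(\rho a,\rho b)$ from both sides gives $\langle\mu,T(a,b)\rangle-\omega(\rho a,\rho b)=(\DA\mu)(a,b)+\omega(\rho a,\rho b)$, so the left side vanishes if and only if $\mu$ is bracket-compatible; no hidden cancellation involving the $[a,b]$ term is needed.
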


\begin{proof}
The pullback of $\omega$ by $\rho \otimes \rho$ is given by
\begin{equation*}
\begin{split}
 \omega\bigl(\rho a, \rho b \bigr)
 &=
   \frac{1}{2}( 
   \iota_{\rho b} \iota_{\rho a} \omega
 - \iota_{\rho a} \iota_{\rho b} \omega
  ) \\
 &=
 \frac{1}{2}( 
  \iota_{\rho b} \langle D\mu, a \rangle 
 -\iota_{\rho a} \langle D\mu, b \rangle
 ) \\
 &=
 \frac{1}{2}( 
 \langle D_{\rho b} \mu, a \rangle 
-\langle D_{\rho a} \mu, b \rangle
 ) \,,
\end{split}
\end{equation*}
where we have used that $\mu$ is a momentum section. The Lie algebroid differential of $\mu$ and the torsion are then related as follows:
\begin{equation*}
\begin{split}
 (\DA \mu)(a,b)
 &= 
 \rho a \cdot \mu(b) - \rho b \cdot \mu(a) - \mu([a,b]) \\
 &= 
 \iota_{\rho a} d \langle \mu, b\rangle
-\iota_{\rho b} d \langle \mu, a\rangle - \mu([a,b]) \\
 &= 
 \langle D_{\rho a}\mu, b\rangle
-\langle D_{\rho b}\mu, a\rangle 
+\langle \mu, D_{\rho a}b - D_{\rho b}a - [a,b] \rangle \\
 &=
 -2 \omega(\rho a, \rho b) 
 + \langle \mu, T(a,b) \rangle \,.
\end{split}
\end{equation*}
Adding $\omega(\rho a, \rho b)$ on both sides yields
\begin{equation*}
  \omega(\rho a, \rho b)
  + (\DA \mu)(a,b)
  =
  - \omega(\rho a, \rho b) 
  + \langle \mu, T(a,b) \rangle \,.
\end{equation*}
The left hand side vanishes iff the momentum section is bracket-compatible. The right hand side vanishes iff Eq.~\eqref{eq:TorsionSymp} holds.
\end{proof}

\subsection{The zero locus of a bracket-compatible momentum section}

We arrive now at one of the main goals of our work, establishing that the zero locus of a bracket-compatible momentum section for a presymplectic manifold is coisotropic and invariant, as is well-known to be the case for the zero locus of a momentum map for a hamiltonian Lie algebra action.
We first make a couple of remarks.

Coisotropic submanifolds are usually considered in the context of Poisson manifolds, where they appear as the graphs of Poisson maps and as the source and target images of lagrangian subgroupoids in symplectic groupoids (see \cite{BlohmannWeinstein:PoissonHamLA}). But they may also be defined in presymplectic manifolds as submanifolds whose tangent spaces all contain their presymplectic orthogonals.   Note that these tangent spaces must, in particular, contain the characteristic spaces of the presymplectic form, which are tangent to the characteristic foliation.  This means that, at least locally, the coisotropic submanifolds are the inverse images of coisotropic submanifolds in the (symplectic) leaf space of the characteristic foliation.  In fact, for our eventual application to the constraints in general relativity, it is the image in the symplectic reduced space which is the constraint manifold of interest to us (see \cite{BlohmannWeinstein:GRHamLA}).

The momentum zero locus may be defined in terms of the ideal $I$ of functions on $M$ generated by the ``components'' of $\mu$; more precisely, $I := \{ \langle\mu, b\rangle \in C^\infty(M) ~|~ b \in \Gamma(M,A)\}.$  The zero locus $Z$ is the set of common zeros of the elements of $I$.  

\begin{Proposition}
\label{Zeroisotropic}
In a hamiltonian Lie algebroid, the zero locus $Z$ of the momentum section is invariant in the sense that every orbit which meets  $Z$  is contained in $Z$.  Moreover, each such orbit is an isotropic submanifold.  
\end{Proposition}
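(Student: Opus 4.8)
The plan is to prove the two assertions separately, using the torsion reformulation of bracket-compatibility (Prop.~\ref{prop:TorsionSymp}) as the main computational tool.

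\emph{Invariance.} First I would show that the ideal $I$ generated by the components $\langle\mu,b\rangle$ is closed under the action of $\rho(A)$, i.e.\ that $\rho a\cdot\langle\mu,b\rangle\in I$ for all sections $a,b$. For this I would use Prop.~\ref{prop:PointHam}, or equivalently the identity extracted from Eq.~\eqref{eq:MomEquiv2}: since $\mu$ is bracket-compatible, at any point $m$ and for $a,b$ horizontal at $m$ we have $\rho a\cdot\langle\mu,b\rangle\big|_m=\langle\mu,[a,b]\rangle\big|_m$, which lies in $I$; the general-section case then follows by the usual $C^\infty(M)$-linearity/Leibniz argument (writing arbitrary sections in a frame horizontal at $m$, exactly as in the proof of Prop.~\ref{prop:PointSymp}). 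Once $\rho(A)\cdot I\subseteq I$, the zero locus $Z=\{m: f(m)=0\ \forall f\in I\}$ is invariant under the flows of all vector fields $\rho a$: if a trajectory of $\rho a$ starts in $Z$ then every $f\in I$ vanishes along it, since $\frac{d}{dt}f=(\rho a\cdot f)$ is again in $I$ and hence vanishes at the starting point, and one iterates / uses that $I$ is finitely generated to get an ODE system with zero initial data. Therefore any orbit meeting $Z$ is contained in $Z$.

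\emph{Isotropy of the orbits.} An orbit in $Z$ is, by construction, an integral manifold of the (possibly singular) distribution spanned by $\{\rho a: a\in\Gamma(M,A)\}$, so its tangent space at $m\in Z$ is spanned by vectors of the form $\rho a|_m$. To show it is isotropic I must show $\omega(\rho a,\rho b)|_m=0$ for all $a,b$ and all $m\in Z$. By Prop.~\ref{prop:TorsionSymp}, bracket-compatibility gives $\omega(\rho a,\rho b)=\langle\mu,T(a,b)\rangle$ pointwise, where $T(a,b)=D_{\rho a}b-D_{\rho b}a-[a,b]$ is a section of $A$. Hence $\omega(\rho a,\rho b)|_m=\langle\mu|_m,T(a,b)|_m\rangle$, and since $m\in Z$ means $\mu|_m=0$ (the vanishing of all components $\langle\mu,b\rangle$ at $m$ is exactly the vanishing of the covector $\mu|_m\in A_m^*$), this is zero. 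Thus every tangent vector pair to the orbit is $\omega$-orthogonal, i.e.\ the orbit is isotropic. (If one prefers to argue at the level of the clean zero locus rather than individual orbits, the same computation shows $\rho(A)|_m\subseteq (\rho(A)|_m)^\perp$ at each $m\in Z$.)

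\emph{Expected main obstacle.} The genuinely substantive point is the torsion identity $\omega(\rho a,\rho b)=\langle\mu,T(a,b)\rangle$, but that is already established in Prop.~\ref{prop:TorsionSymp}, so modulo that result the isotropy claim is immediate. The place that needs a little care is the invariance argument: making precise that $\rho(A)\cdot I\subseteq I$ really does force $Z$ to be flow-invariant requires the finite generation of $\Gamma(M,A)$ as a $C^\infty(M)$-module (so that $I$ is finitely generated and the relevant trajectory equation is a finite-dimensional linear ODE with zero initial condition), together with the pointwise version of bracket-compatibility from Prop.~\ref{prop:PointHam}. I expect this bookkeeping — rather than any deep geometry — to be the only friction in the proof.
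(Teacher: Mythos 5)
Your proposal is correct and follows essentially the same route as the paper: the paper's proof also derives the invariance of the ideal $I$ from Eq.~\eqref{eq:MomEquiv2} (in the equivalent global form $\Lie_{\rho a}\langle\mu,b\rangle=\langle\mu,[a,b]+D_{\rho b}a\rangle\in I$, which is what your horizontal-at-a-point-plus-Leibniz argument reproduces), and then obtains isotropy of the orbits from Prop.~\ref{prop:TorsionSymp} exactly as you do. Your extra care about the ODE/flow-invariance step is a reasonable elaboration of what the paper states more briefly.
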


\begin{proof}
In a hamiltonian Lie algebroid, the Lie derivative of a function $\langle\mu, b \rangle$
with respect to $\rho a$ for a section $a$ of $A$ can, by Eq.~\eqref{eq:MomEquiv2}, be expressed as
\begin{equation*}
  \Lie_{\rho a} \langle\mu,b \rangle
  = \langle\mu, [a,b] + D_{\rho b } a \rangle \in I \,.
\end{equation*}
This shows that $I$ is invariant under the Lie derivative of every vector field in the image of the anchor. It follows that $I$ and hence its set $Z$ of common zeros are invariant under the flow of every vector field in the image of the anchor. We conclude that every orbit of $A$ that meets $Z$ is contained in $Z$. By Eq.~\eqref{eq:TorsionSymp}, the image of the anchor at each point of $Z$ is isotropic.
\end{proof}

Since the zero locus of a momentum section is not necessarily a smooth submanifold, for the coisotropic property we will restrict attention to the \textbf{clean zero locus} 
$Z_{\mathrm{cl}}$, which consists of points of smoothness where the tangent space of the zero locus is the entire zero space of the differential of the momentum section.  The clean zero locus can be identified in algebraic terms as the set of points $m$ for which there is a neighborhood $U$ on which $Z$ is a smooth submanifold and on which the defining ideal $I$ is no smaller than the ideal $I_Z\supseteq I$ consisting of {\em all} functions vanishing on $Z$.  

\begin{Remark}
Note that points near which $I_Z = I$ are not necessarily smooth points of $Z$
(e.g.~when $I$ is the ideal generated by $q^2 + p^2$ in the plane). 
\end{Remark}

\begin{Theorem}
\label{thm:CoisoPresymp}
The clean zero locus $Z_{\mathrm{cl}}$ of the momentum section for a hamiltonian Lie algebroid over a  presymplectic manifold is a coisotropic submanifold which is invariant under the Lie algebroid. When the manifold is symplectic, then the characteristic distribution of $Z_{\mathrm{cl}}$ is equal to the image of the anchor.
\end{Theorem}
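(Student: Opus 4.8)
The plan is to identify, at every point $m$ of $Z_{\mathrm{cl}}$, the tangent space $T_mZ_{\mathrm{cl}}$ with the presymplectic orthogonal $\rho(A_m)^\perp$ of the image of the anchor. Once this identification is in hand, coisotropy and invariance follow immediately from the fact — already recorded in Prop.~\ref{Zeroisotropic} — that $\rho(A_m)$ is isotropic at every point of the zero locus, and the statement about the characteristic distribution in the symplectic case follows from $W^{\perp\perp}=W$.

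First I would work at a point $m\in Z_{\mathrm{cl}}$. Since $\mu(m)=0$, the section $\mu$ has a well-defined linearization $d_m\mu\colon T_mM\to A_m^*$ (independent of the connection, characterized by $\langle d_m\mu(v),a(m)\rangle=\bigl(d\langle\mu,a\rangle\bigr)(v)\bigr|_m$ for sections $a$), and by the definition of the clean zero locus $T_mZ_{\mathrm{cl}}=\ker d_m\mu$. The crucial computation is the pointwise identity
\begin{equation*}
 \omega(\rho a,v)\bigr|_m \;=\; \langle d_m\mu(v),\,a(m)\rangle
 \qquad\text{for all } v\in T_mM,\ a\in\Gamma(M,A),\ m\in Z.
\end{equation*}
This is immediate from the momentum-section equation $\gamma=D\mu$ and the defining relation of the dual connection: contracting $\langle D\mu,a\rangle = d\langle\mu,a\rangle-\langle\mu,Da\rangle$ with $v$ and evaluating at $m$ kills the term $\langle\mu,D_va\rangle$ because $\mu(m)=0$, leaving $\iota_v\iota_{\rho a}\omega|_m = \bigl(d\langle\mu,a\rangle\bigr)(v)\bigr|_m = \langle d_m\mu(v),a(m)\rangle$. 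Reading this identity from right to left shows that $v\in\ker d_m\mu$ exactly when $\omega(\rho a,v)|_m=0$ for every $a$, i.e.
\begin{equation*}
 T_mZ_{\mathrm{cl}} \;=\; \rho(A_m)^\perp .
\end{equation*}
Note that this identification uses only that $\mu$ is a $D$-momentum section; it uses neither $D\gamma=0$ nor bracket-compatibility.

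Bracket-compatibility enters only through the isotropy of the anchor image. By Prop.~\ref{prop:TorsionSymp} one has $\langle\mu,T(a,b)\rangle=\omega(\rho a,\rho b)$, so the vanishing of $\mu$ on $Z$ forces $\rho(A_m)$ to be isotropic for every $m\in Z$; this is the last assertion of Prop.~\ref{Zeroisotropic}. Isotropy of $\rho(A_m)$ says precisely that $\rho(A_m)\subseteq\rho(A_m)^\perp=T_mZ_{\mathrm{cl}}$, which is the invariance of $Z_{\mathrm{cl}}$ under the Lie algebroid. Moreover, the presymplectic orthogonal of an isotropic subspace is coisotropic (see the linear-algebra discussion in Sec.~\ref{sec:conventions}); applied to $\rho(A_m)$ this says that $T_mZ_{\mathrm{cl}}=\rho(A_m)^\perp$ is coisotropic, so $Z_{\mathrm{cl}}$ is a coisotropic submanifold.

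When $\omega$ is symplectic, $W^{\perp\perp}=W$ for every subspace $W$, so the characteristic distribution of the coisotropic submanifold $Z_{\mathrm{cl}}$, which at $m$ is the subspace $(T_mZ_{\mathrm{cl}})^\perp\subseteq T_mZ_{\mathrm{cl}}$, equals $\rho(A_m)^{\perp\perp}=\rho(A_m)$; hence it coincides with the image of the anchor. There is no deep obstacle here: the whole argument reduces to the observation that the linearization of $\mu$ at a zero is, up to sign, $\rho^*\circ\tilde{\omega}$. The only points demanding care are making that linearization precise, so that the ``differential of the momentum section'' appearing in the definition of $Z_{\mathrm{cl}}$ is literally the object computed from $\gamma=D\mu$, and keeping the presymplectic orthogonal-complement inclusions straight in the degenerate case, where $\tilde{\omega}$ need not be injective and $Z_{\mathrm{cl}}$ should be treated one smooth piece at a time.
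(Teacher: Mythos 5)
Your identification $T_mZ_{\mathrm{cl}}=\rho(A_m)^\perp$ via the momentum-section equation at a zero of $\mu$, the use of Prop.~\ref{prop:TorsionSymp} to get isotropy of $\rho(A_m)$ from bracket-compatibility, the inclusion $T_mZ_{\mathrm{cl}}=\rho(A_m)^\perp\supseteq(\rho(A_m)^\perp)^\perp=(T_mZ_{\mathrm{cl}})^\perp$, and the $W^{\perp\perp}=W$ argument in the symplectic case are exactly the paper's proof, and your bookkeeping of which hypothesis is used where (the momentum-section equation alone for the tangent-space identification, bracket-compatibility only for isotropy) matches the paper's structure. The one place you diverge is the invariance claim: you read ``invariant under the Lie algebroid'' as the tangency $\rho(A_m)\subseteq T_mZ_{\mathrm{cl}}$, which you prove correctly, but this only yields \emph{local} flow-invariance, since $Z_{\mathrm{cl}}$ is in general a non-closed subset of $Z$ and an integral curve of $\rho a$ tangent to $Z_{\mathrm{cl}}$ could a priori exit through a non-clean point of $Z$. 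The paper closes this by a separate set-theoretic argument: the ideal $I$ generated by the components of $\mu$ is preserved by the Lie derivatives along $\mathrm{im}\,\rho$ (Prop.~\ref{Zeroisotropic}), hence the flows preserve $Z$, hence also $I_Z$, the smooth locus, and the locus where $I=I_Z$ locally, so $Z_{\mathrm{cl}}$ itself is carried to itself by these flows. If you intend the orbit-theoretic invariance statement, you should add that step; for the coisotropy and the characteristic-distribution statement your argument is complete.
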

\begin{proof}
Since $Z$ is determined by $I$, so is $I_Z$, and hence the latter is invariant under all the diffeomorphisms generated by the image of $\rho$.  It follows that the subset where they agree locally is invariant.  So is the set of smooth points of $Z$, and hence so is $Z_{\mathrm{cl}}$.

Now let $m$ belong to the clean zero locus. A vector $v \in T_m M$ is tangent to $\mu^{-1}(0)$ if and only if it annihilates the differential of $\mu$.  This means that, for all $a \in \Gamma(M,A)$,
\begin{equation*}
\begin{split}
 0 
 &= v \cdot \langle\mu, a \rangle
 = \iota_v d\langle \mu, a \rangle
 =   \iota_v (\langle D\mu, a \rangle + \langle \mu, Da\rangle)
 \\
 &=  \omega( \rho a, v) \,,
\end{split}
\end{equation*}
where in the last step we have used that $\mu$ is a momentum section and that $\mu$ vanishes at $m$. In other words, the tangent space is the presymplectic orthogonal of the image of the anchor: 
\begin{equation*}
 T_m (\mu^{-1}(0)) = \rho(A_m)^\perp \,.
\end{equation*}
Now we suppose that $\mu$ is bracket-compatible. By Proposition \ref{Zeroisotropic}, $\rho(A_m)$ is an isotropic subspace of $T_m M$,  that is, $\rho(A_m) \subseteq \rho(A_m)^\perp$. So the image of the anchor is contained in the tangent space to the zero locus.  Furthermore, since  $V \subseteq W$ implies $V^\perp \supseteq W^\perp$  even when $\omega$ is degenerate, we obtain 
$T_m(Z_{\mathrm{cl}}) = \rho(A_m)^\perp \supseteq (\rho(A_m)^\perp)^\perp = (T_m Z_{\mathrm{cl}})^\perp$,
and so  $Z_{\mathrm{cl}}$ is coisotropic.

When $\omega$ is non-degenerate, then at every $m \in Z_{\mathrm{cl}}$ the characteristic distribution is $T_m (\mu^{-1}(0))^\perp = (\rho(A_m)^\perp)^\perp = \rho(A_m)$, the image of the anchor.
\end{proof}

Theorem  \ref{thm:CoisoPresymp} shows how symplectic reduction works for a hamiltonian Lie algebroid $A$ over a symplectic manifold: Since the anchor is tangent to $Z_\mathrm{cl}$, the Lie algebroid can be restricted to $Z_\mathrm{cl}$. If the leaf space of the characteristic distribution of $A|_{Z_\mathrm{cl}}$ is smooth, then it is a symplectic manifold, called the \textbf{symplectic reduction of $A$}.

\section{Examples}
\label{sec:Examples}

Our definitions of presymplectic Lie algebroids and (bracket-compatible) momentum sections were motivated by the special case of action Lie algebroids, but there are many more examples.   We present some of them in this section.

\subsection{Lie algebra bundles}
\label{subsec:liealgebrabundles}

A Lie algebra bundle is a Lie algebroid $A$ with zero anchor. In this case $\gamma = 0$, so that $D\gamma = 0$ for every linear connection $D$, and $A$ is presymplectically anchored with respect to any $D$.  As momentum section we can choose the zero section, $\mu = 0$, which is always bracket-compatible. We conclude that a Lie algebra bundle is  hamiltonian in this trivial way for every choice of connection.

On the other hand, there are obstructions to finding non-zero momentum sections, and even stronger ones if they are required to be bracket-compatible.
For any connection $D$ on $A$, $\mu$ is a momentum section if and only if it is horizontal, $D\mu = 0$. This implies that, if $\mu$ does not vanish at $m \in M$, then it cannot vanish anywhere on $M$.  So $A^*$ (and hence $A$) must admit a nowhere-zero section; this is just a topological condition on the bundle.  Conversely, if $\mu$ is any nowhere zero section of $A$, one may use a splitting of $A$, with one summand the trivial line bundle spanned by $\mu$, to construct connections on $A$ for which $\mu$ is horizontal. 

Since $\rho = 0$ here, the condition~\eqref{eq:TorsionSymp} for the bracket-compatibility of a momentum section becomes
\begin{equation*}
  \langle\mu, [a,b] \rangle = 0 \,.
\end{equation*}
Since all $a_m, b_m \in A_m$ can be extended to sections $a$, $b$ of $A$, this equation holds if and only if $\mu$ vanishes on the first derived ideal $[A_m, A_m]$ of the Lie algebra $A_m$ for all $m \in M$.  When a fibre $A_m$ is semisimple then $[A_m, A_m] = A_m$. In that case, a momentum section $\mu$ can be bracket-compatible only if it vanishes at $m$, so that it must vanish on all of $M$. Consequently, if some fibre of $A$ is semisimple (or more generally has a trivial abelianization), then $\mu = 0$ is the unique bracket-compatible momentum section.

\begin{Remark}
Since the vanishing of the anchor implies that $\gamma = 0$, whatever $\omega$ might be, the compatibility conditions here do not depend on the presymplectic structure at all.   We will see in \cite{BlohmannWeinstein:PoissonHamLA} that, for compatibility with Poisson structures, the image in $TM$ of this structure does intervene in the compatibility conditions.
\end{Remark}

\subsection{Tangent bundles of presymplectic manifolds}
\label{sec:tangent}

The anchor of the tangent bundle Lie algebroid is $\rho = \id_{TM}$, so the dualized anchor $\gamma$ is equal to  $\tilde{\omega}: TM \to T^*M$, $v \mapsto \iota_v \omega$, viewed as an element of $\Gamma(M, T^* M \otimes A^*) = \Gamma(M, T^* M \otimes T^* M)$. The condition $D\gamma = 0$ for $TM$ to be $D$-presymplectic  can be written in the following form:

\begin{Lemma}
The tangent bundle of a presymplectic manifold is presymplectically anchored with respect to a connection $D$ if and only if 
\begin{equation}
\label{eq:TangSympAnch}
  (D_v \omega)(u,w) - (D_w \omega)(u,v) + \omega(u, T(v,w)) = 0 
\end{equation} for all vector fields $u$, $v$, and $w$. 
\end{Lemma}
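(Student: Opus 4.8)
The plan is to unwind the definition of the degree-$1$ operator $D$ applied to the $A^*$-valued $1$-form $\gamma$ (for the tangent Lie algebroid $A=TM$ we have $A^*=T^*M$, $\rho=\id_{TM}$, and $\gamma=\tilde\omega$), to evaluate the resulting $2$-form on a pair of vector fields, pair it against a third vector field, and match the outcome termwise against the left-hand side of~\eqref{eq:TangSympAnch}.

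First I would record the Koszul-type formula that the graded Leibniz rule~\eqref{eq:ConnLeibniz} yields for $D$ on $A^*$-valued $1$-forms: for vector fields $v$, $w$,
\begin{equation*}
 (D\gamma)(v,w) = D_v\bigl(\gamma(w)\bigr) - D_w\bigl(\gamma(v)\bigr) - \gamma\bigl([v,w]\bigr),
\end{equation*}
which follows by writing $\gamma$ locally as a finite sum $\sum\alpha_i\otimes\mu_i$ with $\alpha_i\in\Omega^1(M)$, $\mu_i\in\Gamma(M,A^*)$, expanding with~\eqref{eq:ConnLeibniz}, and using $(D\mu_i)(w)=D_w\mu_i$; here $D_v$ on the right is the dual connection on $T^*M$. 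Pairing this against an auxiliary vector field $u$ and using $\gamma(w)=\iota_w\omega$ together with the defining relation $v\cdot\langle\beta,u\rangle=\langle D_v\beta,u\rangle+\langle\beta,D_v u\rangle$ of the dual connection gives $\langle D_v(\iota_w\omega),u\rangle = v\cdot\omega(w,u)-\omega(w,D_v u)$, and hence
\begin{equation*}
 \langle(D\gamma)(v,w),u\rangle = v\cdot\omega(w,u) - \omega(w,D_v u) - w\cdot\omega(v,u) + \omega(v,D_w u) - \omega([v,w],u).
\end{equation*}

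Next I would expand the left-hand side of~\eqref{eq:TangSympAnch} directly: the Leibniz rule for the covariant derivative of the $2$-tensor $\omega$ gives $(D_v\omega)(u,w) = v\cdot\omega(u,w) - \omega(D_v u,w) - \omega(u,D_v w)$, and the torsion term is expanded using $T(v,w) = D_v w - D_w v - [v,w]$, which is~\eqref{eq:DefTorsion2} specialized to $\rho=\id_{TM}$. The four terms involving $D_v w$ and $D_w v$ cancel in pairs, leaving $v\cdot\omega(u,w) - \omega(D_v u,w) - w\cdot\omega(u,v) + \omega(D_w u,v) - \omega(u,[v,w])$; applying the antisymmetry of $\omega$ to each of these five terms turns the expression into exactly $-\langle(D\gamma)(v,w),u\rangle$. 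Since both sides of~\eqref{eq:TangSympAnch} are $C^\infty(M)$-multilinear in $u$, $v$, $w$, the identity holding for all vector fields is equivalent to its holding pointwise, and also equivalent to $(D\gamma)(v,w)$ pairing to zero with every $u$, i.e.\ to $D\gamma=0$; this proves the lemma in both directions simultaneously.

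I do not anticipate a genuine obstacle: the argument is bookkeeping in Cartan calculus. The only point demanding care is consistency of sign and torsion conventions — that $T$ is the torsion of~\eqref{eq:DefTorsion2} for the tangent Lie algebroid, that $\langle\gamma,a\rangle=\iota_{\rho a}\omega$, and that the antisymmetry of $\omega$ is applied so the two computed expressions come out as exact negatives rather than differing by a sign or a factor of~$2$.
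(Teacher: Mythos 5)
Your proposal is correct and follows essentially the same route as the paper's proof: both unwind $D\gamma$ via the graded Leibniz rule for the dual connection, evaluate on a triple of vector fields, and regroup the result into the $(D_v\omega)(u,w) - (D_w\omega)(u,v) + \omega(u,T(v,w))$ form using $T(v,w)=D_vw-D_wv-[v,w]$. The only cosmetic difference is the order of contractions (you contract the form part first and then pair with $u$, whereas the paper pairs with the section first), which accounts for the overall sign flip you correctly absorb with the antisymmetry of $\omega$.
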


\begin{proof}
For a general Lie algebroid $A$ over a presymplectic manifold $(M,\omega)$ the dualized anchor satisfies
\begin{equation*}
\begin{split}
  \iota_w \iota_v \langle \gamma, Da \rangle 
  &= \iota_w \iota_v \bigl(\gamma_i \wedge (Da)^i \bigr)
  \\
  &= \iota_w \bigl( (\iota_v\gamma_i) (Da)^i 
  - \gamma_i (D_v a)^i\bigr)
  \\
  &= (\iota_v\gamma_i) (D_w a)^i 
  - (\iota_w \gamma_i) (D_v a)^i
  \\
  &= \iota_v \gamma(D_w a) 
  -  \iota_w \gamma(D_v a)
  \\
  &= \omega( \rho D_w a, v)
   - \omega( \rho D_v a, w)
  \,,
\end{split}    
\end{equation*}
for all vector fields $v$, $w$ on $M$ and all sections $a$ of $A$. Using this relation and Eq.~\eqref{eq:DgammaPaired}, we obtain
\begin{equation}
\label{eq:TangSympConn}
\begin{split}
  \iota_w \langle D_v\gamma, a \rangle
  &= \iota_w \iota_v \langle D \gamma, a \rangle
  \\
  &= \iota_w \iota_v \bigl( 
    d \langle \gamma, a \rangle 
    + \langle \gamma, Da \rangle \bigr)
  \\
  &= v \cdot \omega(\rho a, w) - w \cdot \omega(\rho a, v) 
     - \omega(\rho a, [v,w])
  \\
  &\quad{}
     - \omega( \rho D_v a, w) + \omega( \rho D_w a, v) \,.
\end{split}
\end{equation}
For the tangent Lie algebroid $A = TM$ the anchor is the identity $\rho = \id_{TM}$, so that we obtain
\begin{equation*}
\begin{split}
  \iota_w \langle D_v\gamma, a \rangle
  &= v \cdot \omega(a, w) - w \cdot \omega(a, v)
     - \omega(a, [v,w])
     - \omega(D_v a, w) + \omega(D_w a, v)
  \\
  &= (D_v \omega)(a, w) + \omega(D_v a, w) + \omega(a, D_v w)
  \\
  &\quad{}
   - (D_w \omega)(a, v) - \omega(D_w a, v) - \omega(a, D_w v)
  \\
  &\quad{}
     - \omega(a, [v,w])
     - \omega(D_v a, w) + \omega(D_w a, v)
     \\
     &=(D_v \omega)(a, w) - (D_w \omega)(a, v) +\omega\bigl(a,D_v w - D_w v - [v,w]\bigr)
  \\
  &= (D_v \omega)(a, w) - (D_w \omega)(a, v) + \omega\bigl(a, T(v,w) \bigr)
  \,.
\end{split}
\end{equation*}
Using the antisymmetry of $\omega$ and denoting $a$ by $u$ to emphasize that it is also a vector field on $M$, we obtain condition~\eqref{eq:TangSympAnch}.
\end{proof}

\begin{Remark}
One might have expected the condition for $TM$ be to be presymplectically anchored to be $D\omega=0$, the usual definition of a presymplectic connection.  This fails because, in our situation, the connection $D$ is applied only to the target of $\tilde{\omega}$ and not to the source.  On the other hand, we will see in Prop~\ref{prop:Dcheckomega} below that Eq.~\eqref{eq:TangSympAnch} has the simple formulation $\check{D}\omega = 0$, where $\check{D}$ is the opposite $A$-connection on $TM$ as defined in Definition \ref{def:OppConn}.
\end{Remark}

A special case where Eq.~\eqref{eq:TangSympAnch} is satisfied is when each term vanishes separately, that is, if the connection satisfies $D\omega = 0$ and if its torsion takes values in the characteristic distribution $TM^\perp$. In other words, \eqref{eq:TangSympAnch} is satisfied for what we may call a presymplectic connection with torsion in $TM^\perp$. In the symplectic case, there is an abundance of symplectic connections with vanishing torsion on every symplectic manifold \cite{Tondeur:1961}. In the presymplectic case, there can be a presymplectic connection only when $\omega$ is regular, since parallel translation along curves shows that the rank of $\gamma$, and hence the rank of $\omega$, must be the same everywhere. Vaisman proved in \cite{Vaisman:2000} that the regularity of the presymplectic form is sufficient for the existence of presymplectic connections whose torsion takes its values in $TM^\perp$. The conclusion is:

\begin{Proposition}
\label{prop:SympConn}
The tangent Lie algebroid of any regular presymplectic manifold is presymplectically anchored with respect to some connection.
\end{Proposition}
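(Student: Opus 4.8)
The plan is to reduce the proposition to the existence of one well-chosen connection and then read off the conclusion from the preceding Lemma. By that Lemma, $TM$ is presymplectically anchored with respect to a connection $D$ exactly when
$(D_v\omega)(u,w) - (D_w\omega)(u,v) + \omega(u,T(v,w)) = 0$
for all vector fields $u,v,w$. The first move is to look for a $D$ making all three summands vanish separately: this holds if $D$ is a presymplectic connection, $D\omega = 0$, and if its torsion $T$ is valued in the characteristic distribution $TM^\perp = \ker\tilde{\omega}$, since then $\omega(u, T(v,w)) = -\langle \tilde{\omega}(T(v,w)), u\rangle = 0$.

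Thus everything comes down to producing such a connection on a regular presymplectic manifold, and this is precisely Vaisman's theorem \cite{Vaisman:2000}: when $\omega$ has constant rank there exists a presymplectic connection whose torsion takes values in $TM^\perp$. I would simply invoke this. (In the symplectic case one can instead cite Tondeur's classical existence of torsion-free symplectic connections \cite{Tondeur:1961}, for which the torsion condition is vacuous.) Feeding such a $D$ into the Lemma's identity, the first two terms vanish because $D\omega = 0$ and the third because $T(v,w) \in \ker\tilde{\omega}$; hence \eqref{eq:TangSympAnch} holds, and $TM$ is presymplectically anchored with respect to $D$.

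The real content, hidden in the citation, is the construction of a presymplectic connection with torsion in $TM^\perp$, and this is the step I expect to be the main obstacle for a self-contained proof. The natural approach is to work in foliated charts adapted to the characteristic foliation of the regular form $\omega$ --- where $\omega$ is pulled back from a symplectic form on a local transversal --- choose a symplectic connection there, modify it so its torsion points along the leaves, and glue the local pieces with a partition of unity, using that the space of linear connections is affine. One then checks that the average still satisfies $D\omega = 0$ and keeps the torsion $TM^\perp$-valued, which works because both conditions are $C^\infty(M)$-linear in the difference of connections. Regularity is indispensable here: parallel transport of the (fibrewise nondegenerate) form across the normal directions forces the rank of $\omega$ to be locally constant, so the statement genuinely fails for non-regular $\omega$.
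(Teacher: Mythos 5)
Your proof is correct and follows essentially the same route as the paper: the paper likewise observes that Eq.~\eqref{eq:TangSympAnch} holds when each term vanishes separately, i.e.\ for a presymplectic connection ($D\omega=0$) with torsion valued in $TM^\perp$, and then cites Vaisman \cite{Vaisman:2000} for the existence of such a connection in the regular case (and Tondeur \cite{Tondeur:1961} in the symplectic case). The only thing you add is a sketch of how Vaisman's construction might go; the paper instead notes an alternative shortcut via involutivity of $TM^\perp$ and Prop.~\ref{prop:CompatRegular}.
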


\begin{Remark}
In \cite{BCGRS:SymplecticConnections}, connections satisfying $D\omega = 0$ are called symplectic only if they are torsion free. We will follow other authors \cite{MarsdenRatiuRaugel:1991,Tondeur:1961} and not require the torsion to vanish, since this would, as a consequence of Proposition \ref{prop:TorsionSymp}, preclude the existence of bracket-compatible momentum sections.  Fortunately, as we will see, there is an abundance of connections with torsion which also presymplectically anchor any tangent Lie algebroid.
\end{Remark}

Another simple way to prove Proposition \ref{prop:SympConn} is to observe that, since $\omega$ is closed, the characteristic distribution $TM^\perp$ is involutive. It then follows from Proposition \ref{prop:CompatRegular} that the tangent Lie algebroid $TM$ satisfies (C3). Moreover, Proposition \ref{prop:C2impliesLocHam} tells us that $TM$ is \emph{locally} weakly hamiltonian.

We now study momentum sections.   Locally, it is easy to find
such sections explicitly. When the dimension of $M$ is $s$ and the rank of the presymplectic form is $2r$, the Darboux normal form in coordinates $(q^1,\cdots,q^r,p_1\cdots, p_r, z_1,\dots , z_{s-2r})$ is $\omega = dq^i \wedge dp_i$, so that
$$\gamma = dp_i \otimes dq^i - dq^i \otimes dp_i \,.$$
For the trivial connection $D$ in which the coordinate vector fields are all parallel, we have $D\omega = 0$ and $T = 0$. The 1-form
\begin{equation}
\label{eq:DarbouxMom1b}
  \mu = p_i dq^i - q^i dp_i
\end{equation}
satisfies $D\mu = \gamma$, which shows that $\mu$ is a local $D$-momentum section. 

\begin{Lemma}
\label{lem:ConnSymp1b}
Let $(M,\omega)$ be a presymplectic manifold and $D$ a connection on $TM$ that satisfies Eq.~\eqref{eq:TangSympAnch}. Another connection $D'$ satisfies Eq.~\eqref{eq:TangSympAnch} if and only if the difference tensor, the $C^\infty(M)$-bilinear map $\Gamma: \calX(M) \times \calX(M) \to \calX(M)$ defined by $\Gamma(v,w) = D'_v w - D_v w$, satisfies
\begin{equation}
\label{eq:ConnSymp2b}
  \omega\bigl(u, \Gamma(v,w) \bigr)
  = \omega\bigl(v, \Gamma(u,w) \bigr)  
\end{equation}
for all $u, v, w \in \calX(M)$.
\end{Lemma}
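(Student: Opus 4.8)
The plan is to work directly from the characterization of ``presymplectically anchored with respect to $D$'' for the tangent Lie algebroid given in Eq.~\eqref{eq:TangSympAnch}, which is the previous lemma. Write $L(D)(u,v,w) := (D_v\omega)(u,w) - (D_w\omega)(u,v) + \omega(u,T(v,w))$, so that $D$ presymplectically anchors $TM$ iff $L(D) \equiv 0$, and similarly for $D'$. The strategy is to compute $L(D') - L(D)$ term by term in terms of the difference tensor $\Gamma$ and show it equals the left-hand side minus the right-hand side of Eq.~\eqref{eq:ConnSymp2b} (suitably antisymmetrized), so that $L(D') = L(D) = 0$ is equivalent to that symmetry condition.

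First I would record the two elementary facts about how the ingredients change under $D \mapsto D' = D + \Gamma$. For the covariant derivative of $\omega$ (a $(0,2)$-tensor), $(D'_v\omega)(u,w) = (D_v\omega)(u,w) - \omega(\Gamma(v,u),w) - \omega(u,\Gamma(v,w))$. For the torsion of the induced $A$-connection on $TM$ — here $A=TM$ with the identity anchor, so $T(v,w) = D_v w - D_w v - [v,w]$ — one gets $T'(v,w) = T(v,w) + \Gamma(v,w) - \Gamma(w,v)$. These are routine; I would state them and move on.

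Next I would substitute into $L(D')$. The $(D'_v\omega)(u,w) - (D'_w\omega)(u,v)$ part contributes, beyond $L(D)$'s first two terms, the expression $-\omega(\Gamma(v,u),w) - \omega(u,\Gamma(v,w)) + \omega(\Gamma(w,u),v) + \omega(u,\Gamma(w,v))$. The torsion term $\omega(u,T'(v,w))$ contributes, beyond $\omega(u,T(v,w))$, the extra $\omega(u,\Gamma(v,w)) - \omega(u,\Gamma(w,v))$. Adding these, the terms $-\omega(u,\Gamma(v,w))$ and $+\omega(u,\Gamma(w,v))$ cancel against the torsion contribution, leaving $L(D') - L(D) = -\omega(\Gamma(v,u),w) + \omega(\Gamma(w,u),v)$. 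Using antisymmetry of $\omega$, rewrite this as $\omega(w,\Gamma(v,u)) - \omega(v,\Gamma(w,u))$, and relabel $(u,v,w)$ appropriately: this vanishes for all $u,v,w$ precisely when $\omega(v,\Gamma(u,w)) = \omega(u,\Gamma(v,w))$ for all $u,v,w$, which is Eq.~\eqref{eq:ConnSymp2b}. Hence $L(D') = 0 \iff L(D) = 0$ under the assumption that $\Gamma$ satisfies that symmetry, and conversely if both connections anchor $TM$ then the difference must satisfy it.

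The main obstacle is purely bookkeeping: getting all the signs right in the transformation of $(D_v\omega)$ and of $T$, and making sure the cancellation between the $\omega$-contributions from the metricity terms and from the torsion term is accounted for exactly (it is this cancellation that makes only the ``non-symmetric'' part of $\Gamma$ in the first two slots survive). No conceptual difficulty arises; the identity is forced once the two transformation formulas are in hand, and I would present it as a short computation displaying $L(D') - L(D) = \omega(w,\Gamma(v,u)) - \omega(v,\Gamma(w,u))$ and then reading off the equivalence.
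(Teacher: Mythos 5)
Your proposal is correct and follows essentially the same route as the paper's proof: compute the transformation of $(D_v\omega)$ and of the torsion under $D\mapsto D+\Gamma$, observe the cancellation of the $\omega(u,\Gamma(\cdot,\cdot))$ terms, and read off $A'(u,v,w)-A(u,v,w)=\omega(w,\Gamma(v,u))-\omega(v,\Gamma(w,u))$, which vanishes identically exactly when \eqref{eq:ConnSymp2b} holds. The signs and the final relabeling in your sketch all check out against the paper's computation.
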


\begin{proof}
Let us denote the tensor on the right hand side of Eq.~\eqref{eq:TangSympAnch} by
\begin{equation*}
  A(u,v,w) := (D_v \omega)(u,w) - (D_w \omega)(u,v) + \omega(u,T(v,w)) \,,
\end{equation*}
and for $D'$ by $A'$. We want to compute the difference $A'-A$ in terms of $\Gamma$.

A short calculation shows that
\begin{equation*}
\begin{split}
  (D'_u \omega)(v,w)
  &= u \cdot \omega(v,w) - \omega(D'_u v, w) - \omega(v, D'_u w) \\
  &= (D_u \omega)(v,w) 
  - \omega\bigl( \Gamma(u,v), w \bigr) 
  - \omega\bigl( v, \Gamma(u,w) \bigr)
  \\
  &= (D_u \omega)(v,w) 
  + \omega\bigl(w, \Gamma(u,v) \bigr) 
  - \omega\bigl( v, \Gamma(u,w) \bigr)
  \,.
\end{split}
\end{equation*}
Similarly, the torsion $T'$ of $D'$ is related to the torsion $T$ of $D$ by
\begin{equation*}
  T'(v,w) = T(v,w) + \Gamma(v,w) - \Gamma(w,v) \,.
\end{equation*}
With these relations we obtain
\begin{equation*}
\begin{split}
  A'(u,v,w) - A(u,v,w) 
  &=
    \omega\bigl(w, \Gamma(v,u) \bigr) 
  - \omega\bigl( u, \Gamma(v,w) \bigr)
  \\
  &\quad{}
  - \omega\bigl(v, \Gamma(w,u) \bigr) 
  + \omega\bigl(u, \Gamma(w,v) \bigr)
  \\
  &\quad{}
  + \omega\bigl(u, \Gamma(v,w) - \Gamma(w,v) \bigr)
  \\
  &= 
      \omega\bigl(w, \Gamma(v,u) \bigr)
    - \omega\bigl(v, \Gamma(w,u) \bigr) \,.
\end{split}
\end{equation*}
Assume that $A(u,v,w) = 0$. Then $A'(u,v,w) = 0$ if and only if the right hand side of the last equation vanishes. After renaming the vector fields, we obtain condition~\eqref{eq:ConnSymp2b}.
\end{proof}

\begin{Proposition} 
\label{prop:NonvanishMomentumB}
Let $\mu \in \Omega^1(M)$ be a nowhere vanishing 1-form on a regular presymplectic manifold. If $\mu$ is basic in the sense that $\mu$ and $d\mu$ annihilate the characteristic distribution $TM^\perp$ (which is always the case when $\omega$ is symplectic) then it is a momentum section for some presymplectically anchored connection.
\end{Proposition}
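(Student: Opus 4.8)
The goal is to produce a connection $D$ on $TM$ with $D\gamma = 0$ and $D\mu = \gamma$, where $\gamma = \tilde{\omega}$ is regarded as an element of $\Gamma(M, T^*M \otimes T^*M)$. My plan is to take a connection that is already presymplectically anchored and correct it by a difference tensor chosen so as to force $D\mu = \gamma$ while preserving $D\gamma = 0$. Since $\mu$ is nowhere vanishing, an auxiliary Riemannian metric furnishes a vector field $\xi \in \calX(M)$ with $\langle \mu, \xi \rangle = 1$, so that $TM = \langle \xi \rangle \oplus K$ with $K := \ker \mu$. The basic hypothesis enters only through the following consequences: $TM^\perp \subseteq K$; the form $\mu$ lies in the image of $\tilde{\omega}$, so $\mu = \iota_{Y_0}\omega$ for a section $Y_0$, and $\langle \mu, Y_0 \rangle = \omega(Y_0, Y_0) = 0$ shows $Y_0 \in \Gamma(K)$; hence $K^\perp = TM^\perp \oplus \langle Y_0 \rangle$.

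By Prop.~\ref{prop:SympConn} (Vaisman's theorem) there is a connection $D^0$ on $TM$ with $D^0\omega = 0$ whose torsion $T^0$ takes values in $TM^\perp$; such a $D^0$ satisfies~\eqref{eq:TangSympAnch}, so it is presymplectically anchored. Write an arbitrary competitor as $D = D^0 + \Gamma$ for a difference tensor $\Gamma$ and set $\beta := \gamma - D^0\mu \in \Omega^1(M, T^*M)$. Pairing $D\mu = \gamma$ with sections shows it is equivalent to $\langle \mu, \Gamma(v,w) \rangle = -\beta(v,w)$ for all $v, w$, and, because $\langle \mu, \xi \rangle = 1$, this is precisely the statement that $\Gamma = -\beta \otimes \xi + \Gamma_K$ for an otherwise arbitrary $K$-valued tensor $\Gamma_K$. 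On the other hand, by Lemma~\ref{lem:ConnSymp1b} the connection $D$ is again presymplectically anchored if and only if $\omega(u, \Gamma(v,w))$ is symmetric in $u$ and $v$; substituting $\Gamma = -\beta \otimes \xi + \Gamma_K$, this reduces to the tensorial equation
\begin{equation*}
  \omega\bigl(u, \Gamma_K(v,w)\bigr) - \omega\bigl(v, \Gamma_K(u,w)\bigr)
  = \beta(v,w)\,\omega(u,\xi) - \beta(u,w)\,\omega(v,\xi) \qquad (\star)
\end{equation*}
in the unknown $K$-valued tensor $\Gamma_K$.

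The step I expect to be the main obstacle is the solution of $(\star)$; it is fibrewise linear algebra. For fixed $w$ the right side is the $2$-form $\beta(\,\cdot\,,w) \wedge \iota_\xi\omega$, while as $\Gamma_K(\,\cdot\,,w)$ ranges over $\Hom(TM, K)$ the left side ranges over $\tilde{\omega}(K) \wedge T^*M$; and a $2$-form lies in $\tilde{\omega}(K) \wedge T^*M$ exactly when its contraction with every vector of $K^\perp = \mathrm{Ann}(\tilde{\omega}(K))$ lies in $\tilde{\omega}(K)$. By the first paragraph it therefore suffices to verify this for $X \in TM^\perp$ and for $X = Y_0$. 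Contracting with $X \in TM^\perp$ produces $\beta(X,w)\,\iota_\xi\omega$, and since $\iota_\xi\omega \notin \tilde{\omega}(K)$ (as $\xi \notin K$) this requires $\beta(X,w) = 0$, which does hold: this is where both halves of the basic hypothesis are consumed, via the identity $(D^0_X\mu)(w) = -\langle \mu, D^0_w X \rangle = 0$, valid because $D^0\omega = 0$ together with $T^0 \in TM^\perp$ makes $TM^\perp$ a $D^0$-parallel subbundle on which $\mu$ vanishes. Contracting with $Y_0$ produces $\beta(\,\cdot\,,w) + \beta(Y_0,w)\,\iota_\xi\omega$; since $\beta(\,\cdot\,,w)$ annihilates $TM^\perp$ we have $\beta(\,\cdot\,,w) \in \tilde{\omega}(TM) = \tilde{\omega}(K) \oplus \langle \iota_\xi\omega \rangle$, and one checks, using $\langle \mu, Y_0 \rangle = 0$, that its $\langle \iota_\xi\omega \rangle$-component is exactly cancelled, so the contraction again lands in $\tilde{\omega}(K)$. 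Hence $(\star)$ is fibrewise solvable; since it is a linear equation for $\Gamma_K$ with a smooth right-hand side and $\omega$ is regular, a smooth local solution exists, and smooth local solutions glue to a global $\Gamma_K$ by a partition of unity (any convex combination of solutions of $(\star)$ is again a solution). Setting $D := D^0 - \beta \otimes \xi + \Gamma_K$ then yields a presymplectically anchored connection with $D\mu = \gamma$, which is the assertion. The nowhere-vanishing hypothesis, in contrast, is used only to make $K = \ker\mu$ a subbundle admitting the global complement $\langle \xi \rangle$.
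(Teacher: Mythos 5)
Your proof is correct, and it shares the paper's overall strategy: start from a Vaisman connection $D^0$ with $D^0\omega=0$ and torsion valued in $TM^\perp$, perturb it by a difference tensor $\Gamma$, use Lemma~\ref{lem:ConnSymp1b} to phrase the constraint that the perturbed connection stay presymplectically anchored, and reduce everything to a vanishing statement on $TM^\perp$ that consumes the basic hypothesis. Where you genuinely differ is in the existence argument for $\Gamma$. The paper builds the trilinear tensor $C_3(u,v,w)=\omega(u,\Gamma(v,w))$ explicitly by an iterated correction-and-symmetrization ($C_1\to C_2\to C_3$) using the auxiliary field $\bar n$ (your $\xi$), and then checks realizability of $C_3$ as $\omega(u,\Gamma(v,w))$ by verifying $C_3(u,v,w)=0$ for $u\in TM^\perp$. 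You instead split off the forced component $-\beta\otimes\xi$ and characterize solvability of the residual equation $(\star)$ for the $K$-valued part by the criterion that a $2$-form lies in $\tilde{\omega}(K)\wedge T^*M$ iff its contractions with $K^\perp=TM^\perp\oplus\langle Y_0\rangle$ land in $\tilde{\omega}(K)$. This is non-constructive but conceptually cleaner, and your explicit handling of smoothness and globalization (affine combinations of local solutions under a partition of unity) is more careful than the paper's, which simply asserts that a suitable $\Gamma$ exists once $C_3$ vanishes on $TM^\perp$. Both arguments ultimately rest on the same key fact, $\omega(D^0_X Y_0,w)=0$ for $X\in TM^\perp$: the paper gets it by computing $\iota_w\iota_X d\mu$ directly, while you route it through $\Lie_X\mu=0$ and the $D^0$-parallelism of $TM^\perp$. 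One presentational caveat: your one-line identity $(D^0_X\mu)(w)=-\langle\mu,D^0_wX\rangle$ silently uses $(\Lie_X\mu)(w)=0$ (which needs both $\iota_X\mu=0$ and $\iota_Xd\mu=0$) together with $\mu(T^0(X,w))=0$; all the ingredients are named in your parenthetical, but the chain $(D^0_X\mu)(w)=(\Lie_X\mu)(w)-\mu(T^0(X,w))-\mu(D^0_wX)$ deserves to be displayed rather than compressed.
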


\begin{proof}
Let $\mu \in \Omega^1(M)$ be a nowhere vanishing 1-form. Assume that $\mu$ annihilates $TM^\perp$. Then there is a vector field $n$ such that $\iota_n \omega = \mu$. Since $\mu$ is nowhere vanishing, there is another vector field $\bar{n}$ such that $1 = \langle \mu, \bar{n}\rangle = \omega(n, \bar{n})$. 

Given an arbitrary connection $D$, the covariant derivative of $\mu$ can be expressed as
\begin{equation*}
\begin{split}
  \langle D_v\mu, w\rangle 
  &= v \cdot \mu(w) - \mu(D_v w) \\
  &= v \cdot \omega(n,w) - \omega(n, D_v w) \,.
\end{split}
\end{equation*}
If $D'$ is another connection with $D'_v w - D_v w = \Gamma(v,w)$, then
\begin{equation*}
  \langle D'_v \mu, w\rangle - \langle D_v \mu, w\rangle
  = - \omega\bigl(n,\Gamma(v,w)\bigr) \,.
\end{equation*}

Now let $D$ be a presymplectic connection satisfying $D\omega=0$ with torsion taking its values in $TM^\perp$. As we see from Eq.~\eqref{eq:TangSympAnch}, $TM$ is $D$-presymplectically anchored. Since $D\omega = 0$, we have $\langle D_v \mu, w\rangle = \omega(D_v n, w)$. With this, the condition $\langle D'_v \mu, w\rangle = -\omega(v,w)$ for $\mu$ to be a $D'$-momentum section can be written as the following condition for $\Gamma$,
\begin{equation}
\label{eq:GammaMom}
\begin{split}
  0 
  &= \langle D'_v \mu, w\rangle + \omega(v,w) \\
  &= \langle D_v \mu, w\rangle - \omega\bigl(n,\Gamma(v,w)\bigr)
     + \omega(v,w) \\
  &= \omega(D_v n, w) - \omega\bigl(n,\Gamma(v,w)\bigr)
     + \omega(v,w) \\
  &= \omega(D_v n + v, w) 
   - \omega\bigl(n,\Gamma(v,w)\bigr)
   \,.
\end{split}
\end{equation}
In addition to this equation, condition~\eqref{eq:ConnSymp2b} for $\Gamma$ must be satisfied for $TM$ to be presymplectically anchored with respect to $D'$.

Conditions~\eqref{eq:GammaMom} and~\eqref{eq:ConnSymp2b} can be viewed as conditions for the 3-form $C(u,v,w) := \omega(u, \Gamma(v,w))$ given by
\begin{subequations}
\begin{align}
\label{eq:Ccond1}
  C(n,v,w) - \omega(D_v n + v, w) &= 0 \\
\label{eq:Ccond2}
  C(u,v,w) - C(v,u,w) &= 0 \,.
\end{align}
\end{subequations}
Our strategy to find a $C$ satisfying both conditions is the following: We start from $C = 0$. In the first step, we eliminate any non-vanishing term on the left hand side term of \eqref{eq:Ccond1}. In the second step we symmetrize $C$ in the first two arguments in order to satisfy \eqref{eq:Ccond2}. This will lead to a new non-zero term on the left hand side of \eqref{eq:Ccond2}. So we must repeat this procedure until both conditions are satisfied.

In the first step let 
\begin{equation*}
  C_1(u,v,w) 
  := \omega(u,\bar{n})\,\omega(D_v n + v, w) \,,
\end{equation*}
which satisfies \eqref{eq:Ccond1} because $\omega(n,\bar{n}) = 1$. The symmetrization of $C_1$ is given by
\begin{equation*}
\begin{split}
  C_2(u,v,w) 
  &:= C_1(u,v,w) + C_1(v,u,w) \\
  &= \omega(u,\bar{n})\,\omega(D_v n + v, w)
  + \omega(v,\bar{n})\,\omega(D_u n + u, w) \,,
\end{split}
\end{equation*}
which satisfies \eqref{eq:Ccond2}. However,
\begin{equation*}
  C_2(n,v,w) - \omega(D_v n + v, w)
  = \omega(v,\bar{n})\,\omega(D_n n + n, w) \,,
\end{equation*}
so that \eqref{eq:Ccond1} is not satisfied by $C_2$. The non-vanishing term can be cancelled by setting
\begin{equation*}
  C_3(u,v,w) 
  = C_2(u,v,w) - 
  \omega(u,\bar{n})\,\omega(v,\bar{n})\,\omega(D_n n + n, w) \,.
\end{equation*}
The additional term is manifestly symmetric in $u$ and $v$, so that $C_3$ satisfies both \eqref{eq:Ccond1} and \eqref{eq:Ccond2}. We conclude that $TM$ is $D'$-presymplectically anchored with momentum section $\mu = \iota_n \omega$ if $\Gamma$ satisfies
\begin{equation*}
\begin{split}
  \omega(u, \Gamma(v,w))
  &= C_3(u,v,w) 
  \\
  &= \omega(u,\bar{n})\,\omega(D_v n + v, w)
  + \omega(v,\bar{n})\,\omega(D_u n + u, w)
  \\
  &\quad{}
  - \omega(u,\bar{n})\,\omega(v,\bar{n})\,\omega(D_n n + n, w) 
  \\
  &= \omega\bigl( 
  u, \omega(D_v n + v, w) \bar{n}
  - \omega(v,\bar{n})\, \omega(D_n n + n, w) \bar{n}
  + \omega(v,\bar{n}) w \bigr)
  \\
  &\quad{}
  + \omega(v, \bar{n}) \, \omega(D_u n, w)
  \,.
\end{split}
\end{equation*}

The remaining question is whether a $\Gamma$ satisfying this equation exists, which is not immediately clear since $\omega$ may be degenerate. A sufficient condition is that $C_3(u,v,w)$ vanishes for all $u \in TM^\perp$ and $v,w \in TM$. For $u \in TM^\perp$ we have $C_3(u,v,w) = \omega(v,\bar{n})\, \omega(D_u n, w)$, which vanishes for all $v,w \in TM$ if $\omega(D_u n, w) = 0$. Therefore, we it suffices to show that $\iota_u \omega = 0$ implies $\omega(D_u n, w) = 0$. 

So let $u, w$ be vector fields with $\iota_u \omega = 0$. We get
\begin{equation*}
\begin{split}
  \iota_w \iota_u d\mu 
  &= u \cdot \mu(w) - w \cdot \mu(u) - \mu([u,w]) \\
  &= u \cdot \omega(n,w) - w \cdot \omega(n,u) - \omega(n, [u,w]) \\
  &= (D_u \omega)(n,w) + \omega(D_u n, w) + \omega(n, D_u w) - \omega(n, [u,w]) \\
  &= \omega(D_u n, w) + \omega(n, T(u,w) + D_w u) \\
  &= \omega(D_u n, w) + \omega(n, D_w u) \\
  &= \omega(D_u n, w) + w \cdot \omega(n,u) - (D_w \omega)(n,u) - \omega(D_w n, u) \\
  &= \omega(D_u n, w) \,,
\end{split}
\end{equation*}
where we have used that $D\omega = 0$ and $T = 0$. By assumption $\iota_u d\mu = 0$, so that $\omega(D_u n, w) = 0$ as well. This shows that a map $\Gamma: TM \times TM \to TM$ satisfying $\omega(u, \Gamma(v,w)) = C_3(u,v,w)$ exists. The conclusion is that $TM$ is $D'$-presymplectically anchored and that $\mu$ is a $D'$-momentum section.
\end{proof}

\begin{Remark}
The converse of Proposition \ref{prop:NonvanishMomentumB} is not true: Let $M$ be a presymplectic vector space with Darboux coordinates $(q,p,z)$. $TM$ is presymplectically anchored by the trivial connection for which $dq$, $dp$, and $dz$ are horizontal. Then $\mu = p\,dq - q\, dp + dz$ is a nowhere vanishing momentum section that does not annihilate the characteristic distribution $TM^\perp$ spanned by $\frac{\partial}{\partial z}$.
\end{Remark}

In the symplectic case the existence of a global weakly hamiltonian momentum section turns out to be equivalent to a simple topological condition:

\begin{Theorem}
\label{thm:TMweaklyhamB}
When $(M, \omega)$ is a symplectic manifold, the following are equivalent:
\begin{itemize}
\item[(i)] $TM$ is weakly hamiltonian.
\item[(ii)] $M$ is either non-compact or compact with non-negative Euler characteristic.
\end{itemize}
\end{Theorem}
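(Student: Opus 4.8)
The plan is to prove the two implications by passing between $D$-momentum sections of $TM$ and vector fields with controlled zeros. The key observation is that if $\mu\in\Omega^1(M)$ is a $D$-momentum section of $TM$ then $\langle D_v\mu,w\rangle=-\omega(v,w)$ for all vector fields $v,w$, so at a zero $m$ of $\mu$ the intrinsic derivative $D\mu|_m\colon T_mM\to T^*_mM$ (which is independent of the choice of $D$) equals $-\tilde{\omega}_m$. Setting $X:=\tilde{\omega}^{-1}(\mu)\in\calX(M)$, so that $\iota_X\omega=\mu$, the vector field $X$ has exactly the same zeros as $\mu$, and at each of them its intrinsic derivative is $\tilde{\omega}_m^{-1}\circ(-\tilde{\omega}_m)=-\id_{T_mM}$; hence every zero of $X$ is nondegenerate of index $(-1)^{\dim M}=+1$. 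For the implication (i)~$\Rightarrow$~(ii) I would then argue: if $M$ is noncompact there is nothing to prove, and if $M$ is compact then Poincar\'e--Hopf gives $\chi(M)=\#\{\text{zeros of }X\}\ge 0$.

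For (ii)~$\Rightarrow$~(i) when $M$ is noncompact or compact with $\chi(M)=0$, I would use that such a manifold carries a nowhere-vanishing vector field $X$ (classical; Hopf's theorem in the compact case), so that $\mu:=\iota_X\omega$ is a nowhere-vanishing $1$-form. Since $\omega$ is symplectic, $TM^\perp=0$, so $\mu$ is automatically basic, and Prop.~\ref{prop:NonvanishMomentumB} then produces a presymplectically anchored connection for which $\mu$ is a $D$-momentum section; thus $TM$ is weakly hamiltonian.

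The remaining case is $M$ compact with $\chi(M)>0$, and here the work is concentrated. I would first manufacture a $1$-form $\mu$ vanishing exactly at $\chi(M)$ prescribed points: choose pairwise disjoint Darboux charts $U_\alpha$ centered at points $m_1,\dots,m_{\chi(M)}$ and let $\mu$ equal the local model $\sum_i(p_i\,dq^i-q^i\,dp_i)$ of Eq.~\eqref{eq:DarbouxMom1b} on each $U_\alpha$, which has a single nondegenerate zero there; then extend $\mu$ over $M$, perturb it away from small neighborhoods of the $m_\alpha$ so that its remaining zeros are nondegenerate, note that by Poincar\'e--Hopf their indices sum to $\chi(M)-\chi(M)=0$, and cancel them in pairs along arcs in the connected complement of those neighborhoods. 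Next I would build the connection in two pieces. On each $U_\alpha$ the flat coordinate connection $D_\alpha$ has vanishing torsion and $D_\alpha\omega=0$, so $D_\alpha\gamma=0$ by Eq.~\eqref{eq:TangSympAnch}, and $D_\alpha\mu=\gamma$ by the Darboux computation preceding Lem.~\ref{lem:ConnSymp1b}. On $V:=M\setminus\{m_1,\dots,m_{\chi(M)}\}$, where $\mu$ is nowhere vanishing, the construction in the proof of Prop.~\ref{prop:NonvanishMomentumB} applied to the symplectic manifold $V$ yields a presymplectically anchored connection $D_V$ with $D_V\mu=\gamma$. Finally I would patch: both conditions ``$D\gamma=0$'' and ``$D\mu=\gamma$'' are affine in $D$ — precisely the feature used to globalize in the proof of Prop.~\ref{prop:Connection1} — so a partition of unity subordinate to the open cover $\{U_\alpha\}\cup\{V\}$ combines $D_\alpha$ and $D_V$ into a global connection $D$ on $TM$ that presymplectically anchors $TM$ and admits $\mu$ as a $D$-momentum section.

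The main obstacle is exactly this last case. One cannot apply Prop.~\ref{prop:NonvanishMomentumB} directly because no nowhere-vanishing $1$-form exists, and near a zero of $\mu$ the auxiliary vector field $\bar n$ (with $\omega(n,\bar n)=1$) used in its proof is undefined, so the neighborhoods of the zeros and their complement require genuinely different local recipes that must then be glued. What makes the gluing routine is the affineness of the defining equations in the connection; what makes the local construction near a zero possible — and singles out the sign of $\chi(M)$ as the sole obstruction — is that a momentum section is forced to be infinitesimally the Darboux model $p_i\,dq^i-q^i\,dp_i$, whose zero automatically has index $+1$.
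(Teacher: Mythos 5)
Your argument is correct and follows essentially the same route as the paper's proof: the forward direction via the forced linearization $D\mu|_m=-\tilde{\omega}_m$ at a zero (the paper phrases this as $D_{v_x}n=-v_x$ for $n=\tilde{\omega}^{-1}(\mu)$) together with Poincar\'e--Hopf, and the converse via a $1$-form that is the Darboux model near $\chi(M)$ points and nowhere vanishing elsewhere, with the flat Darboux connections and the connection from Prop.~\ref{prop:NonvanishMomentumB} glued by a partition of unity using the affineness of the conditions $D\gamma=0$ and $D\mu=\gamma$ in $D$. The only cosmetic difference is that you work throughout with the $1$-form $\mu$ and its zeros where the paper works with the dual vector field.
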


\begin{proof}
We will first show that (ii) implies (i). 

If $M$ is non-compact or compact with zero Euler characteristic, then there is a nowhere vanishing section of $T^*M$. It follows from Proposition \ref{prop:NonvanishMomentumB} that $TM$ is weakly hamiltonian.

Suppose now that $M$ is compact with positive Euler characteristic $k$. We may construct a vector field with index $k$ as follows. Let $U_1$ be a disc in $M$ with Darboux coordinates $(q^1, \ldots, q^n, p_1, \ldots, p_n)$, and let $D_1$ be the connection on $TM|_{U_1}$ for which the coordinate vector fields are horizontal. The 1-form $\mu = p_i dq^i - q^i dp_i$ of Eq.~\eqref{eq:DarbouxMom1b} is a momentum section with respect to the trivial connection given by the Darboux coordinates. The vector field $n$ satisfying $\iota_n \omega = \mu$ is the negative Euler vector field $n = - q^i\frac{\partial}{\partial q^i} - p^i\frac{\partial}{\partial p^i}$ which has a zero of index 1 at the origin (since $M$ has even dimension). We repeat this construction to get vector fields of the same form on Darboux coordinate discs $U_2, \ldots, U_k$ with disjoint closures. The vector field we thus obtain on $U := U_1 \cup \ldots \cup U_k$ has total index $k$. We can extend this vector field to a vector field $\xi$ with non-degenerate isolated zeros on all of $M$. By the Poincar\'e-Hopf theorem the restriction of the vector field to $M \setminus \bar U$ must have index $0$.  The index of a non-degenerate zero is $\pm 1$, so that we must have the same number of zeros of index $1$ as of index $-1$. Since $M$ is assumed to be connected, so is $M \setminus \bar U$, so we can pairwise cancel the zeros of positive and negative index by modifying them along paths connecting the elements of a pair. The result is a vector field $v$ which is the negative Euler vector field of the Darboux coordinates on $U_1, \ldots, U_k$ and nowhere vanishing on $M \setminus \bar U$.

Using Proposition \ref{prop:NonvanishMomentumB}, we can find a connection $D'$ over a neighborhood of $M \setminus U$ such that $\mu := \iota_v \omega$ is a $D'$-momentum section over $M \setminus U$. By construction, $\mu$ is also a $D_i$-momentum section over every $U_i$. Since the condition~\eqref{eq:MomentumMapDef} for $\mu$ to be a momentum section does not involve derivatives of the  connection, we can use a partition of unity to merge $D'$ with the trivial connections $D_i$ on $U_i$ to a connection $D$ on all of $M$ with respect to which $\mu$ is a momentum section. 

For the converse direction, we let $n$ be the unique vector field such that $\mu = \iota_n \omega$. Let $x$ be a zero of $n$. The condition for $\mu$ to be a $D$-momentum section can be expressed in terms of $n$ as
\begin{equation*}
  \omega(D_v n, w) = \omega(-v, w) + (D_v\omega)(n,w) \,,
\end{equation*}
for all $w$, which is equivalent to
\begin{equation*}
  D_v n = -v + \tilde{\omega}^{-1}(\iota_n D_v\omega) \,.
\end{equation*}
When we evaluate this equation at $x$, the second term on the right hand side vanishes, so that we obtain
\begin{equation*}
  D_{v_x}n = -v_x  \,.
\end{equation*}
This shows that the derivative of $n$ at any zero $x$ is $-1$, so that $x$ is an isolated zero with index 1. Since this is true for every zero of $n$, the Poincar\'e-Hopf theorem implies that if $M$ is compact the Euler characteristic cannot be negative.

\end{proof}

We now move on to bracket compatibility.  Recall that a vector field $n$ on $(M,\omega)$ is called a \textbf{Liouville vector field} when $\Lie_n \omega = -\omega$.  (Some authors do not use the minus sign in the definition.)

\begin{Proposition}
\label{prop:TangentHamB}
Let $(M,\omega)$ be a manifold with a non-zero 
regular presymplectic form. The following are equivalent:
\begin{itemize}

\item[(i)] There is a Liouville vector field on $M$ that is nowhere tangent to $TM^\perp$.

\item[(ii)] There is a nowhere vanishing 1-form $\mu \in \Omega^1(M)$ that 
annihilates $TM^\perp$ and satisfies $d\mu = -\omega$. 
\end{itemize}
These equivalent conditions imply:
\begin{itemize}

\item[(iii)] The tangent Lie algebroid of $M$ is hamiltonian.

\end{itemize}
\end{Proposition}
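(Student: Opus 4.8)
The plan is to prove the equivalence (i)$\Leftrightarrow$(ii) through the correspondence $\mu = \iota_n\omega$ between vector fields and those $1$-forms lying in the image of $\tilde\omega$, and then to deduce (iii) from (ii) by combining Prop.~\ref{prop:NonvanishMomentumB} with the observation that, for the tangent Lie algebroid, bracket-compatibility of a momentum section is equivalent to the single equation $d\mu = -\omega$.

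For (i)$\Rightarrow$(ii): given a Liouville vector field $n$, set $\mu := \iota_n\omega$. Cartan's formula together with $d\omega = 0$ gives $d\mu = d\iota_n\omega = \Lie_n\omega = -\omega$. For $u \in T_mM^\perp$ one has $\langle\mu_m, u\rangle = \omega_m(n_m,u) = -(\iota_u\omega)(n_m) = 0$, so $\mu$ annihilates $TM^\perp$; and if $\mu_m = \iota_{n_m}\omega = 0$, then $n_m \in \ker\tilde\omega_m = T_mM^\perp$, contradicting the hypothesis that $n$ is nowhere tangent to $TM^\perp$. Hence $\mu$ is nowhere vanishing. For the converse (ii)$\Rightarrow$(i): regularity of $\omega$ makes $TM^\perp = \ker\tilde\omega$ a sub-bundle, and the image of $\tilde\omega$ is then the sub-bundle of $T^*M$ consisting of covectors annihilating $TM^\perp$. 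Choosing a complement $E$ with $TM = TM^\perp\oplus E$ gives a bundle isomorphism $\tilde\omega|_E$ onto that image, so $n := (\tilde\omega|_E)^{-1}\mu$ is a smooth vector field with $\iota_n\omega = \mu$; then $\Lie_n\omega = d\iota_n\omega + \iota_n d\omega = d\mu = -\omega$, so $n$ is Liouville, and it is nowhere tangent to $TM^\perp$ because otherwise $\mu = \iota_n\omega$ would vanish somewhere.

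For (ii)$\Rightarrow$(iii): the $1$-form $\mu$ is nowhere vanishing, and both $\mu$ and $d\mu = -\omega$ annihilate $TM^\perp$ (the latter since $\iota_u\omega = 0$ for every $u \in TM^\perp$), so $\mu$ is ``basic'' in the sense of Prop.~\ref{prop:NonvanishMomentumB}, and that proposition provides a presymplectically anchored connection $D'$ on $TM$ for which $\mu$ is a $D'$-momentum section. It then remains only to verify bracket-compatibility, which by Def.~\ref{def:MomentumEquiv} does not involve the connection: for $A = TM$ the anchor is the identity, and by \eqref{eq:defDA} the Lie algebroid differential $\DA$ is the de Rham differential, so condition~\eqref{eq:MomEqui} reads $(d\mu)(u,v) = -\omega(u,v)$ for all vector fields $u,v$, i.e.\ exactly $d\mu = -\omega$. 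Hence $\mu$ is a bracket-compatible $D'$-momentum section and $(TM, D', \mu)$ is hamiltonian.

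The genuinely hard computation --- producing a presymplectically anchored connection for which a given basic nowhere-vanishing $1$-form is a momentum section --- has already been carried out in the proof of Prop.~\ref{prop:NonvanishMomentumB}, so within the present argument the only steps requiring a little care are the smoothness of $n$ in (ii)$\Rightarrow$(i), where regularity of $\omega$ is used to guarantee that $\mathrm{im}\,\tilde\omega$ is a sub-bundle, and the identification of bracket-compatibility for the tangent Lie algebroid with the relation $d\mu = -\omega$; neither poses a substantial obstacle.
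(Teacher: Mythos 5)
Your proposal is correct and follows essentially the same route as the paper: (i)$\Leftrightarrow$(ii) via the correspondence $\mu=\iota_n\omega$, using a choice of complement to $TM^\perp$ (made a subbundle by regularity) to invert $\tilde\omega$ in the direction (ii)$\Rightarrow$(i), and (ii)$\Rightarrow$(iii) by invoking Prop.~\ref{prop:NonvanishMomentumB} and noting that for $TM$ bracket-compatibility reduces to $d\mu=-\omega$. The only cosmetic difference is that you phrase the inversion through $\mathrm{im}\,\tilde\omega=(TM^\perp)^\circ$ rather than the pullback isomorphism $(TM^\perp)^\circ\cong C^*$, which is the same argument.
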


\begin{proof}
Assume (ii).  Choose a complement $C$ to $TM^\perp$ in $TM$.  The restriction $\omega_C$ of $\omega$ to the vector bundle $C$ is nondegenerate, and the pullback map from  $(TM^\perp)^\circ$ to $C^*$ is an isomorphism.   Since $\mu$ annihilates $TM^\perp$  it can be viewed as a section of $(TM^\perp)^\circ$; since $\mu$ is nowhere zero, it pulls back to a nowhere zero section $\mu_C$ of $C^*$. Hence there is a (unique) nonzero section $n$ of $C\subseteq TM$ for which $i_n \omega_C = \mu_C$. Now let $v \in TM$ be decomposed as $v_C + v_\perp$, where $v_C \in C$ and $v_\perp \in TM^\perp$. Then $(i_n\omega)(v) = (i_n\omega) (v_C + v_\perp) = (i_n \omega )(v_C) = (i_n\omega_C)(v_C) = \mu_C (v_C) = \mu (v)$ (with the last equality due to the fact that $\mu$ annihilates $TM^\perp$).  So $i_n\omega = \mu$, and hence $n$ is a Liouville vector field. Moreover, since $\mu = \iota_n \omega$ is nowhere vanishing, $n$ is nowhere tangent to $TM^\perp$.
So (ii) implies (i).

Next, assume (i). Since the Liouville vector field $n$ is nowhere tangent to $TM^\perp$, $\mu = \iota_n\omega$ is nowhere vanishing. By definition, $\mu$ annihilates $TM^\perp$. Since $d\mu = \Lie_n \omega = -\omega$, the differential $d\mu$ annihilates $TM^\perp$ as well. So (i) implies (ii).

Assume (ii). Then we can apply Proposition \ref{prop:NonvanishMomentumB} which states that there is a connection $D$ such that $(TM, D, \mu)$ is a weakly hamiltonian structure for $TM$. Since $d\mu = -\omega$, the momentum section is bracket compatible. So (ii) implies (iii).
\end{proof}

\begin{Remark}
\label{rmk:HamNotmuvanish}
If (iii) of Proposition \ref{prop:TangentHamB} holds, there is a hamiltonian $D$-momentum section $\mu \in \Omega^1(M)$ which is bracket-compatible, $d\mu = - \omega$. By Eq.~\eqref{eq:TorsionSymp} and the assumption that $\omega$ is nowhere zero, $\mu$ must be nowhere vanishing. However, we could neither prove nor disprove that there is always such a $\mu$ that annihilates $TM^\perp$.
\end{Remark}

\begin{Remark}
In the case that $\omega = 0$, every tangent Lie algebroid is hamiltonian with momentum section $\mu = 0$ and any connection $D$, so that (iii) always holds. However, since $TM^\perp = TM$, every (Liouville) vector field is tangent to $TM^\perp$ and every 1-form $\mu$ annihilating $TM^\perp$ is zero, so that (i) and (ii) are never true.
\end{Remark}

\begin{Corollary}
\label{cor:HamSympSpaceB}
The tangent Lie algebroid of every presymplectic vector space is hamiltonian.
\end{Corollary}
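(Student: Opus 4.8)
The plan is to deduce the statement from Proposition~\ref{prop:TangentHamB}, after disposing of the degenerate case. If the presymplectic form $\omega$ on the vector space $V$ is zero, then $TV^{\perp}=TV$ and, as observed in the remark following Prop.~\ref{prop:TangentHamB}, the tangent Lie algebroid is hamiltonian in the trivial way, with momentum section $\mu=0$ and any connection, so there is nothing to prove. Thus I would assume $\omega\neq 0$. Since $\omega$ is a (translation invariant, constant-coefficient) presymplectic form on a vector space, it is closed and of constant rank $2r$ with $r\geq 1$; in particular it is a non-zero regular presymplectic form, so the equivalence of conditions (ii) and (iii) in Prop.~\ref{prop:TangentHamB} applies. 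It therefore suffices to produce a nowhere vanishing $\mu\in\Omega^1(V)$ that annihilates $TV^{\perp}$ and satisfies $d\mu=-\omega$; condition (iii), namely that $TV$ is hamiltonian, follows at once.

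To construct $\mu$, I would choose global linear Darboux coordinates $(q^1,\dots,q^r,p_1,\dots,p_r,z_1,\dots,z_{s-2r})$ on $V$, for which $\omega=dq^i\wedge dp_i$ and $TV^{\perp}$ is spanned by the coordinate vector fields $\partial/\partial z_k$, and set
\[
 \mu = p_i\, dq^i + dp_1 .
\]
The verifications are immediate: in the global coframe $\{dq^i,dp_i,dz_k\}$ the coefficient of $dp_1$ in $\mu$ is the constant $1$, so $\mu$ is nowhere zero; $\mu$ has no $dz_k$ component, so it annihilates $TV^{\perp}$; and $d\mu = dp_i\wedge dq^i = -dq^i\wedge dp_i = -\omega$, which also shows that $d\mu$ annihilates $TV^{\perp}$. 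Hence $\mu$ satisfies condition (ii) of Prop.~\ref{prop:TangentHamB}, and $TV$ is hamiltonian. Alternatively, one may feed this same $\mu$ into Prop.~\ref{prop:NonvanishMomentumB} to obtain a presymplectically anchored connection $D$ with $D$-momentum section $\mu$, and observe that, since the Lie algebroid differential of $TV$ is the de Rham differential and its anchor is the identity, bracket-compatibility of $\mu$ reduces exactly to the identity $d\mu=-\omega$.

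I do not expect a genuine obstacle. The only point that deserves a moment's attention is that the naive primitive $p_i\,dq^i$ of $-\omega$ vanishes along the subspace $\{p=0\}$, so a closed correction term must be added; taking it to be $dp_1$ --- which exists precisely because $\omega\neq 0$ forces $r\geq 1$ --- restores the ``nowhere vanishing'' property without spoiling either $d\mu=-\omega$ or the vanishing on $TV^{\perp}$. Note that this gives strictly more than the local existence of momentum sections that follows from Prop.~\ref{prop:SympConn} together with Prop.~\ref{prop:C2impliesLocHam}: it produces a single global, nowhere vanishing, bracket-compatible momentum section on the whole vector space.
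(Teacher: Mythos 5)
Your proof is correct and follows essentially the same route as the paper: Darboux coordinates, the primitive $\mu = p_i\,dq^i + dp_1$, and an appeal to Proposition~\ref{prop:TangentHamB}(ii)$\Rightarrow$(iii). Your explicit handling of the degenerate case $\omega=0$ (which the paper's one-line proof glosses over, since $dp_1$ requires $r\geq 1$) is a small but welcome extra precaution.
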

\begin{proof}
Once again, we use the Darboux coordinate system
$$(q^1,\cdots,q^r,p_1\cdots, p_r, z_1,\cdots , z_{s-2r}),$$ in which 
$\omega = dq^i \wedge dp_i$, so that the 1-form $\mu = p^i dq_i + dp_1$ is nowhere vanishing, annihilates $TM^\perp = \mathrm{Span}\{\frac{\partial}{\partial z^1},\ldots, \frac{\partial}{\partial z^{s-2r}}\}$, and satisfies $d\mu = -\omega$. We conclude that (ii) of Proposition \ref{prop:TangentHamB} holds.
\end{proof}

\begin{Corollary}
\label{cor:TangLocHamB}
The tangent Lie algebroid of every regular presymplectic manifold is locally hamiltonian.
\end{Corollary}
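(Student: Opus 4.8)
The plan is to reduce everything to the local Darboux model and then invoke Prop.~\ref{prop:TangentHamB}. First I would dispose of the degenerate case: if the constant rank of $\omega$ is $0$, then $\omega = 0$ identically, and by the Remark following Prop.~\ref{prop:TangentHamB} the tangent Lie algebroid is already globally (hence locally) hamiltonian with $\mu = 0$ and an arbitrary connection. So from now on the rank $2r$ may be assumed positive.

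Next I would fix a point $m \in M$ and, using the Darboux normal form for regular presymplectic manifolds, choose a coordinate neighborhood $U$ of $m$ with coordinates $(q^1,\ldots,q^r,p_1,\ldots,p_r,z_1,\ldots,z_{s-2r})$, $s = \dim M$, in which $\omega|_U = dq^i \wedge dp_i$ and $TU^\perp = \mathrm{Span}\{\partial/\partial z^1,\ldots,\partial/\partial z^{s-2r}\}$. On $U$ I would take the explicit $1$-form
\[
 \mu := p_i\, dq^i + dp_1 \,,
\]
exactly as in the proof of Cor.~\ref{cor:HamSympSpaceB}: it is nowhere vanishing (its $dp_1$-component is identically $1$), it annihilates $TU^\perp$, and $d\mu = -\omega|_U$. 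Thus condition (ii) of Prop.~\ref{prop:TangentHamB} holds for the regular, nonzero presymplectic manifold $(U,\omega|_U)$, and that proposition then yields a connection $D$ on $TU = TM|_U$ (the one constructed in the proof of Prop.~\ref{prop:NonvanishMomentumB}) for which $(TM|_U, D, \mu)$ is hamiltonian. Since $m$ was arbitrary, this shows $TM$ is locally hamiltonian, with connections that in general differ from chart to chart.

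I expect no genuine obstacle here: this is essentially the chart-wise version of Cor.~\ref{cor:HamSympSpaceB}, a Darboux chart being symplectomorphic to an open subset of the presymplectic vector space $(\bbR^s, dq^i\wedge dp_i)$, on which the needed $1$-form and connection restrict from the global data exhibited in that corollary. The only points deserving an explicit sentence are the degenerate case $\omega = 0$, handled separately above, and the harmless replacement of ``presymplectic vector space'' by ``open subset of one'' in the citation of Cor.~\ref{cor:HamSympSpaceB}, which affects none of the steps in its proof since the relevant $\mu$ and connections are defined there.
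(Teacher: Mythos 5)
Your proof is correct and is precisely the argument the paper intends (the corollary is stated without proof there): apply the Darboux normal form on a neighborhood of each point and run the argument of Cor.~\ref{cor:HamSympSpaceB} chart-wise, invoking Prop.~\ref{prop:TangentHamB}(ii)$\Rightarrow$(iii). Your explicit separation of the rank-zero case, which Prop.~\ref{prop:TangentHamB} excludes by its ``non-zero'' hypothesis and which the paper handles only in the following Remark, is a welcome bit of extra care.
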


In the symplectic case, Proposition \ref{prop:TangentHamB} takes the following simpler form:

\begin{Proposition}
\label{prop:SympTangHam}
For every symplectic manifold $(M,\omega)$ the following are equivalent:
\begin{itemize}

\item[(i)] There is a Liouville vector field on $M$ that is nowhere zero.

\item[(ii)] There is a nowhere vanishing 1-form $\mu \in \Omega^1(M)$ such that $d\mu = -\omega$.  

\item[(iii)] The tangent Lie algebroid of $M$ is hamiltonian. 

\end{itemize}
\end{Proposition}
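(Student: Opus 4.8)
The plan is to deduce this proposition from the presymplectic statement Proposition~\ref{prop:TangentHamB}, using the single simplifying observation that when $\omega$ is symplectic the characteristic distribution $TM^\perp$ is zero. Under that hypothesis, a vector field is nowhere tangent to $TM^\perp$ exactly when it is nowhere zero, and every $1$-form automatically annihilates $TM^\perp$; so conditions (i) and (ii) here are precisely conditions (i) and (ii) of Proposition~\ref{prop:TangentHamB}, and the equivalence (i)~$\Leftrightarrow$~(ii) together with the implication (ii)~$\Rightarrow$~(iii) is immediate from that proposition. For (i)~$\Leftrightarrow$~(ii) one can equally argue directly: since $\tilde{\omega}\colon TM \to T^*M$ is an isomorphism, the assignments $\mu = \iota_n \omega$ and $n = \tilde{\omega}^{-1}(\mu)$ are mutually inverse, they exchange ``nowhere zero'' with ``nowhere vanishing,'' and they turn the Liouville identity $\Lie_n \omega = d\iota_n \omega = -\omega$ into $d\mu = -\omega$.

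What is genuinely new compared with Proposition~\ref{prop:TangentHamB} is the converse implication (iii)~$\Rightarrow$~(ii): a hamiltonian structure on $TM$ must produce a nowhere vanishing primitive of $-\omega$. I would argue as follows. Suppose $(TM, D, \mu)$ is hamiltonian with $\mu \in \Omega^1(M)$. For the tangent Lie algebroid the Lie algebroid differential $\DA$ is the de Rham differential $d$ and the anchor $\rho$ is the identity, so the bracket-compatibility condition~\eqref{eq:MomEqui} reads $d\mu(u,v) = -\omega(u,v)$ for all vector fields $u,v$, i.e.\ $d\mu = -\omega$. It then only remains to see that $\mu$ is nowhere zero, and this is exactly where nondegeneracy is used: by the torsion reformulation~\eqref{eq:TorsionSymp} of bracket-compatibility, $\langle \mu, T(u,v) \rangle = \omega(u,v)$ for all $u,v$, so if $\mu$ vanished at a point $m$ the left-hand side would vanish there for all $u,v$, forcing $\omega_m = 0$ and contradicting that $\omega$ is symplectic. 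Hence $\mu$ is nowhere vanishing and (ii) holds.

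I do not expect any real technical obstacle: every computational ingredient is already in place (Proposition~\ref{prop:TangentHamB}, the identifications $\DA = d$ and $\rho = \id$ for $TM$, and the torsion formula~\eqref{eq:TorsionSymp}), and the argument is essentially organizational. The one point worth emphasizing is conceptual rather than computational, and is the reason this symplectic statement is a genuine equivalence while Proposition~\ref{prop:TangentHamB} is not: in the general presymplectic case a hamiltonian momentum $1$-form need not annihilate $TM^\perp$ (cf.\ Remark~\ref{rmk:HamNotmuvanish}), so (iii) does not give back (ii); the symplectic hypothesis makes $TM^\perp$ trivial, which removes precisely that obstruction and closes the loop.
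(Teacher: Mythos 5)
Your proposal is correct and follows essentially the same route as the paper: the equivalence of (i), (ii) and the implication (ii)~$\Rightarrow$~(iii) are read off from Prop.~\ref{prop:TangentHamB} using $TM^\perp = 0$, and the converse (iii)~$\Rightarrow$~(ii) is exactly the content of Remark~\ref{rmk:HamNotmuvanish}, which the paper proves by the same combination of $d\mu=-\omega$ from bracket-compatibility and nonvanishing of $\mu$ via Eq.~\eqref{eq:TorsionSymp} and nondegeneracy of $\omega$. No gaps.
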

\begin{proof}
Since $\omega$ is symplectic, we have $TM^\perp = 0$, so that the conditions in Proposition \ref{prop:TangentHamB} for the Liouville vector field not to be tangent to $TM^\perp$ and for $\mu$ to annihilate $TM^\perp$ are vacuous. Moreover, by Remark \ref{rmk:HamNotmuvanish} condition (iii) now implies (ii), so that all three conditions are equivalent.
\end{proof}

\begin{Remark}
The tangent bundle of a symplectic vector space has the structure of an action Lie algebroid $T\bbR^{2n} \cong \bbR^{2n} \ltimes \bbR^{2n}$, where $\bbR^{2n}$ is the commutative Lie algebra acting freely by the coordinate vector fields of Darboux coordinates. The action is by hamiltonian vector fields, but the action is only weakly hamiltonian.  (A bracket-compatible momentum section for the trivial connection would have to map the basis of the Lie algebra to $2n$ functionally independent, Poisson commuting functions. However, a maximum of $n$ such functions may exist for a completely integrable system.) We conclude that the connection of a hamiltonian structure, which is guaranteed to exist by Corollary \ref{cor:HamSympSpaceB}, cannot be the trivial one which is customarily attached to an action Lie algebroid. In fact if, for a connection making the Lie algebroid hamiltonian, there is any subbundle of $TM$ on which both the curvature and the torsion vanish, its rank may not exceed $n$.
\end{Remark}

We will see below that the tangent bundle of any exact (hence noncompact) symplectic manifold is hamiltonian.  Since the proof of that fact depends on a deep result in symplectic topology, we present next a class of symplectic manifolds whose tangent Lie algebroids are more easily proven to be hamiltonian.

\begin{Proposition}
If a manifold $Q$ is non-compact or compact with zero Euler characteristic, then the tangent Lie algebroid of $M = T^*Q$ is hamiltonian.
\end{Proposition}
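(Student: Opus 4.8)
The plan is to reduce the statement, via Proposition~\ref{prop:SympTangHam}, to constructing a nowhere-vanishing $1$-form $\mu\in\Omega^1(T^*Q)$ with $d\mu=-\omega$, where $\omega=d\theta$ is the canonical symplectic form of $M=T^*Q$ and $\theta$ the tautological $1$-form. Since $M$ is symplectic, Proposition~\ref{prop:SympTangHam} makes the existence of such a $\mu$ equivalent to the tangent Lie algebroid of $M$ being hamiltonian, so this is all that is needed. First I would note that the hypothesis on $Q$ — noncompact, or compact with $\chi(Q)=0$ — is precisely the condition under which $Q$ admits a nowhere-vanishing vector field $A\in\calX(Q)$ (the Poincar\'e--Hopf theorem in the compact case, and a classical fact in the noncompact case).

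The construction: to such an $A$ associate its fiberwise-linear symbol $g_A\in C^\infty(T^*Q)$, $g_A(q,p)=\langle p,A_q\rangle$ (equivalently, $g_A$ is the Hamiltonian generating the cotangent lift of the flow of $A$), and set
\[
  \mu := -\theta + dg_A \,.
\]
Then $d\mu=-d\theta=-\omega$ automatically, so it only remains to check that $\mu$ is nowhere zero. In local cotangent coordinates $(q^i,p_i)$ with $A=A^i(q)\,\partial_{q^i}$ we have $g_A=A^i(q)\,p_i$, hence $dg_A=p_i\,\partial_jA^i\,dq^j+A^i\,dp_i$, while $\theta=p_i\,dq^i$ has no $dp$-component; thus the $dp$-part of $\mu$ is $A^i(q)\,dp_i$, which never vanishes because $A$ is nowhere zero, so $\mu_{(q,p)}\neq 0$ for all $(q,p)$. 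Equivalently, the Liouville vector field $n$ defined by $\iota_n\omega=\mu$ is $n=-E+X_{g_A}$, with $E=p_i\,\partial_{p_i}$ the fiberwise Euler vector field of $T^*Q$; its $TQ$-component is exactly $A$, so $n$ is nowhere zero, and $\Lie_n\omega=\Lie_{-E}\omega=-\omega$. Proposition~\ref{prop:SympTangHam} then yields the conclusion.

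I do not anticipate a real obstacle; the only step needing thought is the choice of correction term. The obvious primitive $-\theta$ of $-\omega$ vanishes exactly on the zero section of $T^*Q$, and correcting it by the pullback $\pi^*\beta$ of a closed $1$-form from $Q$ merely moves the zero set to the graph of $\beta$, again a copy of $Q$; so pullback corrections alone cannot help. Using instead a fiberwise-linear $g_A$, whose vertical differential is the fiber-constant, nowhere-zero covector field dual to $A$, is what forces $\mu$ to be nonvanishing on all of $T^*Q$ and not just along the zero section. One should also keep in mind the edge cases excluded by the hypothesis (for instance $Q$ a point, where $\chi(Q)=1\neq 0$), but these do not occur.
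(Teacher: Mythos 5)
Your proof is correct and is essentially the paper's argument in dual form: the paper corrects the fiberwise (negative Euler) Liouville vector field $n$, which vanishes on the zero section, by adding the hamiltonian vector field of $f=\iota_\xi\theta=p_i\xi^i$ for a nowhere-vanishing $\xi\in\calX(Q)$, and checks non-vanishing of $n'=n+X_f$ by projecting to $Q$; your $\mu=-\theta+dg_A$ is exactly $\iota_{n'}\omega$ for that same $n'$ (with $g_A=f$), and your check that the vertical component of $\mu$ is the nowhere-zero $A^i\,dp_i$ is the dual of the projection argument. Only cosmetic sign-convention differences separate the two write-ups, so nothing further is needed.
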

\begin{proof}
Let $\theta$ be the canonical 1-form on $T^*Q$, which satisfies $d\theta = -\omega$. Let $n$ be the unique vector field satisfying $\iota_n \omega = \theta$. In local Darboux coordinates $\theta = p_i dq^i$ so that $n = - p_i \frac{\partial}{\partial p_i}$, which is a negative Liouville vector field, $\Lie_n \omega = -\omega$. We cannot apply Proposition \ref{prop:SympTangHam} yet, because $n$ vanishes on the zero section of the bundle $\pi: T^*Q \to Q$.

Since $Q$ is non-compact or compact with zero Euler characteristic, there is a nowhere vanishing vector field $\xi \in \calX(Q)$. Let $X$ be the hamiltonian vector field on $T^*Q$ generated by the function $f$ defined as $\iota_\xi \theta$ for each cotangent vector $\theta$. In local coordinates, we can write $\xi = \xi^i(q) \frac{\partial}{\partial q^i}$, so that $f =  p_i \xi^i$ and, hence, 
\begin{equation*}
  X = \xi^i \frac{\partial}{\partial q^i} 
  - p_i \frac{\partial \xi^i}{\partial q^j}
  \frac{\partial}{\partial p^i} \,.
\end{equation*}
Let $n' := n + X$. Since $X$ is a hamiltonian vector field, it satisfies $\Lie_X \omega = 0$, so that $\Lie_{n'} \omega = \Lie_n \omega + \Lie_X \omega = -\omega$. Moreover $n'$ projects by $T\pi$ to $\xi$, which is nowhere vanishing. Hence, $n'$ is nowhere vanishing as well. From Proposition \ref{prop:SympTangHam} it now follows that $TM$ is hamiltonian with momentum section $\mu = \iota_{n'} \omega$.
\end{proof}

For the general case, to prove that the tangent Lie algebroid of every exact symplectic manifold is hamiltonian, we apply recent results of Tang, Stratmann, and Karshon/Tang.  This work, inspired by the application to Lie algebroids, is also of considerable independent interest in symplectic topology.  

To find a nonvanishing primitive of an exact symplectic manifold, the first step is to begin with any primitive and to modify it by adding the differential of a function so that the new primitive $\lambda$ is transversal to the zero section.   (See for example, Lemma 3.4 of \cite{KarshonTang:2021}, for a proof.)  The zeros of this new primitive are then isolated.  Under the usual assumption that $M$ is separable, there are at most countably many zeros. In \cite{Stratmann:2020a}, Stratmann constructs a primitive with further special features and then pushes them out to infinity one at a time in such a way that there is a limiting primitive with no zeros.

Another general approach, which we proposed in a previous draft of this paper, is to remove rays containing the zeros of $\lambda$, either one at a time or all at once, showing the the resulting manifold is still symplectomorphic to $M$.   This has been carried out by Tang in \cite{Tang:2020}, with some conditions on the behavior at infinity of $M$, and then in general by Stratmann in \cite{Stratmann:2020b} and by Karshon and Tang in \cite{KarshonTang:2021}.

The conclusion follows:

\begin{Theorem}
\label{thm-hamiltoniantangent}
The tangent Lie algebroid of a symplectic manifold is hamiltonian if and only if the symplectic structure is exact.
\end{Theorem}

\subsection{Regular foliations}

Let $\calF$ be a regular foliation of a manifold $M$. The tangent spaces to the leaves form the integrable distribution $T\calF \subseteq TM$, which can be viewed as a Lie algebroid over $M$ with the embedding of $T\calF$ into $TM$ as anchor and the commutator of vector fields as Lie bracket. Let $\omega$ be a symplectic\footnote{We will restrict our attention here to the nondegenerate case in order to avoid issues arising from the interaction between $\calF$ and the characteristic foliation of $\omega$.} form on $M$. We recall that the foliation $\calF$ is called \textbf{symplectically complete} if the symplectic orthogonal $(T\calF)^\perp$ is an integrable distribution \cite{Libermann:1983}.  When this is the case, the foliation integrating $(T\calF)^\perp$ will be denoted by $\calF^\perp$, so that $(T\calF)^\perp = T\calF^\perp$.

\begin{Proposition}
\label{prop:FolSympComplete}
The tangent Lie algebroid $T\calF$ of a regular foliation $\calF$ of a symplectic manifold $M$ is symplectically anchored if and only if $\calF$ is symplectically complete.
\end{Proposition}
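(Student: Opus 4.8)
The plan is to reduce the statement to the hierarchy of compatibility conditions from Section~\ref{sec:anchored} and then invoke the Frobenius theorem. Write $\rho\colon T\calF \hookrightarrow TM$ for the anchor of the Lie algebroid $T\calF$, so that $\rho(T\calF)=T\calF$ and the dualized anchor is $\gamma = \tilde\omega\circ\rho$. By Prop.~\ref{prop:Connection1}, saying that $(T\calF,\rho)$ is presymplectically anchored with respect to some connection is exactly condition (C3) for this anchored vector bundle. So it suffices to prove that (C3) holds if and only if $\calF$ is symplectically complete.

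First I would check that $(T\calF,\rho)$ is presymplectically regular, so that Prop.~\ref{prop:CompatRegular} applies. The anchor $\rho$ is fibrewise injective of constant rank $\dim\calF$ because $\calF$ is a regular foliation, and $\tilde\omega$ is a bundle isomorphism because $\omega$ is symplectic; hence $\gamma = \tilde\omega\circ\rho$ has constant rank $\dim\calF$. This is the one hypothesis that must be verified by hand, and it is precisely where non-degeneracy of $\omega$ is used: without it, (C3) and (C4) can genuinely differ, as in Example~\ref{ex:C4example}. Prop.~\ref{prop:CompatRegular} then gives that (C3) is equivalent to (C4), namely that the presymplectic orthogonal $\rho(T\calF)^\perp = (T\calF)^\perp$ is involutive, i.e.\ its space of sections is closed under the bracket of vector fields.

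Finally, since $\omega$ is symplectic and $\calF$ is regular, $(T\calF)^\perp$ is a regular distribution of rank $\dim M - \dim\calF$; for such a distribution, being closed under the bracket of vector fields is equivalent to being integrable, by the Frobenius theorem, and integrability of $(T\calF)^\perp$ is exactly the definition of $\calF$ being symplectically complete. Chaining the equivalences --- presymplectically anchored with respect to some connection $\iff$ (C3) $\iff$ (C4) $\iff$ $(T\calF)^\perp$ integrable $\iff$ $\calF$ symplectically complete --- finishes the proof. I expect no serious obstacle here: the entire content is in recognizing that presymplectic regularity lets Prop.~\ref{prop:CompatRegular} collapse the gap between the differential-ideal formulation (C3) and the involutivity formulation (C4).
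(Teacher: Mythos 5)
Your proof is correct and follows essentially the same route as the paper's: identify ``presymplectically anchored with respect to some connection'' with (C3) via Prop.~\ref{prop:Connection1}, use Prop.~\ref{prop:CompatRegular} to pass to (C4), and conclude by Frobenius. The only difference is that you spell out the verification of presymplectic regularity (constant rank of $\gamma=\tilde\omega\circ\rho$, using nondegeneracy of $\omega$ and regularity of $\calF$), which the paper leaves implicit in the phrase ``in the regular case''; this is a welcome clarification rather than a deviation.
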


\begin{proof}
(C4) from Section \ref{def:CompatCond} is the condition that $\rho(A)^\perp = (T\calF)^\perp$ is involutive, which by Frobenius' theorem is equivalent to $(T\calF)^\perp$ being integrable. In other words (C4) is equivalent to $\calF$ being symplectically complete. Proposition \ref{prop:CompatRegular} tells us that in the regular case (C4) is equivalent to (C3) which is equivalent to $A$ being presymplectically anchored.
\end{proof}

Let us now turn to momentum sections. In a first step, we observe that $\tilde{\omega}: TM \to T^*M$ induces an isomorphism from the normal bundle $N \calF^\perp := TM/T\calF^\perp$ to the dual of the tangent bundle of $\calF$, given by
\begin{equation*}
\begin{aligned}
  \theta: N\calF^\perp 
  &\stackrel{\cong}{\longrightarrow} T^*\calF
  \\
  \langle \theta_{\pi(v)}, a \rangle &= \omega(v,a) 
\end{aligned}
\end{equation*}
for all $v \in TM$ and $a \in T\calF$, where $\pi: TM \to N\calF^\perp$ denotes the canonical projection. As is the case for the normal bundle of any foliation, we have the \textbf{Bott connection} on $N\calF^\perp$, which is the flat $T\calF^\perp$-connection given by 
\begin{equation*}
  D_v \pi(w) := \pi([v,w])
\end{equation*}
for all $v \in \Gamma(M, T\calF^\perp)$ and $w \in \calX(M)$. The isomorphism $\theta$ maps the Bott connection to a $T\calF^\perp$-connection on $T^*\calF$, which we will also call the Bott connection.

\begin{Proposition}
\label{prop:BottInduce}
Let $\calF$ be a symplectically complete foliation of $(M,\omega)$. Then the Bott connection on $T^*\calF \cong N\calF^\perp$ is given for $v \in \Gamma(M,T\calF^\perp)$ and $\mu \in \Gamma(M, T^*\calF)$ by
\begin{equation*}
  \langle \nabla_v \mu, a \rangle
  = \omega([v,w],a) \,,
\end{equation*}
where $w \in \calX(M)$ is a vector field such that $\langle\mu, a \rangle = \omega(w,a)$.
\end{Proposition}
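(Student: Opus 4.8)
The plan is to unwind the two relevant definitions — that of the Bott connection on $N\calF^\perp$ and that of the isomorphism $\theta$ — and then verify that the resulting expression does not depend on the auxiliary choice of the vector field $w$. Since transporting a $T\calF^\perp$-connection along a bundle isomorphism again produces a $T\calF^\perp$-connection, the only thing to establish is the explicit formula.

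First I would note that, because $\theta\colon N\calF^\perp \to T^*\calF$ is a bundle isomorphism and $\pi\colon TM \to N\calF^\perp$ is a surjective bundle map, every section $\mu \in \Gamma(M,T^*\calF)$ is of the form $\mu = \theta\circ\pi(w)$ for some $w \in \calX(M)$; by the definition of $\theta$ this says precisely that $\langle\mu,a\rangle = \omega(w,a)$ for all $a \in T\calF$. The Bott connection on $T^*\calF$ is, by definition, the image under $\theta$ of the Bott connection $D$ on $N\calF^\perp$, so for $v \in \Gamma(M,T\calF^\perp)$ we get
\[
  D_v \mu = \theta\bigl( D_v \pi(w) \bigr) = \theta\bigl( \pi([v,w]) \bigr),
\]
and applying the definition of $\theta$ once more yields $\langle D_v \mu, a \rangle = \omega([v,w],a)$, which is the asserted formula.

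The one step requiring care — and the only place where symplectic completeness is used — is well-definedness: $\omega([v,w],a)$ must be independent of the choice of representative $w$. If $w,w' \in \calX(M)$ both satisfy $\omega(w,a) = \langle\mu,a\rangle = \omega(w',a)$ for all $a \in T\calF$, then $w - w'$ lies in the symplectic orthogonal of $T\calF$, i.e.\ $w - w' \in \Gamma(M,(T\calF)^\perp) = \Gamma(M, T\calF^\perp)$. Since $\calF$ is symplectically complete, $T\calF^\perp$ is involutive, so for $v \in \Gamma(M, T\calF^\perp)$ we have $[v, w-w'] \in \Gamma(M, T\calF^\perp) = \ker\pi$; hence $\pi([v,w]) = \pi([v,w'])$ and $\omega([v,w],a) = \omega([v,w'],a)$ for all $a \in T\calF$. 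This is, of course, the same argument that makes the Bott connection on $N\calF^\perp$ well defined, pushed through the isomorphism $\theta$. I expect no real obstacle: the proof is a transport-of-structure bookkeeping exercise, and the only thing to stay attentive to is which vector fields must lie in $\Gamma(M,T\calF^\perp)$ — the direction $v$ and the difference $w-w'$ of two representatives — as opposed to being arbitrary — the representative $w$ itself and the test section $a \in \Gamma(M,T\calF)$ — and to invoke involutivity of $T\calF^\perp$ exactly once, at the well-definedness step.
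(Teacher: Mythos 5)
Your proof is correct and is exactly the argument the paper intends; the paper's own proof simply says the statement "follows immediately from the definition of the isomorphism $\theta$," and your unwinding of the definitions of $\theta$ and of the Bott connection, together with the well-definedness check via involutivity of $T\calF^\perp$, is the full version of that one-line argument.
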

\begin{proof}
This follows immediately from the definition of the isomorphism $\theta$.
\end{proof}

\begin{Proposition}
\label{prop:BottRestrict}
Let $D$ be a symplectically anchored connection on the tangent bundle $T\calF$ of a regular foliation of a symplectic manifold. Then the dual connection on $T^*\calF$ restricted to the direction of $T\calF^\perp$ is the Bott connection.
\end{Proposition}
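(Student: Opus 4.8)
The plan is to reduce the statement to the single equation $D\gamma=0$ after unwinding the identifications in the definition of the dualized anchor. For the tangent Lie algebroid $A=T\calF$, whose anchor is the inclusion $T\calF\hookrightarrow TM$, we have $A^*=T^*\calF$, and the dualized anchor, regarded via $\Omega^1(M,A^*)=\Hom(TM,A^*)$ as a bundle map $\gamma\colon TM\to T^*\calF$, is the map $u\mapsto(\iota_u\omega)|_{T\calF}$, so that $\langle\gamma(u),a\rangle=\omega(u,a)$ for $u\in\calX(M)$ and $a\in\Gamma(M,T\calF)$. Two properties of $\gamma$ do all the work: it is surjective, because $\tilde\omega\colon TM\to T^*M$ is an isomorphism and $T^*M\to T^*\calF$ is onto; and its kernel is exactly $(T\calF)^\perp=T\calF^\perp$, since $(\iota_v\omega)|_{T\calF}=0$ precisely when $v\in T\calF^\perp$. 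Note also that, by Prop.~\ref{prop:FolSympComplete}, the hypothesis that $T\calF$ be symplectically anchored already forces $\calF$ to be symplectically complete, so that $T\calF^\perp$ is involutive and the Bott connection on $N\calF^\perp\cong T^*\calF$ is defined in the first place.

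The steps, in order. (1) Fix $v\in\Gamma(M,T\calF^\perp)$ and $\mu\in\Gamma(M,T^*\calF)$, and, using surjectivity of $\gamma$, pick $w\in\calX(M)$ with $\gamma(w)=\mu$; thus $\omega(w,a)=\langle\mu,a\rangle$ for all $a\in\Gamma(M,T\calF)$. (2) Observe that $\gamma(v)=0$, since $v\in T\calF^\perp$. (3) Apply the Koszul-type identity for the operator $D$ on $A^*$-valued $1$-forms --- the exact analogue of $d\alpha(v,w)=v\cdot\alpha(w)-w\cdot\alpha(v)-\alpha([v,w])$, immediate from the Leibniz rule~\eqref{eq:ConnLeibniz} ---
\[
 (D\gamma)(v,w)=D_v\bigl(\gamma(w)\bigr)-D_w\bigl(\gamma(v)\bigr)-\gamma\bigl([v,w]\bigr)\,,
\]
where the $D_v,D_w$ on the right are the directional covariant derivatives of the dual connection on $T^*\calF$. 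Since $D$ is symplectically anchored, $D\gamma=0$; since $\gamma(v)=0$, the middle term drops out; and since $\gamma(w)=\mu$, the identity collapses to $D_v\mu=\gamma([v,w])$, that is, $\langle D_v\mu,a\rangle=\omega([v,w],a)$ for all $a\in\Gamma(M,T\calF)$. (4) Compare with Prop.~\ref{prop:BottInduce}: since $w$ satisfies $\omega(w,a)=\langle\mu,a\rangle$, that proposition gives the Bott-connection covariant derivative of $\mu$ in the direction $v$, paired with $a$, as exactly $\omega([v,w],a)$. Hence the dual connection $D$ on $T^*\calF$, restricted to directions lying in $T\calF^\perp$, coincides with the Bott connection.

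The substance of the proof is step (2), the vanishing $\gamma(v)=0$ on $T\calF^\perp$, which is what removes the inhomogeneous term of the Koszul identity; the remainder is bookkeeping of the identifications $\Hom(A,T^*M)\cong\Omega^1(M,A^*)\cong\Hom(TM,A^*)$ together with a consistency check of sign conventions --- both $\gamma$ and the isomorphism $\theta\colon N\calF^\perp\to T^*\calF$ used to transport the Bott connection are built from the same pairing, $\omega$ restricted to $T\calF$, so (with the opposite normalization of $\gamma$ one merely replaces $w$ by $-w$ when invoking Prop.~\ref{prop:BottInduce}) the comparison in step (4) is unaffected. No analytic input is required beyond the involutivity of $T\calF^\perp$ (needed only so that the Bott connection makes sense), and the conclusion does not depend on the auxiliary choice of $w$, since $D_v\mu$ manifestly does not.
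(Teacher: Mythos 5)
Your proof is correct and follows essentially the same route as the paper's: both evaluate the condition $D\gamma=0$ on a pair $(v,w)$ with $v$ tangent to $T\calF^\perp$ and $\gamma(w)=\mu$, observe that the terms involving $\gamma(v)$ drop out, and identify the surviving term $\gamma([v,w])$ with the Bott connection via Prop.~\ref{prop:BottInduce}. The only difference is presentational — you package the computation as the intrinsic Koszul identity for the exterior covariant derivative, whereas the paper writes out the same identity in fully expanded scalar form (Eq.~\eqref{eq:FolAnchConn2}) before specializing to $v\in\Gamma(M,T\calF^\perp)$.
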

\begin{proof}
From Eq.~\eqref{eq:TangSympConn} we deduce that the condition for the connection $D$ on $T\calF$ to be presymplectically anchored is
\begin{equation}
\label{eq:FolAnchConn}
\begin{split}
  0 &= v \cdot \omega(w, a) - w \cdot \omega(v, a) 
     - \omega([v,w], a)
  \\
  &\quad{}
     - \omega(w, D_v a) + \omega(v, D_w a) \,,
\end{split}
\end{equation}
for all $v, w \in \calX(M)$ and $a \in \Gamma(M, T\calF)$.

Let now $\mu \in \Gamma(M,T^*\calF)$ be given by $\langle \mu, a \rangle = \omega(w,a)$ for some vector field $w$. Then $\langle D_v \mu, a\rangle = v\cdot \langle\mu, a\rangle - \langle\mu, D_v a \rangle =  v \cdot \omega(w,a) - \omega(w, D_v a)$. With this we can write Eq.~\eqref{eq:FolAnchConn} as
\begin{equation}
\label{eq:FolAnchConn2}
  \langle D_v \mu, a \rangle
  = \omega([v,w],a) + w \cdot \omega(v,a) - \omega(v, D_w a)
  \,.
\end{equation}
For $v \in \Gamma(M, T\calF^\perp)$ the last two terms on the right hand side of Eq.~\eqref{eq:FolAnchConn2} vanish, so that we obtain $\langle D_v \mu, a \rangle = \omega([v,w],a)$. This is the Bott connection of Proposition \ref{prop:BottInduce}.
\end{proof}

\begin{Proposition}
\label{prop:FoliWeakHam}
Let $\calF$ be a symplectically complete foliation of $M$. Let $\mu$ be a nowhere vanishing section of $T^* \calF$. If $\mu$ is horizontal with respect to the Bott connection, then it is the momentum section for some symplectically anchored connection on $T\calF$.
\end{Proposition}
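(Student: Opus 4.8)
The plan is to produce a symplectically anchored connection $D$ on $A := T\calF$ having $\mu$ as a $D$-momentum section by modifying a connection that already works, exactly along the lines of the proof of Prop.~\ref{prop:NonvanishMomentumB}. By Prop.~\ref{prop:FolSympComplete} there is a symplectically anchored connection $D^0$ on $T\calF$, and I would look for $D = D^0 + \Gamma$, where the difference tensor $\Gamma$ is a $C^\infty(M)$-bilinear map $\calX(M)\times\Gamma(M,T\calF)\to\Gamma(M,T\calF)$. The first step is to translate the two requirements on $D$ into conditions on $\Gamma$. Since $D^0$ is symplectically anchored, the $D^0$-part of the right-hand side of~\eqref{eq:FolAnchConn} vanishes, so \eqref{eq:FolAnchConn} shows that $D$ is symplectically anchored if and only if
\begin{equation*}
  \omega\bigl(w, \Gamma(v,a)\bigr) = \omega\bigl(v, \Gamma(w,a)\bigr)
  \qquad\text{for all } v,w\in\calX(M),\ a\in\Gamma(M,T\calF).
\end{equation*}
Setting $E(v,a) := \langle D^0_v\mu, a\rangle - \omega(a,v)$ (which is $C^\infty(M)$-bilinear), the condition $D\mu=\gamma$ becomes $\langle\mu,\Gamma(v,a)\rangle = E(v,a)$.

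Next I would record where the hypothesis enters. Using the isomorphism $\theta\colon N\calF^\perp\cong T^*\calF$, choose a vector field $n$ with $\omega(n,a)=\langle\mu,a\rangle$ for all $a\in T\calF$; since $\mu$ is nowhere vanishing, also choose $\bar n\in\Gamma(M,T\calF)$ with $\langle\mu,\bar n\rangle=\omega(n,\bar n)=1$ (both exist globally, as sections of appropriate affine bundles). By Prop.~\ref{prop:BottRestrict} the dual connection of $D^0$ restricts along $T\calF^\perp$ to the Bott connection, so $\langle D^0_v\mu,a\rangle=0$ for $v\in\Gamma(M,T\calF^\perp)$ because $\mu$ is Bott-horizontal; combined with $\omega(a,v)=0$ for such $v$, this gives $E(v,a)=0$ whenever $v\in T\calF^\perp$. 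In particular $E(n,a)$ is independent of the chosen representative $n$.

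Then I would write $\Gamma$ down in closed form. Put
\begin{equation*}
  C(v,w;a) := \omega(v,\bar n)\,E(w,a) + \omega(w,\bar n)\,E(v,a) - \omega(v,\bar n)\,\omega(w,\bar n)\,E(n,a),
\end{equation*}
which is symmetric in $v$ and $w$; using $\omega(n,\bar n)=1$ one checks $C(v,n;a)=E(v,a)$; and since $\omega(\,\cdot\,,\bar n)$ and $E(\,\cdot\,,a)$ both annihilate $T\calF^\perp$, the covector $w\mapsto C(v,w;a)$ lies in $(T\calF^\perp)^\circ$. Because $\omega$ is nondegenerate, $b\mapsto\omega(\,\cdot\,,b)$ is an isomorphism $T\calF\to(T\calF^\perp)^\circ$ (the domain and codomain have the same rank $\dim M-\operatorname{rank}T\calF^\perp$ and the map is injective), so there is a unique $\Gamma(v,a)\in\Gamma(M,T\calF)$, $C^\infty(M)$-bilinear in $(v,a)$, with $\omega(w,\Gamma(v,a))=C(v,w;a)$ for all $w$. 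The symmetry of $C$ gives the first condition, and $\langle\mu,\Gamma(v,a)\rangle=\omega(n,\Gamma(v,a))=C(v,n;a)=E(v,a)$ gives the second, so $D:=D^0+\Gamma$ is a symplectically anchored connection with $D$-momentum section $\mu$.

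The substance of the argument is this last construction, which compresses the ``correct, symmetrize, correct again'' scheme from the proof of Prop.~\ref{prop:NonvanishMomentumB} into one formula. I expect the main obstacle to be the single genuinely new point: verifying that $\Gamma(v,a)$ can be taken to lie in the subbundle $T\calF$ rather than merely in $TM$. This is precisely where Bott-horizontality of $\mu$ is indispensable, since it forces $E|_{T\calF^\perp}=0$ and hence $C(v,\,\cdot\,;a)\in(T\calF^\perp)^\circ$, which is exactly the range of the isomorphism $b\mapsto\omega(\,\cdot\,,b)$ restricted to $T\calF$. Everything else --- global existence of $n$, $\bar n$, and $D^0$, and the tensorial identities for $C$ --- is routine, so no partition-of-unity gluing is needed at the end.
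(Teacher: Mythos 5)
Your proposal is correct and follows essentially the same route as the paper's proof: start from a symplectically anchored connection $D^0$, translate both requirements into conditions on the difference tensor $\Gamma$, use Bott-horizontality of $\mu$ to show that $B(v,a)=\langle D^0_v\mu,a\rangle+\omega(v,a)$ vanishes for $v\in T\calF^\perp$, and then write down the same trilinear form (your $C(v,w;a)$ coincides with the paper's $\omega(w,\Gamma(v,a))$ built from $\bar n$ and $\tilde B$). The only differences are presentational: you state the final formula at once rather than deriving it by the iterative correct-and-symmetrize scheme, and you solve for $\Gamma(v,a)\in T\calF$ via the isomorphism $T\calF\cong(T\calF^\perp)^\circ$ where the paper introduces the intermediate bundle map $\tilde B$.
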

\begin{proof}
We follow the strategy of the proof of Proposition \ref{prop:NonvanishMomentumB}. Let $n$ be a vector field such that $\langle\mu, a\rangle = \omega(n,a)$. Since $\mu$ is nowhere vanishing, there is an $\bar{n} \in \Gamma(M, T\calF)$ such that $\langle\mu, \bar{n} \rangle = \omega(n, \bar{n}) = 1$.

Let $D$ be a symplectically anchored connection on $T\calF$, which exists by Proposition \ref{prop:FolSympComplete}. Using Eq.~\eqref{eq:FolAnchConn2}, we see that $\mu$ is a $D$-momentum section if and only if
\begin{equation*}
\begin{split}
  B(v,a) 
  &:= \langle D_v \mu, a \rangle + \omega(v,a)
  \\
  &= \omega([v,n],a) + \omega(v,a)
  + n \cdot \omega(v,a) - \omega(v, D_n a)
\end{split}
\end{equation*}
vanishes for all $v \in \calX(M)$ and $a \in \Gamma(M,T\calF)$. It is easy to check that $B(v,a)$ is $C^\infty(M)$-linear in both arguments, so that $B$ can be viewed as a bilinear function on $TM \times T\calF$.

By assumption, $\mu$ is annihilated by the covariant derivative of the Bott connection of Proposition \ref{prop:BottInduce}, which means that $\omega([v,n],a) = 0$ for all $v \in \Gamma(M,T\calF^\perp)$. Since the last three terms of $B(v,a)$ also vanish for $v$ tangent to $T\calF^\perp$, it follows that $B(v,a) = 0$ for all $v$ in $T\calF^\perp$. We conclude that for every $a \in T\calF$ there is an $\tilde{B}(a) \in T\calF$ such that $B(v,a) = \omega(v, \tilde{B}(a))$. This defines a bundle map $\tilde{B}: T\calF \to T\calF$.

We will now construct a symplectically anchored connection $D'$, such that $\mu$ is a $D'$-momentum section. The new connection is given by $D'_v a = D_v a + \Gamma(v,a)$ for some bilinear bundle map $\Gamma: TM \times_M T\calF \to T\calF$. From Eq.~\eqref{eq:FolAnchConn} we deduce that $D'$ is symplectically anchored, if and only if
\begin{equation}
\label{eq:FConnGamma}
  \omega\bigl(v, \Gamma(w,a) \bigr)
  = \omega\bigl(w, \Gamma(v,a) \bigr)
\end{equation}
for all $v, w \in TM$ and $a \in T\calF$. Moreover, the condition for $\mu$ to be a $D'$-momentum section is equivalent to
\begin{equation}
\label{eq:FConnGamma2}
  \omega\bigl(v,\Gamma(n,a) \bigr)
  = B(v,a)
  \,.
\end{equation}

Let $\Gamma$ be defined by
\begin{equation*}
  \Gamma(v,a) 
  := B(v,a)\,\bar{n} + \omega(v,\bar{n}) \tilde{B}(a)
  - \omega(v,\bar{n})\, B(n,a)\, \bar{n}
  \,.
\end{equation*}
Then we have
\begin{equation*}
\begin{split}
  \omega\bigl( w, \Gamma(v,a) \bigr)
  &= B(v,a)\, \omega(w,\bar{n})
  + \omega(v,\bar{n})\, B(w,a)
  - \omega(v,\bar{n})\, B(n,a)\,\omega(w,\bar{n})
  \\
  &= \omega\bigl( v, \Gamma(w,a) \bigr) \,,
\end{split}
\end{equation*}
so that Eq.~\eqref{eq:FConnGamma} is satisfied. This shows that $D'$ is symplectically anchored. For $w = n$ we obtain
\begin{equation*}
\begin{split}
  \omega\bigl( v, \Gamma(n,a) \bigr)
  &= B(v,a)\, \omega(n,\bar{n})
  + \omega(v,\bar{n})\, B(n,a)
  - \omega(v,\bar{n})\, B(n,a)\,\omega(n,\bar{n})
  \\
  &= B(v,a) \,,
\end{split}
\end{equation*}
so that Eq.~\eqref{eq:FConnGamma2} holds as well. This shows that $\mu$ is a $D'$-momentum section.
\end{proof}

The compatibility of a momentum section $\mu$ with the bracket of the Lie algebroid $A = T\calF$ can be written as $\DA\mu = -\rho^* \omega$. For $\langle\mu, a\rangle = \omega(n,a)$ we can write the Lie algebroid differential as
\begin{equation*}
  \DA\mu 
  = \DA \rho^* \iota_n \omega
  = \rho^* d\iota_n \omega 
  = \rho^* \Lie_n \omega
  \,.
\end{equation*}
It follows that the condition of bracket compatibility is equivalent to
\begin{equation}
\label{eq:FoliLiouville}
  \rho^*(\Lie_n \omega + \omega) = 0 \,.
\end{equation}
We thus arrive at the following sufficient conditions for $T\calF$ to be hamiltonian.  As expected, they reduce in the case where  $T\calF = TM$ to the ones found above.

\begin{Proposition}
\label{prop:FoliHam}
Let $\calF$ be a regular foliation of the symplectic manifold $(M,\omega)$.
\begin{itemize}

\item[(i)] $T\calF$ is symplectically anchored if and only if $\calF$ is symplectically complete.

\item[(ii)] $T\calF$ is weakly hamiltonian if, in addition to (i), there is a vector field $n$ that is nowhere tangent to $\calF^\perp$ and a symmetry of $\calF^\perp$, i.e.~$\Gamma(M,T\calF^\perp)$ is closed under the Lie bracket with $n$.

\item[(iii)] $T\calF$ is hamiltonian if, in addition to (i) and (ii), the pullback of $\Lie_n\omega + \omega$ to $\calF$ vanishes.

\end{itemize}
\end{Proposition}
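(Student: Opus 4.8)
The plan is to assemble the three parts from results already established in this subsection, using $\mu = \iota_n\omega$ (restricted to $T\calF$) as the common thread. Part (i) requires nothing new: it is exactly Proposition~\ref{prop:FolSympComplete}, so I would simply cite it. For parts (ii) and (iii) I would assume (i), so that $\calF$ is symplectically complete and $(T\calF)^\perp = T\calF^\perp$, and I would keep track of a single momentum section and a single connection throughout, since bracket-compatibility is independent of the connection (Remark after Def.~\ref{def:MomentumEquiv}).

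For part (ii), given a vector field $n$ as in the hypothesis I would define $\mu \in \Gamma(M, T^*\calF)$ by $\langle \mu, a\rangle = \omega(n,a)$ for all $a \in T\calF$. First I would check that $\mu$ is nowhere vanishing: $\mu_m = 0$ would force $n_m \in (T_m\calF)^\perp = T_m\calF^\perp$, contradicting that $n$ is nowhere tangent to $\calF^\perp$. Next I would verify that $\mu$ is horizontal for the Bott connection: by Proposition~\ref{prop:BottInduce}, for $v \in \Gamma(M, T\calF^\perp)$ we have $\langle \nabla_v\mu, a\rangle = \omega([v,n],a)$, and since $\Gamma(M, T\calF^\perp)$ is closed under the bracket with $n$, the vector field $[v,n]$ lies in $\Gamma(M, T\calF^\perp) = \Gamma(M,(T\calF)^\perp)$, hence pairs to zero with every $a \in T\calF$; so $\nabla_v\mu = 0$. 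Thus $\mu$ satisfies the hypotheses of Proposition~\ref{prop:FoliWeakHam}, which yields a symplectically anchored connection $D'$ on $T\calF$ for which $\mu$ is a $D'$-momentum section, i.e.\ $T\calF$ is weakly hamiltonian.

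For part (iii), I would keep the pair $(D',\mu)$ from part (ii) and, invoking that bracket-compatibility does not see the connection, reduce to checking $\DA\mu = -\rho^*\omega$. Using the computation displayed just before the statement, $\DA\mu = \DA\rho^*\iota_n\omega = \rho^* d\iota_n\omega = \rho^*\Lie_n\omega$, so bracket-compatibility is equivalent to $\rho^*(\Lie_n\omega + \omega) = 0$, i.e.\ equation~\eqref{eq:FoliLiouville}, which is precisely the extra hypothesis of (iii); hence $(T\calF, D', \mu)$ is hamiltonian. I do not expect a genuine obstacle here — the content is a reduction to Propositions~\ref{prop:FolSympComplete}, \ref{prop:BottInduce}, and~\ref{prop:FoliWeakHam}. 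The one point needing care is the dictionary between the geometric hypotheses on $n$ and the hypotheses of those propositions: that ``$n$ nowhere tangent to $\calF^\perp$'' is exactly nonvanishing of $\mu = \iota_n\omega$ on $T\calF$, and that ``$n$ generates a symmetry of $\calF^\perp$'' is exactly Bott-horizontality of $\mu$; and the mild bookkeeping that the same $\mu$ is used for weak-hamiltonicity and for bracket-compatibility so that a single hamiltonian structure results.
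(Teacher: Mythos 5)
Your proposal is correct and follows essentially the same route as the paper: (i) is cited from Prop.~\ref{prop:FolSympComplete}, (ii) translates the hypotheses on $n$ into nonvanishing and Bott-horizontality of $\mu=\iota_n\omega|_{T\calF}$ and then invokes Prop.~\ref{prop:FoliWeakHam}, and (iii) reduces to Eq.~\eqref{eq:FoliLiouville}. The extra bookkeeping you supply (the contrapositive for nonvanishing, the connection-independence of bracket-compatibility) is consistent with, and slightly more explicit than, the paper's own proof.
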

\begin{proof}
Statement (i) is Proposition \ref{prop:FolSympComplete}. For (ii) we first observe that if $n$ is nowhere tangent to $\calF^\perp$, then $\mu := \rho^*\iota_n \omega$ is nowhere vanishing. Moreover, the Bott connection of $\mu$ is given by $\langle D_v \mu, a\rangle = \omega([v,n],a)$, for $v \in \Gamma(M,T\calF^\perp)$, so that $D_v\mu = 0$ iff $[v,n]$ lies in $\Gamma(M,T\calF^\perp)$ for every $v$. In that case we can apply Proposition \ref{prop:FoliWeakHam}. Statement (iii) follows from Eq.~\eqref{eq:FoliLiouville}.
\end{proof}

\subsection{Lie algebroids of rank 1}
\label{sec:Rank1}

By $\Supp \rho$ we denote the support of $\rho$ viewed as a section of $A^* \otimes TM \to M$. In other words, $\Supp \rho$ is the closure of the open set of points $m \in M$ where $\rho(A_m) \neq 0$.

\begin{Proposition}
\label{prop:Rank1Flat}
Let $(A,\rho)$ be an anchored vector bundle of rank 1 over the symplectic manifold $M$, symplectically anchored with respect to $D$.  If $\Supp\rho = M$, and $\mu$ is a $D$-momentum section, then $D$ is flat, and the support of $\mu$ is $M$ as well.
\end{Proposition}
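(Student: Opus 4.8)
The plan is to reduce everything to a single scalar equation by working in a local trivialization. Since $A$ has rank one, around any point choose a nowhere-vanishing section $a$ of $A$ with dual section $\theta$ of $A^*$, and write the connection one-form $\Omega\in\Omega^1$ by $Da=\Omega\otimes a$, so that $D\theta=-\Omega\otimes\theta$ and the curvature of $D$ is the two-form $F:=d\Omega$. Set $\gamma_a:=\gamma a=\iota_{\rho a}\omega$ and $\mu_a:=\langle\mu,a\rangle$. In this frame the hypothesis $D\gamma=0$ becomes $d\gamma_a=\Omega\wedge\gamma_a$ (the rank-one case of \eqref{omegaequation}), and the equation $D\mu=\gamma$ becomes the scalar identity $d\mu_a-\mu_a\,\Omega=\gamma_a$ (the rank-one case of the local form of \eqref{eq:MomentumMapDef2}). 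Proving $D$ flat amounts to showing $F\equiv 0$.

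First I would show $W:=\{m\in M:\mu(m)\neq 0\}$ is dense, which is exactly the assertion $\Supp\mu=M$. Suppose not; then $\mu$, hence $\mu_a$, vanishes identically on some nonempty open $U$ (contained in a trivializing chart). On $U$ we get $d\mu_a=0$, so the scalar momentum-section equation forces $\gamma_a=0$ on $U$. But $\omega$ is symplectic, so $\tilde\omega$ is injective and $\gamma_a=\tilde\omega(\rho a)$ vanishes on $U$ precisely when $\rho a$, and hence $\rho(A_m)=\bbR\,\rho(a(m))$, vanishes on $U$. This contradicts $\Supp\rho=M$. Hence $W$ is open and dense, i.e.\ $\Supp\mu=M$.

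Next I would show $F\equiv 0$. Differentiating the scalar momentum-section equation and substituting $d\gamma_a=\Omega\wedge\gamma_a$ gives $0=d\gamma_a+\gamma_a\wedge\Omega+\mu_a\,d\Omega=\mu_a\,d\Omega$, that is, $\mu_a F=0$ on the chart (equivalently: apply $D$ to $D\mu=\gamma$, use $D\gamma=0$, and note $D^2\mu=D^2(\mu_a\otimes\theta)=-\mu_a F\otimes\theta$). On the dense open set $W$ we have $\mu_a\neq 0$, so $F=0$ on $W$; since $F$ is smooth and $W$ is dense, $F\equiv 0$ on the chart, and since such charts cover $M$, the connection $D$ is flat.

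The main obstacle is only bookkeeping: $\mu_a$, $\Omega$, and $F$ are a priori only chart-defined, so one must note that the zero set of $\mu_a$ coincides in the chart with that of the global section $\mu$, and that $F$ is the restriction of the globally well-defined curvature two-form of the line bundle, so the local conclusions glue consistently. The single genuinely geometric input beyond formal manipulation is the nondegeneracy of $\omega$, which is what lets us pass from ``$\gamma_a=0$ on $U$'' to ``$\rho=0$ on $U$'' and thereby contradict $\Supp\rho=M$.
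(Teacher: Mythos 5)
Your proof is correct and follows essentially the same route as the paper's: you derive $\Supp\mu=M$ from the nondegeneracy of $\omega$ (a zero of $\mu$ on an open set would force $\gamma$, hence $\rho$, to vanish there), and you obtain flatness from $0=D^2\mu$, which kills the curvature on the dense set where $\mu\neq 0$ and hence everywhere by continuity. The only difference is that you carry out the computation in a local frame ($\mu_a F=0$ with $F=d\Omega$) where the paper states it invariantly as $0=D^2\mu=\mu R$; the content is identical.
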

\begin{proof}
Let $\mu$ be a $D$-momentum section. It follows from the defining properties of a $D$-momentum section $\mu$ that $0 = D^2 \mu =  \mu R$, where $R$ is the curvature 2-form. This shows that $R$ must vanish on the open set of all points where $\mu$ is non-zero and so, by continuity, $R$ must vanish also on the closure of $U$, that is, on the support of $\mu$.

If $\mu$ is zero on an open set $U \subseteq M$, then $\gamma = D\mu$ vanishes on $U$, so that $\rho$ also vanishes on $U$, since $\omega$ is nondegenerate.   This contradicts the assumption that $\Supp\rho = M$, and so the support of $\mu$ must be all of $M$.
\end{proof}

\begin{Corollary}
Let $A$ be a Lie algebroid of rank 1 over a symplectic manifold $M$ with $\Supp\rho = M$. Then $A$ is locally hamiltonian if and only if (C2) holds.
\end{Corollary}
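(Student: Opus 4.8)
The plan is to reduce the statement to the equivalence of condition (C2) with \emph{local weak} hamiltonicity, after first noticing that in rank $1$ the bracket-compatibility condition is vacuous.

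The first step is the observation that, since $A$ has rank $1$, the bundle $\wedge^2 A^*$ is zero. Hence both sides of \eqref{eq:MomEqui}, the section $\DA\mu \in \Gamma(M,\wedge^2A^*)$ and the section $-\rho^*\omega \in \Gamma(M,\wedge^2 A^*)$, vanish identically, so \emph{every} $D$-momentum section of a rank-$1$ Lie algebroid is automatically bracket-compatible. Consequently, for such $A$ the notions \textbf{weakly hamiltonian} and \textbf{hamiltonian} coincide, and likewise \textbf{locally weakly hamiltonian} and \textbf{locally hamiltonian} coincide. It therefore suffices to prove that, under our hypotheses, $(A,\rho)$ is locally weakly hamiltonian if and only if (C2) holds.

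For the ``if'' direction I would simply invoke Prop.~\ref{prop:C2impliesLocHam}: condition (C2) implies that $(A,\rho)$ is locally weakly hamiltonian, hence locally hamiltonian by the previous paragraph. For the ``only if'' direction, suppose $A$ is locally hamiltonian and fix $m \in M$; choose a neighborhood $U$ of $m$ carrying a presymplectically anchored connection $D$ on $A|_U$ together with a $D$-momentum section $\mu$. Here the only point requiring care is that the hypothesis $\Supp\rho = M$ passes to $U$: the set of points at which $\rho$ is nonzero is dense in $M$, so its intersection with the open set $U$ is dense in $U$, giving $\Supp(\rho|_U)=U$. Then $(A|_U,\rho|_U)$ over the symplectic manifold $U$ satisfies the hypotheses of Prop.~\ref{prop:Rank1Flat}, which forces $D$ to be flat. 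Thus every point of $M$ has a neighborhood carrying a flat presymplectically anchored connection, which by Prop.~\ref{prop:Connection2} is precisely condition (C2).

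There is no genuine obstacle here: once the rank-$1$ triviality of bracket-compatibility is recognized, everything follows by combining Props.~\ref{prop:C2impliesLocHam}, \ref{prop:Connection2}, and \ref{prop:Rank1Flat}, the only verification being the elementary density argument that $\Supp\rho = M$ localizes to open subsets.
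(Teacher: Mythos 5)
Your proof is correct and follows essentially the same route as the paper, which deduces (C2) from Props.~\ref{prop:Rank1Flat} and~\ref{prop:Connection2} and cites Prop.~\ref{prop:C2impliesLocHam} for the converse. Your explicit observations that bracket-compatibility is vacuous in rank $1$ (since $\wedge^2 A^*=0$), so that ``locally weakly hamiltonian'' upgrades to ``locally hamiltonian,'' and that the hypothesis $\Supp\rho=M$ localizes to open subsets, are both needed and are left implicit in the paper's two-line proof.
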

\begin{proof}
Assume $A$ is locally hamiltonian, then Proposition \ref{prop:Rank1Flat} and Proposition \ref{prop:Connection2} imply (C2). The converse statement was proved in Proposition \ref{prop:C2impliesLocHam}.
\end{proof}

\begin{Proposition}
\label{prop:Rank1Obstr}
Let $A$ be a vector bundle of rank $1$ that is symplectically anchored with respect to the connection $D$. If $A$ has a $D$-momentum section, then the curvature of $D$ vanishes at the singular points of the anchor.
\end{Proposition}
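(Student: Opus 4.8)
The plan is to exploit the fact that, for a rank-one bundle, the two hypotheses collapse into a single tensorial identity. If $\mu$ is a $D$-momentum section then $D\mu = \gamma$, and if $A$ is symplectically anchored then $D\gamma = 0$; composing these gives $D^2\mu = D(D\mu) = D\gamma = 0$. Since $A$ has rank one, its endomorphism bundle $\End(A)$ is canonically trivial, so (as recalled in Section~\ref{sec:ConnectionReview} and already used in the proof of Prop.~\ref{prop:Rank1Flat}) the operator $D^2$ acting on a section of $A^*$ is wedging by an ordinary $2$-form: $D^2\mu = \mu R$, where $R \in \Omega^2(M)$ is the curvature $2$-form of $D$. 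Hence the identity $D^2\mu = 0$ says precisely that at every point either $R$ or $\mu$ vanishes; equivalently, $R$ vanishes identically on the open set $\{m \in M : \mu_m \neq 0\}$.

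Next I would argue by contradiction. Let $m_0$ be a singular point of the anchor, i.e.\ a point of $\Supp\rho$ at which $\rho$ has less than maximal rank; since $A$ has rank one, this means $\rho_{m_0} = 0$. Suppose $R_{m_0} \neq 0$. Then $R$ is nowhere zero on some open neighborhood $V$ of $m_0$, and since $\mu R = 0$ pointwise this forces $\mu \equiv 0$ on $V$. But then $\gamma = D\mu$ also vanishes identically on $V$, and because $\omega$ is symplectic the bundle map $\tilde\omega$ is an isomorphism, so $\rho = \tilde\omega^{-1}\circ\gamma$ vanishes identically on $V$ as well. This makes $V$ disjoint from $\{m : \rho(A_m) \neq 0\}$, contradicting $m_0 \in \Supp\rho = \overline{\{m : \rho(A_m) \neq 0\}}$. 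Therefore $R_{m_0} = 0$, which is the assertion.

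I do not expect any real obstacle here: the entire content is the observation $D^2\mu = D\gamma = 0$ together with the rank-one identification of $D^2$ with multiplication by a scalar $2$-form, so the argument is essentially a localized version of the reasoning in Prop.~\ref{prop:Rank1Flat}. The one point that must be stated carefully is the meaning of ``singular point of the anchor'': at points lying \emph{outside} $\Supp\rho$ the anchor vanishes on a whole neighborhood, $\mu$ is then merely required to be horizontal there, and $D$ may genuinely be curved, so no constraint arises; the statement is necessarily about the singular locus inside $\Supp\rho$, which is exactly where the continuity step above has force.
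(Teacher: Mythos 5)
Your proof is correct and follows essentially the same route as the paper's: both rest on the rank-one identity $\mu R = D^2\mu = D\gamma = 0$ together with the observation that $\mu$ vanishing on an open set forces $\gamma$, and hence (by nondegeneracy of $\omega$) $\rho$, to vanish there, contradicting the definition of a singular point. The paper phrases this directly (showing the singular point lies in $\Supp\mu$, where $R$ was already shown to vanish in Prop.~\ref{prop:Rank1Flat}), while you run the contrapositive on a neighborhood where $R\neq 0$; the two are logically interchangeable.
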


\begin{proof}
Let $m \in M$ be a singular point. Assume that there is an open neighborhood $U \ni m$ on which the $D$-momentum section $\mu$ vanishes. Then $\gamma = D\mu = 0$ on $U$ which implies that $\rho = 0$ over $U$, so that the rank of $\rho$ is constant $0$. This contradicts the assumption that $m$ is singular, so that no such neighborhood  exists. It follows that every neighborhood of $m$ contains a point at which $\mu$ does not vanish. Thus, we can find a sequence $(m_i)$ converging to $m$ with $\mu_{m_i} \neq 0$. We conclude that $m$ lies in the support of $\mu$, so that $R$ must vanish at $m$. 
\end{proof}

As corollary to the last statement we obtain an obstruction to the existence of a local momentum section in the neighborhood of a singular point $m$. If the connection $D$ has nonzero curvature at $m$, then there cannot be a local $D$-momentum section. This still leaves open the much more difficult question of whether there can be another connection $D'$ for which $A$ is $D'$-symplectic, such that there is a local $D'$-momentum section.

\section{The category of hamiltonian Lie algebroids}
\label{sec:Category}

To further develop the concept of hamiltonian Lie algebroids, we will now introduce and study the notion of morphisms. This will lead to a unified framework for the various structures that make up a hamiltonian Lie algebroid. The theory developed here will be used in the example in  Section \ref{sec:ReduceIsotropy} on transitive Lie algebroids and in Section \ref{sec:HamHReduced}, where we study important classes of hamiltonian structures on quotients of Lie algebroids, as well as in Section \ref{sec:CohomInterp}, where we give a cohomological interpretation of hamiltonian Lie algebroids.

Stating the definition of a morphism of Lie algebroids directly in terms of the anchor and the Lie bracket is rather cumbersome. (See \cite{HigginsMackenzie:1990} and references therein.) The most succinct definition is in terms of the graded vector space $\Omega^q(A) = \Gamma^\bullet(M,\wedge^q A^*)$ of exterior forms on $A$ and the Lie algebroid differential $\DA$ \cite{Vaintrob:1997}: A morphism of vector bundles $\phi: A \to \tilde{A}$  over a map $\phi_0:M\to \tilde M$ is called a \textbf{morphism of Lie algebroids} if its pullback $\phi^*: \Omega^q(\tilde{A}) \to \Omega^q(A)$ is a map of differential complexes; i.e.~$\DA \phi^* = \phi^* \tilde{\DA}$.  

\begin{Example}
When $A = TM$ and $\tilde{A} = T\tilde{M}$ are tangent Lie algebroids, and $\phi = T\phi_0$ is the tangent map of a smooth map $\phi_0: M \to \tilde{M}$, then $\phi^*$ is the usual pullback of differential forms along $\phi_0$, which always intertwines the de Rham differentials.
\end{Example}

A connection on $A$ induces a dual connection on $A^*$, which is given by maps $D^p: \Omega^p(M,A^*) \to \Omega^{p+1}(M,A^*)$  between the spaces
\begin{equation*}
  \Omega^p(M,A^*) :=  \Gamma(M, \wedge^p T^* M \otimes A^*) \cong \Omega^p(M) \otimes_{C^\infty(M)} \Omega^1(A)
\end{equation*}
of $A^*$-valued differential forms. (Note, that the while the tensor product of vector bundles is fibre-wise over $\bbR$, the tensor product of the spaces of sections is over the ring of smooth functions.  We omit the subscript in the former case.) In order to formulate the connection and the hamiltonian Lie algebroid structure within the same algebraic framework and study their compatibility with vector bundle morphisms we are, therefore, led to the bigraded algebra
\begin{equation}
\label{eq:OmegaMA}
  \Omega^{p,q}(M,A) := \Omega^p (M) \otimes_{C^\infty(M)} \Omega^q(A) \,.
\end{equation}
We will show that the Lie algebroid structure, the connection, and the momentum section can all be simultaneously encoded either as derivations or as elements of $\Omega^{\bullet,\bullet}(M,A)$. This leads to a conceptually clear and useful notion of morphisms of hamiltonian Lie algebroids.

\subsection{Lie algebroid with a connection in terms of derivations}
\label{sec:CatLieAlgConn}

The bigraded ring $\Omega^{\bullet,\bullet}(M,A)$ can be viewed as ring of functions on the bigraded manifold $T[1,0]M \oplus A[0,1]$,
\begin{equation*}
\begin{split}
 \Omega^{p,q}(M,A) 
 &= \calO(T[1,0]M \oplus A[0,1])_{p,q} \\ 
 &= \Gamma(M, \wedge^p T^*M \otimes \wedge^q A^*)
 \,.
\end{split}
\end{equation*}
We may think of the elements of this bigraded algebra as exterior forms on the vector bundle $TM \oplus A$, as $\wedge^\bullet T^*M$-valued forms on the vector bundle $A$, or as $\wedge^\bullet A^*$-valued differential forms on $M$. The point of view of graded manifolds is explained in Section \ref{sec:CohomInterp}.

As we will now show, the anchor, the connection, and the Lie algebroid bracket can all be encoded as derivations on $\Omega(M,A)$. Any derivation $\delta$ of an algebra is uniquely determined by its action on a set of generators. $\Omega(M,A)$ is generated by functions $f \in \Omega^{0,0}(M,A)$, differential 1-forms $\tau \in \Omega^{1,0}(M,A)$, and Lie algebroid 1-forms $\theta \in \Omega^{0,1}(M,A)$. The derivation laws on these generators read
\begin{equation}
\label{eq:DerDef}
\begin{aligned}
  \delta(fg) &= (\delta f)g + f(\delta g)
  \\
  \delta(f\tau) &= (\delta f)\tau + f(\delta\tau)
  \\
  \delta(f\theta) &= (\delta f)\theta + f(\delta\theta)
  \,,
\end{aligned}
\end{equation}
Every linear map satisfying these relations is a local (first order differential) operator. Since, locally, $\Omega(M,A)$ is freely generated as a graded commutative $C^\infty(M)$-algebra, any linear map $\delta$ defined on functions and 1-forms that satisfies~\eqref{eq:DerDef} extends uniquely to a derivation of $\Omega(M,A)$.

\subsubsection{Interior derivative and anchor}

An important space of derivations on the algebra of exterior forms on a vector bundle is given by the insertion of  sections of the vector bundle as the first entry of the forms. These operations can be viewed a bigraded map
\begin{equation*}
\begin{aligned}
  \iota: \Gamma(M, T[1,0]M \oplus A[0,1])
  &\longrightarrow \Der(\Omega(M,A)) 
  \\ 
  X &\longmapsto \iota_X
  \,,
\end{aligned}
\end{equation*}
called the \textbf{interior derivative}, where $\Der(\Omega(M,A))$ is the space of bigraded derivations on $\Omega(M,A)$. When $X = v$ is a vector field on $M$, so that $\deg(v) = (-1,0)$, then $\iota_v$ is a derivation on $\Omega(M,A)$ of bidegree $(-1,0)$, which acts by $\iota_v \tau =  \langle \tau, v \rangle$ on $\tau \in \Omega^{1,0}(M,A)$ and by $\iota_v \theta = 0$ on $\theta \in \Omega^{0,1}(M,A)$. When $X = a$ is a section of $A$, then $\iota_a$ is a derivation of bidegree $(0,-1)$, which acts by $\iota_a \tau = 0$ and $\iota_a \theta = \langle \theta, a \rangle$. In both cases $\iota_X f = 0$, so that the derivation property~\eqref{eq:DerDef} is satisfied trivially.

It is useful to extend the interior derivative $\Omega(M,A)$-linearly to a map
\begin{equation*}
\begin{aligned}
  \iota: \Omega(M,A) \otimes_{C^\infty(M)} 
  \Gamma(M,T[1,0] \oplus A[0,1])
  &\longrightarrow \Der(\Omega(M,A))
  \\
  \phi \otimes X &\longmapsto
  \iota_{\phi \otimes X} := ( \psi
  \mapsto \phi\, \iota_X \psi )
  \,.
\end{aligned}
\end{equation*}
This map can be applied to an anchor $A \to TM$, which can be viewed as an element
\begin{equation*}
  \rho \in \Omega^{0,1}(M,A) 
  \otimes_{C^\infty(M)} \Gamma(M, T[1,0] M) \,.
\end{equation*}
We  thus obtain a derivation $\iota_\rho$ of bidegree $(-1,1)$. Let us make the definition of this derivation explicit:

\begin{Definition}
Let $(A,\rho)$ be an anchored vector bundle over $M$. The anchor gives rise to a bidegree $(-1,1)$ derivation $\iota_\rho$ of $\Omega(M,A)$ defined by
\begin{equation*}
  \iota_\rho f = 0
  \,,\quad
  \langle \iota_\rho \tau , a \rangle
  = \iota_{\rho a} \tau 
  \,,\quad
  \iota_\rho \theta = 0 \,,
\end{equation*}
for all $f \in \Omega^{0,0}(M,A)$, $\tau \in \Omega^{1,0}(M,A)$, $\theta \in \Omega^{0,1}(M,A)$, and all sections $a$ of $A$.
\end{Definition}

\subsubsection{Connection and Lie algebroid structure}

A connection on $A$ gives rise to a bidegree $(1,0)$ derivation on $\Omega(M,A)$ defined by
\begin{equation*}
   D f := df
   \,,\quad
   D \tau := d\tau
   \,,\quad
   D\theta := D\theta 
   \,,
\end{equation*}
where $df$ and $d\tau$ is the de Rham differential and $D\theta$ the covariant derivative. The derivation property~\eqref{eq:DerDef} follows from the derivation property of the de Rham differential and the Leibniz rule~\eqref{eq:ConnLeibniz} of the connection.

Let $\check{D}$ be the $A$-connection on $TM$ defined in Eq.~\eqref{eq:DcheckDef}. As is the case for every $A$-connection, $\check{D}$ gives rise to a dual $A$-connection on $T^*M$, also denoted by $\check{D}$, which is defined implicitly by
\begin{equation}
\label{eq:DualAconn}
  \rho a \cdot \langle \tau, v \rangle
  = \langle \check{D}_a \tau, v \rangle
  + \langle \tau, \check{D}_a v \rangle \,,
\end{equation}
for every differential 1-form $\tau \in \Omega^1(M)$ and vector field $v$. When $A$ is a Lie algebroid, the dual $A$-connection extends to a derivation on $\Omega(M,A)$ defined by
\begin{equation*}
   \check{D} f := \DA f
   \,,\quad
   \check{D} \tau := \check{D}\tau
   \,,\quad
   \check{D}\theta := \DA\theta 
   \,,
\end{equation*}
where $\DA f = \iota_\rho df$ and $\DA \theta$ is the Lie algebroid differential, and $\check{D}\tau$ the covariant derivative of the $A$-connection. The derivation property~\eqref{eq:DerDef} follows from the derivation property of the Lie algebroid differential and the Leibniz rule~\eqref{eq:Aconnect} of the $A$-connection.

We recall from Section \ref{subsec:torsion} that every connection $D$ induces an $A$-connection $\hat{D}$. Its torsion $T$ defined in Eq.~\eqref{eq:DefTorsion2} can be viewed as an $A$-valued exterior 2-form on $A$, i.e. as an element 
\begin{equation*}
  T \in \Omega^{0,2}(M,A) \otimes_{C^\infty(M)}
  \Gamma(M, A[0,1]),
\end{equation*}
so that $\iota_T$ is a derivation of bidegree $(0,1)$, defined by
\begin{equation*}
   \iota_T f = 0
   \,,\quad
   \iota_T \tau = 0
   \,,\quad
  \iota_T \theta := \langle \theta, T\rangle 
   \,.
\end{equation*}

\subsubsection{Commutators of the derivations}
We have the three bigraded derivations
\begin{align*}
  \iota_\rho &\in \Der(\Omega(M,A))_{-1,1}
  \\
  D &\in \Der(\Omega(M,A))_{1,0}
  \\
  \check{D} &\in \Der(\Omega(M,A))_{0,1} 
  \,,
\end{align*}
which encode the anchor, the connection, and the Lie algebroid bracket.

The bigraded commutator of a pair of bigraded derivations is again a bigraded derivation. Therefore, these three derivations generate a bigraded Lie subalgebra of the graded Lie algebra of all derivations on $\Omega(M,A)$. For example, we have already seen in Section \ref{sec:ConnectionReview} that $[D,D] = 2D^2 = 2(\id \otimes R)$ acts as the curvature operator on elements of $\Omega^{0,1}(M,A)$, which extends to a derivation on $\Omega(M,A)$. And the derivation $\iota_\rho$ is of total degree $0$, so that $[\iota_\rho,\iota_\rho] = 0$. 

The following result will be used in the proofs of Proposition \ref{prop:MorphcompatLA} and Theorem  \ref{thm:EquivExt}:

\begin{Proposition}
\label{prop:CommirhoD}
The differential $\check{D}$ can be expressed as
\begin{equation}
\label{eq:DcheckTorsion}
  \check{D} = [\iota_\rho, D] + \iota_T \,.
\end{equation}
\end{Proposition}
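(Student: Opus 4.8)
Since all three operators $\iota_\rho$, $D$, and $\check{D}$ are derivations of $\Omega(M,A)$, and the graded commutator of derivations is again a derivation, both sides of \eqref{eq:DcheckTorsion} are derivations of bidegree $(0,1)$. A derivation of $\Omega(M,A)$ is determined by its values on the generators, namely functions $f\in\Omega^{0,0}(M,A)$, de Rham 1-forms $\tau\in\Omega^{1,0}(M,A)$, and Lie algebroid 1-forms $\theta\in\Omega^{0,1}(M,A)$. So the plan is simply to check the identity on each of these three types of generator.

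First I would handle functions. On $f$ we have $\check D f = \DA f = \iota_\rho\, df$, while $[\iota_\rho,D]f = \iota_\rho Df - D\iota_\rho f = \iota_\rho\, df - 0$ (the sign is irrelevant since one term vanishes), and $\iota_T f = 0$; so the two sides agree. Next, for a Lie algebroid 1-form $\theta$, I would note that $\iota_\rho\theta = 0$ and $D\theta$ lives in $\Omega^{1,1}(M,A)$ on which $\iota_\rho$ acts by contracting the de Rham slot with the anchor; so $[\iota_\rho,D]\theta = \iota_\rho D\theta$. Pairing with sections $a,b$ of $A$ and using the definition of the dual connection together with \eqref{eq:defDA} and the torsion formula \eqref{eq:DefTorsion2}, one computes $\langle (\iota_\rho D\theta)(a,b)\rangle = \rho a\cdot\langle\theta,b\rangle - \rho b\cdot\langle\theta,a\rangle - \langle\theta, D_{\rho a}b - D_{\rho b}a\rangle$; adding $\iota_T\theta = \langle\theta,T\rangle$ with $T(a,b)=D_{\rho a}b - D_{\rho b}a - [a,b]$ then produces exactly $(\DA\theta)(a,b) = (\check D\theta)(a,b)$. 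This is essentially a repackaging of the computation already carried out in the proof of Prop.~\ref{prop:TorsionSymp}.

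The remaining and most delicate case is a de Rham 1-form $\tau\in\Omega^{1,0}(M,A)$, where one must show $\check D\tau = [\iota_\rho,D]\tau$ (note $\iota_T\tau = 0$). Here $\check D\tau$ is the covariant derivative of $\tau$ with respect to the opposite $A$-connection $\check D$ on $T^*M$, defined implicitly by \eqref{eq:DualAconn}, while $[\iota_\rho,D]\tau = \iota_\rho\, d\tau + D\iota_\rho\tau$, in which $d\tau$ is the de Rham differential in $\Omega^{2,0}(M,A)$ and $\iota_\rho\tau\in\Omega^{0,1}(M,A)$ is the Lie algebroid 1-form $a\mapsto\iota_{\rho a}\tau$. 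I would test the equality by pairing $\check D_a\tau$ against an arbitrary vector field $v$ and evaluating $(\iota_\rho\,d\tau)(a)(v)$ via Cartan's formula $d\tau(\rho a,v) = \rho a\cdot\langle\tau,v\rangle - v\cdot\langle\tau,\rho a\rangle - \langle\tau,[\rho a,v]\rangle$, together with $\langle D_v(\iota_\rho\tau),a\rangle = v\cdot\langle\iota_\rho\tau,a\rangle - \langle\iota_\rho\tau,D_v a\rangle = v\cdot\langle\tau,\rho a\rangle - \langle\tau,\rho D_v a\rangle$. Summing these, the $v\cdot\langle\tau,\rho a\rangle$ terms cancel and one is left with $\rho a\cdot\langle\tau,v\rangle - \langle\tau,[\rho a,v] + \rho D_v a\rangle = \rho a\cdot\langle\tau,v\rangle - \langle\tau,\check D_a v\rangle$ by the very definition \eqref{eq:DcheckDef} of $\check D_a v$, which by \eqref{eq:DualAconn} is precisely $\langle\check D_a\tau,v\rangle$.

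\textbf{Main obstacle.} The only real bookkeeping hazard is the bidegree-$(0,1)$ case on de Rham 1-forms: one must be careful about which de Rham differential and which contraction are in play (the $d$ appearing in $[\iota_\rho,D]\tau$ is the ordinary exterior derivative of a $1$-form, not a covariant derivative), and the cancellation hinges on recognizing the combination $[\rho a,v] + \rho D_v a$ as the defining expression for the opposite connection $\check D_a v$. Once the generators are treated, invoking that a derivation is determined by its action on generators (as recalled after \eqref{eq:DerDef}) closes the argument immediately.
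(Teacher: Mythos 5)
Your proposal is correct and follows essentially the same route as the paper's proof: both verify the identity on the three types of generators $f$, $\tau$, $\theta$, using Cartan's magic formula together with the defining relation \eqref{eq:DualAconn} of the opposite connection for the de Rham $1$-forms, and the torsion formula \eqref{eq:DefTorsion2} for the Lie algebroid $1$-forms. The one point to watch is your intermediate formula $[\iota_\rho,D]\tau=\iota_\rho\,d\tau+D\iota_\rho\tau$: since $\iota_\rho$ has total degree zero the graded commutator is $\iota_\rho D-D\iota_\rho$, and your plus sign is only consistent because of the extra sign $\langle\check{D}_a\tau,v\rangle=-\langle\check{D}\tau,v\otimes a\rangle$ in the evaluation convention for bidegree $(1,1)$ elements, a subtlety the paper makes explicit in its own computation.
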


\begin{proof}
Since $\check{D}$, $[\iota_\rho, D]$, and $\iota_T$  are all derivations on $\Omega(M,A)$ and are thus uniquely determined by their action on the generators of $\Omega(M,A)$, we only need to show that Eq.~\eqref{eq:DcheckTorsion} holds when applied to functions $f \in \Omega^{0,0}(M,A)$, to differential 1-forms $\tau \in \Omega^{1,0}(M,A)$, and to exterior $A$-forms $\theta \in \Omega^{0,1}(M,A)$.

We start with a function $f$:
\begin{equation*}
\begin{split}
 \bigl\langle ([\iota_\rho, D] + \iota_T) f, 
   a \bigr\rangle
 &= \langle \iota_\rho df, a \rangle 
 = \rho a \cdot f = \langle \DA f,a\rangle
 \\
 &= \langle \check{D} f, a \rangle \,.
\end{split}
\end{equation*}
Next, we consider a differential 1-form $\tau$, for which we have
\begin{equation*}
  \iota_\rho D\tau = \iota_\rho d\tau \,.
\end{equation*}
Before we compute $D\iota_\rho\tau$, we observe that, in the definition of the dual connection of $\check{D}$, there appears an additional sign: for every differential 1-form $\tau$, section $a$ of $A$, and vector field $v$ on $M$ we have
\begin{equation*}
\begin{split}
  \langle \check{D}_a \tau, v \rangle
  &= \iota_v \check{D}_a \tau 
  = \iota_v \iota_a \check{D}\tau
  = - \iota_a \iota_v \check{D} \tau \\
  &= - \langle \check{D}\tau, v \otimes a \rangle \,.
\end{split}
\end{equation*}
With this relation, we obtain
\begin{equation*}
\begin{split}
  \langle D \iota_\rho \tau, v \otimes a \rangle
  &= \langle D_v \iota_\rho \tau, a \rangle
  \\
  &= v \cdot \langle \iota_\rho \tau, a \rangle
     - \langle \iota_\rho \tau, D_v a \rangle
  \\
  &= \langle \Lie_v \tau, \rho a \rangle
     + \langle \tau, \Lie_v \rho a \rangle
     - \langle \iota_\rho \tau, D_v a \rangle
  \\
  &= \langle \Lie_v \tau, \rho a \rangle
     - \langle \tau, [v, \rho a] + \rho(D_v a) \rangle
  \\
  &= \langle \iota_\rho \Lie_v \tau, a \rangle 
     - \langle \tau, \check{D}_a v \rangle
  \\
  &= \langle \iota_\rho (\iota_v d + d \iota_v) \tau, a \rangle 
     - \langle \tau, \check{D}_a v \rangle
  \\
  &= \langle \iota_\rho \iota_v d \tau, a \rangle
     + \rho a \cdot \langle\tau, v \rangle
     - \langle \tau, \check{D}_a v \rangle
  \\
  &= \langle \iota_v \iota_\rho d \tau, a \rangle
     + \langle \check{D}_a \tau, v \rangle
  \\
  &= \langle \iota_\rho d \tau - \check{D}\tau, v \otimes a \rangle \,,
\end{split}
\end{equation*}
where we have used the definition of the dual connection, the defining Eq.~\eqref{eq:DcheckDef} of the opposite $A$-connection $\check{D}$, Cartan's magic formula, and that $\iota_\rho \iota_v = \iota_v \iota_\rho$ because $\iota_\rho$ is of total degree $0$. Subtracting the two terms of the commutator yields $[\iota_\rho, D] \tau = \check{D}\tau - \iota_T \tau$, where we have used the condition $\iota_T \tau = 0$ in the definition of $\iota_\tau$.

Finally, we turn to an exterior 1-form $\theta$ on $A$. In order to apply Eq.~\eqref{eq:DcheckTorsion} to $\theta$ we first observe that 
\begin{equation*}
  \iota_{\rho a} = [\iota_a, \iota_\rho] \,,
\end{equation*}
which can be checked by letting the derivation $[\iota_a, \iota_\rho]$ act on the generators $\tau$ and $\theta$. Using this relation, we obtain 
\begin{equation*}
\begin{split}
  ([\iota_\rho, D] \theta)(a,b)   
  &= \iota_b \iota_a (\iota_\rho D - D \iota_\rho) \theta
  = \iota_b \iota_a \iota_\rho D \theta
  \\
  &= (\iota_b \iota_{\rho a} - \iota_a \iota_{\rho b}  
  + \iota_\rho \iota_b \iota_a)D\theta
   \\
  &= \langle D_{\rho a} \theta, b \rangle 
  - \langle D_{\rho b} \theta, a \rangle
  \\
  &= \rho a \cdot \langle\theta, b\rangle
   - \rho b \cdot \langle\theta, a\rangle
   - \langle \theta, D_{\rho a} b \rangle
   + \langle \theta, D_{\rho b} a \rangle
  \\
  &= \rho a \cdot \langle\theta, b\rangle
   - \rho b \cdot \langle\theta, a\rangle
   - \langle \theta, [a,b]\rangle
   \\
   &{}\quad + \langle \theta, [a,b]\rangle
   - \langle \theta, D_{\rho a} b \rangle
   + \langle \theta, D_{\rho b} a \rangle 
  \\
  &= (\DA \theta)(a,b) 
    - \langle\theta, T(a,b)\rangle
  \\
  &= (\check{D} \theta)(a,b) 
    - (\iota_T\theta)(a,b)
\end{split}
\end{equation*}
for all sections $a$, $b$ of $A$. This finishes the proof.
\end{proof}

\subsection{Morphisms of anchored vector bundles with connections}

A morphism $\phi: A \to A'$ of vector bundles is given explicitly by a commutative diagram of the form
\begin{equation*}
\xymatrix{
A \ar[r]^-{\phi_1} \ar[d] & A' \ar[d] \\
M \ar[r]^{\phi_0} & M'
}
\end{equation*}
The morphism induces a pullback operator
\begin{equation*}
  \phi^*: \Omega(M',A')
  \longrightarrow \Omega(M,A) \,,
\end{equation*}
which is the bigraded linear map defined by $\phi^*(\tau \otimes \theta) := \phi_0^* \tau \otimes \phi^* \theta$, where $\phi_0^* \tau$ is the usual pullback of the differential form $\tau$ on $M'$ along $\phi_0$, and where $\phi^* \theta$ is given by the commutative diagram
\begin{equation*}
\xymatrix@C+2ex{
\wedge^q A^*
& \wedge^q A'^* \ar[l]_-{\wedge^q \phi_1^*}
\\
M \ar[u]^{\phi^* \theta} \ar[r]^{\phi_0} & M' \ar[u]_{\theta}
}
\end{equation*}

A smooth map of (not necessarily linear) fibre bundles with (Ehresmann) connections is said to be \textbf{compatible with the connections} if its derivative maps horizontal vectors to horizontal vectors (or, equivalently, the map takes horizontal paths to horizontal paths). For a map of {\em vector} bundles with {\em linear} connections, this compatibility can be expressed in terms of the covariant derivatives as follows:

\begin{Proposition}
\label{prop:MorphcompatConn}
Let $(A,D)$ and $(A',D')$ be vector bundles with connections. A morphism $\phi: A \to A'$ of vector bundles is compatible with the connections if and only if $D \phi^* = \phi^* D'$.
\end{Proposition}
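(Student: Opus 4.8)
The plan is to reduce the statement to a coordinate (local frame) computation and then observe that both sides are tensorial. First I would recall that compatibility of $\phi$ with the connections means that the tangent map $T\phi_1 : TA \to TA'$ sends the horizontal subbundle of $TA$ (determined by $D$) into the horizontal subbundle of $TA'$ (determined by $D'$). The cleanest way to unpack this is in terms of parallel transport: $\phi$ is compatible iff, for every path $c$ in $M$ and every $D$-parallel lift $a(t)$ of $c$ along $A$, the image $\phi_1(a(t))$ is a $D'$-parallel lift of $\phi_0 \circ c$ along $A'$. Equivalently, in local frames: choose a local frame $\{a'_i\}$ of $A'$ with connection $1$-forms $\Omega'^j_i$, pull back via $\phi_1$ (on the relevant open set where $\phi_0$ lands) to get sections $\phi_1^{-1}$-related data, and the compatibility condition becomes the statement relating the connection $1$-forms of $D$ and the pulled-back connection $1$-forms $\phi_0^*\Omega'^j_i$.

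Next I would translate everything to the dual picture, since $\phi^*$ acts on $\Omega^\bullet(M',A'^*)$. Pick a local frame $\{a'_i\}$ of $A'$ over an open set $U' \subseteq M'$ with dual frame $\{\theta'^i\}$, and work over $\phi_0^{-1}(U')$. Write $\phi^* \theta'^i = \Lambda^i_j \theta^j$ for a matrix of functions $\Lambda^i_j$ (the transpose of the matrix of $\phi_1$ in these frames), where $\{\theta^j\}$ is a local frame of $A^*$ with associated connection $1$-forms. Then compute $D\phi^*\theta'^i$ using the Leibniz rule~\eqref{eq:ConnLeibniz} and the dual-connection formula, and separately compute $\phi^* D'\theta'^i = \phi^*(-\Omega'^i_j \otimes \theta'^j) = -\phi_0^*\Omega'^i_j \otimes \phi^*\theta'^j$. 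Setting these equal and using that the $\theta^j$ are a frame, I get $d\Lambda^i_j - \Lambda^i_k \Omega^k_j = -\phi_0^*\Omega'^i_k\,\Lambda^k_j$, i.e.\ $d\Lambda + \Lambda\cdot\Omega^{\mathrm{op}} = -(\phi_0^*\Omega')^{\mathrm{op}}\cdot\Lambda$ in the appropriate index convention; this is precisely the classical compatibility relation for connection matrices under a bundle map. I would then note that since $\Omega(M,A^*)$ is generated over $C^\infty(M)$ by $\Omega^0(M,A^*)$ and $\Omega(M)$, and both $D\phi^*$ and $\phi^*D'$ are $\mathbb{R}$-linear operators satisfying the same graded Leibniz rule over the de Rham differential on $\Omega(M)$, it suffices to check $D\phi^* = \phi^* D'$ on sections of $A'^*$; hence equality on all of $\Omega(M',A'^*)$ follows from the frame computation above.

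For the converse direction I would run the same computation backwards: assuming $D\phi^* = \phi^* D'$, evaluate on the frame $\{\theta'^i\}$ to recover the relation $d\Lambda^i_j - \Lambda^i_k\Omega^k_j = -\phi_0^*\Omega'^i_k\Lambda^k_j$, and then check that this is exactly the condition for $T\phi_1$ to preserve horizontal subspaces. Concretely, a tangent vector to $A$ at a point over $m$ is horizontal iff, in the local frame, its ``fibre component'' transforms according to $-\Omega^k_j$ times the position; one computes the image under $T\phi_1$ (using $\phi_1(a) = \Lambda^i_j(m)\,a^j\, $ on fibre coordinates, plus the base map $\phi_0$) and verifies that the horizontality condition for $A'$ is satisfied precisely when the displayed matrix identity holds.

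The main obstacle, and the part requiring the most care, is pinning down the precise meaning of ``compatible with the connections'' in a way that is both faithful to the Ehresmann-connection formulation (horizontal vectors to horizontal vectors) and directly comparable to the covariant-derivative identity; i.e.\ getting the bookkeeping of the horizontal distribution of a linear connection on the total space $A$ straight, including the correct handling of the base map $\phi_0$ (which is not assumed to be a diffeomorphism, so one must be careful that ``pullback'' of forms and of connection data behaves well and that the frame computations are genuinely local over $\phi_0^{-1}(U')$). Once the right local description of horizontality is fixed, the remainder is a routine but slightly lengthy manipulation of connection matrices, which I would not write out in full, citing instead the standard correspondence between horizontal lifts and the transformation law for connection $1$-forms.
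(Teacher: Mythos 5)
Your proposal is correct, but it takes a genuinely different route from the paper. The paper's proof is frame-free: it writes the compatibility condition as the identity $T\phi_1\,h(v_m,a_m) = h'(T\phi_0\,v_m,\phi_1(a_m))$ between horizontal lifts (Eq.~\eqref{eq:ConnectCompat}), decomposes tangent vectors of the total spaces into horizontal and vertical parts, and computes the derivative $v_m\cdot\langle\phi^*\theta',a\rangle$ in two ways; subtracting yields the exact identity
\begin{equation*}
  \iota_{v_m}\bigl\langle(\phi^*D'-D\phi^*)\theta',a\bigr\rangle
  = \bigl\langle\theta'(\phi_0(m)),\,
    h'\bigl(T\phi_0 v_m,\phi_1(a(m))\bigr)-T\phi_1\,h\bigl(v_m,a(m)\bigr)\bigr\rangle,
\end{equation*}
from which both implications drop out simultaneously, with no need to argue separately that a local matrix identity characterizes horizontality. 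Your version instead localizes, writes $\phi^*\theta'^i=\Lambda^i_j\theta^j$, and reduces everything to the connection-matrix relation $d\Lambda^i_j-\Lambda^i_k\Omega^k_j=-\phi_0^*\Omega'^i_k\,\Lambda^k_j$, which you then match (correctly) against the coordinate description of the horizontal subspaces; your observation that both operators obey the same graded Leibniz rule over $d$, so that checking on $\Gamma(M',A'^*)$ suffices, is a reduction the paper uses only implicitly. The trade-off: your argument is more classical and computational but requires the extra verification that the matrix identity is \emph{exactly} horizontality preservation (which you sketch adequately), plus a further shrinking of $\phi_0^{-1}(U')$ to where $A$ is also trivial; the paper's argument is longer to set up but produces a single pointwise identity equating the two defects, making the equivalence immediate. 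Both are complete proofs.
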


While the statement of this proposition is folklore knowledge, we could not find a proof in the literature, so we give one here:

\begin{proof}
$T\phi_1: TA \to TA'$ maps horizontal vectors to horizontal vectors if and only if the horizontal lifts $h: TM \times_M A \to TA$ and $h': TM' \times_{M'} A' \to TA'$ of the connections satisfy
\begin{equation}
\label{eq:ConnectCompat}
  T\phi_1 \, h(v_m, a_m)
  = h'( T\phi_0\, v_0, \phi_1(a_m))
\end{equation}
for all $m \in M$, $v_m \in T_m M$, and $a_m \in A_m$.

The decomposition of a tangent vector $\dot{a} \in T_{a_m} A$ into its horizontal and vertical part is given by
\begin{equation*}
  \dot{a}^\parallel = h(T\pi\, \dot{a}, a_m) \,,\quad
  \dot{a}^\perp =  \dot{a} - \dot{a}^\parallel \,,
\end{equation*}
where $\pi: A \to M$ is the bundle projection. Since $A$ is a vector bundle, we can identify the vertical part of $\dot{a}$ with an element in $A_m$. The dual connection is defined by requiring that paring a (locally or at a point) horizontal section $\theta^\parallel$ of $A^*$ with a horizontal section $a^\parallel$ of $A$ yields a constant function $\langle \theta^\parallel, a^\parallel \rangle$ on $M$. For a tangent vector $v_m \in T_m M$ represented by the smooth path $t \mapsto \gamma_t \in M$, we have
\begin{equation*}
\begin{split}
  v_m \cdot \langle \theta^\parallel, a^\parallel \rangle 
  &= \frac{d}{dt} \bigl\langle \theta^\parallel(\gamma_t), 
    a^\parallel(\gamma_t) \bigr\rangle_{t=0}
  \\
  &= 0
  \,.
\end{split}
\end{equation*}
For arbitrary sections $\theta$ and $a$ it follows that the derivative of $\langle \theta, a \rangle$ depends only on the horizontal components of the derivatives of $\theta$ and $a$, so that we obtain
\begin{equation*}
  v_m \cdot \langle \theta, a \rangle
  = 
    \bigl\langle (T\theta\,v_m)^\perp , a(m) \bigr\rangle
  + \bigl\langle \theta(m), (Ta\, v_m)^\perp v_m \bigr\rangle
  \,.
\end{equation*}
Using this relation, we can write the derivative in the direction of $v_m \in T_m M$ of the pairing of the pullback of a section $\theta'$ of $A'^*$ with a section $a$ of $A$  as follows:
\begin{equation*}
\begin{split}
  v_m \cdot \langle \phi^* \theta', a \rangle
  &= v_m \cdot \langle \theta' \circ \phi_0, \phi_1 \circ a \rangle
  \\
  &= \bigl\langle (T\theta'\, T\phi_0 \, v_m)^\perp, 
     \phi_1(a(m)) \bigr\rangle
   + \bigl\langle \theta'(\phi_0(m)), 
     (T\phi_1\, Ta\, v_m)^\perp \bigr\rangle
  \\
  &= \bigl\langle D'_{T\phi_0\, v_m} \theta', 
     \phi_1(a(m)) \bigr\rangle
  \\ &\quad{}   
   + \bigl\langle \theta'(\phi_0(m)), 
     T\phi_1\, Ta\, v_m 
     - h'\bigl(T\phi_0\, v_m, 
     \phi_1(a(m))\bigr) \bigr\rangle
  \\
  &= \iota_{v_m} \bigl\langle \phi^* D' \theta', 
     a\bigr\rangle 
   + \bigl\langle \theta'(\phi_0(m)), 
     T\phi_1\, Ta\, v_m 
     - h'\bigl(T\phi_0\, v_m, 
     \phi_1(a(m))\bigr) \bigr\rangle
   \,,
\end{split}
\end{equation*}
where we have used that the vertical part of the derivative of a section is its covariant derivative, $(T\theta'\,T\phi_0\, v_m)^\perp = D_{T\phi_0 v_m} \theta'$, and that $\phi$ being a morphism of bundles implies $T\pi'\, T\phi_1\, Ta \, v_m = T\phi_0\, T\pi\, Ta\, v_0 =  T\phi_0\, v_m$. On the other hand, we can express the derivative in terms of the covariant derivatives as
\begin{equation*}
\begin{split}
  v_m \cdot \langle \phi^* \theta', a \rangle
  &= \langle D_{v_m} \phi^* \theta', a \rangle
    + \langle \phi^* \theta', D_{v_m} a \rangle
  \\  
  &= \langle D_{v_m} \phi^* \theta', a \rangle
    + \bigl\langle \theta'(\phi_0(m)),
    T\phi_1 \,\bigl( Ta\, v_m - h(v_m, a(m)) 
    \bigr) \bigr\rangle 
  \,.
\end{split}
\end{equation*}
Subtracting the last two equations we obtain
\begin{equation*}
  \iota_{v_m} \bigl\langle (\phi^* D' - D \phi^*) 
    \theta', a \bigr\rangle
  =
  \bigl\langle \theta'(\phi_0),
    h'\bigl(T\phi_0\, v_m, \phi_1(a(m))\bigr)
    - T\phi_1 \, h(v_m, a(m))
    \bigr\rangle \,, 
\end{equation*}
which holds for all sections $\theta$, $a$, and all vectors $v_m$. We conclude that $\phi^* D' = D \phi^*$ if and only if Eq.~\eqref{eq:ConnectCompat} holds.
\end{proof}

\begin{Proposition}
\label{prop:MorphcompatAnch}
Let $(A,\rho)$ and $(A',\rho')$ be anchored vector bundles. A morphism $\phi: A \to A'$ of vector bundles is compatible with the anchors, $T\phi_0 \circ \rho = \rho' \circ T\phi_1$, if and only if $\iota_\rho \phi^* = \phi^* \iota_{\rho'}$.
\end{Proposition}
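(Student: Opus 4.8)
The plan is to prove this by testing the two bigraded derivations $\iota_\rho \phi^*$ and $\phi^* \iota_{\rho'}$ on the generators of $\Omega(M',A')$, since both sides are determined by their action on functions, differential $1$-forms $\tau' \in \Omega^{1,0}(M',A')$, and Lie algebroid $1$-forms $\theta' \in \Omega^{0,1}(M',A')$. On functions and on forms $\theta'$ both sides vanish trivially (since $\iota_\rho$ and $\iota_{\rho'}$ kill these, and pullback preserves the bidegree $(0,1)$ of $\theta'$, which $\iota_\rho$ then kills, etc.), so the only content is in the action on a $1$-form $\tau' \in \Omega^{1,0}(M',A')$. There, $\iota_{\rho'}\tau' \in \Omega^{-1,1}$-type object is characterized by $\langle \iota_{\rho'}\tau', a'\rangle = \iota_{\rho' a'}\tau'$ for sections $a'$ of $A'$, and similarly for $\iota_\rho$. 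So the identity $\iota_\rho \phi^* \tau' = \phi^* \iota_{\rho'}\tau'$, evaluated against an arbitrary section $a$ of $A$, unwinds to a pointwise statement about how $\phi_0^*\tau'$ contracts against $\rho a$ versus how $\tau'$ contracts against $\rho'$ applied to the image of $a$ under $\phi_1$.

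The key computation is then: for $m \in M$ and a section $a$ of $A$,
\begin{equation*}
  \langle \iota_\rho \phi^* \tau', a \rangle_m
  = \iota_{\rho a} (\phi_0^* \tau')_m
  = \langle (\phi_0^* \tau')_m, (\rho a)_m \rangle
  = \langle \tau'_{\phi_0(m)}, (T\phi_0)_m (\rho a)_m \rangle \,,
\end{equation*}
using the definition of pullback of differential forms. On the other side,
\begin{equation*}
  \langle \phi^* \iota_{\rho'} \tau', a \rangle_m
  = \langle (\iota_{\rho'}\tau')_{\phi_0(m)}, (\phi_1)_m (a_m) \rangle
  = \iota_{\rho'(\phi_1(a_m))} \tau'_{\phi_0(m)}
  = \langle \tau'_{\phi_0(m)}, \rho'((\phi_1)_m(a_m)) \rangle \,,
\end{equation*}
using the definition of $\phi^*$ on $A^*$-valued forms and of $\iota_{\rho'}$. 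Comparing, the two expressions agree for all $\tau'$, all $a$, and all $m$ precisely when $(T\phi_0)_m \circ (\rho)_m = (\rho')_{\phi_0(m)} \circ (\phi_1)_m$ as maps $A_m \to T_{\phi_0(m)}M'$, i.e. exactly the anchor-compatibility condition $T\phi_0 \circ \rho = \rho' \circ T\phi_1$. Since at each $m$ the vectors $(\phi_1)_m(a_m)$ need not exhaust $A'_{\phi_0(m)}$, one should be slightly careful: the equality $\langle \tau', (T\phi_0)(\rho a)\rangle = \langle \tau', \rho'(\phi_1 a)\rangle$ for all covectors $\tau'$ at $\phi_0(m)$ forces $(T\phi_0)(\rho a_m) = \rho'(\phi_1 a_m)$ in $T_{\phi_0(m)}M'$, and since this holds for every $a_m \in A_m$ we recover the stated identity of bundle maps (which only asserts agreement on the image of $\phi_1$, so no surjectivity is needed).

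I expect no genuine obstacle here: this is a ``check on generators, then unwind the pairings'' argument analogous to the proof of Proposition~\ref{prop:CommirhoD} and the characterization in Proposition~\ref{prop:MorphcompatConn}. The one point requiring a little care is bookkeeping the bidegrees — making sure that $\iota_\rho$ (bidegree $(-1,1)$) and $\phi^*$ (bidegree $(0,0)$) compose to maps of the same bidegree on each generator, and that the trivial cases (functions and $\theta'$) are genuinely trivial rather than hiding a sign or a base-point subtlety. Once that is settled, the equivalence drops out of the pointwise identification above.
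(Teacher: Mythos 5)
Your proposal is correct and follows essentially the same route as the paper: reduce to the action on differential $1$-forms (the other generators being killed by both sides), unwind the pairings pointwise, and extend to all of $\Omega(M',A')$ by the derivation property. The paper's proof is just a more compressed version of your chain of equalities, and your extra remark about the image of $\phi_1$ not needing to be all of $A'_{\phi_0(m)}$ is a correct (if unneeded in the paper) clarification of the converse direction.
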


\begin{proof}
Evaluating the condition $T\phi_0 \circ \rho = \rho' \circ \phi_1$ at $a \in A$ and pairing it with a differential 1-form $\tau \in \Omega^1(M')$, we obtain the equivalent condition
\begin{equation*}
\begin{split}
  \langle \iota_\rho \phi^* \tau, a \rangle
  &= \langle \phi^* \tau, \rho(a) \rangle
  = \langle\tau, T\phi_0 \rho(a) \rangle
  = \langle\tau, \rho'(\phi_1(a)) \rangle
  = \langle \iota_{\rho'}
     \tau, \phi_1(a) \rangle
  \\
  &= \langle \phi^*\iota_{\rho'} \tau, a \rangle \,.
\end{split}
\end{equation*}
This shows that the compatibility is equivalent to $\iota_\rho \phi^*\tau = \phi^*\iota_{\rho'} \tau$ for all differential 1-forms on $M'$. By the derivation property of $\iota_\rho$ and $\iota_{\rho'}$, $\iota_\rho \phi^*\tau = \phi^*\iota_{\rho'} \tau$ then holds for differential forms of all positive degrees. Since $\iota_\rho$ and $\iota_{\rho'}$ annihilate all functions in $C^\infty(M)$ and $C^\infty(M')$, as well as all exterior $A$-forms in $\Omega(A)$ and $\Omega(A')$, respectively, we conclude that $\iota_\rho \phi^*  = \phi^*\iota_{\rho'}$ holds when applied to all forms in $\Omega(M,A)$.
\end{proof}

\begin{Proposition}
\label{prop:MorphcompatLA}
Let $(A,D)$ and $(A', D')$ be Lie algebroids with connections and $\phi: A \to A'$ a morphism of vector bundles that is compatible with the connections. Then $\phi$ is a morphism of Lie algebroids if and only if $\check{D} \phi^* = \phi^* \check{D'}$.
\end{Proposition}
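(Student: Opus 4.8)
The plan is to deduce the statement from Prop.~\ref{prop:CommirhoD}, which writes the derivation $\check D$ as a combination $[\iota_\rho,D]+\iota_T$ of the derivations encoding the anchor, the connection, and the torsion; the compatibility of $\check D$ with $\phi^*$ then splits into the separate compatibilities of these pieces, of which all but the torsion one come essentially for free.

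For the implication ``$\Leftarrow$'', I would use that $\check D$ has bidegree $(0,1)$ and therefore preserves the subalgebra $\Omega^{0,\bullet}(M,A)=\Omega(A)$ of ``purely Lie algebroid'' forms, where, by its definition, it agrees with the Lie algebroid differential $\DA$ on the generators $f\in C^\infty(M)$ and $\theta\in\Omega^1(A)$, hence $\check D|_{\Omega(A)}=\DA$; likewise $\check{D'}|_{\Omega(A')}=\DA'$. Since $\phi^*$ maps $\Omega(A')$ into $\Omega(A)$, restricting $\check D\phi^*=\phi^*\check{D'}$ to $\Omega(A')$ yields $\DA\phi^*=\phi^*\DA'$, which by the characterization of Lie algebroid morphisms recalled at the start of Sec.~\ref{sec:Category} says exactly that $\phi$ is a morphism of Lie algebroids.

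For ``$\Rightarrow$'', assume $\phi$ is a morphism of Lie algebroids, i.e.\ $\DA\phi^*=\phi^*\DA'$ on $\Omega(A')$. Evaluating this on a function $f\in C^\infty(M')$ and an arbitrary section $a$ of $A$ gives, after unwinding the pullbacks, $df_{\phi_0(m)}\bigl(T_m\phi_0\,\rho(a)_m\bigr)=df_{\phi_0(m)}\bigl(\rho'(\phi_1 a_m)\bigr)$ for all $f$ and $m$, hence $T\phi_0\circ\rho=\rho'\circ\phi_1$, so $\phi$ is compatible with the anchors and Prop.~\ref{prop:MorphcompatAnch} gives $\iota_\rho\phi^*=\phi^*\iota_{\rho'}$. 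Combining this with the hypothesis $D\phi^*=\phi^*D'$ of Prop.~\ref{prop:MorphcompatConn}, and using that $\iota_\rho$ has total degree $0$, we get immediately $[\iota_\rho,D]\,\phi^*=\phi^*\,[\iota_{\rho'},D']$. By Prop.~\ref{prop:CommirhoD}, $\check D=[\iota_\rho,D]+\iota_T$ and $\check{D'}=[\iota_{\rho'},D']+\iota_{T'}$, so everything reduces to proving $\iota_T\phi^*=\phi^*\iota_{T'}$. Both $\iota_T$ and $\iota_{T'}$ are derivations of bidegree $(0,1)$ that annihilate functions and differential $1$-forms, so $\iota_T\phi^*-\phi^*\iota_{T'}$ is a derivation relative to $\phi^*$ vanishing on $\Omega^{0,0}$ and $\Omega^{1,0}$; hence it suffices to check it on Lie algebroid $1$-forms $\theta\in\Omega^1(A')$, where, unwinding $\iota_T\theta=\langle\theta,T\rangle$, the assertion becomes the fibrewise identity $\phi_1\bigl(T(a_m,b_m)\bigr)=T'(\phi_1 a_m,\phi_1 b_m)$ for the torsions of the induced $A$- and $A'$-connections. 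Since $T(a,b)=D_{\rho a}b-D_{\rho b}a-[a,b]$ is $C^\infty$-bilinear, one may verify this pointwise by extending $a_m,b_m$ to local sections: the two covariant-derivative terms match because $T\phi_1$ carries $D$-horizontal vectors to $D'$-horizontal ones (Prop.~\ref{prop:MorphcompatConn}) together with anchor compatibility, and the bracket term matches by applying the morphism property to $\DA$ in the tensorial form~\eqref{eq:defDA}, exactly as in the proof of Prop.~\ref{prop:CommirhoD}.

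I expect this torsion step to be the only delicate point, essentially because $\phi_0$ need not be a diffeomorphism: sections of $A$ have no pushforwards, so the ``$\phi$-relatedness'' of the bracket and of the covariant derivative must be handled through pullbacks of forms (or through $\phi$-related local sections and tensoriality of $T$) rather than by naive transport of vector fields. Everything else is the formal calculus of the three derivations $\iota_\rho$, $D$, $\check D$ on $\Omega(M,A)$ together with Props.~\ref{prop:MorphcompatConn}, \ref{prop:MorphcompatAnch}, and~\ref{prop:CommirhoD}.
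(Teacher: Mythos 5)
Your argument is correct, and your backward direction coincides with the paper's. In the forward direction, however, you take a genuinely different and more laborious route: you apply the decomposition $\check{D}=[\iota_\rho,D]+\iota_T$ of Prop.~\ref{prop:CommirhoD} uniformly to all generators, which forces you to establish the naturality of the torsion, $\iota_T\phi^*=\phi^*\iota_{T'}$, on Lie algebroid $1$-forms, i.e.\ the fibrewise identity $\phi_1\bigl(T(a_m,b_m)\bigr)=T'(\phi_1 a_m,\phi_1 b_m)$. The paper never needs this: it checks $\check{D}\phi^*=\phi^*\check{D'}$ generator by generator, using on exterior $A$-forms that $\check{D}$ \emph{is} $\DA$ by definition (so the morphism hypothesis applies verbatim there), and on differential forms that $\iota_T$ vanishes by definition, so that $\check{D}=[\iota_\rho,D]$ on $\Omega(M')$ and the claim follows from Props.~\ref{prop:MorphcompatConn} and~\ref{prop:MorphcompatAnch} alone. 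Your torsion identity is true, but the sketch of it is the delicate point: since $\phi_0$ need not be a diffeomorphism, the three terms $D_{\rho a}b$, $D_{\rho b}a$, $[a,b]$ do not transform individually under $\phi$ --- only their tensorial combination $T$ does --- so one must either write $\phi_1\circ a$ as a $C^\infty(M)$-combination of pulled-back sections and watch the derivative terms cancel, or, more cheaply, observe that torsion naturality is itself a \emph{consequence} of the identities $\check{D}\phi^*\theta'=\DA\phi^*\theta'=\phi^*\DA'\theta'=\phi^*\check{D'}\theta'$ and $[\iota_\rho,D]\phi^*=\phi^*[\iota_{\rho'},D']$, at which point your detour collapses into the paper's proof. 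In short: no gap, but the one step you flag as delicate can be eliminated entirely by evaluating on the generators in the right order.
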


\begin{proof}
Assume that $\phi$ is a morphism of Lie algebroids. This means that for every function  and every exterior $A$-form $\theta' \in \Omega(A)$ we have $\check{D} \phi^* \theta' = \DA \phi^* \theta' = \phi^* \DA' \theta' = \phi^* \check{D'} \theta'$. In particular, for every function $f' \in \Omega^0(A') = C^\infty(M')$ we have
\begin{equation*}
\begin{split}
  \iota_\rho \phi^* df' 
  &= \iota_\rho d \phi^* f'
  = \DA \phi^* f'
  = \phi^* \check{D}' f'
  \\
  &= \phi^* \iota_{\rho'} df' \,,
\end{split}
\end{equation*}
which implies that $\iota_\rho \phi^* = \phi^* \iota_{\rho'}$. Proposition \ref{prop:CommirhoD} implies that $\check{D}$ acts on differential forms by the commutator $[\iota_\rho, D]$. Since $\phi^*$ intertwines the interior derivatives of the anchors and by assumption the connections, it follows that $\check{D} \phi^* \tau' = \phi^* \check{D'} \tau'$ for all $\tau' \in \Omega(M')$. Differential forms and exterior $A$-forms generate $\Omega(M,A)$, which implies that the relation $\check{D} \phi^* = \phi^* \check{D'}$ holds when applied to any element of $\Omega(M,A)$.

Conversely, assume that $\check{D} \phi^* = \phi^* \check{D'}$. Since $\check{D}$ acts on exterior $A$-forms by the Lie algebroid differential, it follows that $\DA \phi^* \theta' = \phi^* \DA' \theta'$ for all $\theta' \in \Omega(A')$. We conclude that $\phi$ is a morphism of Lie algebroids.
\end{proof}

\subsection{Morphisms of hamiltonian Lie algebroids}
The structure of a hamiltonian Lie algebroid $(A,\omega, D, \mu)$ can be interpreted in terms of the bigraded ring $\Omega(M,A)$ as well. The presymplectic 2-form, the dual anchor, and the momentum section are elements
\begin{equation*}
  \omega \in \Omega^{2,0}(M,A) \,,\quad 
  \gamma \in \Omega^{1,1}(M,A) \,,\quad 
  \mu \in \Omega^{0,1}(M,A) \,. 
\end{equation*}
The connection is given by the derivation $D$ of bidegree $(1,0)$ and the Lie algebroid structure by the derivation $\check{D}$ of bidegree $(0,1)$. Moreover, the anchor is encoded in the derivation $\iota_\rho$. 

\begin{Proposition}
\label{prop:OmegaHamConds}
The conditions in Definition \ref{def:HamLAcomplete} for $A$ to be presymplectically anchored with respect to $D$, for $\mu$ to be a momentum section, and for $\mu$ to be bracket-compatible can be written as
\begin{equation}
\label{eq:HamConds2}
  D\iota_\rho\omega = 0 \,,\,\,
  D\mu = -\iota_\rho \omega,\, {\it and} \,\,
  \check{D}\mu 
  = - \tfrac{1}{2} \iota_\rho \iota_\rho \omega \,,
\end{equation}
respectively.
\end{Proposition}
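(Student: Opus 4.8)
The plan is to reduce all three equations to a single structural identity: in the bigraded algebra $\Omega^{\bullet,\bullet}(M,A)$ the dualized anchor is, up to a sign, the interior derivative $\iota_\rho\omega$, namely $\iota_\rho\omega = -\gamma$. Granting this, (H1) and (H2) become literal rewritings, while (H3) follows from the definition of the derivation $\check D$ on $A$-forms together with a short computation of $\iota_\rho\iota_\rho\omega$.

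\emph{Step 1: $\iota_\rho\omega = -\gamma$, and hence (H1), (H2).} I would check the identity in a coordinate chart. Writing $\omega = \tfrac{1}{2}\,\omega_{jk}\,dx^j\wedge dx^k$ and using that $\iota_\rho$ is a derivation of bidegree $(-1,1)$, hence of total degree $0$, with $\iota_\rho dx^j = \rho^*dx^j\in\Omega^{0,1}(M,A)$, one obtains $\iota_\rho\omega = \omega_{jk}\,dx^j\otimes\rho^*dx^k$. On the other hand $\gamma = \gamma_i\otimes\theta^i$ with $\gamma_i = \iota_{\rho a_i}\omega$ by Eq.~\eqref{eq:gammaFrame}, which unwinds in coordinates to $\gamma = -\omega_{jk}\,dx^j\otimes\rho^*dx^k$; the discrepancy in sign is exactly the Koszul sign picked up when an $\Omega^{0,1}$-factor is moved past an $\Omega^{1,0}$-factor in the bigraded product. (Without frames one can argue instead from $\iota_{\rho a} = [\iota_a,\iota_\rho]$, established inside the proof of Prop.~\ref{prop:CommirhoD}: it gives $\iota_a\iota_\rho\omega = \iota_\rho\iota_a\omega + \iota_{\rho a}\omega = \iota_{\rho a}\omega$, fixing $\iota_\rho\omega$ up to the conventional sign.) Once $\iota_\rho\omega = -\gamma$ is in hand, (H1), i.e.\ $D\gamma = 0$, is the equation $D\iota_\rho\omega = 0$, and (H2), i.e.\ $D\mu = \gamma$, is the equation $D\mu = -\iota_\rho\omega$.

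\emph{Step 2: (H3).} By definition the derivation $\check D$ acts on $\mu\in\Omega^{0,1}(M,A)$ by the Lie algebroid differential, so $\check D\mu = \DA\mu \in \Omega^{0,2}(M,A) = \Gamma(M,\wedge^2 A^*)$. It remains to recognize $-\tfrac{1}{2}\,\iota_\rho\iota_\rho\omega$ as the same $2$-form on $A$. From Step~1, $\iota_\rho\iota_\rho\omega = -\iota_\rho\gamma$, and since $\iota_\rho$ is a derivation vanishing on exterior $A$-forms, $\iota_\rho\gamma = \iota_\rho(\gamma_i\otimes\theta^i) = (\rho^*\gamma_i)\wedge\theta^i$. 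Evaluating on sections $a,b$ of $A$, using $\langle\rho^*\gamma_i,a\rangle = \omega(\rho a_i,\rho a)$ together with the completeness relation $\sum_i\langle\theta^i,a\rangle\,a_i = a$, one finds $(\iota_\rho\gamma)(a,b) = \omega(\rho b,\rho a) - \omega(\rho a,\rho b) = -2\,\omega(\rho a,\rho b)$, hence $\bigl(-\tfrac{1}{2}\,\iota_\rho\iota_\rho\omega\bigr)(a,b) = -\omega(\rho a,\rho b)$. On the other hand (H3) reads $(\DA\mu)(a,b) = -\langle\gamma(\rho a),b\rangle$, and by the note following Def.~\ref{def:HamLAcomplete} the right-hand side equals $-\omega(\rho a,\rho b)$. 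Comparing, $\check D\mu = -\tfrac{1}{2}\,\iota_\rho\iota_\rho\omega$ holds if and only if (H3) holds.

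The only nonformal part is the bookkeeping of the bigraded Koszul signs in $\Omega^{\bullet,\bullet}(M,A)$ — in particular confirming that $\iota_\rho\omega = -\gamma$ rather than $+\gamma$, and that $\iota_\rho\iota_\rho\omega$ comes out with a factor $2$; this is where I expect the main, if modest, difficulty. If one prefers to avoid a frame in Step~2, $(\iota_\rho\iota_\rho\omega)(a,b)$ can be computed invariantly as $\iota_b\iota_a\iota_\rho\iota_\rho\omega$ by repeated use of $\iota_a\iota_\rho = \iota_\rho\iota_a + \iota_{\rho a}$ and $\iota_a\omega = 0$.
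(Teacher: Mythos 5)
Your proposal is correct and follows essentially the same route as the paper's proof: both hinge on the coordinate identity $\iota_\rho\omega=-\gamma$ (with the sign coming from commuting a $(0,1)$-factor past a $(1,0)$-factor) to dispose of (H1) and (H2), and then on the computation $\iota_\rho\iota_\rho\omega = 2\,\omega(\rho\,\cdot\,,\rho\,\cdot\,)$ for (H3). The only cosmetic difference is that the paper obtains the factor $2$ via the commutator identity $\iota_{\rho a}=[\iota_a,\iota_\rho]$ rather than your frame evaluation of $\iota_\rho\gamma$ — which is exactly the alternative you mention at the end.
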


\begin{proof}
We must explain the minus sign in the second equation and the factor $\frac{1}{2}$ in the third equation, which are a bit surprising. By Eq.~\eqref{eq:gammaFrame}, the dualized anchor $\gamma$, viewed as an $A^*$-valued 1-form on $M$, can be written in coordinates as 
\begin{equation*}
  \gamma 
  = \gamma_i \otimes \theta^i 
  = \rho^\alpha_i \omega_{\alpha\beta} dx^\beta \otimes \theta^i
\end{equation*}
as an element of the graded algebra $\Omega(M,A)$.  By definition, $\iota_\rho$ is the graded derivation on $\Omega(M,A)$ that is given by its action on local generators as
\begin{equation*}
  \iota_\rho dx^\alpha = \rho^\alpha_i\theta^i 
  \,,\quad
  \iota_\rho \theta^i = 0 \,.
\end{equation*}
Applying $\iota_\rho$ to $\omega = \frac{1}{2} \omega_{\alpha\beta} dx^\alpha \wedge dx^\beta \equiv \omega = \frac{1}{2} \omega_{\alpha\beta} dx^\alpha dx^\beta$, we obtain
\begin{equation*}
\begin{split}
  \iota_\rho \omega
  &= \omega_{\alpha\beta}
  dx^\alpha \iota_\rho dx^\beta
  = \omega_{\alpha\beta}  dx^\alpha \rho^\beta_i\theta^i
  = - \rho^\alpha_i \omega_{\alpha\beta}  dx^\beta \otimes \theta^i
  \\
  &= -\gamma \,.
\end{split}
\end{equation*}
It follows that $\gamma = - \iota_\rho \omega$, so that the condition $D\gamma = 0$ is equivalent to $D\iota_\rho \omega = 0$, and the condition $D\mu = \gamma$ is equivalent to $D\mu = -\iota_\rho \omega$.

Using $\iota_{\rho a} = [\iota_a, \iota_\rho]$, we compute for the third condition
\begin{equation*}
\begin{split}
  \iota_b \iota_a (\iota_\rho \iota_\rho \omega)
  &= 
  \iota_b (\iota_\rho \iota_a 
  + \iota_{\rho a}) \iota_\rho \omega)
  \\
  &= 
  (\iota_b \iota_\rho) (\iota_a \iota_\rho) \omega 
  + \iota_b \iota_{\rho a} \iota_\rho \omega
  \\
  &= 
  (\iota_b \iota_\rho) \iota_{\rho a} \omega 
  + (\iota_b \iota_\rho) \iota_{\rho a} \omega
  \\
  &= 
  2\omega(\rho a, \rho b)
  \,.
\end{split}
\end{equation*}
This shows that $\omega(\rho a, \rho b) = (\frac{1}{2} \iota_\rho \iota_\rho \omega)(a,b)$. It follows that the condition $(\DA\mu)(a,b) = -\omega(\rho a, \rho b)$ is equivalent to $\check{D}\mu = - \frac{1}{2}\iota_\rho \iota_\rho \omega$.
\end{proof}
}
For the first condition of Eqs.~\eqref{eq:HamConds2}, there is an equivalent condition in terms of $\check{D}$:

\begin{Proposition}
\label{prop:Dcheckomega}
An anchored vector bundle $A$ over a presymplectic manifold $(M,\omega)$ is presymplectically anchored with respect to the connection $D$ if and only if
\begin{equation*}
  \check{D} \omega = 0 \,.
\end{equation*}
\end{Proposition}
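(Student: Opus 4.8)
The plan is to compute $\check D\omega$ directly from the definitions and to recognize the result as an expression that has already been evaluated in the proof of the tangent-bundle lemma, namely the right-hand side of Eq.~\eqref{eq:TangSympConn}. Recall that $\check D$ is the opposite $A$-connection on $TM$ of Def.~\ref{def:OppConn}, extended to the dual $A$-connection on $T^*M$ via Eq.~\eqref{eq:DualAconn} and thence to $\wedge^2 T^*M$ by the Leibniz rule; thus $\check D\omega = 0$ unwinds to
\begin{equation*}
  (\check D_a\omega)(v,w) = \rho a\cdot\omega(v,w) - \omega(\check D_a v, w) - \omega(v,\check D_a w) = 0
\end{equation*}
for all sections $a$ of $A$ and all vector fields $v, w$ on $M$. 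Only the $A$-connection part of $\check D$ is used here, so the statement is meaningful for a mere anchored vector bundle, without reference to any bracket.

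First I would substitute the defining formula $\check D_a v = [\rho a, v] + \rho(D_v a)$ of Eq.~\eqref{eq:DcheckDef} and separate the purely bracket terms from those involving $D$. The bracket terms assemble into $(\Lie_{\rho a}\omega)(v,w)$, and since $\omega$ is closed, Cartan's formula gives $\Lie_{\rho a}\omega = d\,\iota_{\rho a}\omega$, so this piece equals $v\cdot\omega(\rho a, w) - w\cdot\omega(\rho a, v) - \omega(\rho a,[v,w])$. The two remaining terms are $-\omega(\rho(D_v a), w) - \omega(v,\rho(D_w a)) = -\omega(\rho(D_v a), w) + \omega(\rho(D_w a), v)$. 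Adding these contributions reproduces exactly the right-hand side of Eq.~\eqref{eq:TangSympConn}, which was shown there to equal $\iota_w\langle D_v\gamma, a\rangle$; and since the derivation of that formula uses only the anchor and the connection, not a bracket on sections of $A$, it is available in the present generality.

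This gives the identity $(\check D_a\omega)(v,w) = \iota_w\langle D_v\gamma, a\rangle$, valid for all $a$, $v$, $w$, from which the proposition is immediate: $\check D\omega = 0$ if and only if $\langle D\gamma, a\rangle = 0$ for every section $a$, i.e.\ if and only if $D\gamma = 0$, which by definition is the statement that $(A,\rho)$ is presymplectically anchored with respect to $D$. The only real work is sign bookkeeping in passing among $\iota_{\rho a}\omega$, $\gamma$, and $\langle\gamma, a\rangle$ and in the antisymmetrizations — the same bookkeeping already carried out in the tangent-bundle lemma — so I expect no conceptual obstacle, only the need for care with orientation conventions.
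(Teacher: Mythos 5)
Your argument is correct, and it reaches the result by a genuinely different route from the paper. The paper's proof is a three-line computation in the bigraded derivation calculus of Sec.~\ref{sec:CatLieAlgConn}: writing $\gamma = -\iota_\rho\omega$ as an element of $\Omega^{1,1}(M,A)$ and combining Prop.~\ref{prop:CommirhoD} ($\check{D} = [\iota_\rho, D] + \iota_T$) with $D\omega = d\omega = 0$ and $\iota_T\omega = 0$, it identifies $D\gamma$ with $\pm\check{D}\omega$ outright. You instead unwind $(\check{D}_a\omega)(v,w)$ pointwise from Def.~\ref{def:OppConn}, peel off the Lie-derivative part, apply Cartan's formula using $d\omega=0$, and match the result term by term with the right-hand side of Eq.~\eqref{eq:TangSympConn}, arriving at the identity $(\check{D}_a\omega)(v,w) = \iota_w\iota_v\langle D\gamma, a\rangle$; the equivalence with $D\gamma=0$ is then immediate. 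I verified this identity independently and it holds, so the proof is sound. What your version buys is elementarity and scope: it uses no graded commutators, and it makes visibly manifest that only the anchor and the connection enter, so the argument applies verbatim to anchored vector bundles — whereas the paper's proof formally cites a proposition stated for Lie algebroids (the torsion term $\iota_T$ happens to annihilate the differential form $\omega$, so the bracket never actually matters, but one has to notice this). What you give up is brevity and the structural explanation of \emph{why} the identity holds, namely that it is a formal consequence of $\check{D} = [\iota_\rho,D]+\iota_T$. In a final write-up you should make explicit, as you already indicate, that the derivation of Eq.~\eqref{eq:TangSympConn} uses no bracket, and that your reading of $\check{D}\omega$ (the $A$-connection on $\wedge^2 T^*M$ induced by Eq.~\eqref{eq:DualAconn}) coincides with the bidegree-$(0,1)$ derivation $\check{D}$ applied to $\omega\in\Omega^{2,0}(M,A)$; both are determined by the same action on functions and on $1$-forms, so they do agree.
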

\begin{proof}
The dualized anchor is given by $\gamma = - \iota_\rho \omega$. Using Proposition \ref{prop:CommirhoD} and that $D\omega = d\omega = 0$, we get
\begin{equation*}
  D\gamma 
  = - D\iota_\rho \omega
  = [D, \iota_\rho] \omega
  = - \check{D} \omega \,,
\end{equation*}
so the left hand side vanishes if and only if the right hand side does.
\end{proof}

The conditions for a map of vector bundles to be compatible with hamiltonian Lie algebroid structures are now clear.

\begin{Definition}
Let $(A,\omega, D,\mu)$ and $(A', \omega', D', \mu')$ be (weakly) hamiltonian Lie algebroids over presymplectic manifolds. A map $\phi: A \to A'$ of vector bundles is a \textbf{morphism of (weakly) hamiltonian Lie algebroids} if it is a morphism of Lie algebroids and if the following three conditions hold:
\begin{itemize}

\item[(i)] $\omega = \phi_0^* \omega'$,

\item[(ii)] $D \phi^* = \phi^* D'$,

\item[(iii)] $\mu = \phi^* \mu'$.

\end{itemize}
\end{Definition}

\begin{Proposition}
\label{prop:HamPullback}
Let $(A,D)$ and $A', D')$ be Lie algebroids with connections. Let $\phi: A \to A'$ be a morphism of Lie algebroids that is compatible with the connections. Let $\omega'$ be a presymplectic form on the base manifold of $A'$, let $\mu'$ be a section of $A'^*$, and let $\omega := \phi^* \omega'$ and $\mu := \phi^* \mu'$ denote their pullbacks.

If $(A', \omega')$ is presymplectically anchored, then $(A,\omega)$ is presymplectically anchored. If $\mu'$ is a (bracket-compatible) $D'$-momentum section for $(A', \omega')$, then $\mu$ is a (bracket-compatible) $D$-momentum section for $(A, \omega)$. Moreover, when $\phi_1$ is surjective and $\phi_0$ a submersion, then the converses of these statements are also true.
\end{Proposition}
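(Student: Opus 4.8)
The plan is to do all the bookkeeping inside the bigraded algebra $\Omega(M,A)$, combining the translation of the hamiltonian conditions from Prop.~\ref{prop:OmegaHamConds} with the intertwining relations for $\phi^*$ from Props.~\ref{prop:MorphcompatConn}, \ref{prop:MorphcompatAnch}, and~\ref{prop:MorphcompatLA}. First I would collect the three operator identities that do all the work. Since $\phi$ is a morphism of Lie algebroids compatible with the connections, Prop.~\ref{prop:MorphcompatConn} gives $D\phi^* = \phi^* D'$ and Prop.~\ref{prop:MorphcompatLA} gives $\check{D}\phi^* = \phi^*\check{D'}$; and, exactly as in the proof of Prop.~\ref{prop:MorphcompatLA} (apply $\check D\phi^* = \phi^*\check{D'}$ to functions and use $\DA f = \iota_\rho\, df$), a morphism of Lie algebroids is automatically compatible with the anchors, so $\iota_\rho\phi^* = \phi^*\iota_{\rho'}$. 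I would also note that $\phi^*$ restricts to $\phi_0^*$ on $\Omega^{\bullet,0}$, so that $\omega = \phi^*\omega'$ as an element of $\Omega^{2,0}(M,A)$ (and $\omega$ is closed since $\omega'$ is).

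With these in hand the forward implications are one-line computations obtained by pushing $\phi^*$ through the defining equations of Prop.~\ref{prop:OmegaHamConds} for the primed data. If $(A',\omega')$ is presymplectically anchored, then
\begin{equation*}
 D\iota_\rho\omega = D\iota_\rho\phi^*\omega' = D\phi^*\iota_{\rho'}\omega' = \phi^*\bigl(D'\iota_{\rho'}\omega'\bigr) = 0 \,,
\end{equation*}
so $(A,\omega)$ is presymplectically anchored. If $\mu'$ is a $D'$-momentum section, then $D\mu = D\phi^*\mu' = \phi^*(D'\mu') = -\phi^*\iota_{\rho'}\omega' = -\iota_\rho\omega$, so $\mu$ is a $D$-momentum section; and if in addition $\mu'$ is bracket-compatible, then applying $\iota_\rho\phi^* = \phi^*\iota_{\rho'}$ twice gives $\check{D}\mu = \phi^*\check{D'}\mu' = -\tfrac12\phi^*\iota_{\rho'}\iota_{\rho'}\omega' = -\tfrac12\iota_\rho\iota_\rho\omega$, which is bracket-compatibility by Prop.~\ref{prop:OmegaHamConds}.

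For the converses the key observation is that, when $\phi_1$ is fibrewise surjective and $\phi_0$ is a (surjective) submersion, the pullback $\phi^*\colon \Omega(M',A') \to \Omega(M,A)$ is injective: $\phi_0^*$ is injective on differential forms because $T\phi_0$ is fibrewise surjective, and fibrewise surjectivity of $\phi_1$ makes $\wedge^\bullet\phi_1^*$ fibrewise injective, hence $\phi^*$ injective on $\wedge^\bullet A'^*$-valued forms. Granting this, each converse is immediate: the hypothesis that $(A,\omega)$ is presymplectically anchored reads $\phi^*\bigl(D'\iota_{\rho'}\omega'\bigr)=0$, whence $D'\iota_{\rho'}\omega'=0$; the hypothesis that $\mu$ is a $D$-momentum section reads $\phi^*\bigl(D'\mu'+\iota_{\rho'}\omega'\bigr)=0$, whence $D'\mu' = -\iota_{\rho'}\omega'$; and bracket-compatibility of $\mu$ reads $\phi^*\bigl(\check{D'}\mu'+\tfrac12\iota_{\rho'}\iota_{\rho'}\omega'\bigr)=0$, whence $\check{D'}\mu' = -\tfrac12\iota_{\rho'}\iota_{\rho'}\omega'$.

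The main thing to be careful about is the injectivity of $\phi^*$ used in the converse, and specifically the force of the word ``submersion'': a smooth section that vanishes on the image of a non-surjective submersion need not vanish identically, so one should read the hypothesis as ``surjective submersion'' (or at least assume $\phi_0$ has dense image, which is the case in the situations of interest). With that caveat the forward and backward arguments are completely symmetric, and no input beyond Prop.~\ref{prop:OmegaHamConds} and the three compatibility propositions of this section is needed.
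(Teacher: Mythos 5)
Your proof is correct and follows essentially the same route as the paper's: rewrite the three conditions via Prop.~\ref{prop:OmegaHamConds}, push $\phi^*$ through using the intertwining relations $D\phi^*=\phi^*D'$, $\iota_\rho\phi^*=\phi^*\iota_{\rho'}$, $\check{D}\phi^*=\phi^*\check{D'}$, and invoke injectivity of $\phi^*$ for the converses. Your added caveat that the submersion $\phi_0$ should be read as surjective (or with dense image) for $\phi_0^*$ to be injective is a legitimate small sharpening of the paper's statement, but otherwise the arguments coincide.
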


\begin{proof}
By Propositions \ref{prop:MorphcompatConn} and \ref{prop:MorphcompatLA} $\phi$ satisfies $D\phi^* = \phi^* D'$, $\iota_\rho \phi^* = \phi^* \iota_{\rho'}$, and $\check{D}\phi^* = \phi^* \check{D'}$. Therefore,
\begin{align*}
  D  \iota_\rho \omega 
  &= D\phi^* \iota_{\rho'} \omega'
  = \phi^* D' \iota_{\rho'} \omega'
  \\
  D\mu + \iota_\rho \omega 
  &= D\phi^* \mu' + \iota_{\rho} \phi^* \omega' 
  = \phi^*(D'\mu' + \iota_{\rho'} \omega')
  \\
  \check{D} \mu 
  + \tfrac{1}{2} \iota_\rho \iota_\rho \omega 
  &= \check{D} \phi^* \mu' 
    + \tfrac{1}{2} \iota_\rho \iota_\rho \phi^* \omega' 
  = \phi^*( \check{D'} \mu' 
    + \tfrac{1}{2} \iota_{\rho'} \iota_{\rho'} \omega' )
    \,.
\end{align*}
If $D' \iota_{\rho'}\omega'$, $D'\mu' + \iota_{\rho'} \omega'$, and $\check{D'} \mu' + \frac{1}{2}\iota_{\rho'} \iota_{\rho'} \omega'$ on the right hand sides of these equations vanish, then the left hand sides vanish as well. Moreover, when $\phi_1$ is surjective and $\phi_0$ a submersion, then $\phi^*$ is injective, so that in the last statement the ``if'' becomes ``if and only if''.
\end{proof}

\subsection{ Quotient by the isotropy subbundle}
\label{sec:ReduceIsotropy}

As an application of the results of this section, we will study what happens to a (weakly) hamiltonian structure on a Lie algebroid when we take the quotient by its isotropy bundle. We begin with action Lie algebroids.

Let $\rho: \frakg \times M \to TM$ be the anchor of an action Lie algebroid over a presymplectic manifold. The kernel of the corresponding homomorphism $\frakg \to \calX(M)$ is an ideal $\frakh \subseteq \frakg$, so the action descends to an action $\underline{\rho}$ of the quotient Lie algebra $\underline{\frakg} := \frakg/\frakh$.  On the other hand, a momentum map $\mu:M\to \frakg^*$ descends to a map $\underline{\mu}:M\to (\frakg/\frakh)^*$ if and only its image annihilates $\frakh$ under the natural pairing.

\begin{Example}
Let $\frakg$ be the 3-dimensional real Heisenberg Lie algebra spanned by the generators $Q$, $P$, $I$ subject to the relations $[Q,P] = I$, $[Q,I] = 0 = [P,I]$. The action $\rho: \frakg \to \calX(\bbR^2)$ on the symplectic plane $(\bbR^2, \omega = dq \wedge dp)$ given by
\begin{equation*}
  \rho(Q) = - \frac{\partial}{\partial p}
  \,,\quad
  \rho(P) = \frac{\partial}{\partial q}
  \,,\quad
  \rho(I) = 0 \,,
\end{equation*}
is hamiltonian with the momentum map $\mu$ defined by $\langle \mu, Q \rangle = q$, $\langle \mu, P\rangle = p$, and $\langle\mu, I \rangle = 1$.

The kernel of $\rho$ is the 1-dimensional Lie algebra ideal spanned by $I$. The action $\rho$ descends to an action of the quotient $\underline{\frakg} = \frakg/\ker\rho$, which is a 2-dimensional abelian Lie algebra. Note, though, that the momentum map $\mu$ does {\it not} descend to $\underline\frakg$ because it does not annihilate the kernel of $\rho$. On the other hand, there is another momentum map, $\mu'$, obtained from $\mu$ by change of the value on $I$ from $1$ to $0$, which {\it does} descend to a momentum map $\underline{\mu}'$ for the action of $\underline\frakg$. This map, which makes the reduced action weakly hamiltonian, is given by $\langle \underline{\mu}', Q + \ker \rho \rangle = q$ and $\langle \underline{\mu}', P + \ker \rho \rangle = p$. Now neither $\mu'$ nor $\underline{\mu}'$ is equivariant. In fact, the action of $\underline{\frakg}$ is not hamiltonian for any choice of momentum map. This is a special case of Proposition \ref{prop:LieActQuot2} below.
\end{Example}

Our example shows that a momentum map for $\rho$ does generally not descend to a momentum map for $\underline{\rho}$, but we found (at the possible expense of equivariance) a different momentum map which does descend.  This is a general phenomenon, as the next proposition shows.

\begin{Proposition}
\label{prop:LieActQuot1}
The action $\rho$ of a Lie algebra $\frakg$ on a presymplectic manifold is weakly hamiltonian if and only if the induced action of $\frakg/\ker \rho$ is.
\end{Proposition}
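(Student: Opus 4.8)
The plan is to use the standard reformulation discussed in Sec.~\ref{sec:MomPre}: for a Lie algebra action on a symplectic manifold, \emph{weakly hamiltonian} means exactly that each fundamental vector field is hamiltonian, equivalently that a momentum map $\mu : M \to \frakg^*$ exists (a linear map $a \mapsto \langle\mu,a\rangle$ with $d\langle\mu,a\rangle = \iota_{\rho a}\omega$ for all $a$). Write $\frakh = \ker\rho$, $\underline{\frakg} = \frakg/\frakh$, and let $\pi : \frakg \to \underline{\frakg}$ be the projection, so that $\underline{\rho}\circ\pi = \rho$.

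First I would dispose of the easy implication. Given a momentum map $\underline{\mu} : M \to \underline{\frakg}^*$ for $\underline{\rho}$, I would set $\mu := \pi^*\underline{\mu}$, i.e.\ $\langle\mu,a\rangle := \langle\underline{\mu},\pi a\rangle$; this is linear in $a$ and satisfies $d\langle\mu,a\rangle = d\langle\underline{\mu},\pi a\rangle = \iota_{\underline{\rho}(\pi a)}\omega = \iota_{\rho a}\omega$, so $\mu$ is a momentum map for $\rho$ and the $\frakg$-action is weakly hamiltonian.

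For the converse I would start from a momentum map $\mu : M \to \frakg^*$ for $\rho$. Such a $\mu$ need not annihilate $\frakh$, so it does not descend as is; the fix is a constant shift. For $a\in\frakh$ one has $\rho a = 0$, hence $d\langle\mu,a\rangle = \iota_{\rho a}\omega = 0$; since $M$ is connected (our standing assumption, Sec.~\ref{sec:conventions}) this forces $\langle\mu,a\rangle$ to be a constant $c(a)\in\bbR$, and $c$ is linear, so $c\in\frakh^*$. I would then pick any linear extension $\tilde{c}\in\frakg^*$ of $c$ and replace $\mu$ by $\mu' := \mu - \tilde{c}$, which is again a momentum map (subtracting a constant element of $\frakg^*$ does not change $d\langle\mu,a\rangle$) and now satisfies $\langle\mu'(m),a\rangle = 0$ for every $m$ and every $a\in\frakh$. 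Hence $\mu'$ factors through $\pi$ as $\mu' = \pi^*\underline{\mu}$ for a unique smooth $\underline{\mu} : M\to\underline{\frakg}^*$, and evaluating on $\bar{a} = \pi a$ gives $d\langle\underline{\mu},\bar{a}\rangle = d\langle\mu',a\rangle = \iota_{\rho a}\omega = \iota_{\underline{\rho}\bar{a}}\omega$, so $\underline{\mu}$ is a momentum map for $\underline{\rho}$.

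I do not expect a genuine obstacle here; the whole content is the observation that the components of $\mu$ along $\ker\rho$ are locally constant, hence constant by connectedness, and can thus be subtracted off without disturbing the defining relation of a momentum map. The only points needing a little care are the appeal to connectedness to pass from ``$d(\cdot)=0$'' to ``constant'', and the routine verification that the shifted map and its descent remain momentum maps. One could also remark, as the preceding example illustrates, that equivariance of $\mu$ is generally destroyed by this shift, but that is irrelevant to the weakly hamiltonian statement.
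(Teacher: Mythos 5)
Your proof is correct and follows essentially the same route as the paper: both directions match, with the converse handled by observing that the components of $\mu$ along $\ker\rho$ are constant (the paper phrases the connectedness argument as "the hamiltonian vector fields of elements in $\ker\rho$ are zero, so $i^*\mu$ is constant") and then subtracting a constant extension of that restriction, exactly as you do with $\tilde{c}$; the paper's choice of a linear projection $p$ with $p\,i=\id$ is just its way of choosing your extension. No gaps.
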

\begin{proof}
Let $\rho: \frakg \to \calX(M)$ be a weakly hamiltonian action with momentum map $\mu: M \to \frakg^*$. Denote by $i: \ker \rho \hookrightarrow \frakg$  the embedding of the kernel. Since the hamiltonian vector fields of elements in $\ker \rho$ are zero, the induced map $\iota^*\mu: M \to (\ker \rho)^*$ is constant. Choosing some linear projection $p: \frakg \to \ker\rho$ satisfying $p\,i = \id$, we get a constant map $\nu: p^*i^* \circ \mu: M \to \frakg^*$, which satisfies $\langle \nu, X \rangle = \langle \mu, X \rangle$ for all $X \in \ker \rho$. Since $\nu$ is constant, $\mu' := \mu -  \nu$ is also a momentum map for $\rho$. It satisfies $\langle \mu', \ker\rho \rangle = 0$, so that $\mu'$ descends to a momentum map for the induced action of $\underline{\frakg} := \frakg/\ker\rho$.

Conversely, assume that $\mu$ is a momentum map for the induced action of $\underline{\frakg}$. Let $\pi: \frakg \to \underline{\frakg}$ be the canonical projection. Then $\pi^* \circ \mu: M \to \frakg^*$ is a momentum map for $\rho$.
\end{proof}

For a momentum map of the induced action of the quotient to be equivariant, we need an extra condition:

\begin{Proposition}
\label{prop:LieActQuot2}
Let $\rho: \frakg \to \calX(M)$ be a hamiltonian action on a presymplectic manifold with equivariant momentum map $\mu$. Then the induced action of $\frakg/\ker\rho$ is hamiltonian if and only if $\langle\mu, [\frakg,\frakg] \cap \ker\rho \rangle = 0$.
\end{Proposition}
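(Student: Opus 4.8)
The plan is to reduce the statement to linear algebra in $\frakg^*$ by tracking which momentum maps for $\rho$ descend to the induced action $\underline{\rho}$ of $\underline{\frakg}:=\frakg/\frakh$, where $\frakh=\ker\rho$, and which of those are equivariant. Write $\pi\colon\frakg\to\underline{\frakg}$ for the projection and $\iota\colon\frakh\hookrightarrow\frakg$ for the inclusion. First I would record that $\lambda:=\iota^*\mu\in\frakh^*$ is a \emph{constant} element of $\frakh^*$: for $X\in\frakh$ one has $d\langle\mu,X\rangle=\iota_{\rho X}\omega=0$ and $M$ is connected, so $\langle\mu,X\rangle$ is constant in $M$. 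Hence the condition $\langle\mu,[\frakg,\frakg]\cap\ker\rho\rangle=0$ of the proposition is exactly $\lambda|_{\frakh\cap[\frakg,\frakg]}=0$. Next, exactly as in the proof of Prop.~\ref{prop:LieActQuot1}, pulling back along $\pi^*\colon\underline{\frakg}^*\to\frakg^*$ gives a bijection between momentum maps for $\underline{\rho}$ and momentum maps for $\rho$ that annihilate $\frakh$; and since $M$ is connected, the momentum maps for $\rho$ are precisely the $\mu+c$ with $c\in\frakg^*$ a constant element, with $\mu+c$ annihilating $\frakh$ iff $\iota^*c=-\lambda$.

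Now I would track equivariance. By Prop.~\ref{prop:muHomFlat} applied with the trivial (flat) connection on $\frakg\times M$, whose horizontal sections are exactly the constant ones, i.e.\ the elements of $\frakg$, together with Def.~\ref{def:MomentumEquiv}, a momentum map $\widehat{\mu}$ for $\rho$ is equivariant if and only if $\langle\widehat{\mu},[a,b]\rangle=-\{\langle\widehat{\mu},a\rangle,\langle\widehat{\mu},b\rangle\}$ for all $a,b\in\frakg$. For $\widehat{\mu}=\mu+c$ with $c$ constant the right-hand side is unchanged, since constants Poisson-commute with everything, while the left-hand side changes by $\langle c,[a,b]\rangle$; as $\mu$ is equivariant by hypothesis, $\mu+c$ is equivariant iff $c\in([\frakg,\frakg])^{\circ}$. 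Running the same computation for $\underline{\rho}$ on the same $(M,\omega)$, and using that $\pi$ is a surjective Lie algebra homomorphism with $\rho=\underline{\rho}\circ\pi$, shows that a momentum map $\underline{\sigma}$ for $\underline{\rho}$ is equivariant iff its pullback $\pi^*\underline{\sigma}$ is equivariant for $\rho$. Combining: $\underline{\rho}$ is hamiltonian iff there is a constant $c\in\frakg^*$ with $\iota^*c=-\lambda$ and $c\in([\frakg,\frakg])^{\circ}$. Dualizing the composite $\frakh\hookrightarrow\frakg\twoheadrightarrow\frakg/[\frakg,\frakg]$, the image of the restriction map $([\frakg,\frakg])^{\circ}\cong(\frakg/[\frakg,\frakg])^*\to\frakh^*$ is precisely the space of functionals on $\frakh$ that vanish on $\frakh\cap[\frakg,\frakg]$; so such a $c$ exists iff $\lambda$ vanishes on $\frakh\cap[\frakg,\frakg]$, which by the first step is exactly $\langle\mu,[\frakg,\frakg]\cap\ker\rho\rangle=0$. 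Unwinding once more, when the condition holds the resulting $\mu+c$ descends to an equivariant momentum map for $\underline{\rho}$, proving the ``if'' direction, and the discussion above gives ``only if''.

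The argument is bookkeeping rather than deep, and the two places I would be most careful are: (a) verifying that equivariance is \emph{invariant under descent}, i.e.\ $\widehat{\mu}$ equivariant for $\rho$ $\Leftrightarrow$ the descended map equivariant for $\underline{\rho}$ (this uses $\rho=\underline{\rho}\circ\pi$, surjectivity of $\pi$, and that $[a,b]$ lifts $[\pi a,\pi b]$), and (b) the elementary identity for the image of the restriction of an annihilator. I would also watch the sign conventions coming from the anti-homomorphism $f\mapsto v_f$ noted before Prop.~\ref{def:MomentumEquiv}: they enter the Poisson-bracket form of equivariance but do not affect the final criterion, again because constants Poisson-commute with everything.
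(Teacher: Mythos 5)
Your proof is correct and follows essentially the same route as the paper's: both arguments exploit that momentum maps differ by constant elements of $\frakg^*$, that equivariance of the shifted map is governed by vanishing of the constant on $[\frakg,\frakg]$, that descent to $\frakg/\ker\rho$ is governed by vanishing on $\ker\rho$, and that a functional on $\ker\rho$ annihilating $[\frakg,\frakg]\cap\ker\rho$ extends to one on $\frakg$ annihilating $[\frakg,\frakg]$. The only differences are presentational: you phrase equivariance via the Poisson bracket (Prop.~\ref{prop:muHomFlat}) where the paper uses the Chevalley--Eilenberg differential, and you isolate the annihilator-restriction fact as an explicit lemma where the paper absorbs it into the phrase ``we can choose $\nu$ such that $\langle\nu,[\frakg,\frakg]\rangle=0$.''
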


\begin{proof}
By assumption, the momentum map $\mu$ is equivariant, which means that 
\begin{equation}
\label{eq:ChevEilEqui1}
  \delta\mu(X,Y) = - \omega(\rho(X), \rho(Y))
  \,,  
\end{equation}
for all $X,Y \in \frakg$, where $\omega$ is the presymplectic form and where
\begin{equation*}
  \delta \mu(X, Y) 
  = \rho(X) \cdot \langle\mu, Y\rangle 
  - \rho(Y) \cdot \langle\mu, X\rangle 
  - \langle \mu, [X,Y] \rangle \,,
\end{equation*}
is the Chevalley-Eilenberg differential. As we have seen in the proof of Proposition \ref{prop:LieActQuot1}, another map $\mu': M \to \frakg^*$ is a momentum map if and only if the difference $\nu = \mu - \mu'$ is constant. (As stated in Section \ref{sec:conventions}, $M$ is assumed to be connected.) Moreover, $\mu'$ is equivariant if and only if 
\begin{equation*}
  0 
  = \delta\nu(X,Y)
  = - \langle \nu, [X,Y] \rangle
\end{equation*}
for all $X, Y \in \frakg$.

Assume that $\langle\mu, [\frakg,\frakg] \cap \ker\rho \rangle = 0$. Then we can find a constant map $\nu: M \to \frakg^*$, such that $\langle \nu, X\rangle = \langle \mu, X \rangle$ for all $X \in \ker \rho$ and $\langle\nu, [X,Y] \rangle = 0$ for all $X,Y \in \frakg$. The first condition ensures that $\mu' = \pi^*\underline{\mu}$ for a unique map $\underline{\mu}: M \to \underline{\frakg} = \frakg/\ker\rho$, where $\pi: \frakg \to \underline{\frakg}$ is the quotient map. As in the proof of Proposition \ref{prop:LieActQuot1}, it follows that $\underline{\mu}$ is a momentum map.

The second condition $\langle\nu, [X,Y] \rangle = 0$ ensures that $\mu'$ is equivariant. Let $\underline{\delta}$ denote the Chevalley-Eilenberg differential of $\underline{\frakg}$ and let $\underline{\rho}: \underline{\frakg} \to \calX(M)$ denote the induced action. As is the case for any Lie algebra homomorphism, $\pi^*$ commutes with the differentials, $\delta \circ \pi^* = \pi^* \circ \underline{\delta}$. It follows that \begin{equation}
\label{eq:ChevEilEqui2}
\begin{split}
  &\underline{\delta}\underline{\mu}\bigl( \pi(X), \pi(Y) \bigr) 
  +  \omega\bigl( \underline{\rho}(\pi(X)),
    \underline{\rho}(\pi(Y)) \bigr)
  \\
  ={} &\pi^*(\underline{\delta}\underline{\mu})(X,Y)
  +  \omega\bigl( 
  \pi^*\!\underline{\rho}(X),
  \pi^*\!\underline{\rho}(Y) \bigr)
  \\
  ={} & 
  \delta(\pi^*\underline{\mu})(X,Y) 
  +  \omega\bigl( \rho(X), \rho(Y) \bigr)
\end{split}
\end{equation}
for all $X, Y \in \frakg$. The right hand side vanishes because $\mu' = \pi^*\underline{\mu}$ is equivariant. Since $\pi$ is surjective, the left hand side vanishes if and only if $\underline{\mu}$ satisfies the equivariance condition~\eqref{eq:ChevEilEqui1}. We conclude that $\underline{\rho}$ is a hamiltonian action.

Conversely, assume that $\underline{\mu}$ is an equivariant momentum map for the induced action $\underline{\rho}$. Let $\mu' := \pi^* \underline{\mu}$. As in the proof of Proposition \ref{prop:LieActQuot1}, it follows that $\mu'$ is a momentum map. From Eq.~\eqref{eq:ChevEilEqui2} it follows that $\mu'$ is equivariant.

The difference $\nu = \mu - \mu'$ of two momentum maps is a constant map. Since both $\mu$ and $\mu'$ are equivariant, we also have $\langle \nu, [X,Y] \rangle = 0$ for all $X, Y \in \frakg$. Since $\langle \mu', X\rangle = \langle \underline{\mu}, \pi(X) \rangle$ vanishes on $\ker\pi = \ker\rho$, it follows that $\langle \mu, X \rangle = \langle\nu, X \rangle$ for all $X \in \ker \rho$. From $\langle \nu, [X,Y] \rangle = 0$ we conclude that $\langle\mu, [\frakg,\frakg] \cap \ker\rho \rangle = 0$.
\end{proof}

In the case of Lie algebroids, the kernel of the action is replaced by the (generally non-regular) isotropy bundle $\ker \rho \subseteq A$. The space of sections of $\ker\rho$ is still a Lie algebra ideal, so that when $\ker\rho$ is regular, the quotient $\underline{A} := A/\ker\rho$ is a Lie algebroid with injective anchor $\underline{\rho}$. Then $\underline{A}$ can be identified with the regular involutive distribution $\underline{\rho}(\underline{A}) = \rho(A) \subseteq TM$.

When do the additional compatibility conditions of $A$ being presymplectically anchored or (weakly) hamiltonian descend to $\underline{A}$? As a first step, we observe that for a presymplectically anchored vector bundle the kernel of the dualized anchor is ``invariant'' under the connection.

\begin{Proposition}
\label{prop--kernel}
If  $(A,\rho)$ is presymplectically anchored with respect to $D$, then, for all sections $a$, $\langle \gamma , a\rangle = 0$ implies that $\langle \gamma , Da\rangle = 0$.
\end{Proposition}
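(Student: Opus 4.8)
The plan is to obtain the claim as a one-line consequence of the hypothesis, which is precisely that $D\gamma = 0$ (recall that ``$(A,\rho)$ presymplectically anchored with respect to $D$'' means exactly $D\gamma = 0$). First I would pair the equation $D\gamma = 0$ with the section $a$; this is exactly the computation carried out in deriving Eq.~\eqref{eq:Dsymp2}, and it yields $d\,\iota_{\rho a}\omega = -\langle\gamma\wedge Da\rangle$. Here the right-hand side is, in any local frame $\{a_i\}$ with $\gamma = \gamma_i\otimes\theta^i$ and $Da = (Da)^i\otimes a_i$, the $2$-form $\gamma_i\wedge(Da)^i$; this is the meaning of the contraction ``$\langle\gamma, Da\rangle$'' appearing in the statement, $Da$ being an $A$-valued $1$-form and $\gamma$ an $A^*$-valued $1$-form.

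Next I would feed in the hypothesis. Since $\langle\gamma, a\rangle = \iota_{\rho a}\omega$, the assumption $\langle\gamma, a\rangle = 0$ says that this $1$-form vanishes identically on $M$, hence $d\langle\gamma, a\rangle = 0$. Combining with the identity above gives $\langle\gamma\wedge Da\rangle = -d\langle\gamma, a\rangle = 0$, which is the assertion. Equivalently, one may invoke directly the graded Leibniz rule $d\langle\gamma, a\rangle = \langle D\gamma, a\rangle - \langle\gamma, Da\rangle$ for the pairing of $\Omega^1(M,A^*)$ with $\Omega^0(M,A)$ (the minus sign being the one already present in \eqref{eq:Dsymp2}, coming from $\deg\gamma = 1$) and substitute $D\gamma = 0$ and $\langle\gamma, a\rangle = 0$.

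There is essentially no obstacle here beyond bookkeeping: the only point to be careful about is the sign and the interpretation of $\langle\gamma, Da\rangle$ as the $2$-form $\langle\gamma\wedge Da\rangle$, rather than, say, a statement that $D_v a$ lies in $\ker\gamma$ for every vector field $v$ (which would be false in general). Once this is pinned down, the proof is immediate from $D\gamma = 0$; no use of the Lie algebroid bracket, of a momentum section, or of any regularity of $\gamma$ is needed.
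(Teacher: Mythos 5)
Your argument is correct and coincides with the paper's own proof: both apply the Leibniz rule for the dual connection to the $A^*$-valued $1$-form $\gamma$, which gives $\langle \gamma, Da\rangle = \pm\, d\langle \gamma, a\rangle$ once $D\gamma = 0$ is substituted, and then conclude from $\langle \gamma, a\rangle = 0$. Your careful reading of $\langle \gamma, Da\rangle$ as the scalar $2$-form $\langle \gamma \wedge Da\rangle$ (rather than the stronger pointwise statement that $D_v a \in \ker\gamma$ for every $v$) is exactly the interpretation the paper's proof establishes, so there is nothing to add.
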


\begin{proof}
Consider $\gamma$ as an $A^*$-valued 1-form on $M$.  In~\eqref{eq:DgammaPaired} we have seen that, for any section $a$ of $A$, the scalar 2-form $\langle D\gamma,a \rangle$ is equal to $d\langle \gamma, a \rangle + \langle \gamma ,Da \rangle$. If the connection satisfies $D\gamma = 0$, then $\langle \gamma, Da \rangle = - d\langle \gamma, a \rangle$, and the proposition follows.
\end{proof}

If $(A,\rho)$ is regular and presymplectically
anchored with respect to $D$, then the kernel of $\gamma=\tilde\omega \circ\rho$ is a vector subbundle of $A$, and Proposition \ref{prop--kernel} shows that $D$ induces a connection on it. For $D$ to descend to the quotient Lie algebroid $\underline{A}$, however, we need $D$ to induce a connection on $\ker\rho \subseteq \ker\gamma$, i.e.~ that $\ker \rho$ is invariant under $D$.  By Proposition \ref{prop--kernel}, this happens, for instance, when $\ker\rho = \ker\gamma$, e.g.~when $\omega$ is symplectic.

\begin{Remark}
Suppose that $\omega=0$, so that any connection on $A$ is presymplectically anchored.  For a Lie algebroid for which $\ker \rho$ is neither $0$ nor all of $A$, most connections $D$ on $A$
do not leave $\ker\rho$ invariant, so they do not induce connections on $\underline{A}$. 
\end{Remark}

\begin{Proposition}
\label{prop--kernel2}
If $(A,\rho)$ is a regular anchored vector bundle over a presymplectic manifold, then for any presymplectically anchored connection $D$ on $A$ which leaves $\ker\rho$ invariant, the induced connection $\uline D$ on $\underline{A} = A/\ker\rho$ is presymplectically anchored.
\end{Proposition}

\begin{proof}
Let $D$ be any presymplectically anchored connection on $A$. To define the induced connection $\underline{D}$ on $\underline{A}$, we set for each section $\uline a$, $\uline D \,\uline a = \pi\circ Da$, where $\pi: A \to \uline A$ is the projection and $a$ is chosen so that $\uline a = \pi \circ a$.  Since $\ker\rho$ is invariant under $D$, the result is independent of the choice of $a$.
To show that $\uline D$ is presymplectically anchored, we must check that, for any section $\uline a$ of $\uline A$ and vector field $v$ on $M$, the ordinary 1-form $\langle \underline{D}_v \underline{\gamma}, \underline{a} \rangle$ is equal to zero. 

Viewing the dualized anchors $\gamma$ and $\underline{\gamma} := \tilde\omega\circ\underline{\rho}$ as 1-forms on $M$ with values in $A^*$ and ${\uline A}^*$ respectively, we note first that $\gamma = \uline \gamma \circ \pi$ since $\rho = \underline{\rho} \circ \pi$. Now, according to the definition of the dual connection $\uline D$ on $\uline A^*$-valued forms, we have
\begin{equation}
\label{difference}
\langle\uline D_v \uline\gamma,\uline a\rangle 
  = \iota_v d \langle\uline\gamma,\uline a \rangle 
  - \langle\uline\gamma,\underline{D}_v\, \underline{a} \rangle \,.
\end{equation}
The pairing of the $A^*$-valued 1-form $\underline{\gamma}$ with the section $\underline{D}_v \underline{a}$ of $\underline{A}$ can be viewed as the composition of the map $\underline{D}_v\underline{a}: M \to \underline{A}$ with the map $\underline{\gamma}: \underline{A} \to T^*M$. In terms of such composition of maps, the second term on the right hand side of Eq.~\eqref{difference} can be rewritten (without the $-$ sign) as
\begin{equation*}
\begin{split}
  \langle\uline\gamma,\underline{D}_v\, \underline{a} \rangle
  &= \underline{\gamma} \circ \underline{D}_v\, \underline{a}
  = \underline{\gamma} \circ \pi \circ D_v a
  = \gamma \circ  D_v a \\
  &= \langle \gamma, D_va \rangle\,,
\end{split}
\end{equation*}
for any choice of $a$ with $\uline a= \pi\circ a$.

Since $D\gamma=0$ by assumption, we have, again by duality,
\begin{equation*}
\begin{split}
  \langle \gamma,D_v a \rangle 
  &= \iota_v d \langle\gamma, a \rangle 
  = \iota_v d (\gamma \circ a ) 
  = \iota_v d (\uline\gamma\circ \pi\circ a) 
  = \iota_v d (\uline\gamma \circ \underline{a}) \\
  &= \iota_v d\langle\uline\gamma,\uline a \rangle\,,
\end{split}
\end{equation*}
which cancels the first term on the right side of Eq.~\eqref{difference}. So $\underline{D}_v \underline{\gamma} = 0$, and hence the connection $\uline D$ is presymplectically anchored.
\end{proof}

\begin{Proposition}
\label{prop--kernel3}
Let $(A,\rho)$ be a regular anchored vector bundle such that $\ker\rho = \ker\gamma$, let $D$ be a presymplectically anchored connection on $A$, and let $(\underline{A}, \underline{\rho})$ be the anchored  quotient vector bundle with the presymplectically anchored connection $\underline{D}$ from Proposition \ref{prop--kernel2}. Then $\underline{\mu} \in \Gamma(M, \underline{A}^*)$ is a $\underline{D}$-momentum section for $(\underline{A}, \underline{\rho})$ if and only if its pullback $\mu \in \Gamma(M, A^*)$ is a $D$-momentum section for $(A,\rho)$.
\end{Proposition}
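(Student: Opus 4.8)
The plan is to reduce the proposition to a single identity: the pullback along the bundle projection $\pi\colon A\to\underline A$ (with kernel $\ker\rho$) intertwines the two covariant derivatives on the dual bundles, $D\,\pi^*=\pi^*\,\underline D$, while also $\gamma=\pi^*\underline\gamma$. Granting these, the defining equation $D\mu=\gamma$ of a $D$-momentum section (Eq.~\eqref{eq:MomentumMapDef}) reads $\pi^*(\underline D\,\underline\mu)=\pi^*\underline\gamma$, and since $\pi^*$ is injective this is equivalent to $\underline D\,\underline\mu=\underline\gamma$, i.e.\ to $\underline\mu$ being a $\underline D$-momentum section.

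First I would collect the elementary facts about $\pi$. Since $(A,\rho)$ is regular, $\ker\rho$ is a subbundle, so the sequence $0\to\ker\rho\to A\to\underline A\to0$ splits; in particular every section of $\underline A$ has the form $\pi\circ a$ for some section $a$ of $A$, and the induced map $\pi^*\colon\Gamma(M,\underline A^*)\to\Gamma(M,A^*)$, $\langle\pi^*\underline\mu,a\rangle=\langle\underline\mu,\pi\circ a\rangle$, is injective because $\pi$ is fibrewise surjective; the same holds after tensoring with $\wedge^\bullet T^*M$, so $\pi^*$ is injective on $A^*$-valued forms. From $\rho=\underline\rho\circ\pi$ we obtain $\gamma=\tilde\omega\circ\rho=(\tilde\omega\circ\underline\rho)\circ\pi=\underline\gamma\circ\pi=\pi^*\underline\gamma$, as already noted in the proof of Prop.~\ref{prop--kernel2}.

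Next I would compute $D\mu$ for $\mu=\pi^*\underline\mu$. For a section $a$ of $A$ and a vector field $v$, set $\underline a=\pi\circ a$; the defining relation of the dual connections gives
\[
 \langle D_v\mu,a\rangle
 = v\cdot\langle\mu,a\rangle-\langle\mu,D_va\rangle
 = v\cdot\langle\underline\mu,\underline a\rangle-\langle\underline\mu,\pi\circ D_va\rangle .
\]
By the construction of $\underline D$ in Prop.~\ref{prop--kernel2}, $\underline D_v\underline a=\pi\circ D_va$, and by Prop.~\ref{prop--kernel} this is independent of the chosen lift $a$; hence the right-hand side equals $v\cdot\langle\underline\mu,\underline a\rangle-\langle\underline\mu,\underline D_v\underline a\rangle=\langle\underline D_v\underline\mu,\underline a\rangle=\langle\pi^*(\underline D_v\underline\mu),a\rangle$. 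As $a$ and $v$ were arbitrary, $D\mu=\pi^*(\underline D\,\underline\mu)$.

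Combining the identities $D\mu=\pi^*(\underline D\,\underline\mu)$ and $\gamma=\pi^*\underline\gamma$: the condition $D\mu=\gamma$ is equivalent to $\pi^*(\underline D\,\underline\mu)=\pi^*\underline\gamma$, and by injectivity of $\pi^*$ to $\underline D\,\underline\mu=\underline\gamma$, which proves both implications. The only step requiring genuine care --- hence the main, though mild, obstacle --- is the identity $\underline D_v\underline a=\pi\circ D_va$ together with its independence of the lift $a$; this is precisely where the hypothesis that $D$ is presymplectically anchored enters, through Prop.~\ref{prop--kernel}, and both facts are already available to us.
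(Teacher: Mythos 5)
Your argument is correct and is essentially the paper's own proof: both establish the identity $\langle D_v\mu,a\rangle=\langle \underline{D}_v\,\underline{\mu},\underline{a}\rangle$ (equivalently $D\mu=\pi^*(\underline{D}\,\underline{\mu})$) via the dual-connection formula and the relation $\underline{D}_v\underline{a}=\pi\circ D_va$, and then compare with $\gamma=\pi^*\underline{\gamma}$. Your explicit packaging in terms of injectivity of $\pi^*$ is just a slightly cleaner way of stating the paper's "for all sections $a$" conclusion.
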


\begin{proof}
Viewing the section $\underline{\mu}$ as a map $A \to \bbR$, we may write  the pullback section as $\mu = \underline{\mu} \circ \pi$, where $\pi: A \to \underline{A}$ is the projection. As in the proof of \ref{prop--kernel2}, we have, for any section $\underline{a}$ and choice of section $a$ of $A$ with $\underline{a} = \pi \circ a,$ 
\begin{equation*}
\begin{split}
  \langle\underline{D}\,\underline{\mu}, \underline{a} \rangle
  &= d \langle \underline{\mu}, \underline{a} \rangle
  + \langle\underline{\mu}, \underline{D}\,\underline{a} \rangle
  \\
  &= d (\underline{\mu} \circ \underline{a})
  + \underline{\mu} \circ \underline{D}\,\underline{a}
  \\
  &= d (\underline{\mu} \circ \pi \circ a)
  + \underline{\mu} \circ \pi \circ Da
  \\
  &= d (\mu \circ a)
  + \mu \circ Da
  \\
  &= d \langle \mu, a \rangle
  + \langle\mu, Da \rangle
  \\
  &= \langle D \mu, a \rangle \,,
\end{split}
\end{equation*}
where we have used Eq.~\eqref{eq:DgammaPaired} twice. Using this equation, we conclude that $\underline{D}\underline{\mu} = \underline{\gamma}$ implies that $\langle\gamma, a \rangle = \langle\underline{\gamma}, \underline{a}\rangle = \langle\underline{D}\,\underline{\mu}, \underline{a} \rangle = \langle D\mu, a\rangle$ for all sections $a$, so that $D\mu  = \gamma$. By an analogous argument we see that $D\mu = \gamma$ implies that $\underline{D}\,\underline{\mu} = \underline{\gamma}$.
\end{proof}

It does \emph{not} follow from Proposition \ref{prop--kernel2} that $(A,\rho)$ being weakly hamiltonian implies that $(\underline{A}, \underline{\rho})$ is weakly hamiltonian as well. For it is not clear whether a momentum section $\mu$ can be chosen to annihilate $\ker \rho$, which is the necessary and sufficient condition for $\mu$ to descend to a section of $\underline{A}^*$. The following lemma gives a partial answer to this question:

\begin{Lemma}
\label{lem--kernel3b}
Let $(A,\rho)$ be a regular presymplectically anchored vector bundle for which $\ker\rho = \ker\gamma$. If the subbundle $\ker\rho$ has a complement that is invariant under parallel transport, then a $D$-momentum section, if it exists, can always be chosen to annihilate $\ker\rho$.
\end{Lemma}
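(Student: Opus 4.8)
The plan is to use the invariant complement to split both the connection and the momentum-section equation.

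First, write $A = \ker\rho \oplus C$, where $C$ is a complement that is invariant under parallel transport, i.e.\ $D$ restricts to a connection on $C$. Dually, $A^* = (\ker\rho)^\circ \oplus C^\circ$, where $(\ker\rho)^\circ$ is the annihilator of $\ker\rho$ (naturally $C^*$) and $C^\circ$ is the annihilator of $C$ (naturally $(\ker\rho)^*$); since $C$ is $D$-invariant, so is this decomposition of $A^*$, meaning $D$ preserves sections of $(\ker\rho)^\circ$. Now suppose $\mu \in \Gamma(M,A^*)$ is a $D$-momentum section, so $D\mu = \gamma$. Decompose $\mu = \mu' + \mu''$ with $\mu' \in \Gamma(M,(\ker\rho)^\circ)$ and $\mu'' \in \Gamma(M, C^\circ)$. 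The key point is that $\gamma = \tilde\omega\circ\rho$ annihilates $\ker\rho$ by definition (it factors through $\rho$), so $\gamma \in \Gamma(M,(\ker\rho)^\circ)$ already. Applying $D$ and using that $D$ preserves both summands, the equation $D\mu' + D\mu'' = \gamma$ splits into $D\mu' = \gamma$ (the $(\ker\rho)^\circ$-component) and $D\mu'' = 0$ (the $C^\circ$-component). Hence $\mu'$ is itself a $D$-momentum section, and by construction it annihilates $\ker\rho$.

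The second step is just to record that $\mu'$, annihilating $\ker\rho$, descends to a section $\underline\mu$ of $\underline A^* = (A/\ker\rho)^*$, which is the content we want; combined with Prop.~\ref{prop--kernel3} this shows $\underline\mu$ is a $\underline D$-momentum section for $(\underline A,\underline\rho)$, so $(\underline A,\underline\rho)$ is weakly hamiltonian whenever $(A,\rho)$ is.

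I expect the only subtle point — the ``main obstacle'' — is verifying cleanly that the invariance of $C$ under parallel transport really does give invariance of the \emph{dual} decomposition $A^* = (\ker\rho)^\circ \oplus C^\circ$ under $D$; this is a short duality argument (if $a$ is a section of $C$ and $\mu'$ a section of $(\ker\rho)^\circ$, then $\langle D\mu', a\rangle = d\langle\mu',a\rangle - \langle\mu', Da\rangle$, but $\langle\mu',a\rangle$ need not vanish, so one instead tests against sections of $\ker\rho$: for $b \in \Gamma(M,\ker\rho)$, invariance of the complement gives $Db \in \Gamma(M,\ker\rho \oplus \text{nothing})$... ) — more precisely, $C$ being $D$-invariant means $D_v b \in \Gamma(\ker\rho)$ for $b\in\Gamma(\ker\rho)$ as well as $D_vc\in\Gamma(C)$ for $c\in\Gamma(C)$, whence $\langle D\mu', b\rangle = d\langle\mu',b\rangle - \langle\mu', Db\rangle = 0 - 0 = 0$, so $D\mu' \in \Gamma(M,(\ker\rho)^\circ)$. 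This is routine once phrased correctly, and everything else is bookkeeping, so the lemma follows without difficulty.
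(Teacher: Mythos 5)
Your proof is correct and follows essentially the same route as the paper's: split $A^*$ dually along the invariant complement, observe that $\gamma$ lies in $\Omega^1(M,(\ker\rho)^\circ)$, and conclude that the component of $\mu$ annihilating $\ker\rho$ is itself a momentum section while the other component is flat. The one point to fix is your claim that the $D$-invariance of the complement $C$ also gives $D_v b\in\Gamma(\ker\rho)$ for $b\in\Gamma(\ker\rho)$ --- that does not follow from the invariance of $C$; it follows instead from Prop.~\ref{prop--kernel} together with the hypothesis $\ker\rho=\ker\gamma$, which is exactly how the paper justifies it.
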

\begin{proof}
Let $A_2 \subseteq A$ be a $D$-invariant complement of $A_1 = \ker\rho$, $A = A_1 \oplus A_2$. There is a corresponding splitting of the dual bundle $A^* = A_1^* \oplus A_2^* $. By Proposition \ref{prop--kernel} $\ker\rho$ is $D$-invariant, so that the connection splits as $D =  D_1 + D_2$ into components acting on $A_1$ and $A_2$. Let $\mu$ be a $D$-momentum section, which splits as $\mu = \mu_1 + \mu_2$. For $a \in \ker\rho$, the condition that $\mu$ is a momentum section implies $\langle D\mu, a\rangle = \langle D_2 \mu_2, a \rangle = \gamma(a) = 0$, that is, $\mu_1$ is a $D_1$-flat section of $A_1^*$. Since $D_2 \mu_1 =  0$, it follows that $\mu_1$ is a $D$-flat section of $A^*$, so that $\mu_2$ is a $D$-momentum section with values in $A_2^* = (\ker\rho)^\circ$, the annihilator in $A^*$ of $\ker\rho$.
\end{proof}

\begin{Remark}
\label{rmk--kernel3c}
Since $M$ is connected, $\ker\rho$ has a $D$-invariant complement if each fibre has a complement as a submodule of the representation of the holonomy group. For an action Lie algebroid with the trivial connection, the holonomy is trivial, so that such a complement always exists. But in general the holonomy representation may not be reducible, e.g.~when the holonomy group is not compact.
\end{Remark}

We now suppose that $A$ is a Lie algebroid rather than simply an anchored vector bundle.

\begin{Proposition}
\label{prop--kernel4}
Let $(A, \rho, [~,~])$ be a regular presymplectically anchored Lie algebroid such that $\ker\rho = \ker\gamma$. Then a momentum section for the  quotient Lie algebroid $\underline{A} = A/\ker \rho$ is bracket-compatible if and only if the pullback momentum section for $A$ from Proposition \ref{prop--kernel3} is bracket-compatible.
\end{Proposition}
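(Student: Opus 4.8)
The plan is to reduce the statement to the bracket-compatibility clause of Prop.~\ref{prop:HamPullback}, after observing that the canonical projection $\pi\colon A\to\underline A=A/\ker\rho$ is a morphism of Lie algebroids covering $\id_M$. Indeed, the hypothesis that $\ker\rho$ is a regular ideal subbundle means precisely that the bracket and anchor of $A$ descend, equivalently that $\DA\pi^*=\pi^*\underline\DA$ on exterior forms; and the construction of $\underline D$ in Prop.~\ref{prop--kernel2} amounts to $D\pi^*=\pi^*\underline D$, i.e.\ compatibility of $\pi$ with the connections. Since the presymplectic form lives on the common base $M$ (so $\id_M^*\omega=\omega$) and $\mu=\pi^*\underline\mu$ is exactly the pullback momentum section of Prop.~\ref{prop--kernel3}, the proposition is the instance of Prop.~\ref{prop:HamPullback} with $\phi=\pi$: here $\phi_1=\pi$ is fibrewise surjective and $\phi_0=\id_M$ is a submersion, so $\underline\mu$ is bracket-compatible if and only if $\mu$ is.

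I would nonetheless record the short direct argument. Using $\DA\pi^*=\pi^*\underline\DA$, the identity $\pi\circ[a,b]=[\underline a,\underline b]$, and $\rho=\underline\rho\circ\pi$, one checks from the definition~\eqref{eq:defDA} of the two Lie algebroid differentials that for sections $a,b$ of $A$ with images $\underline a=\pi\circ a$ and $\underline b=\pi\circ b$,
\begin{equation*}
 (\DA\mu)(a,b)+\omega(\rho a,\rho b)
 =(\underline\DA\,\underline\mu)(\underline a,\underline b)+\omega(\underline\rho\,\underline a,\underline\rho\,\underline b)\,.
\end{equation*}
If $\underline\mu$ is bracket-compatible, the right-hand side vanishes for every pair of sections of $\underline A$, in particular for projected sections $\pi\circ a$, $\pi\circ b$; hence the left-hand side vanishes for all $a,b$, and $\mu$ is bracket-compatible. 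For the converse, note that the surjective bundle map $\pi$ admits a splitting $s\colon\underline A\to A$, so every section $\underline a$ of $\underline A$ equals $\pi\circ(s\circ\underline a)$; if $\mu$ is bracket-compatible the left-hand side of the displayed identity vanishes for all $a,b$, hence the right-hand side vanishes for all $\underline a,\underline b$, and $\underline\mu$ is bracket-compatible.

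The argument involves no real obstacle: the only points needing care are the elementary verification that $\pi$ is a Lie algebroid morphism — which rests entirely on the regular-ideal hypothesis and has already been used implicitly in the preceding propositions — and the lifting of sections of $\underline A$ to sections of $A$ via a splitting of $\pi$. Everything else is bookkeeping that recycles Props.~\ref{prop--kernel2}, \ref{prop--kernel3}, and \ref{prop:HamPullback}.
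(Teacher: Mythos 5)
Your proposal is correct and its direct argument is essentially the paper's own proof: both rest on the fact that $\pi\colon A\to\underline{A}$ is a Lie algebroid morphism, hence $\pi^*\DA_{\underline{A}}=\DA_A\pi^*$, which identifies the two bracket-compatibility conditions after noting that every section of $\underline{A}$ lifts to a section of $A$. The additional observation that the statement is an instance of Prop.~\ref{prop:HamPullback} is a harmless repackaging of the same computation.
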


\begin{proof}
Since $\rho$ is a regular homomorphism of Lie algebroids, the quotient $\underline{A} = A/\ker\rho$ is a Lie algebroid and the projection $\pi: A \to \underline{A}$ a homomorphism of Lie algebroids. Assume that $\underline{\mu} \in \Gamma(M,\underline{A}^*)$ is a momentum section for $(\underline{A}, \underline{\rho})$. It was shown in Proposition \ref{prop--kernel3} that the pullback $\mu = \pi^* \underline{\mu} \in \Gamma(M,A^*)$ is a momentum section for $(A,\rho)$.

As is the case for any homomorphism of Lie algebroids, the induced pullback $\pi^*: \Omega(\underline{A}) \to \Omega(A)$ of exterior forms on $\underline{A}$ is a map of differential complexes, $\pi^* \DA_{\underline{A}} = \DA_{A} \pi^*$. Applying this to $\underline{\mu}$, we obtain $\pi^* \DA_{\underline{A}} \underline{\mu} = \DA_{A} \pi^* \underline{\mu} = \DA_{A} \mu$. It follows that $(\DA_A \mu)(a,b) = -\omega(\rho a, \rho b)$ if and only if
\begin{equation*}
\begin{split}
  (\DA_{\underline{A}} \underline{\mu})(\underline{a}, \underline{b})
  &= (\DA_{\underline{A}} \underline{\mu})(\pi \circ a, \pi \circ b)
  = (\DA_{A} \mu)(a, b)
  = -\omega(\rho a, \rho b) \\
  &= -\omega(\underline{\rho}\underline{a},
    \underline{\rho}\underline{b}) \,,
\end{split}
\end{equation*}
where $a$, $b$ are sections of $A$ for which $\pi\circ a = \underline{a}$ and $\pi\circ b = \underline{b}$.
\end{proof}

Putting together the results of this section, we obtain the Lie algebroid analog of Propositions \ref{prop:LieActQuot1} and~\ref{prop:LieActQuot2}:

\begin{Theorem}
Let $A$ be a regular Lie algebroid over the presymplectic manifold $(M,\omega)$ satisfying $\ker\rho = \ker\gamma$. Assume that $A$ is presymplectically anchored with respect to the connection $D$ and let $\underline{D}$ denoted the induced connection on $\underline{A} = A / \ker\rho$. Assume furthermore that the subbundle $\ker\rho \subset A$ has a $D$-invariant complement. Then
\begin{itemize}

\item[(i)] $\underline{A}$ is weakly hamiltonian with respect to $\underline{D}$ if and only $A$ is weakly hamiltonian with respect to $D$.

\item[(ii)] Assume that $A$ is hamiltonian with $D$-momentum section $\mu$. Then $\underline{A}$ is hamiltonian with respect to $\underline{D}$ if and only if $\langle\mu, T(A,A) \cap \ker \rho \rangle = 0$.

\end{itemize}
\end{Theorem}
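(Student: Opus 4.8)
The plan is to derive both statements by assembling the propositions of this section, the organizing principle being that pullback along the projection $\pi\colon A\to\underline A$ identifies sections of $\underline A^*$ with exactly those sections of $A^*$ that annihilate $\ker\rho$. Note first that by Prop.~\ref{prop--kernel2} the induced connection $\underline D$ is itself presymplectically anchored, so that ``$\underline A$ is (weakly) hamiltonian'' is a meaningful statement.

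For (i) I would argue the two implications separately. If $A$ is weakly hamiltonian, then by Lemma~\ref{lem--kernel3b} (which applies thanks to the $D$-invariant-complement hypothesis) its $D$-momentum section may be taken to annihilate $\ker\rho$, hence to descend to a section $\underline\mu$ of $\underline A^*$; by Prop.~\ref{prop--kernel3} this $\underline\mu$ is a $\underline D$-momentum section, so $\underline A$ is weakly hamiltonian. Conversely, if $\underline A$ is weakly hamiltonian, Prop.~\ref{prop--kernel3} says that the pullback of its $\underline D$-momentum section is a $D$-momentum section for $A$, and $D$ is presymplectically anchored by hypothesis, so $A$ is weakly hamiltonian.

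For (ii), combining (i) with Props.~\ref{prop--kernel3} and \ref{prop--kernel4} reduces the claim to: $\underline A$ admits a bracket-compatible $\underline D$-momentum section if and only if $A$ admits a bracket-compatible $D$-momentum section that annihilates $\ker\rho$. The main computational tool is the pointwise identity $(\DA\lambda)(a,b)=\langle D_{\rho a}\lambda,b\rangle-\langle D_{\rho b}\lambda,a\rangle+\langle\lambda,T(a,b)\rangle$, valid for every $\lambda\in\Gamma(A^*)$ (the intermediate line in the proof of Prop.~\ref{prop:TorsionSymp}, which uses only the definitions of $\DA$, of the dual connection, and of the torsion); in particular, when $D\lambda=0$ the Lie algebroid $2$-form $\DA\lambda$ is just the tensor $(a,b)\mapsto\langle\lambda,T(a,b)\rangle$. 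The necessity of $\langle\mu,T(A,A)\cap\ker\rho\rangle=0$ is then immediate: if $\nu$ is a bracket-compatible $D$-momentum section annihilating $\ker\rho$, then $\lambda:=\mu-\nu$ is $D$-flat (a difference of $D$-momentum sections) and $\DA\lambda=\DA\mu-\DA\nu=-\rho^*\omega+\rho^*\omega=0$, so $\langle\lambda,T(a,b)\rangle=0$ for all $a,b$, whence $\langle\mu,T(a,b)\rangle=\langle\nu,T(a,b)\rangle$, which vanishes whenever $T(a,b)\in\ker\rho$. For sufficiency I would use Lemma~\ref{lem--kernel3b} to split $\mu=\mu_1+\mu_2$ with respect to a $D$-invariant complement $A_2$ of $\ker\rho$, where $\mu_1$ is $D$-flat (restricting to $\mu$ on $\ker\rho$ and vanishing on $A_2$) and $\mu_2$ is a $D$-momentum section valued in $(\ker\rho)^\circ$; then $\mu_2$ descends to a $\underline D$-momentum section, and by Prop.~\ref{prop:TorsionSymp} together with the bracket-compatibility of $\mu$, that section is bracket-compatible precisely when $\langle\mu_1,T(a,b)\rangle=0$ for all $a,b$. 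To establish the latter from the hypothesis one uses that $\ker\rho$ is $D$-invariant (Prop.~\ref{prop--kernel}, since $\ker\rho=\ker\gamma$) and is a Lie ideal, so that $T(a,b)\in\ker\rho$ whenever $a$ or $b$ is a section of $\ker\rho$; expanding $a,b$ along $\ker\rho\oplus A_2$, every cross term is a torsion value lying in $\ker\rho$, on which $\mu_1$ coincides with $\mu$ and hence vanishes by hypothesis, while the remaining term with both arguments in $A_2$ is handled using the $D$-invariance of $A_2$.

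The step I expect to be the real obstacle is this last reduction in the sufficiency half of (ii): a priori $\langle\mu_1,T(a,b)\rangle$ only sees the $\ker\rho$-component of an arbitrary torsion value, and the set of such components is larger than $T(A,A)\cap\ker\rho$, so one must genuinely use both the parallelism of $\ker\rho$ and its being an ideal (and the $D$-invariant complement, not merely to invoke Lemma~\ref{lem--kernel3b}) in order to bring everything back to torsion values that actually lie in $\ker\rho$. This is the Lie-algebroid counterpart of the careful choice of the constant section $\nu$ in the proof of Prop.~\ref{prop:LieActQuot2}.
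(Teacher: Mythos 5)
Your proposal takes the same route as the paper's proof throughout: part (i) and the necessity half of (ii) are argued exactly as in the source (descend or pull back momentum sections via Props.~\ref{prop--kernel2}, \ref{prop--kernel3}, \ref{prop--kernel4} and Lemma~\ref{lem--kernel3b}, then set $\nu = \mu - \pi^*\underline{\mu}$ and use that a $D$-flat section satisfies $(\DA\nu)(a,b) = \langle\nu, T(a,b)\rangle$), and those parts are complete and correct.

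The problem is the sufficiency half of (ii), where you have correctly located the crux but not closed it. After splitting $\mu = \mu_1 + \mu_2$ along $A = \ker\rho \oplus A_2$, what must be shown is $\langle\mu_1, T(a,b)\rangle = 0$ for all $a,b$. The terms with at least one argument in $\Gamma(\ker\rho)$ do produce torsion values lying in $\ker\rho$ (since $\ker\rho = \ker\gamma$ is $D$-invariant by Prop.~\ref{prop--kernel} and its sections form a bracket ideal), so the hypothesis $\langle\mu, T(A,A)\cap\ker\rho\rangle = 0$ disposes of them, as you say. But for $a_2, b_2 \in \Gamma(A_2)$ the $D$-invariance of $A_2$ gives precisely $P_{\ker\rho}T(a_2,b_2) = -P_{\ker\rho}[a_2,b_2]$, where $P_{\ker\rho}$ is the projection along $A_2$; this element of $\ker\rho$ is in general \emph{not} a value of $T$, so the hypothesis says nothing about it, and the sentence ``the remaining term \dots is handled using the $D$-invariance of $A_2$'' does not supply an argument -- the $D$-invariance is exactly what isolates this term, not what kills it. You should know that the paper's own proof glosses over the same point by asserting the splitting $T(A,A) = (T(A,A)\cap\ker\rho) \oplus (T(A,A)\cap C)$, which is not automatic for a subset of a direct sum (the two projections of a torsion value need not themselves be torsion values); so the obstacle you flagged is genuine and is not dissolved by consulting the source. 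To actually finish, one must either strengthen the hypothesis to $\langle\mu, P_{\ker\rho}T(A,A)\rangle = 0$ (equivalently $\langle\mu_1, T(A,A)\rangle = 0$), after which your argument goes through verbatim, or give a separate proof that $\mu$ annihilates $P_{\ker\rho}[a_2,b_2]$ for all $a_2,b_2\in\Gamma(A_2)$ under the stated hypotheses.
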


\begin{proof}
(i) By Proposition \ref{prop--kernel2} $\underline{A}$ is presymplectically anchored with respect to $\underline{D}$. Assume that $\underline{A}$ is weakly hamiltonian with respect to $\underline{D}$. It follows from Proposition \ref{prop--kernel3} that $A$ is weakly hamiltonian with respect to $D$. Conversely, assume that $A$ is weakly hamiltonian. By lemma \ref{lem--kernel3b} the momentum section $\mu$ can be chosen to annihilate $\ker\rho$, so that it is the pullback of a section $\underline{\mu}$ of $\underline{A}^*$. It follows from Proposition \ref{prop--kernel3} that $\underline{\mu}$ is a momentum section with respect to $\underline{D}$.

(ii) Let $\mu$ be an equivariant $D$-momentum section for $A$. Assume that $\underline{A}$ is hamiltonian with bracket-compatible $\underline{D}$-momentum section $\underline{\mu}$. It follows from Proposition \ref{prop--kernel3} that the pullback $\mu' = \pi^*\underline{\mu}$ along the canonical epimorphism $\pi: A \to \underline{A}$ is a $D$-momentum section and from Proposition \ref{prop--kernel4} that it is bracket-compatible. Let $\nu := \mu - \mu'$. Since both, $\mu$ and $\mu'$ are momentum sections, $D\nu = D\mu - D\mu' = \gamma - \gamma = 0$. It follows that
\begin{equation}
\label{eq:nuannihilT}
\begin{split}
  (\DA \nu)(a,b) 
  &= \rho a \cdot \langle\nu, b\rangle 
   - \rho b \cdot \langle\nu, a\rangle
   - \langle\nu, [a,b] \rangle
  \\ 
  &= \langle\nu, D_{\rho a} b\rangle 
   - \langle\nu, D_{\rho b} a\rangle
   - \langle\nu, [a,b] \rangle
  \\
  &= \langle\nu, T(a,b) \rangle
\end{split}
\end{equation}
for all sections $a$ and $b$ of $A$. Since both $\mu$ and $\mu'$ are bracket-compatible, $\DA \nu = 0$ so that $\nu$ vanishes on the image $T(A,A)$ of the torsion. Since $\mu'$ vanishes on $\ker\rho$ it follows that $\langle\mu, T(A,A) \cap \ker\rho \rangle = \langle\nu, T(A,A) \cap \ker\rho \rangle = 0$.

Conversely, assume that $\langle\mu, T(A,A) \cap \ker\rho \rangle = 0$. By assumption $\ker\rho$ has a $D$-invariant complement $C$, $A = \ker\rho \oplus C$, so that the momentum section has two components $\mu := \nu + \mu'$, where $\nu$ is a section of $(\ker\rho)^*$ and $\mu$ a section of $C^*$. Since $\gamma$ vanishes on $\ker\rho$, the condition for $\mu$ to be a momentum section, $D\mu = D\nu + D\mu' = \gamma$ is equivalent to the two conditions $D\nu = 0$ and $D \mu' = \gamma$. This shows that $\mu' = \mu - \nu$ is a momentum section. Since $\mu'$ annihilates $\ker\rho$ it is the pullback $\mu' = \pi^*\underline{\mu}$ along the canonical epimorphism $\pi: A \to \underline{A}$ of a $\underline{D}$-momentum section for $A$. By Proposition \ref{prop--kernel4} $\underline{\mu}$ is bracket-compatible iff $\mu'$ is. By Eq.~\eqref{eq:nuannihilT} this is the case iff $\nu$ annihilates $T(A,A)$. Splitting the image of the torsion as $T(A,A) = (T(A,A) \cap \ker\rho) \oplus (T(A,A) \cap C)$ using that $\nu$ annihilates $C$ and observing that on $\langle \nu, a\rangle = \langle\mu, \nu\rangle$ for all $a \in \ker\rho$, we obtain $\langle \nu, T(A,A) \rangle = \langle \mu, T(A,A) \cap \ker\rho \rangle$, which vanishes by assumption.
\end{proof}

\subsection{Application to transitive Lie algebroids}

An anchored vector bundle $(A,\rho)$ over $M$ with surjective anchor can be split as the direct sum of the  isotropy bundle and a subbundle mapping isomorphically by $\rho$ to the tangent bundle of $M$. In Section \ref{sec:ReduceIsotropy} we have studied the role of the isotropy bundle and in Section \ref{sec:tangent} we have given conditions for the tangent bundle to be (weakly) hamiltonian. Now we will combine these results and obtain conditions for transitive Lie algebroids to be hamiltonian.

\begin{Proposition}
\label{prop:TransAlgdHam1}
Every transitive Lie algebroid $A$ over a regular presymplectic manifold $M$ can be presymplectically anchored.
\end{Proposition}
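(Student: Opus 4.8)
The plan is to reduce the statement to the hierarchy of compatibility conditions established in Sections~\ref{sec:anchored} and~\ref{sec:connections}. The key observation is that, since the anchor $\rho$ of a transitive Lie algebroid is surjective, we have $\rho(A) = TM$, so the presymplectic orthogonal $\rho(A)^\perp$ of the image of the anchor is nothing but the characteristic distribution $TM^\perp$ of $\omega$. Because $\omega$ is closed and regular, $TM^\perp$ is a regular integrable, hence involutive, distribution, so condition (C4) of Def.~\ref{def:CompatCond} is automatically satisfied. Note that the Lie algebroid bracket on $A$ plays no role here; only the transitivity of $\rho$ and the regularity of $\omega$ are used.

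Next I would verify that $(A,\rho)$ is presymplectically regular, i.e.\ that the dualized anchor $\gamma = \tilde\omega\circ\rho$ has constant rank. This holds because $\rho$ has constant rank $n = \dim M$ (being surjective) and $\omega$ has constant rank by hypothesis: then $\operatorname{im}\gamma = \operatorname{im}\tilde\omega$ and $\ker\gamma = \rho^{-1}(TM^\perp)$ are subbundles, and explicitly $\operatorname{rk}\gamma = \operatorname{rk}\omega$ is constant on $M$.

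With (C4) in hand and presymplectic regularity established, Prop.~\ref{prop:CompatRegular} shows that (C4) is equivalent to (C3), so $A$ satisfies (C3). Finally, Prop.~\ref{prop:Connection1} shows that (C3) is equivalent to the existence of a linear connection $D$ on $A$ with $D\gamma = 0$; that is, $A$ is presymplectically anchored with respect to $D$, which is the claim.

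As for difficulty, there is no real obstacle here: the entire content of the proof lies in recognizing that transitivity collapses $\rho(A)^\perp$ to the characteristic distribution of $\omega$, which is automatically involutive in the regular case, and then invoking the already-established equivalences (C4) $\Leftrightarrow$ (C3) $\Leftrightarrow$ ($D\gamma=0$). The only step requiring a moment's care is checking presymplectic regularity, so that Prop.~\ref{prop:CompatRegular} genuinely applies.
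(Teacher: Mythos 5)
Your proof is correct, but it follows a different route from the paper's. You work entirely within the hierarchy of compatibility conditions: transitivity forces $\rho(A)^\perp = TM^\perp$, which is involutive because $\omega$ is closed, so (C4) holds; surjectivity of $\rho$ plus regularity of $\omega$ gives $\operatorname{rk}\gamma = \operatorname{rk}\tilde{\omega}$ constant, so Prop.~\ref{prop:CompatRegular} upgrades (C4) to (C3), and Prop.~\ref{prop:Connection1} converts (C3) into the existence of a connection with $D\gamma = 0$. All of these steps check out, including the presymplectic-regularity verification, which is indeed the one point where care is needed. The paper instead argues constructively: it splits $A \cong \ker\rho \oplus TM$, observes that $\gamma$ vanishes on the $\ker\rho$ summand and equals $\tilde{\omega}$ on the $TM$ summand, and takes $D = D_1 \oplus D_2$ with $D_2$ a presymplectically anchored connection on $TM$ supplied by Prop.~\ref{prop:SympConn} (Vaisman's existence result for presymplectic connections with torsion in $TM^\perp$). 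The paper's route buys a connection adapted to the splitting, which is then reused in Prop.~\ref{prop:TransAlgdHam2} to transport momentum sections between $TM$ and $A$; your route is shorter and arguably more conceptual, and in fact amounts to applying directly to $A$ the same (C4)$\Rightarrow$(C3) argument that the paper mentions as an alternative proof of Prop.~\ref{prop:SympConn} for $TM$. Since the statement only asserts existence, either argument suffices.
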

\begin{proof}
We choose a splitting of the surjective anchor to identify $A \cong  \ker \rho \oplus TM, $ with the anchor being given by the projection to $TM$. The  quotient bundle of Proposition \ref{prop--kernel2} can then be identified with the tangent bundle $\underline{A} = TM$ and the  quotient anchor with the identity map $\underline{\rho} = \id_{TM}$. The dualized anchor $\gamma = \gamma_1 + \gamma_2$ is given by the zero section $\gamma_1 = 0$ on $\ker\rho$ and $\gamma_2 = \tilde{\omega}$ on $TM$.

If $D_1$ and $D_2$ are arbitrary linear connections on the subbundles $\ker\rho$ and $TM$, respectively, then the connection on the quotient from Proposition \ref{prop--kernel2} is $\underline{D} = D_2$. The condition for $A$ to be presymplectically anchored with respect to $D =  D_1 + D_2$ is  $D\gamma = D_1 \gamma_1 + D_2 \gamma_2 = 0$. The zero section $\gamma_1 = 0$ is horizontal with respect to any linear connection $D_1$, which implies that $A$ is presymplectically anchored if and only if $D_2\gamma_2 = 0$. We conclude that $A$ is presymplectically anchored if $TM$ is, which by Proposition \ref{prop:SympConn} is always the case.
\end{proof}

\begin{Proposition}
\label{prop:TransAlgdHam2}
Let $A$ be a transitive Lie algebroid over a symplectic manifold $M$. If $TM$ is (weakly) hamiltonian then so is $A$.
\end{Proposition}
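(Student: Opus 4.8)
The plan is to realize $A$ as a pullback, along its own anchor, of the tangent Lie algebroid $TM$, and then invoke Prop.~\ref{prop:HamPullback}. Since $A$ is transitive, $\ker\rho$ is a vector subbundle of $A$, and a choice of splitting of the surjective anchor (exactly as in the proof of Prop.~\ref{prop:TransAlgdHam1}) identifies $A \cong \ker\rho \oplus TM$ so that $\rho\colon A \to TM$ becomes the projection onto the second summand. Recall that the anchor of any Lie algebroid is a morphism of Lie algebroids over $\id_M$: in the language of Sec.~\ref{sec:Category}, the induced pullback $\rho^*\colon \Omega^\bullet(TM) = \Omega^\bullet(M) \to \Omega^\bullet(A)$ is a chain map, $\DA \rho^* = \rho^* d$.

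First I would fix the data. Since $TM$ is (weakly) hamiltonian, choose a presymplectically anchored connection $D_2$ on $TM$ together with a (bracket-compatible) $D_2$-momentum section $\mu_2 \in \Omega^1(M) = \Gamma(M,T^*M)$; choose any linear connection $D_1$ on $\ker\rho$ and set $D := D_1 \oplus D_2$ on $A$. In the dual splitting $A^* = (\ker\rho)^* \oplus T^*M$, the dual of the anchor sends a $T^*M$-valued form $\alpha$ to $0 \oplus \alpha$, and since $D$ is block-diagonal so is its dual connection; hence $D(\rho^*\alpha) = 0 \oplus D_2\alpha = \rho^*(D_2\alpha)$ for every $\alpha \in \Omega^\bullet(M,T^*M)$. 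By Prop.~\ref{prop:MorphcompatConn} this says precisely that $\rho$ is compatible with the connections $D$ and $D_2$.

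Now I would apply Prop.~\ref{prop:HamPullback} to $\phi = \rho\colon A \to TM$ with $\omega' = \omega$ and $\mu' = \mu_2$. Because $\phi_0 = \id_M$ we have $\phi^*\omega' = \omega$, and $\mu := \rho^*\mu_2 \in \Gamma(M,A^*)$ is, in the splitting, $0 \oplus \mu_2$, i.e.\ the $1$-form $\mu_2$ viewed as a section of $A^*$ that annihilates $\ker\rho$. Since $(TM,\omega)$ is presymplectically anchored with respect to $D_2$, the proposition yields that $(A,\omega)$ is presymplectically anchored with respect to $D$; and since $\mu_2$ is a (bracket-compatible) $D_2$-momentum section, $\mu$ is a (bracket-compatible) $D$-momentum section for $A$. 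Hence $A$ is (weakly) hamiltonian, the weak case and the full hamiltonian case being handled simultaneously.

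There is no serious obstacle; the points needing care are merely (i) checking that the block-diagonal connection $D = D_1 \oplus D_2$ is honestly compatible with $\rho$, which is the short block-diagonal computation above, and (ii) observing that nothing depends on the auxiliary choices of splitting or of $D_1$. One could also avoid Prop.~\ref{prop:HamPullback} entirely and argue by hand: presymplectic anchoring is Prop.~\ref{prop:TransAlgdHam1}; $D\mu = \gamma$ holds because $\gamma = 0 \oplus \tilde\omega$ in the splitting and $D_2\mu_2 = \tilde\omega$; and bracket-compatibility, $(\DA\mu)(a,b) = -\omega(\rho a,\rho b)$, follows from $\DA \rho^* = \rho^* d$ applied to $\mu_2$ together with the identity $d\mu_2 = -\omega$, which is exactly what bracket-compatibility of $\mu_2$ for the tangent Lie algebroid asserts.
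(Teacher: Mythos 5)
Your proof is correct, and it rests on the same underlying computation as the paper's: split $A \cong \ker\rho \oplus TM$, take a block-diagonal connection $D = D_1 \oplus D_2$, and transport the momentum section from $TM$ to $A$ as $0 \oplus \mu_2$, i.e.\ as a section of $A^*$ annihilating $\ker\rho$. The difference is purely in which general result you invoke to legitimize the transport. The paper routes through the quotient-by-isotropy machinery of Sec.~\ref{sec:ReduceIsotropy} (Props.~\ref{prop--kernel3} and~\ref{prop--kernel4}), viewing $TM$ as the quotient $\underline{A} = A/\ker\rho$ and pulling back along the projection $\pi$; those propositions carry the hypothesis $\ker\rho = \ker\gamma$, which is automatic here since $\omega$ is symplectic. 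You instead observe that the anchor $\rho\colon A \to TM$ is itself a morphism of Lie algebroids over $\id_M$, verify compatibility of the connections by the block-diagonal computation (via Prop.~\ref{prop:MorphcompatConn}), and apply the general pullback result Prop.~\ref{prop:HamPullback} in one stroke. Your route is marginally cleaner in that it handles the momentum-section condition and bracket-compatibility simultaneously and makes no reference to the quotient construction; what the paper's route buys is that it slots directly into the preceding discussion of when hamiltonian structures descend to $\underline{A}$, which is the context in which the proposition is stated. Both arguments are complete; your closing "by hand" sketch of bracket-compatibility via $\DA\rho^* = \rho^* d$ and $d\mu_2 = -\omega$ is also correct and is essentially the content of Prop.~\ref{prop--kernel4} specialized to this case.
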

\begin{proof}
Let $D = D_1 + D_2$ be a direct sum of connections as above. Let $\underline{\mu}$ be a $\underline{D} = D_2$-momentum section for $\underline{A} = TM$. According to Proposition \ref{prop--kernel3}, $\mu := \mu_1 + \mu_2$, with $\mu_1 = 0$ and $\mu_2 = \underline{\mu}$, is a $D$-momentum section for $A$. According to Proposition \ref{prop--kernel4}, $\mu$ is bracket-compatible iff $\underline{\mu}$ is bracket-compatible.
\end{proof}

\begin{Remark}
The proof does \emph{not} show that $A$ being weakly hamiltonian implies that $TM$ is weakly hamiltonian, since we cannot exclude the case that $A$ has only $D$-momentum sections $\mu$ that do not annihilate $\ker\rho$ and, therefore, do not descend to $TM$. According to Remark ~\ref{rmk--kernel3c}, this can  occur only when $\ker\rho$ does not have a $D$-invariant complement. However, we know of no example that realizes this topological obstruction.
\end{Remark}

\begin{Corollary}
Any transitive Lie algebroid over a symplectic manifold that is either non-compact or compact with non-negative Euler characteristic is weakly hamiltonian.
\end{Corollary}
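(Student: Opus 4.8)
The plan is to chain together the two immediately preceding results, so the argument is almost formal. First I would note that the hypothesis on $M$ is exactly condition (ii) of Thm.~\ref{thm:TMweaklyhamB}: $M$ is either non-compact or compact with non-negative Euler characteristic. Since $M$ is symplectic, that theorem applies and tells us that the tangent Lie algebroid $TM$ is weakly hamiltonian; more concretely, it produces a connection $\underline{D}$ on $TM$ together with a $\underline{D}$-momentum section $\underline{\mu}\in\Omega^1(M)$ with $\underline{D}\,\underline{\mu}=\tilde\omega$.

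Next I would invoke Prop.~\ref{prop:TransAlgdHam2}, whose hypothesis is precisely that $A$ is a transitive Lie algebroid over a symplectic manifold $M$ with $TM$ weakly hamiltonian: its conclusion is that $A$ is weakly hamiltonian as well. Unwinding that proposition for the record: choose a splitting $A\cong\ker\rho\oplus TM$ of the surjective anchor, pick any linear connection $D_1$ on $\ker\rho$, set $D=D_1+\underline{D}$, and take $\mu=0\oplus\underline{\mu}$; Prop.~\ref{prop--kernel2} shows $D$ presymplectically anchors $A$ and Prop.~\ref{prop--kernel3} shows $\mu$ is a $D$-momentum section, so $(A,D,\mu)$ is a weakly hamiltonian structure.

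There is essentially no obstacle here, since the corollary is a direct consequence of Thm.~\ref{thm:TMweaklyhamB} and Prop.~\ref{prop:TransAlgdHam2}. The only point deserving a word of care is that Prop.~\ref{prop:TransAlgdHam2} is stated for a \emph{symplectic} base rather than a merely regular presymplectic one — but that is exactly the hypothesis of the corollary, so nothing further is required. (One should also resist the temptation to claim the stronger statement that $A$ is \emph{hamiltonian}, or that $TM$ weakly hamiltonian follows from $A$ weakly hamiltonian; as the surrounding remarks point out, neither converse direction is available without extra input, so the corollary is correctly stated only for the ``weakly'' version and only in the stated direction.)
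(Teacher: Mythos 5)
Your argument is exactly the paper's: the hypothesis on $M$ feeds into Thm.~\ref{thm:TMweaklyhamB} to make $TM$ weakly hamiltonian, and Prop.~\ref{prop:TransAlgdHam2} (resting on Props.~\ref{prop:TransAlgdHam1}, \ref{prop--kernel2}, and \ref{prop--kernel3} via the splitting $A\cong\ker\rho\oplus TM$) transfers this to $A$. The proposal is correct, and your cautionary remarks about not overclaiming the hamiltonian or converse statements match the paper's own surrounding remarks.
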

\begin{proof}
The statement follows from Propositions \ref{prop:TransAlgdHam1} and \ref{prop:TransAlgdHam2}, together with Theorem  \ref{thm:TMweaklyhamB}.
\end{proof}

\section{Reduction of action Lie groupoids and algebroids}
\label{sec:ReductionAction}

If $G$ is a Lie group acting smoothly on a manifold $M$, and if $H$ is a closed normal subgroup operating freely and properly on $M$, so that $M/H$ is a manifold, then the reduced action is a smooth action.  We will show in this section that, under the additional assumption that $H$ is acting freely on $M$, we still have a reduction of the action groupoid $G \ltimes M$ to a reduced groupoid over $M/H$, even when $H$ is not normal.  It is generally no longer an action groupoid, though it does agree with the action groupoid of the reduced action in the case where $H$ is normal. We will give examples, show the Morita invariance of the reduced groupoid to the original one, and describe the Lie algebroid of the reduced groupoid.  The reduced Lie algebroid (but not the groupoid) has appeared with slightly different assumptions in \cite{Lu:2008}.

This section is independent of the previous sections of the paper and may be of interest in itself.   In Section~\ref{sec:HamHReduced}, we will return to our main subject and study how hamiltonian Lie algebroid structures behave under reduction.

As throughout this paper, we will be working in the smooth category.  It should be noted, though, that there are similar versions in the topological category and simply in the category of sets (in which case properness of an action is not required for the existence of a nice quotient space).

\subsection{Quotient of an action groupoid by a subgroup}
\label{sec:RedAct1}

Recall that, if a group $H$ acts on $X$ from the right, and on $M$ from the left, then $X \times_H M$ is defined as $(X \times M)/H$, where the (left) action of $H$ is given by $h\cdot (x,m) = (x \cdot h^{-1}, h \cdot m)$.  If the action on $M$ is free and proper, then this is is a smooth fibre bundle over $M/H$ with fibre $X$, called the \textbf{associated $X$ bundle} to the principal $H$ bundle $M\to M/H$. 
 
For our purposes, $X$ will be a manifold $H\backslash G$ of left $H$-cosets, with the usual right $H$-action. The associated bundle is the quotient
\begin{equation*}
  H\backslash G \times_H M = (G \times M)/(H \times H)
\end{equation*}
with respect to the (left) $H \times H$-action given by $(h_1, h_2)\cdot (g,m) = (h_1g h_2^{-1}, h_2 \cdot m)$.

\begin{Remark}
The notation $M/H$ for the orbit space of an $H$-action is commonly used for both left and right actions.  An exception is the notation for the left and right coset spaces $H \backslash G$ and $G/H$, which leads to the unfortunate equation $M/H = H\backslash G$ when $M=G$.   We have grudgingly decided to bear with this inconsistency rather than fixing it by redefining the well-established notation for either cosets or group quotients.
To get the usual notation $K\backslash G/H$ for a double coset space, we consider $H$ acting from the right on the left coset space $K\backslash G$. 
\end{Remark}

\begin{Proposition}
\label{prop:GroupReduce}
Let $G$ be a Lie group acting from the left on a manifold $M$. Let $H \subseteq G$ be a closed Lie subgroup for which the restriction to $H$ of this action is free and proper. Then there is a unique Lie groupoid structure on $\tilde{\Gamma} := H\backslash G \times_H M$ over $\tilde{M} := M/H$ such that the quotient maps
\begin{equation*}
\xymatrix{
G \ltimes M \ar@{->>}[r]^-{\pi_1} \ar@<-3pt>[d]\ar@<+3pt>[d] & 
H \backslash G \times_H M \ar@<-3pt>[d]\ar@<+3pt>[d] 
\makebox[0pt][l]{$ {}= \tilde{\Gamma}$}
\\
M \ar@{->>}[r]^{\pi_0} & M/H
\makebox[0pt][l]{$ {}= \tilde{M}$}
}
\end{equation*}
form a homomorphism of Lie groupoids.
\end{Proposition}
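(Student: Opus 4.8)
The plan is to construct the groupoid structure on $\calG := H\backslash G \times_H M$ explicitly from the action groupoid $G\ltimes M$, check that all the structure maps descend to the quotient, and verify the groupoid axioms by pulling back to $G\ltimes M$. First I would set up notation: write $[g,m]$ for the class of $(g,m)\in G\times M$ under the $H\times H$-action $(h_1,h_2)\cdot(g,m) = (h_1 g h_2^{-1}, h_2 m)$. The source and target in $G\ltimes M$ are $s(g,m)=m$ and $t(g,m)=gm$ (or the other convention, consistent with the arrow conventions fixed earlier); I would define on $\calG$ the maps $\underline s[g,m] := \pi_0(m) = Hm$ and $\underline t[g,m] := \pi_0(gm) = H(gm)$. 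The key point is that $\underline t$ is well defined: replacing $(g,m)$ by $(h_1 g h_2^{-1}, h_2 m)$ sends $gm$ to $h_1 g h_2^{-1} h_2 m = h_1 (gm)$, which lies in the same $H$-orbit; and $\underline s$ is well defined because $h_2 m$ is in the same orbit as $m$. So the quotient square commutes by construction, and $\pi_1$ is source- and target-compatible.

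Next I would define composition. Two elements $[g_1,m_1]$ and $[g_2,m_2]$ are composable in $\calG$ when $\underline s[g_1,m_1] = \underline t[g_2,m_2]$, i.e.\ $Hm_1 = H(g_2 m_2)$, so there is $h\in H$ with $m_1 = h g_2 m_2$. Using the $H\times H$-freedom I would normalize the second factor: replace $(g_2,m_2)$ by $(h g_2, m_2)$ — note $(h g_2, m_2)$ represents the same class only if we also act on $m_2$, so more carefully, act by $(h, e)$ is not allowed unless... — this is exactly the subtle point. Here I would use freeness of the $H$-action on $M$: given composable classes, I would choose the unique representative $(g_1, m_1)$ and then, since $m_1$ and $g_2 m_2$ lie in the same $H$-orbit and $H$ acts freely, there is a \emph{unique} $h$ with $g_2 m_2 = h^{-1} m_1$; replacing $(g_2,m_2)$ by $(h^{-1}, e)\cdot(g_2,m_2)$... again requires acting on $m_2$. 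The clean way: represent the first arrow as $[g_1, m_1]$ with the target of the second being $\pi_0(g_2 m_2)$; pick the representative $(g_2', m_2')$ of $[g_2,m_2]$ with $g_2' m_2' = m_1$ exactly — this is possible and unique because of freeness of $H$ on $M$ (the fibre of $\pi_0$ through $m_1$ is an $H$-torsor, and the $H\times \{e\}$ action on representatives is free and transitive on this set). Then define $[g_1,m_1]\cdot[g_2,m_2] := [g_1 g_2', m_2']$. I would then check this is independent of the initial representative $(g_1,m_1)$ (using the residual $\{e\}\times H$-freedom) and associative, by lifting everything to $G\ltimes M$ where these identities are just the group law in $G$ and the action axiom.

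The unit at $Hm\in M/H$ would be $[e,m]$ (well defined since $[e, h m] = [h e h^{-1}, h m] = [e, hm]$... check: $(h, h)\cdot(e,m) = (h e h^{-1}, hm) = (e, hm)$, good), and the inverse of $[g,m]$ would be $[g^{-1}, gm]$ (check well-definedness and that it inverts composition). Then $\pi_1$ is a homomorphism by construction on the nose. For smoothness I would invoke that $H$ acting freely and properly makes all the relevant quotients ($M/H$, $H\backslash G\times_H M$, and the space of composable pairs) smooth manifolds and the quotient maps submersions, so the descended structure maps are automatically smooth — one defines them via the universal property of submersions applied to the obviously smooth maps on $G\times M$ before quotienting. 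Uniqueness follows because $\pi_1$ is a surjective submersion, hence an epimorphism in the relevant category, so any two groupoid structures making it a homomorphism must agree.

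The main obstacle I anticipate is the bookkeeping in the definition of composition: one must pin down exactly which normalization of representatives is being used and verify it is legitimate precisely because $H$ acts freely on $M$ (this is where the hypothesis is essential — without freeness the choice of $h$, and hence of the product, would be ambiguous). Everything else — well-definedness of $\underline s,\underline t$, units, inverses, the groupoid axioms, and smoothness — is routine once composition is correctly set up, since every identity can be checked after lifting along the submersion $\pi_1$ to the honest action groupoid $G\ltimes M$, where it reduces to the group axioms in $G$ and the action axioms on $M$.
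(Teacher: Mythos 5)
Your construction coincides with the paper's: the unique $h\in H$ with $m_1 = h\cdot(g_2\cdot m_2)$ that you use to normalize the representative of the second arrow is exactly the paper's division map $\eta(m_1, g_2\cdot m_2)$ of the principal bundle $M\to M/H$, and your product $[g_1 g_2', m_2']$ is precisely their $[g_1\,\eta(m_1, g_2\cdot m_2)\,g_2, m_2]$ (note, incidentally, that your worry about $(hg_2,m_2)$ not representing the same class is unfounded, since it is $(h,e)\cdot(g_2,m_2)$ under the $H\times H$-action). The remaining verifications --- well-definedness in both factors, associativity, units, inverses, and smoothness via the smoothness of $\eta$ --- are carried out in the paper exactly as you outline, so the proposal is correct and follows essentially the same route.
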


\begin{proof}
Since $H$ is a closed subgroup, $H\backslash G$ is a smooth manifold on which $H$ has an induced action from the right.  Since the action of $H$ on $M$ is free and proper, $M \to M/H$ is a smooth principal bundle, so that the associated $H\backslash G$-bundle $H\backslash G \times_H M$ over $M/H$ is smooth. We will use the notation
\begin{equation*}
  [m] := \pi_0(m)\,,\quad [g,m] := \pi_1(g,m)
\end{equation*}
for elements of this quotient.  By definition of the group actions, we have $[h \cdot m] = [m]$ and $[h_1 g h_2^{-1}, h_2 \cdot m] = [g,m]$.

To be compatible with $\pi_0$, $\pi_1$, and the action groupoid structure, the left and right anchor maps (sometimes called source or target maps, or moment maps, depending upon conventions), the identity bisection, and the inverse on the quotient must be defined as:
\begin{equation}
\label{eq:RedGroupoid1}
\begin{gathered}
  l[g,m] := [g\cdot m]\,,\quad 
  r[g,m] := [m] \,,\\
  1_{[m]} := [1,m] \,,\quad
  [g,m]^{-1} := [g^{-1}, g \cdot m] \,.
\end{gathered}
\end{equation}
for all $g \in G$ and $m \in M$. When $m_1 = g_2 \cdot m_2$, then the multiplication of $[g_1,m_1]$, $[g_2,m_2] \in \tilde{\Gamma}$ must satisfy
\begin{equation*}
  [g_1,m_1]\, [g_2,m_2] = [g_1 g_2,m_2 ] \,.
\end{equation*}
In the general case, when $r[g_1,m_1] = [m_1]$ and $[g_2 \cdot m_2] = l[g_2,m_2]$ are equal, we observe that the freeness of the $H$ action on $M$ implies that there is a unique $\eta(m_1,g_2 \cdot m_2) \in H$ such that $m_1 = \eta(m_1, g_2 \cdot m_2) \cdot(g_2 \cdot m_2)$. The multiplication must then be defined as
\begin{equation}
\label{eq:RedGroupoid2}
  [g_1,m_1]\,[g_2,m_2] 
  := [g_1 \eta(m_1, g_2 \cdot m_2) g_2, m_2] \,.
\end{equation}
We will now show that the structure maps defined in Eqs.~\eqref{eq:RedGroupoid1} and \eqref{eq:RedGroupoid2} satisfy the axioms of a Lie groupoid. 

First, we observe that $\eta(m_1, g_2 \cdot m_2)$ is the image of the the smooth map $\eta: M \times_{\tilde{M}} M \to H$ that maps a pair of elements $m$ and $m'$ in the same fibre of $M \to \tilde{M}$ to the unique element $\eta(m,m') \in H$ satisfying $m = \eta(m,m') \cdot m'$. It is easy to see that $\eta$ satisfies the following relations:
\begin{align}
  \eta(h\cdot m, m') &= h \eta(m,m') 
  \tag{H1} \label{eq:etaleftlinear}\\
  \eta(m, m) &= 1 \,, 
  \tag{H2}\label{eq:etaidentity}
\end{align}
for all $(m,m') \in M \times_{M/H} M$ and $h \in H$. Having a smooth map with these properties is equivalent to the $H$-action on $M$ being free (cf.~Prop.~3.4 in \cite{Blohmann:2008}). Relations (H1) and (H2) imply
\begin{equation}
  \eta(m,h\cdot m') = \eta(m,m') h^{-1} 
  \tag{H3}\label{eq:etarightlinear}\,,
\end{equation}
which we will also need.

With these relations we can show that the multiplication is well-defined in the first factor:
\begin{equation*}
\begin{split}
  [h_1 g_1 h_2^{-1}, h_2 \cdot m_1]\, [g_2,m_2]
  &= \bigl[h_1 g_1 h_2^{-1} 
  \eta(h_2 \cdot m_1, g_2 \cdot m_2) g_2, m_2 \bigr] \\
  &= \bigl[h_1 g_1 h_2^{-1} h_2 
  \eta(m_1, g_1 \cdot m_2) g_2, m_2 \bigr] \\
  &= \bigl[h_1 g_1 \eta(m_1, g_2 \cdot m_2) g_2, m_2 \bigr] \\
  &= \bigl[g_1 \eta(m_1, g_2 \cdot m_2) g_2, m_2 \bigr] \,,
\end{split}
\end{equation*}
where in the second step we have used~\eqref{eq:etaleftlinear}. For the second factor we get
\begin{equation*}
\begin{split}
  [g_1, m_1]\, [h_1 g_2 h_2^{-1}, h_2 \cdot m_2]
  &= \bigl[g_1 \eta(m_1, h_1 g_2 h_2^{-1}h_2 \cdot m_2) 
  h_1 g_2 h_2^{-1}, h_2 \cdot m_2 \bigr] \\
  &= \bigl[g_1 \eta(m_1, g_2 \cdot m_2) 
  h_1^{-1} h_1 g_2 h_2^{-1}, h_2 \cdot m_2 \bigr] \\
  &= \bigl[g_1 \eta(m_1, g_2 \cdot m_2) 
  g_2 h_2^{-1}, h_2 \cdot m_2 \bigr] \\
  &= \bigl[g_1 \eta(m_1, g_2 \cdot m_2) 
  g_2, m_2 \bigr] \,,
\end{split}
\end{equation*}
where in the second step we have used~\eqref{eq:etarightlinear}. Next, we check associativity,
\begin{equation*}
\begin{split}
  [g_1, m_1]\, \bigl( [g_2, m_2]\,[g_3, m_3]  \bigr)
  &= [g_1, m_1]\, 
    \bigl[g_2\eta(m_2, g_3 \cdot m_3)g_3, m_3 \bigr] \\
  &= \bigl[ g_1 \eta\bigl(m_1, 
    g_2\eta(m_2, g_3 \cdot m_3)g_3 \cdot m_3 \bigr)
    g_2\eta(m_2, g_3 \cdot m_3)g_3, m_3 \bigr] \\
  &= \bigl[g_1 \eta(m_1, g_2 \cdot m_2)g_2
    \eta(m_2, g_3 \cdot m_3)g_3, m_3\bigr] \\ 
  &= \bigl[g_1 \eta(m_1, g_2 \cdot m_2)g_2, m_2\bigr] 
    \, [g_3, m_3] \\ 
  &= \bigl( [g_1, m_1]\, [g_2, m_2] \bigr)\, [g_3, m_3] \,,
\end{split}
\end{equation*}
where we have used that by definition $\eta(m_2, g_3 \cdot m_3)g_3 \cdot m_3 = m_2$.

It is a straightforward calculation to check that the left and right anchor maps, the identity, and the inverse are also well-defined and satisfy the axioms of a groupoid. For example, for the inverse we have
\begin{equation*}
\begin{split}
  [h_1 g h_2^{-1}, h_2 \cdot m]^{-1}
  &= \bigl[h_2 g^{-1} h_1^{-1}, 
     h_1 g h_2^{-1} \cdot (h_2 \cdot m) \bigr] \\
  &= \bigl[h_2 g^{-1} h_1^{-1}, 
     h_1 \cdot (g \cdot m) \bigr] \\
  &= [g^{-1}, g \cdot m ] \\
  &= [g,m]^{-1} \,.
\end{split}
\end{equation*}
Moreover, $[g,m]^{-1} [g,m] = [g^{-1}, g \cdot m]\,[g, m] = [g^{-1}g, m] = [1,m] = 1_{[m]}$. Verifying the remaining relations of a groupoid is equally straightforward.

The smoothness of the structure maps is a consequence of the smoothness of the quotient maps $\pi_0$ and $\pi_1$, of the map $\eta$, and of the structure maps of $G$. Since $\pi_0$ is a submersion it follows that the anchor maps are submersions.
\end{proof}

\begin{Remark}
\label{rmk:RightIsAd}
$Hgh = Hh^{-1}g h = H(\Ad_h^{-1}g)$, so that the right regular action and the right adjoint action of $H$ on $G$ induce the same right $H$-action on $H \backslash G$.
\end{Remark}

\subsection{Examples}

\begin{Example}
\label{ex:NormalReduce}
In the situation of Proposition \ref{prop:GroupReduce} assume that the closed subgroup $H \subset G$ is normal. Then the right action of $H$ on $H \backslash G$ is trivial, so that the associated $H \backslash G$ bundle is also trivial. More precisely, the map
\begin{equation*}
\begin{aligned}
  \phi: H \backslash G \times_H M &\longrightarrow  
  H \backslash G \times M/H
  \\
  [g,m] &\longmapsto (Hg, [m])
\end{aligned}
\end{equation*}
is an isomorphism. The left and right anchor maps of the reduced groupoid $\tilde{\Gamma}$ are given in terms of the trivial bundle by $l(Hg,[m]) = [g \cdot m]$ and $r(Hg, [m]) = [m]$, the groupoid multiplication by $(Hg_1,[g_2 \cdot m])(Hg_2, [m]) = (Hg_1 g_2, [m])$. We conclude that $\tilde{\Gamma}$ is the action groupoid of the quotient group $H \backslash G$ acting on the quotient space $M/H$.
\end{Example}

\begin{Example}
\label{ex:FreeTrans}
Let $M=G$ with the usual action of $G$ by left translation, for which the action groupoid is the pair groupoid $G\times G$.
Elements of our reduced groupoid 
 $$H\backslash G \times_H G = (G \times M)/(H \times H)$$
 are then equivalence classes of pairs $(g_1,g_2)$ of elements of $G$ with respect to the (left) $H \times H$-action given by $$(h_1, h_2)\cdot (g_1,g_2) = (h_1 g_1 h_2^{-1}, h_2 g_2).$$
From the definition of the left and right anchor maps, we have $l([g_1,g_2]) = Hg_1g_2$ and $r([g_1,g_2])=Hg_2$ in $H\backslash G$.  Thus, any pair $(Hg_1,Hg_2)$ of objects for the reduced groupoid is in the image of the morphism $[g_1 g_2^{-1},g_2]$, and this groupoid is therefore transitive. A simple computation then shows that this morphism is uniquely determined by its image, so that the reduced groupoid is isomorphic to the pair groupoid $H\backslash G \times H\backslash G$.
\end{Example}

\begin{Example}
Assume that the action of $G$ on $M$ is free but not transitive. Then we can apply the argument of the last example to each $G$-orbit and assemble the results.  The conclusion is that the reduced groupoid is isomorphic to the ``relative pair groupoid'' of the fibration $M/H \to M/G$, i.e.\ the fibre product $M/H \times_{M/G} M/H$, which is a Lie subgroupoid of the pair groupoid $M/H \times M/H$.
\end{Example}

\begin{Example}
\label{ex:TransReduce}
Assume that the $G$ action is transitive but not necessarily free. Then $M \cong G/K$ for some subgroup $K$ of $G$, and freeness of the $H$ action means that $H$ intersects each subgroup conjugate to $K$ only in the identity element. The argument in the free and transitive case (Example~\ref{ex:FreeTrans}) still shows that the reduced groupoid is transitive, so it must be the gauge groupoid of a principal bundle $B$ over $M/H$.

As for any gauge groupoid, the principal bundle can be recovered up to isomorphism as the fibre $B = r^{-1}(\tilde{m})$ over some point $\tilde{m} \in M/H$, with the left anchor map as bundle projection and the isotropy group of $\tilde{m}$ as gauge group acting by right groupoid multiplication. Here, $M/H$ is isomorphic to the double coset space $H\backslash G/K$. An isomorphism can be obtained explicitly by mapping the double coset $HgK$ to the $H$-orbit $H\cdot (g \cdot m)$ for some point $m \in M$. The total space $B$ of the principal bundle, defined as the right fibre over $HeK$, is isomorphic to $H \backslash G$, the isomorphism mapping $Hg$ to $[g,m]$. The bundle projection maps $Hg$ to $HgK$, so the gauge group is $K$.

We conclude that, for a transitive $G$-action on $M$, the $H$-reduced groupoid is isomorphic to the gauge groupoid of the principal $K$-bundle $H\backslash G \to H\backslash G/K$, where $K$ is the stabilizer group of some point $m \in M$.  (A different choice of $m$ has a conjugate stabilizer subgroup, so the gauge groupoids are isomorphic.) In the special case where $K$ reduces to the identity, so that $M \cong G$, we have the gauge groupoid of a principal $\{e\}$ bundle over $H\backslash G$, which is just the pair groupoid of $H\backslash G$, so we are back to our initial example.
\end{Example}

For the general case, we can again apply the argument for transitive Lie groupoids ($G$-) orbit-by-orbit. The assumption that the $H$ action is free immediately implies the condition that $H$ has zero intersection with all the isotropy groups, so there are clearly many examples of this.

\subsection{Morita equivalence}

In many applications, it is useful to view an action groupoid as a presentation of the quotient stack of the group action, which is a well-behaved structure even if the quotient is singular. For example, the equivariant cohomology of a manifold with a group action is the same as the stack cohomology of the quotient stack. This point of view leads to the cohomological interpretation of hamiltonian Lie algebroids of Section \ref{sec:CohomInterp}.

A differentiable stack can be viewed as Lie groupoid up to a generalized notion of isomorphism, called geometric Morita equivalence \cite{MoerdijkMrcun:Foliations}. The precise statement is that there is an equivalence between the bicategory of differentiable stacks and the bicategory that has Lie groupoids as objects, right principal bibundles as 1-morphisms, and biequivariant maps as 2-morphisms \cite{Blohmann:2008}. (The 2-morphisms will not play any role in this paper.) Let $\Gamma \rightrightarrows M$ and $\Gamma' \rightrightarrows M'$ be Lie groupoids. A \textbf{$\Gamma$-$\Gamma'$-bibundle} is a smooth manifold $B$ together with two smooth maps $l_B: B \rightarrow M$, $r_B: B \rightarrow M'$ together with a left $\Gamma$-action and a right $\Gamma'$-action that commute. A bibundle is called \textbf{right principal} if $l_B$ is a surjective submersion and if the right $\Gamma'$-action is free and transitive on the $l_B$-fibres. If in addition the left action is free and transitive on the $r_B$-fibres, then the bibundle is called a \textbf{Morita equivalence}.

For any morphism $\pi: \Gamma \to \Gamma'$ of Lie groupoids there is the associated groupoid bibundle given by
\begin{equation*}
  B := M \times_{M'}^{\pi_0, l} \Gamma'
\end{equation*}
with left bundle map $l_B(m, \gamma') = m$, right bundle map $r_B(m, \gamma') = r(\gamma')$, left action $\gamma \cdot (m,\gamma') = \bigl( l(\gamma), \pi_1(\gamma)\gamma' \bigr)$, and right action $(m,\gamma'_1) \cdot \gamma'_2 = (m,\gamma'_1 \gamma'_2)$. The right action is always principal.

The following result shows that the bibundle associated to the morphism of Proposition \ref{prop:GroupReduce} from an action groupoid to its reduced groupoid is a Morita equivalence, i.e.~an isomorphism of differentiable stacks.

\begin{Proposition}
\label{prop:MoritaReduce}
The morphism of Lie groupoids of Proposition \ref{prop:GroupReduce} is a Morita equivalence. 
\end{Proposition}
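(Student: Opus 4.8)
The plan is to show that the homomorphism $(\pi_1,\pi_0)\colon G\ltimes M\to\tilde\Gamma$ of Prop.~\ref{prop:GroupReduce} (with $\tilde\Gamma=H\backslash G\times_H M$) is a \textbf{weak equivalence} of Lie groupoids, that is, that it is \emph{essentially surjective} and \emph{fully faithful} in the usual sense; the proposition then follows, since a weak equivalence $\mathcal G\to\mathcal H$ makes $\mathcal G$ and $\mathcal H$ Morita equivalent. Throughout I would use that, because $H$ acts freely and properly on $M$, the map $\pi_0\colon M\to M/H$ is a principal $H$-bundle---in particular a surjective submersion---whose translation map is the function $\eta\colon M\times_{M/H}M\to H$ of Prop.~\ref{prop:GroupReduce}, and that the action map $G\times M\to M$ is a submersion. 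Recall also that, with the conventions of Prop.~\ref{prop:GroupReduce}, $r$ is the source and $l$ the target of $\tilde\Gamma$.

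For \emph{essential surjectivity} I would describe the fibred product $E:=\{([g,m'],m)\in\tilde\Gamma_1\times M\mid r[g,m']=\pi_0(m)\}$ together with the map $E\to M/H$, $([g,m'],m)\mapsto l[g,m']$. The constraint means that $m$ and $m'$ lie in a common $H$-orbit, so $m'=\eta(m',m)\cdot m$ and hence $[g,m']=[\,g\,\eta(m',m),\,m\,]$; using the relations (H1) and (H2) for $\eta$, one checks that $([g,m'],m)\mapsto\bigl(H\,g\,\eta(m',m),\,m\bigr)$ is a diffeomorphism $E\cong H\backslash G\times M$ carrying the above map to $(Hg,m)\mapsto[g\cdot m]$. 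The composite $G\times M\to M\to M/H$ of the action map with $\pi_0$ is a submersion, and it is invariant under left translation by $H$ on the $G$-factor because $[h\cdot g\cdot m]=[g\cdot m]$; therefore it descends to a surjective submersion $H\backslash G\times M\to M/H$, which is precisely the map in question. Thus $(\pi_1,\pi_0)$ is essentially surjective.

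For \emph{full faithfulness} I would prove that
\begin{equation*}
  \Phi\colon G\times M\longrightarrow \tilde\Gamma_1\times_{(M/H)\times(M/H)}(M\times M),\qquad (g,m)\longmapsto\bigl([g,m],\,(m,\,g\cdot m)\bigr),
\end{equation*}
is a diffeomorphism, the fibred product being formed along $(r,l)\colon\tilde\Gamma_1\to(M/H)\times(M/H)$ and $\pi_0\times\pi_0$. Since $\pi_0\times\pi_0$ is a surjective submersion, the target is a smooth manifold; moreover, being the pullback of the principal $(H\times H)$-bundle $M\times M\to(M/H)\times(M/H)$, its projection onto $\tilde\Gamma_1$ is a principal $(H\times H)$-bundle, and so is the projection $G\times M\to\tilde\Gamma_1=(G\times M)/(H\times H)$. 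I would then verify, using (H1)--(H3), that $\Phi$ covers the identity of $\tilde\Gamma_1$, is equivariant for these two $(H\times H)$-actions after interchanging the two $H$-factors on the target, and is a bijection---both injectivity and surjectivity reduce to freeness of the $H$-action and the defining property of $\eta$. A bijective morphism of principal bundles over a common base is an isomorphism, so $\Phi$ is a diffeomorphism and $(\pi_1,\pi_0)$ is fully faithful. Having checked both conditions, the morphism is a weak equivalence, hence a Morita equivalence.

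The step I expect to be the main obstacle is the surjectivity of $\Phi$: given $([g',m'],(m_1,m_2))$ in the fibred product one must find $(g,m)$ with $m=m_1$, $g\cdot m_1=m_2$, and $[g,m_1]=[g',m']$ simultaneously; writing $m'=\eta(m',m_1)\cdot m_1$ and $g'\cdot m'=\eta(g'm',m_2)\cdot m_2$ leads to $g=\eta(g'm',m_2)^{-1}\,g'\,\eta(m',m_1)$, after which the relations (H1)--(H3) finish the verification. This is essentially bookkeeping---keeping the various $H\times H$-actions and the left/right coset conventions straight---rather than a genuine difficulty, but it is where the details concentrate.
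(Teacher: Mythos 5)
Your proof is correct, but it takes a genuinely different route from the paper's. You verify that $(\pi_1,\pi_0)$ is a weak equivalence in the ``essentially surjective and fully faithful'' sense: the map $\tilde\Gamma_1\times_{r,\tilde M,\pi_0}M\to\tilde M$ induced by $l$ is a surjective submersion, and the square relating $G\times M$ to $\tilde\Gamma_1$ over $(r,l)$ and $\pi_0\times\pi_0$ is a pullback. The paper instead works with the bibundle characterization: it forms the associated bibundle $B=M\times_{\tilde M}^{\pi_0,l}\tilde\Gamma$ of the morphism (whose right $\tilde\Gamma$-action is automatically principal) and checks by an explicit element chase, using the map $\eta$ and freeness of the $H$-action, that the left $(G\ltimes M)$-action on $B$ is free and transitive on the $r_B$-fibres. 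The two characterizations are standard and equivalent (the paper cites Moerdijk--Mr\v{c}un for exactly this), so either suffices. What your approach buys is that smoothness of the inverse comes for free: once you observe that both $G\times M\to\tilde\Gamma_1$ and the fibred product $\tilde\Gamma_1\times_{(M/H)^2}(M\times M)\to\tilde\Gamma_1$ are principal $(H\times H)$-bundles and that $\Phi$ is equivariant over the identity (after swapping the two $H$-factors), $\Phi$ is automatically a diffeomorphism --- in fact your separate verification of bijectivity is then redundant. The paper's bibundle computation is more elementary and self-contained but leaves the smooth-structure bookkeeping implicit. Your identification $g=\eta(g'\cdot m',m_2)^{-1}\,g'\,\eta(m',m_1)$ in the surjectivity step is the same algebraic manipulation with $\eta$ and the relations (H1)--(H3) that drives the paper's transitivity computation, so the two arguments ultimately rest on the same mechanism.
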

\begin{proof}
The bibundle associated to the morphism of Proposition \ref{prop:GroupReduce} is 
\begin{equation*}
 B = \bigl\{(m, [g', m']) \in M \times \tilde{\Gamma} 
 ~|~ m = hg' \cdot m' \text{ for some } h \in H\bigr\}
\end{equation*}
with left bundle map $l_B(m, [g', m']) = m$, right bundle map $r_B(m, [g', m']) = [m']$, and left action of $\Gamma = G \times M$ given by
\begin{equation*}
 (g,m) \cdot (m, [g', m']) = \bigl(g \cdot m, [g,m][g',m'] \bigr) \,.
\end{equation*}

Let $(m_1, [g_1', m_1'])$ and $(m_2, [g_2', m_2'])$ be two elements in the $r_B$-fibre over $\tilde{m} \in \tilde{M} = M/H$. This means that there are $h_1, h_2, h \in H$ satisfying $m_1 =  h_1 g_1' \cdot m_1'$, $m_2 =  h_2 g_2' \cdot m_2'$, and $h \cdot m'_1 = m'_2$. Now we check that
\begin{equation*}
\begin{split}
  &\bigl(h_2 g_2' h g_1'{}^{-1}h_1^{-1}, m_1) 
    \cdot (m_1, [g_1', m_1'] \bigr) \\
  ={} 
  &\bigl( h_2 g_2' h g_1'{}^{-1} h_1^{-1} \cdot m_1,
  [h_2 g_2' h g_1'{}^{-1}h_1^{-1}, m_1] [g_1', m_1'] \bigr) \\
  ={}
  &\bigl( h_2 g_2' h \cdot m'_1,
  [h_2 g_2' h g_1'{}^{-1}h_1^{-1}, h_1 g_1' \cdot m_1'] 
  [g_1', m_1'] \bigr) \\
  ={}
  &\bigl( h_2 g_2' \cdot m'_2,
  [h_2 g_2' h g_1'{}^{-1}, g_1' \cdot m_1']
  [g_1', m_1'] \bigr) \\ 
  ={}
  &\bigl( m_2, [h_2 g_2' h, m_1'] \bigr) \\ 
  ={}
  &\bigl( m_2, [h_2 g_2', h \cdot m_1'] \bigr) \\ 
  ={}
  &\bigl( m_2, [g_2', m_2'] \bigr) \,,
\end{split}
\end{equation*}
which shows that the left $\tilde{\Gamma}$-action is transitive on the $r_B$-fibres.

Assume that $(g,m) \cdot (m, [g',m']) = (m, [g',m'])$, where $m = hg' \cdot m'$ for some $h \in H$. This is equivalent to the two conditions
\begin{equation*}
  g \cdot m = m 
\quad\text{and}\quad
  [g,m][g',m'] = [g',m'] \,.
\end{equation*}
The second condition implies that $[g,m] = 1_{l([g',m'])} = [e, g' \cdot m']$, so that $g$ must lie in the subgroup $H$. Since the action of $H$ is free, the first condition $g \cdot m = m$ now implies that $g \in e$. We conclude that $(g,m) = (e, m) = 1_m$, that is, the left $\Gamma$-action is free.
\end{proof}

\subsection{The quotient groupoid in a local trivialization}

It is instructive, and will be helpful for the computation of the Lie algebroid bracket, to describe the quotient groupoid in terms of a local trivialization of the $H$-principal bundle $M \to \tilde{M} := M/H$. Every such trivialization over an open subset $\tilde{U} \subseteq \tilde{M}$ is obtained by choosing a  section $\sigma: \tilde{U} \to M$, $\pi_0 \sigma = \id_{\tilde{U}}$. The trivialization induced by $\sigma$ is given by $H \times \tilde{U} \to M|_{\tilde{U}}$, $(h,\tilde{m}) \mapsto  h \cdot \sigma(\tilde{m})$. The inverse is $M|_{\tilde{U}} \to H \times \tilde{U}$, $m \to \bigl( \eta(m, \sigma\pi_0 (m)), \sigma\pi_0 (m) \bigr)$.

A local trivialization of $M \to \tilde{M}$ induces a local trivialization of any associated bundle. For the bundle $r: \tilde{\Gamma} \to \tilde{M}$ the induced trivialization is given by
\begin{equation}
\label{eq:GrpdTriv}
\begin{aligned}
  H\backslash G \times \tilde{U} 
  &\stackrel{\Phi}{\longrightarrow} 
  \tilde{\Gamma}\bigr|_{\tilde{U}}
  \\
  (Hg,\tilde{m}) 
  &\longmapsto [g,\sigma(\tilde{m})] \,,
\end{aligned}
\qquad
\begin{aligned}
  \tilde{\Gamma}\bigr|_{\tilde{U}}
  &\stackrel{\Phi^{-1}}{\longrightarrow} 
  H\backslash G \times \tilde{U}
  \\
  [g,m] &\longmapsto
  \bigl( H \,
  \Ad^{-1}_{\eta(m, \sigma\pi_0(m))}g, \pi_0(m) \bigr) \,,
\end{aligned}
\end{equation}

Let us assume for simplicity that $\sigma$ is a global section, $\tilde{U} = \tilde{M}$. Then the left and right anchor maps, as well as the identity bisection of $\tilde{\Gamma}$ are given in the trivialization by
\begin{equation*}
  l(Hg, \tilde{m}) = \pi_0\bigl( g \cdot \sigma(\tilde{m}) \bigr)
  \,,\quad
  r(Hg, \tilde{m}) = \tilde{m} 
  \,,\quad
  1_{\tilde{m}} = (He, \tilde{m})
  \,.
\end{equation*}
The groupoid multiplication of $(Hg_1, \tilde{m}_1)$ and $(Hg_2, \tilde{m}_2)$ satisfying $\tilde{m_1} = \pi_0\bigl(g_2 \cdot \sigma(\tilde{m}) \bigr)$ is given by
\begin{equation*}
\begin{split}
  (Hg_1, \tilde{m}_1)(Hg_2, \tilde{m}_2)
  &= \Phi^{-1}\bigl( \Phi(Hg_1, \tilde{m}_1)\,
  \Phi(Hg_2, \tilde{m}_2) \bigr)
  \\
  &= \Phi^{-1}\bigl( [g_1,  \sigma(\tilde{m}_1)] \,
  [g_2, \sigma(\tilde{m}_2)] \bigr)
  \\
  &= \Phi^{-1}\bigl(
  [g_1 \eta(\sigma(\tilde{m}_1), 
  g_2 \cdot \sigma(\tilde{m}_2) ) g_2, \sigma(\tilde{m}_2) 
  ] \bigr)
  \\
  &= \bigl( H(\Ad_{\zeta(g_2, \tilde{m}_2)}
  g_1)g_2, \tilde{m}_2 \bigr) \,,
\end{split}
\end{equation*}
where $\zeta: G \times \tilde{M} \to H $ is defined by
\begin{equation}
\label{zetadefinition}
  \zeta(g,\tilde{m})
  := \eta\bigl( g\cdot \sigma(\tilde{m}),
  \sigma\pi_0( g \cdot   \sigma(\tilde{m}))
  \bigr) \,.
\end{equation}

In geometric terms, $\zeta(g,\tilde{m})$ is the vertical component of the action of $g$ on $\sigma(\tilde{m})$ with respect to the trivialization induced by $\sigma$. If we denote the horizontal component of the action, given by the left anchor map, by
\begin{equation}
\label{eq:HorActDef}
  g \triangleright \tilde{m} 
  := \pi_0\bigl( g \cdot \sigma(\tilde{m}) \bigr) \,,
\end{equation}
we can decompose the action on $\sigma(\tilde{m})$ as
\begin{equation*}
  g \cdot \sigma(\tilde{m})
  = \zeta(g, \tilde{m}) \cdot \sigma( g \triangleright \tilde{m} )
\end{equation*}
into its horizontal and vertical part. The groupoid inverse can now be expressed as
\begin{equation}
\label{eq:ActHorVertDecomp}
  (Hg, \tilde{m})^{-1}
  = \bigl( H\, Ad^{-1}_{\zeta(g,\tilde{m})} g^{-1},
  g \triangleright \tilde{m}\bigr) \,.
\end{equation}

\subsection{Lie algebroid of the quotient groupoid}

By definition, the Lie algebroid of a Lie groupoid is the vector bundle that has left invariant vector fields on the groupoid as sections. Let us review this construction for the action groupoid $\Gamma = G \ltimes M$.

The $l$-fibre over $m \in M$ is given by $l^{-1}(m) = \{(k,k^{-1} \cdot m)~|~ k \in G\}$. The action $L_{(g,m)}: l^{-1}(m) \to l^{-1}(g \cdot m)$ by left multiplication is
\begin{equation*}
  L_{(g,m)} (k, k^{-1}\cdot m) := 
  (g,m)(k, k^{-1}\cdot m) = (gk, k^{-1}\cdot m) \,.
\end{equation*}
Let $TL_{(g,m)}: T l^{-1}(m) \to Tl^{-1}(g \cdot m)$ denote its derivative. At the identity it is given by
\begin{equation*}
  TL_{(g,m)} \bigl( X, \rho(X,m) \bigr)
  = \bigl( TL_g X, \rho(X,m) \bigr) \,,
\end{equation*}
where $X = \frac{d}{dt} k_t |_{t=0}$ is the element of the Lie algebra $\frakg = T_e G$ of $G$ that is represented by the smooth path $k_t \in G$ through $k_0 = e$ and where 
\begin{equation}
\label{eq:rhominus}
  \rho(X, m) := \frac{d}{dt} 
  ( k^{-1}_t \cdot m ) \Bigr|_{t=0}
  = - \frac{d}{dt} ( k_t \cdot m ) \Bigr|_{t=0}
\end{equation}
is the anchor. The minus sign implies that the induced map on sections $\rho: \frakg \to \calX(G)$ is a homomorphism of Lie algebras (cf.~Section \ref{sec:conventions} for our sign conventions). Every $X \in \frakg$ induces a left invariant vector field on $\Gamma$ given by
\begin{equation*}
  v_X(g,m) = \bigl( TL_g X, \rho(X,m) \bigr) \,,
\end{equation*}
the commutator bracket of which is given by $[v_X, v_Y]_{\calX(G)} = v_{[X,Y]_\frakg}$. It follows that $A \cong \frakg \times M$ and that the Lie algebroid bracket of constant sections $X, Y \in \frakg \subseteq \Gamma(M,A)$ is given by $[X,Y]_A = [X,Y]_\frakg$.

We now turn to the quotient groupoid $\tilde{\Gamma}$. The left fibre $l^{-1}(\tilde{m})$ over $\tilde{m} \in \tilde{M}$ is $ \{[g, g^{-1} \cdot m]~|~g \in G\}$ for $m \in \pi_0^{-1}(\tilde{m})$. In order to determine the tangent space of $l^{-1}(\tilde{m})$ at $[e,m]$, we observe that every every tangent vector of $H\backslash G$ is represented by a smooth path in $G$, so that we have the natural isomorphism
\begin{equation*}
  T_{Hg} (H \backslash G) \cong T_g G / TR_g \frakh 
\end{equation*}
(Here and below, $\frakg$ and $\frakh$ are as usual the Lie algebras of $G$ and $H$, respectively.) At $He \in H\backslash G$ we thus obtain the isomorphism of vector spaces
\begin{equation*}
  T_{He} (H \backslash G) \cong \frakg / \frakh \,,
\end{equation*}
In light of remark~\ref{rmk:RightIsAd}, we thus obtain that the Lie algebroid of the reduced Lie groupoid $\tilde{\Gamma} = H\backslash G \times_H M$ has as vector bundle the associated bundle
\begin{equation*}
  \tilde{A} := \frakg/\frakh \times_H M
  \longrightarrow M/H \,,
\end{equation*}
where $\frakg/\frakh$ carries the right adjoint action $(X + \frakh) \cdot h := \Ad_{h^{-1}} X + \frakh$.

\begin{Corollary}
\label{cor:ReducedAlgd}
Assume the situation of Proposition \ref{prop:GroupReduce}. Let $\frakg$ and $\frakh$ be the Lie algebras of $G$ and $H$, respectively. Let $\frakg \ltimes M$ denote the action Lie algebroid. There is a unique Lie algebroid structure on the vector bundle $\frakg/\frakh \times_H M \to M/H$ such that the diagram
\begin{equation*}
\xymatrix@C+4ex{
\frakg \ltimes M \ar@{->>}[r]^-{T\pi_1} \ar[d] & 
\frakg/\frakh \times_H M \ar[d] \\
M \ar@{->>}[r]^{\pi_0} &M/H
}
\end{equation*}
where $T\pi_1$ is shorthand for the restriction of $T\pi_1$ to $\frakg\times M \subseteq T(G \times M)$, is a homomorphism of Lie algebroids.
\end{Corollary}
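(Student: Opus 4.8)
The plan is to deduce the corollary from the groupoid-level result (Prop.~\ref{prop:GroupReduce}) together with the Morita equivalence (Prop.~\ref{prop:MoritaReduce}), or, more directly, to build the Lie algebroid structure on $\tilde A := \frakg/\frakh \times_H M$ by differentiating the groupoid $\tilde\Gamma = H\backslash G\times_H M$ at its identity section and then verifying that $T\pi_1$ is a Lie algebroid morphism. Since $\tilde\Gamma$ is a genuine Lie groupoid by Prop.~\ref{prop:GroupReduce}, it has a Lie algebroid $A(\tilde\Gamma)$ in the standard way (left-invariant vector fields tangent to the $l$-fibres), and the computation just above the statement identifies the underlying vector bundle of $A(\tilde\Gamma)$ with $\frakg/\frakh\times_H M$. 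So the existence half of the statement is essentially the assertion that $A(\tilde\Gamma)$ \emph{is} this bundle with \emph{some} bracket and anchor; what remains is to check that the quotient map $T\pi_1\colon \frakg\ltimes M \to \frakg/\frakh\times_H M$, obtained by differentiating the groupoid homomorphism $\pi_1$, is a Lie algebroid homomorphism, and that this property pins down the structure uniquely.

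First I would recall the general fact that differentiation is functorial: a morphism of Lie groupoids $\pi\colon \Gamma \to \tilde\Gamma$ over $\pi_0\colon M\to \tilde M$ induces a morphism of Lie algebroids $A(\pi)\colon A(\Gamma)\to A(\tilde\Gamma)$ over $\pi_0$, compatible with anchors and brackets. Applying this to the homomorphism of Prop.~\ref{prop:GroupReduce} gives a Lie algebroid morphism $\frakg\ltimes M \to A(\tilde\Gamma)$ over $\pi_0$, and by the identification of the base-point computation this morphism is exactly the map labelled $T\pi_1$ in the diagram, i.e. the fibrewise-surjective bundle map $\frakg\times M \to \frakg/\frakh\times_H M$ sending $(X,m)\mapsto [X+\frakh, m]$. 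This establishes existence of a Lie algebroid structure on $\frakg/\frakh\times_H M$ making the diagram commute as Lie algebroids.

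Next I would establish uniqueness. The key point is that $T\pi_1$ is a \emph{fibrewise surjective submersion} onto $\tilde A$ covering the surjective submersion $\pi_0$. Concretely, given two sections $\tilde a,\tilde b$ of $\tilde A$ and a point $\tilde m$, one can lift them locally (using a local section $\sigma\colon \tilde U\to M$ of $\pi_0$ as in the trivialization subsection) to sections $a,b$ of $\frakg\ltimes M$ with $T\pi_1\circ a = \tilde a\circ\pi_0$ and likewise for $b$; the compatibility $T\pi_1\circ[a,b] = [\tilde a,\tilde b]\circ\pi_0$ forces the value of the reduced bracket, and similarly $\tilde\rho\circ\tilde a\circ\pi_0 = T\pi_0\circ\rho\circ a$ forces the reduced anchor, since $T\pi_0$ is a submersion and hence the reduced anchor is determined pointwise. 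One must check the values so obtained are independent of the chosen lifts, which follows because any two lifts of $\tilde a$ differ by a section with values in $\ker T\pi_1 = \frakh\times M$ (fibrewise), and $\Gamma(M,\frakh\times M)$ is an ideal in $\Gamma(M,\frakg\times M)$ whose image under $T\pi_1$ is zero — exactly the kind of bookkeeping also used in the reduction-of-hamiltonian-structures arguments. This is mostly routine but needs to be spelled out carefully.

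The main obstacle, and the place where I would spend the most care, is precisely the independence of the lifts and the well-definedness of the induced bracket on $\tilde A$ as a $C^\infty(M/H)$-module: one must confirm that sections of $\tilde A$ over $M/H$ correspond exactly to $H$-invariant (equivariant) sections of $\frakg/\frakh$ along $M$, that such equivariant sections always lift to sections of $\frakg\times M$ (not necessarily equivariant) locally, and that the bracket of two lifts, pushed down, is again equivariant and independent of choices. Here the freeness of the $H$-action (used already to define $\eta$ in Prop.~\ref{prop:GroupReduce}) and the fact that $\frakh$-valued sections form a Lie ideal do the work: if $a' = a + c$ with $c$ a fibrewise-$\frakh$-valued section, then $[a',b'] - [a,b] = [c,b] + [a,c'] + [c,c']$, and although $[c,b]$ need not be $\frakh$-valued, its image under $T\pi_1$ must be shown to vanish — this is where one invokes that $\pi_1$ is a groupoid homomorphism, i.e. that the reduced bracket obtained by the $A(\pi)$ construction is already well-defined, so the lift-independence is automatic and one only needs consistency, not an independent verification. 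In short, I would lean on functoriality of $\Gamma\rightsquigarrow A(\Gamma)$ for existence and on the submersion property of $T\pi_1$ for uniqueness, keeping the explicit $\eta$/$\zeta$ formulas from the trivialization subsection in reserve in case an explicit bracket formula is wanted.
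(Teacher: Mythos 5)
Your proposal is correct and follows essentially the same route as the paper, whose entire proof is the one-line appeal to functoriality of the Lie functor from Lie groupoids to Lie algebroids (with the identification of the underlying vector bundle done in the text just before the statement). The only difference is that you spell out the uniqueness half via the fibrewise surjectivity of $T\pi_1$ over the submersion $\pi_0$, which the paper leaves implicit; that elaboration is sound and harmless.
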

\begin{proof}
Follows from the functoriality of the map from Lie groupoids to Lie algebroids.
\end{proof}

The anchor of $\tilde{A}$ is given by
\begin{equation*}
  \tilde{\rho}\bigl( [X + \frakh,m] \bigr)
  := T_m\pi_0\, \rho(X,m) \,,
\end{equation*}
where $\rho: \frakg \times M \to TM$ is the Lie algebra action. Since the $H$-action is free the fundamental vector fields of $\frakh$ span the tangent spaces to the $H$-orbits which are the fibres of $\pi_0$, that is, $\rho(\frakg, m) = \ker T_m\pi_0$. This shows that $\tilde{\rho}$ is well defined in $\frakg/\frakh$. Moreover, for every smooth path $k_t \in G$ with $X = \frac{d}{dt} k_t |_{t=0}$, we have $k_t \cdot (h \cdot m) = h \cdot ( h^{-1}k_t k \cdot m )$. By differentiating this relation at $t=0$ and applying $T\pi_0$, we obtain
\begin{equation*}
\begin{split}
  \tilde{\rho}\bigl( [X, h \cdot m ]\bigr)
  &= T\pi_0\, \rho( X, h \cdot m)
  = T\pi_0\, \bigl(h \cdot \rho( \Ad_{h^{-1}}X, m ) \bigr)
  = T\pi_0\, \rho( \Ad_{h^{-1}}X, m ) \\
  &= \tilde{\rho}\bigl([\Ad_{h^{-1}}X, m] \bigr) \,,
\end{split}
\end{equation*}
which shows that $\tilde{\rho}$ is well-defined also with respect to the quotient of the associated bundle.

The Lie  algebroid bracket is more difficult to describe explicitly. Since the vector bundle $\tilde{A} \to \tilde{M}$ is generally not trivial we first have to choose a local trivialization. Every trivialization~\eqref{eq:GrpdTriv} of the Lie groupoid induces a trivialization of the Lie algebroid given by
\begin{equation}
\label{eq:tildeAtriv}
\begin{aligned}
  \frakg/\frakh \times \tilde{U} 
  &\longrightarrow 
  \tilde{A}\bigr|_{\tilde{U}}
  \\
  (X + \frakh,\tilde{m}) 
  &\longmapsto [X + \frakh,\sigma(\tilde{m})] \,,
\end{aligned}
\quad
\begin{aligned}
  \tilde{A}\bigr|_{\tilde{U}}
  &\longrightarrow 
  \frakg/\frakh \times \tilde{U}
  \\
  [X + \frakh,m] &\longmapsto
  \bigl( \Ad^{-1}_{\eta(m, \sigma\pi_0(m))}X + \frakh, 
  \pi_0(m) \bigr) \,.
\end{aligned}
\end{equation}

\begin{Proposition}
\label{prop:redLAtriv}
Assume the situation of Proposition \ref{prop:GroupReduce}. Let $\sigma: \tilde{U} \to M$ be a local section of the $H$-principal bundle $M \to \tilde{M}$ and $\theta: TM \to \frakh$ the connection 1-form of the induced trivialization~\eqref{eq:tildeAtriv}. In the local trivialization the anchor of $\tilde{A}$ is given by
\begin{equation}
\label{eq:RedAnchor}
  \tilde{\rho}(X + \frakh, \tilde{m})
  = T\pi_0 \, \rho\bigl(X,\sigma(\tilde{m}) \bigr) \,,
\end{equation}
and the Lie algebroid bracket by
\begin{equation}
\label{eq:RedBracket}
 [X + \frakh, Y + \frakh](\tilde{m})
 = 
 \bigl[ X + \theta\bigl(\rho(X,\sigma(\tilde{m}))\bigr),
        Y + \theta\bigl(\rho(Y,\sigma(\tilde{m}))\bigr)
 \bigl]_{\frakg}{} + \frakh \,,
\end{equation}
where $X, Y \in \frakg$ represent constant sections of $\tilde{A}$.
\end{Proposition}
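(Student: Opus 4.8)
The anchor formula~\eqref{eq:RedAnchor} is immediate: in the trivialization~\eqref{eq:tildeAtriv} the constant section $X+\frakh$ corresponds to $\tilde{m}\mapsto[X+\frakh,\sigma(\tilde{m})]$, and applying the anchor $\tilde{\rho}$ computed just before the proposition gives $\tilde{\rho}([X+\frakh,\sigma(\tilde{m})])=T_{\sigma(\tilde{m})}\pi_0\,\rho(X,\sigma(\tilde{m}))$. So the substance of the proof is the bracket formula~\eqref{eq:RedBracket}.

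The plan is to exploit Cor.~\ref{cor:ReducedAlgd}, which states that $T\pi_1\colon \frakg\ltimes M\to\tilde{A}$ is a homomorphism of Lie algebroids; consequently the bracket in $\tilde{A}$ can be computed by lifting the sections to sections of the action Lie algebroid that map onto them under $T\pi_1$, bracketing there, and pushing the result forward. First I would identify the lift: the section of $\tilde{A}$ over $\tilde{U}$ that is the constant $X+\frakh$ in the trivialization equals $T\pi_1\circ\hat{X}\circ$ (restricted along $\pi_0$), where $\hat{X}$ is the section of $\frakg\ltimes M$ over $\pi_0^{-1}(\tilde{U})$ given by $\hat{X}(m):=\Ad_{\eta(m,\sigma\pi_0(m))}X$. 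Checking this uses only the formula for the $H$-action $[X+\frakh,m]=[\Ad_h X+\frakh,h\cdot m]$ on $\tilde{A}$, the defining relation $m=\eta(m,\sigma\pi_0(m))\cdot\sigma(\pi_0(m))$, and $\eta(m',m')=1$.

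Next I would compute $[\hat{X},\hat{Y}]$ in the action Lie algebroid using the standard formula $[\alpha,\beta]=[\alpha,\beta]_{\frakg}+\rho(\alpha)\!\cdot\!\beta-\rho(\beta)\!\cdot\!\alpha$ for $\frakg$-valued sections, and evaluate at a point $m=\sigma(\tilde{m})$ of the section. There $\eta(\sigma(\tilde{m}),\sigma(\tilde{m}))=1$, so $\hat{X}(\sigma(\tilde{m}))=X$ and $\hat{Y}(\sigma(\tilde{m}))=Y$; the derivative terms become $\Ad$-derivatives at the identity, namely $\rho(\hat{Y})\!\cdot\!\hat{X}\big|_{\sigma(\tilde{m})}=[\xi_Y,X]_{\frakg}$, where $\xi_Y\in\frakh$ is the derivative of $m\mapsto\eta(m,\sigma\pi_0(m))$ along $\rho(Y,\sigma(\tilde{m}))$. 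The key identification is that, along the section, this derivative equals $\theta\big(\rho(Y,\sigma(\tilde{m}))\big)$, because $\eta(\,\cdot\,,\sigma\pi_0(\,\cdot\,))$ is the vertical ($H$-)coordinate of the trivialization~\eqref{eq:tildeAtriv} and takes the value $1\in H$ on $\sigma(\tilde{U})$. This yields
\begin{equation*}
 [\hat{X},\hat{Y}](\sigma(\tilde{m})) = [X,Y]_{\frakg} + [\theta(\rho(X,\sigma(\tilde{m}))),Y]_{\frakg} - [\theta(\rho(Y,\sigma(\tilde{m}))),X]_{\frakg}.
\end{equation*}
Pushing this forward by $T\pi_1$ and reading off the result in the trivialization at $\sigma(\tilde{m})$ (again using $\eta(\sigma(\tilde{m}),\sigma(\tilde{m}))=1$) gives $[X+\frakh,Y+\frakh](\tilde{m})=[\hat{X},\hat{Y}](\sigma(\tilde{m}))+\frakh$. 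Finally, since $\theta$ is $\frakh$-valued and $\frakh$ is a subalgebra, $[\theta(\rho(X,\sigma(\tilde{m}))),\theta(\rho(Y,\sigma(\tilde{m})))]_{\frakg}\in\frakh$; adding it to the displayed expression recasts the right-hand side as $[\,X+\theta(\rho(X,\sigma(\tilde{m}))),\,Y+\theta(\rho(Y,\sigma(\tilde{m})))\,]_{\frakg}$ without changing its class modulo $\frakh$, which is~\eqref{eq:RedBracket}.

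The step I expect to be the main obstacle is the careful bookkeeping around the connection $1$-form $\theta$: identifying the directional derivative of $m\mapsto\eta(m,\sigma\pi_0(m))$ along vectors in the image of the anchor with $\theta$, and verifying that $X\mapsto X+\theta(\rho(X,\sigma(\tilde{m})))$ annihilates $\frakh$. The latter is what makes~\eqref{eq:RedBracket} well-defined on $\frakg/\frakh$; conceptually, $\theta$ determines at each $\tilde{m}$ a linear splitting of $\frakg\to\frakg/\frakh$, and the reduced bracket is simply the $\frakg$-bracket of the lifted elements, projected back to $\frakg/\frakh$. The remaining steps are routine applications of the action-Lie-algebroid bracket formula together with the functoriality statement of Cor.~\ref{cor:ReducedAlgd}.
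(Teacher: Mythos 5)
Your proof is correct, but it takes a genuinely different route from the paper's. The paper works at the groupoid level: it computes the left-translation $TL_{(Hg,\tilde m)}$ on tangent vectors to the $l$-fibres of the reduced groupoid $\tilde\Gamma$ (via the auxiliary maps $\zeta$ and $\bar k_t$), identifies the left-invariant vector field $v_X$ attached to a constant section, and obtains the bracket as the commutator $[v_X,v_Y]$; the connection form $\theta$ enters through the derivative of $\zeta(k_t^{-1},\tilde m)$. You instead stay entirely at the algebroid level: you produce the explicit $T\pi_1$-related lift $\hat X(m)=\Ad_{\eta(m,\sigma\pi_0(m))}X$ of the constant section, invoke the morphism property of Cor.~\ref{cor:ReducedAlgd} to transport the bracket, and evaluate the action-algebroid bracket formula at $m=\sigma(\tilde m)$, where $\eta=1$ turns the derivative terms into $\ad_{\theta(\rho(\cdot,\sigma(\tilde m)))}$. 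Both arguments hinge on the same identification of the directional derivative of $\eta(\,\cdot\,,\sigma\pi_0(\,\cdot\,))$ with $\theta$, and on $\theta(\rho(Z,\sigma(\tilde m)))=-Z$ for $Z\in\frakh$ (the paper's Rmk.~\ref{rmk:meaninghor}) to get well-definedness on $\frakg/\frakh$. What your approach buys is brevity and a cleaner conceptual structure, since Cor.~\ref{cor:ReducedAlgd} already guarantees the reduced bracket is determined by projectable lifts; what the paper's approach buys is independence from that corollary (it re-derives the bracket from the groupoid multiplication directly, which also serves as a concrete consistency check of the groupoid construction) and, as a by-product, the explicit left-translation formula~\eqref{eq:LeftTrans}. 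One small point worth making explicit in your write-up: the fact that $T\pi_1$-related (projectable) sections have $T\pi_1$-related brackets for a Lie algebroid morphism over a surjective submersion should be stated, since for general base maps bracket compatibility is the subtle part of the definition of morphism; here it follows from injectivity of $\pi^*$ on forms together with the chain-map property.
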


\begin{proof}
By definition, the Lie algebroid bracket of $\tilde{A}$ is the Lie bracket of left invariant vector fields on $\tilde{\Gamma}$. In a first step we compute the left action of $\tilde{\Gamma}$ on vectors tangent to the $l$-fibres. We do this by representing a tangent vector by a smooth path $\tilde{\gamma}_t \in \tilde{\Gamma}$ and then taking the derivative with respect to $t$.

In the trivialization~\eqref{eq:tildeAtriv} of $\tilde{\Gamma}$ every smooth path in the left fibre over $\tilde{m}$ is of the form
\begin{equation*}
\begin{split}
  \tilde{\gamma}_t 
  &= (Hk_t^{-1}, \tilde{m})^{-1} 
  = \bigl( H \, Ad^{-1}_{\zeta(k_t^{-1},\tilde{m})} k_t,
    \pi_0(k^{-1}_t \cdot \sigma (\tilde{m}))\bigr)
\\
  &= \bigl( H\bar{k}_t,
     k^{-1}_t \triangleright \tilde{m} \bigr)\,,
\end{split}
\end{equation*}
for some smooth path $k_t \in G$, where we have introduced the abbreviation
\begin{equation*}
  \bar{k}_t 
  := Ad^{-1}_{\zeta(k_t^{-1},\tilde{m})} k_t 
  \,.
\end{equation*}
(Recall that $k^{-1}_t \triangleright \tilde{m} $ denotes the result of 
$k^{-1}_t$ acting on $\tilde{m}.$)

We will need the following relation satisfied by $\zeta$, as defined in
\eqref{zetadefinition}.

\begin{equation*}
\begin{split}
  \zeta(\bar{k}_t, k_t^{-1} \triangleright \tilde{m})
  &= \zeta\bigl(
     Ad^{-1}_{\zeta(k_t^{-1},\tilde{m})} k_t, 
     k_t^{-1} \triangleright \tilde{m} \bigr) 
     \\
  &= \zeta\bigl( 
    \zeta(k_t^{-1},\tilde{m})^{-1} k_t
    \zeta(k_t^{-1},\tilde{m}),
    k_t^{-1} \triangleright \tilde{m} \bigr)
    \\
  &= \zeta(k_t^{-1},\tilde{m})^{-1} 
     \zeta\bigl( 
       k_t \zeta(k_t^{-1},\tilde{m}),
       k_t^{-1} \triangleright \tilde{m} \bigr)
     \\
  &= \zeta(k_t^{-1},\tilde{m})^{-1} \,,
\end{split}
\end{equation*}
where in the last step we have used that $k_t \zeta(k_t^{-1},\tilde{m}) \cdot \sigma(k_t^{-1} \triangleright \tilde{m}) = \tilde{m}$ as a consequence of Eq.~\eqref{eq:ActHorVertDecomp}. Using the last equation, we obtain for the left multiplication of the smooth path by an element of the groupoid,
\begin{equation*}
\begin{split}
   L_{(Hg, \tilde{m})} \tilde{\gamma}_t
   &= (Hg, \tilde{m}) \tilde{\gamma}_t
   = \bigl(H (Ad_{\zeta(\bar{k}_t, 
     k_t^{-1} \triangleright \tilde{m})} g) \bar{k}_t,
     k^{-1}_t \triangleright \tilde{m} \bigr) 
   \\
   &= \bigl(H (Ad^{-1}_{\zeta(k_t^{-1}, \tilde{m})} g) \bar{k}_t,
     k^{-1}_t \triangleright \tilde{m} \bigr)
   \,.
\end{split}
\end{equation*}

The next step is to take the derivative of the last equation with respect to $t$. Assume that $k_0 = e$ and denote $X := \frac{d}{dt} k_t |_{t=0} \in \frakg$. We begin with
\begin{equation*}
\begin{split}
  \frac{d}{dt} (k^{-1}_t \triangleright \tilde{m} )\bigr|_{t=0}
  &= \frac{d}{dt} \pi_0 (k^{-1}_t \cdot \tilde{m} )\bigr|_{t=0}
  = T\pi_0\, \rho\bigl(X,\sigma(\tilde{m}) \bigr) 
  \\
  &=: \tilde{\rho}(X, \tilde{m}) \,,
\end{split}
\end{equation*}
which is the anchor of $\tilde{A}$ in the local trivialization. Next, we compute
\begin{equation*}
\begin{split}
  \frac{d}{dt}\bar{k}_t\bigr|_{t=0} 
  &= \Bigl( 
  \frac{d}{dt} Ad^{-1}_{\zeta(k_t^{-1},\tilde{m})} 
  \Bigr)_{t=0} k_0 +
  Ad^{-1}_{\zeta(k_0^{-1},\tilde{m})} 
  \Bigl( \frac{d}{dt} k_t \Bigr)_{t=0}
  \\
  &= X \,.
\end{split}
\end{equation*}

The next term involves the connection 1-form of the local trivialization which is the $H$-equivariant map $\theta: TM \mapsto \frakh$ defined for vectors at points in $\sigma(\tilde{M})$ by
\begin{equation*}
  -\rho\bigl( \theta(v_{\sigma(\tilde{m})}), \sigma(\tilde{m}) \bigr)
  =  v_{\sigma(\tilde{m})} -  
    (T\sigma\, T\pi_0) v_{\sigma(\tilde{m})} \,,
\end{equation*}
where the minus sign on the left hand side is the minus sign in Eq.~\eqref{eq:rhominus}. When $m_t \in M$ is a smooth path through $m_0 = \sigma(\tilde{m})$ representing $v_{\sigma(m)} = \frac{d}{dt} m_t |_{t=0} \in T_{\sigma(\tilde{m})} M$, we can express $\theta$ as derivative,
\begin{equation*}
  \frac{d}{dt} \eta\bigl(m_t, \sigma\pi_0(m_t) \bigr) \big|_{t=0}
  = \theta(v_{\sigma(\tilde{m})}) \,.
\end{equation*}
With this relation we get
\begin{equation*}
\begin{split}
  \frac{d}{dt} \zeta(k_t^{-1}, \tilde{m}) \bigr|_{t=0}
  &= \frac{d}{dt} \eta\bigl(k_t^{-1} \cdot \sigma(\tilde{m}), 
    \sigma\pi_0(k_t^{-1} \cdot \sigma(\tilde{m})) \bigr) \bigr|_{t=0}
  \\
  &= \theta\bigl( \rho(X,\sigma(\tilde{m})) \bigr) \,.
\end{split}
\end{equation*}
We thus obtain
\begin{equation*}
\begin{split}
  \frac{d}{dt}\, Ad^{-1}_{\zeta(k_t^{-1}, \tilde{m})} g \bigr|_{t=0}
  &= TL_g\, \theta\bigl( \rho(X,\sigma(\tilde{m})) \bigr)
   - TR_g\, \theta\bigl( \rho(X,\sigma(\tilde{m})) \bigr)
  \,,
\end{split}
\end{equation*}
Putting everything together, we can compute the derivative of a path in the $l$-fibre over $\tilde{m}$ and of its left translation by an element of the groupoid,
\begin{equation*}
\begin{aligned}
  \frac{d}{dt} \tilde{\gamma}_t \bigr|_{t=0} 
  &= \bigl( X + \frakh, 
  \tilde{\rho}(X + \frakh,\tilde{m}) \bigr) 
  \\
  \frac{d}{dt}(Hg,\tilde{m})\tilde{\gamma}_t\bigr|_{t=0} 
  &= \bigl( 
  TL_g X + TL_g\, \theta\bigl( \rho(X,\sigma(\tilde{m})) \bigr)
  + TR_g \frakh, \tilde{\rho}(X, \tilde{m}) \bigr) \,.
\end{aligned}
\end{equation*}
We conclude that the left action of the groupoid on vectors tangent to the $l$-fibres is given by
\begin{equation}
\label{eq:LeftTrans}
  TL_{(Hg,\tilde{m})} \bigl(X + \frakh, 
    \tilde{\rho}(X,\tilde{m}) \bigr)
  = \bigl( 
  TL_g X + TL_g\, \theta\bigl( \rho(X,\sigma(\tilde{m})) \bigr)
  + TR_g \frakh, \tilde{\rho}(X, \tilde{m}) \bigr) \,.
\end{equation}

We can now proceed to calculate the bracket of left invariant vector fields. In the trivialization~\eqref{eq:tildeAtriv} a section of the Lie algebroid is given by a map $\tilde{U} \to \frakg/\frakh$, $\tilde{m} \mapsto X(\tilde{m}) + \frakh$, where $X: \tilde{U} \to \frakg$ is a smooth map. For the computation of the Lie algebroid bracket it suffices to consider constant sections $X + \frakh$ and $Y+\frakh$ for $X, Y \in \frakg$. The bracket of non-constant sections can be deduced using the Leibniz rule. The left invariant vector field associated to a constant section $X$ is given by
\begin{equation*}
  v_X(Hg, \tilde{m}) := TL_{(Hg,\tilde{m})} \bigl(X + \frakh, 
    \tilde{\rho}(X,\tilde{m}) \bigr) \,.
\end{equation*}
Using Eq.~\eqref{eq:LeftTrans}, we can compute the bracket of two such vector fields,
\begin{equation*}
\begin{split}
  [v_X, v_Y](Hg, \tilde{m}) 
  = TL_{(Hg,\tilde{m})} \Bigl( &\bigl[
   X + \theta\bigl( \rho(X,\sigma(\tilde{m})) \bigr), 
   Y + \theta\bigl( \rho(Y,\sigma(\tilde{m})) \bigr) \bigr]_{\frakg}
   + \frakh,
   \\ 
  &[\tilde{\rho}(X,\tilde{m}), \tilde{\rho}(Y,\tilde{m})]_{\calX(\tilde{M})} \Bigr)
  \,.
\end{split}
\end{equation*}
This shows that the Lie algebroid bracket is given by Eq.~\eqref{eq:RedBracket}.
\end{proof}

\begin{Remark}
\label{rmk:meaninghor}
When $k_t \in H$ is a smooth path with $X = \frac{d}{dt} k_t |_{t=0} \in \frakh$, then
\begin{equation*}
\begin{split}
  \theta\bigl( \rho(X,\sigma(\tilde{m})) \bigr)
  &= \frac{d}{dt} \eta\bigl(
     k_t^{-1} \cdot \sigma(\tilde{m}), 
     \sigma\pi_0(k_t^{-1} \cdot \sigma(\tilde{m})) \bigr) \bigr|_{t=0} \\
  &= \frac{d}{dt} \zeta(k_t^{-1}, \tilde{m}) \bigr|_{t=0}
  = \frac{d}{dt} k_t^{-1} \bigr|_{t=0}
  \\
  &=  -X \,,
\end{split}
\end{equation*}
where the minus sign comes from the group inverse of the path $k_t^{-1}$. This shows that we have a well-defined map
\begin{equation*}
\begin{aligned}
  \Hor:
  \frakg/\frakh \times \tilde{M} 
  &\longrightarrow \frakg
  \\
  (X+\frakh, \tilde{m}) 
  &\longmapsto 
  X + \theta\bigl( \rho(X,\sigma(\tilde{m})) \bigr) \,,
\end{aligned}
\end{equation*}
in terms of which the bracket~\eqref{eq:RedBracket} can be written as
\begin{equation*}
  [X + \frakh, Y + \frakh](\tilde{m})
  = [\Hor(X+\frakh, \tilde{m}), \Hor(Y+\frakh, \tilde{m})]_\frakg + \frakh \,.
\end{equation*}
It is instructive to apply the anchor of the unreduced groupoid to this map,
\begin{equation*}
\begin{split}
  \rho\bigl(\Hor(X + \frakh, \tilde{m}) \bigr)
  &= \rho(X, \sigma(\tilde{m}) ) 
  + \rho\bigl(\theta\bigr(\rho(X, \sigma(\tilde{m}))\bigr), 
    \sigma(\tilde{m}) \bigr) 
    \\
  &= \rho(X, \sigma(\tilde{m}) ) 
  - \rho(X, \sigma(\tilde{m}) ) 
  + T\sigma T\pi_0\, \rho\bigl(X, \sigma(\tilde{m}) \bigr) 
    \\
  &= T\sigma\, \tilde{\rho}(X, m) \,.
\end{split}
\end{equation*}
This equation can be written as the following commutative diagram:
\begin{equation*}
\xymatrix@C+2ex{
\frakg/\frakh \times \tilde{M} 
\ar[d]_{\tilde{\rho}} \ar[r]^{\Hor \times \sigma} &
\frakg \times M \ar[d]^\rho 
\\
T\tilde{M} \ar[r]^{T\sigma} & TM
}
\end{equation*}
This shows that $\Hor$ is the map that maps $(X + \frakh, \tilde{m})$ to the element in $\frakg$ that acts on $\sigma(\tilde{m})$ by the horizontal lift of the anchor of the reduced Lie algebroid.
\end{Remark}

\begin{Remark}
\label{rmk:Lu}
In Corollary \ref{cor:ReducedAlgd} we have used the $H$-reduction of Lie groupoids in order to show that there is a unique Lie algebroid structure on $\tilde{A} = \frakg/\frakh \times_H M$ such that $\frakg \ltimes M \to \tilde{A}$ is a homomorphism of Lie algebroids. This result can be generalized to situations where the action Lie algebroid $\frakg \ltimes M$ does not integrate to an action groupoid, e.g.~when not all vector fields in the image of the action $\rho:\frakg \to \calX(M)$ are complete.

It suffices to assume that $H$-acts freely and properly on $M$ such that (i) the induced $\frakh$-action is the restriction of the $\frakg$-action; (ii) the infinitesimal $\frakh$-equivariance of the $\frakg$-action lifts to the global $H$-equivariance of $\rho: \frakg \to \calX(M)$, where $H$ acts by the left adjoint action on $\frakg$ and by the push forward of vector fields on $\calX(M)$. Under these conditions \cite[Def.~2.2]{Lu:2008} it was shown in \cite[Lem.~2.3]{Lu:2008} that the quotient $\frakg \ltimes M \to \tilde{A}$ induces a Lie algebroid structure on $\tilde{A}$.
\end{Remark}

\begin{Example}
\label{ex:grel}
The Lie algebroid of \cite{BFW} arises in the following way as the Lie algebroid of a reduced Lie groupoid. Let $S$ be a manifold representing spacetime, and let $\Sigma \subset S$ be an embedded codimension 1 submanifold, which will be the $t=0$ slice of the initial value problem.   The group $G = \Diff(S)$ acts by pushforward on the manifold $M$ of lorentzian metrics on $S$. Let $H \subset G$ be the subgroup of diffeomorphisms of $S$ that fix all points in $\Sigma$. Note that $H$ is not a normal subgroup. $G$ acts by composition on the space of embeddings of $\Sigma$ in $S$. $H$ is the stabilizer group of the inclusion $\Sigma \subset G$. It follows that the quotient $H \backslash G$ can be identified with the $G$ orbit of $\Sigma$, i.e.~with the space of those embeddings of $\Sigma$ in $S$ which extend to diffeomorphisms of $S$.

The Lie algebra $\frakg$ of $G$ is the Lie algebra of vector fields on $S$; $\frakh$ is the Lie subalgebra of vector fields on $S$ that vanish on $\Sigma$. The quotient $\frakg/\frakh$ can be identified with the space of vector fields on $S$ supported on $\Sigma$, i.e.\ with sections of the restriction of $TS$ to $\Sigma$. The quotient map $\frakg \to \frakg/\frakh$ is the restriction of a vector field on $S$ to $\Sigma$.

Now  we restrict the action groupoid $G \ltimes M$ on its base to the open submanifold $M_\Sigma \subset M$ consisting of those lorentzian metrics for which $\Sigma$ is a space-like submanifold. The restricted Lie groupoid $\Gamma_\Sigma$ we obtain in this way is no longer an action groupoid, since $M_\Sigma$ is not $G$-invariant. Since $M_\Sigma$ is invariant under diffeomorphisms close to the identity, though, the Lie algebroid of $\Gamma_\Sigma$ is the action Lie algebroid $\frakg \ltimes M_\Sigma$. Moreover, $M_\Sigma$ is $H$-invariant, so that our construction of the $H$-reduced Lie algebroid can still be carried out as explained in Remark \ref{rmk:Lu}. 

For the next step we will assume that $\Sigma$ is cooriented (i.e.~with an orientation on the normal bundle, which can be viewed as choice of a time direction). The stabilizer group of $\Sigma$ is now the group of diffeomorphisms that fix all points in $\Sigma$ and preserve the coorientation, which we will denote by $H^+$. Note that $H^+$ has the same Lie algebra as $H$. From the proof of \cite[Lem.~2.8]{BFW}, we conclude that an element of $\tilde{M} := M_\Sigma/H^+$ has a representative that is given by a unique gaussian metric on an open neighborhood of $\Sigma$. The upshot is that the reduced Lie algebroid $\tilde{A} := \frakg/\frakh \times_{H^+} M_\Sigma \to \tilde{M}$ is the Lie algebroid constructed in \cite{BFW} by an entirely different approach. It was shown in \cite[Prop.~2.6]{BFW} that, as a vector bundle, $\tilde{A} \cong \frakg/\frakh \times \tilde{M}$ is trivial.
\end{Example}

\section{Hamiltonian structures on the reduced Lie algebroid}
\label{sec:HamHReduced}

We turn to the relation between hamiltonian structures on the action Lie algebroid $A = \frakg \times M$ and on the $H$-reduced Lie algebroid $\tilde{A} = \frakg/\frakh \times_H M$. The main result is that a hamiltonan structure on $\tilde{A}$ always pulls back along the projection $\phi: A \to \tilde{A}$ of Corollary \ref{cor:ReducedAlgd} to a hamiltonian structure on $A$ (Theorem  \ref{thm:HamReduced1}). However, unless $H \subset G$ is a normal subgroup, the connection on $A$ of the pulled back hamiltonian structure is not the trivial connection (Proposition \ref{prop:NormalTriv}).

\subsection{Connections on the reduced Lie algebroid}

In order to apply Proposition \ref{prop:HamPullback} to $\phi: A \to \tilde{A}$, we must find a connection $D$ on $A$ such that $\phi$ is compatible with the connections $D$ and $\tilde{D}$. For this we first make a few general observations about connections on a vector bundle associated to a principal bundle.

\begin{Lemma}
\label{lem:InducedConn1}
Let $M \to \tilde{M}$ be a left $H$-principal bundle, $W$ an $H$-module, and $\pi: W \times M \to W \times_H M$ the quotient map to the associated vector bundle. For every connection $\tilde{D}$ on $W \times_H M \to \tilde{M}$ there is a unique connection $D$ on $W \times M \to M$ so that $\pi^* \tilde{D} = D \pi^*$.
\end{Lemma}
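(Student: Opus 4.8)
The plan is to work in a local trivialization of the principal bundle $M \to \tilde M$ and use the standard correspondence between connections on an associated vector bundle $W \times_H M$ and principal connections (equivalently, $\mathfrak h$-valued connection $1$-forms) on $M \to \tilde M$, then observe that such data pulls back canonically to the trivial bundle $W \times M$. First I would fix an open cover $\{\tilde U_\alpha\}$ of $\tilde M$ with sections $\sigma_\alpha: \tilde U_\alpha \to M$, giving trivializations $H \times \tilde U_\alpha \xrightarrow{\sim} M|_{\tilde U_\alpha}$ as in Eq.~\eqref{eq:GrpdTriv}. On $W \times M|_{\tilde U_\alpha} \cong W \times H \times \tilde U_\alpha$ there is a tautological flat connection (the coordinate vector fields along $\tilde U_\alpha$ and along $H$ are declared horizontal and the $W$-direction is the fibre). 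The quotient map $\pi$ restricted to this chart sends $(w, h, \tilde m) \mapsto [h^{-1}\cdot w, \tilde m]$ (using the left $H$-action on $W$), which over $\tilde U_\alpha$ identifies $W \times_H M$ with $W \times \tilde U_\alpha$ via the section $\sigma_\alpha$.

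The key computational step is: given $\tilde D$ on $W\times_H M$, read off its local connection $1$-forms $\tilde\omega_\alpha \in \Omega^1(\tilde U_\alpha, \mathfrak{gl}(W))$ (or more precisely the $\mathfrak h$-valued part once one fixes the trivialization), and then define $D$ on $W \times M$ in the chart $W \times H \times \tilde U_\alpha$ by a connection form that combines the Maurer--Cartan form of $H$ with $\pi^*$ of the data determining $\tilde D$. Concretely, $D$ is the unique connection on $W\times M$ for which a section $s: M \to W$ of the trivial bundle that is of the form $s = \pi^* \tilde s$ satisfies $D s = \pi^* \tilde D \tilde s$; because $\pi$ is a surjective submersion, every element of $\Omega^\bullet(M, W)$ lying in the image of $\pi^*$ is a pullback of a unique form on $\tilde M$ with values in $W\times_H M$, and one checks that the prescription $D(\pi^* \tilde s) := \pi^*(\tilde D \tilde s)$ is $C^\infty(M)$-consistent (satisfies the Leibniz rule) precisely because $\tilde D$ does and $\pi_0^*$ is a ring homomorphism. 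I would verify this is a well-defined connection on all of $W \times M$ by checking the local formulas glue: on overlaps $\tilde U_\alpha \cap \tilde U_\beta$ the transition functions $h_{\alpha\beta}: \tilde U_{\alpha\beta} \to H$ of $M\to\tilde M$ act, and the compatibility of the $\tilde\omega_\alpha$ for $\tilde D$ together with equivariance forces the corresponding local forms for $D$ to transform correctly.

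For uniqueness, I would argue that $D\pi^* = \pi^*\tilde D$ determines $D s$ for every section $s$ of $W\times M$ of the form $\pi^*\tilde s$, and that such sections span the fibres of $W \times M$ at every point (indeed at a point $m$ with $\pi_0(m) = \tilde m$, choosing $\tilde s$ with prescribed value in $(W\times_H M)_{\tilde m}$ gives $\pi^*\tilde s(m)$ ranging over all of $W$); then a partition-of-unity argument on $\tilde M$ (or directly on $M$, $H$-invariantly) shows that $D$ is forced on all sections by the Leibniz rule. The main obstacle I expect is the bookkeeping of the two ``directions'' — the fibre direction of the principal bundle $M\to\tilde M$, which is genuinely present in $W\times M$ but collapsed in $W\times_H M$ — and making precise that requiring $D\pi^* = \pi^*\tilde D$ (which only constrains $D$ on pullback forms, i.e.\ on the horizontal-and-fibre part) together with naturality nonetheless pins down $D$ uniquely along the vertical direction as well. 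The cleanest way around this is to note that $\pi^*: \Omega^q(\tilde M, W\times_H M) \to \Omega^q(M, W)$ is injective with image exactly the $H$-basic $W$-valued forms, and that a connection on $W\times M$ is determined by its covariant derivative on a generating set of sections, so the condition $D\pi^* = \pi^*\tilde D$ applied to $0$-forms (sections) already determines $D$ on all of $\Gamma(M, W\times M)$ by $C^\infty(M)$-linearity and Leibniz, since $\Gamma(M,W)$ is generated over $C^\infty(M)$ by pullback sections.
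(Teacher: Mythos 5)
Your proposal is correct in substance but takes a genuinely different route from the paper. The paper works entirely in a local trivialization induced by a section $\sigma$ of $M \to \tilde M$: it computes $T\pi_1$ explicitly, writes both connections as connection $1$-forms $\alpha: TM \to \End(W)$ and $\tilde\alpha: T\tilde M \to \End(W)$, and solves the intertwining condition for $\alpha$, arriving at the closed formula $\alpha(v_m) = \lambda(\theta(v_m)) + L_{\eta(m,\sigma\pi_0 m)}\,\tilde\alpha(T\pi_0 v_m)\,L_{\eta(m,\sigma\pi_0 m)}^{-1}$ of Eq.~\eqref{eq:PullbackConn}; existence and uniqueness drop out together, and that explicit formula is precisely what gets reused in Lemma~\ref{lem:InducedConn2} and Corollary~\ref{cor:TrivConnCompat}. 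You instead characterize $D$ invariantly by its action on the pullback sections $\pi^*\tilde s$ (the $H$-equivariant sections of $W\times M$): since these span every fibre, they generate $\Gamma(M,W\times M)$ over $C^\infty(M)$, so uniqueness is immediate, and you correctly identify and dispose of the real issue, namely that the condition $D\pi^* = \pi^*\tilde D$, which only directly sees basic data, nevertheless pins down $D$ in the vertical directions of $M\to\tilde M$ (it forces $\alpha(v_m)=\lambda(\theta(v_m))$ there). Your route is cleaner and coordinate-free, but it does not produce the explicit local formula the paper needs downstream.

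One step deserves more care. You justify the consistency of the prescription $D(\pi^*\tilde s) := \pi^*(\tilde D\tilde s)$ under the Leibniz extension by saying it holds ``because $\tilde D$ does and $\pi_0^*$ is a ring homomorphism.'' That only handles relations among pullback sections with \emph{basic} coefficients; the generating set also satisfies $C^\infty(M)$-linear relations with non-basic coefficients, and well-definedness must be checked against those. The standard repair is pointwise: define $\alpha(v_m)w := \iota_{v_m}\pi^*(\tilde D\tilde s) - \bigl(d\,\pi^*\tilde s\bigr)(v_m)$ for any $\tilde s$ with $\pi^*\tilde s(m)=w$, where $d$ is the trivial connection on $W\times M$, and check independence of the choice of $\tilde s$ by writing the difference of two choices as $\sum_j g_j\tilde t_j$ with $g_j(\pi_0(m))=0$; the only potentially surviving terms, $d(\pi_0^*g_j)(v_m)\,\pi^*\tilde t_j(m)$, then cancel between $\iota_{v_m}\pi^*(\tilde D(\cdot))$ and $d\,\pi^*(\cdot)(v_m)$. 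With that inserted, $\alpha$ is a well-defined $\End(W)$-valued $1$-form, $D=d+\alpha$ satisfies $D\pi^*=\pi^*\tilde D$ on sections and hence on all forms by the Leibniz rule, and your argument is complete.
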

\begin{proof}
Let the structure homomorphism of the left action of $H$ on $W$ be denoted by $L: H \to \Aut(W)$ and of the infinitesimal action by $\lambda: \frakh \to \End(W)$. Let $\tilde{U} \subset \tilde{M}$ be a contractible open subset, so that the restricted principal bundle $M|_{\tilde{U}} \to \tilde{U}$ has a section $\sigma: \tilde{U} \to M$. This induces a local trivialization $W \times_H M \cong W \times \tilde{M}$ like in Eq.~\eqref{eq:tildeAtriv}. In this trivialization, the canonical epimorphism takes the form
\begin{equation*}
\begin{aligned}
  \pi_1: W \times U 
  &\longrightarrow W \times \tilde{U}
  \\
  (w,m) 
  &\longmapsto 
  \bigl(L_{\eta(m, \sigma\pi_0 m)}^{-1} w, \pi_0 m \bigr) \,, 
\end{aligned}
\end{equation*}
where $U := \pi_0^{-1}(\tilde{U})$ and where  $\eta$ is defined as in Section \ref{sec:RedAct1}. Let $\theta: TM \to \frakh$ denote the connection 1-form of the trivialization, which is given by
\begin{equation*}
  \theta(v_m)
  = \frac{d}{dt} \eta(m_t, \eta(m_0, \sigma\pi_0 m_0)
     \cdot \sigma \pi_0 m_t) \bigr|_{t=0}
     \,,
\end{equation*}
for any smooth path $m_t \in M$ representing $v_m = \dot{m}_0$. We thus obtain
\begin{equation*}
\begin{split}
  \frac{d}{dt} L_{\eta(m_t, \sigma\pi_0 m_t)}^{-1} w \bigr|_{t=0}
  &= \frac{d}{dt} L_{\eta(m_0, \sigma\pi_0 m_0)}^{-1}
     L_{\eta(m_t, \eta(m_0, \sigma\pi_0 m_0)
     \cdot \sigma\pi_0 m_t)}^{-1} w \bigr|_{t=0}
  \\
  &= - L_{\eta(m_0, \sigma\pi_0 m_0)}^{-1}
       \lambda(\theta(\dot{m}_0)) w
  \,.
\end{split}
\end{equation*}
It follows that the differential $T\pi_1: TW \times TU \to TW \times T\tilde{U}$ is given by
\begin{equation*}
  T\pi_1 ((w,u) , v_m) 
  = \bigl( \bigl(
  L_{\eta(m, \sigma\pi_0 m)}^{-1} w,  L_{\eta(m, \sigma\pi_0 m)}^{-1} 
  (u - \lambda(\theta(v_m)) w \bigr) ,
  T\pi_0 v_m \bigr)
  \,,
\end{equation*}
where we have identified $TW \cong W \times W$.

A linear connection on the trivial bundle $W \times U \to U$ is given by a map $TU \times W \to W \times W \times TU$, $(v_m, w) \mapsto (w, \alpha(v_m)w, v_m)$, where $\alpha: TU \to \End(W)$ is the connection 1-form. Analogously, a connection on $W \times \tilde{U} \to \tilde{U}$ is given by a 1-form $\tilde{\alpha}: T\tilde{U} \to \End(W)$. The covariant derivative of the connection is given by $D_{v_m} w := \iota_{v_m} dw + \alpha(v_m) w(m)$. The map $\pi$ is compatible with the connections if $T\pi_1((w, \alpha(v_m)), v_m) = ((w, \tilde{\alpha}(T\pi_0 v_m)), T\pi_0 v_m)$, which is the case iff
\begin{equation*}
  L_{\eta(m, \sigma\pi_0 m)}^{-1}
  \bigl( \alpha(v_m) - \lambda(\theta(v_m)) \bigr) w
  = \tilde{\alpha}(T\pi_0 v_m) L_{\eta(m, \sigma\pi_0 m)}^{-1} w \,.
\end{equation*}
This equation can be solved uniquely for
\begin{equation}
\label{eq:PullbackConn}
  \alpha(v_m) = \lambda(\theta(v_m)) 
  + L_{\eta(m, \sigma\pi_0 m)} 
    \tilde{\alpha}(T\pi_0 v_m)
    L_{\eta(m, \sigma\pi_0 m)}^{-1}
    \,.
\end{equation}
We conclude that for a given connection 1-form $\tilde{\alpha}: T\tilde{U} \to \End(W)$ there is a unique connection 1-form $\alpha: TU \to \End(W)$, such that the two connections are compatible with $\pi$. 

Let now $\{ \tilde{U}_i\}_{i \in I}$ be a good cover of $\tilde{M}$ and $\{\tilde{\chi}_i: \tilde{U}_i \to [0,1]\}$ a partition of unity of $\tilde{M}$. Then $\{ U_i := \pi_0^{-1}(\tilde{U}_i)\}_{i \in I}$ is a cover of $M$ and $\{\chi_i := \tilde{\chi}_i \pi_0: U_i \to [0,1] \}$ a partition of unity of $M$. Let $\tilde{\alpha}$ be a connection 1-form on $\tilde{M}$. For every $i$ let $\tilde{\alpha}_i$ be the restriction of $\tilde{\alpha}$ to $\tilde{U}_i$ and $\alpha_i: TU_i \to \End(W)$ the connection 1-form given by~\eqref{eq:PullbackConn}. Let $\tilde{D}_i$ and $D_i$ be the covariant derivatives of the connections, in terms of which the compatibility with $\pi$ reads $\pi^* \tilde{D}_i = D_i\pi^*$. Multiplying $\tilde{D_i}$ with the partition function $\tilde{\chi}_i$ yields
\begin{equation*}
\begin{split}
  \pi^* ( \tilde{\chi}_i \tilde{D}_i )
  &= (\pi_0^* \tilde{\chi}_i)(\pi^* \tilde{D}_i)
  = \chi_i (D_i \pi^*)
  \\
  &= (\chi_i D_i ) \pi^*
  \,.
\end{split}    
\end{equation*}
Summing this equation over $i$, we obtain $\pi^* \tilde{D} = D \pi^*$, where $\tilde{D} = \sum_i \tilde{\chi}_i \tilde{D}_i$ is the covariant derivative of the connection 1-form $\tilde{\alpha}$ and $D := \sum_i \chi_i D_i$ the covariant derivative of a connection on $W \times M \to M$. This finishes the proof.
\end{proof}

\begin{Lemma}
\label{lem:InducedConn2}
Let $\pi_0: M \to \tilde{M}$ be a left $H$-principal bundle, $W$ an $H$-module, and $\pi: W \times M \to W \times_H M$ the quotient map. Let $L: H \to \Aut(W)$ and $\lambda: \frakh \to \End(W)$ denote the representations of $H$ and its Lie algebra $\frakh$. Let $\alpha: TM \to \End(W)$ be the 1-form of a connection on $W \times M$. Assume that $\alpha$ satisfies:
\begin{itemize}

\item[(i)] $\alpha(v_m) = \lambda(v_m)$ for all $m \in M$ and vertical $v_m \in \ker T_m\pi_0 \cong \frakh$.

\item[(ii)] $\alpha(h \cdot v_m) = L_h \alpha(v_m) L_h^{-1}$ for all $v_m \in TM$ and $h \in H$.

\end{itemize}
Then $\alpha$ descends to a unique connection on $W \times_H M$.
\end{Lemma}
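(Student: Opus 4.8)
The statement is a converse to Lemma~\ref{lem:InducedConn1}: conditions (i) and (ii) are precisely what is needed for the connection 1-form $\alpha$ on $W\times M$ to be the pullback of some connection 1-form on the associated bundle $W\times_H M$. So the plan is to run the computation of Lemma~\ref{lem:InducedConn1} in reverse. First I would work locally: choose a local section $\sigma:\tilde U\to M$ of the principal bundle $M\to\tilde M$, together with the associated splitting $\eta(\cdot,\cdot)$ and the connection 1-form $\theta:TM\to\frakh$ of the induced trivialization, exactly as in the proof of Lemma~\ref{lem:InducedConn1}. Using the formula for $T\pi_1$ derived there, $\alpha$ descends along $\pi$ to a 1-form $\tilde\alpha$ on $T\tilde M$ if and only if
\begin{equation*}
  L_{\eta(m, \sigma\pi_0 m)}^{-1}
  \bigl( \alpha(v_m) - \lambda(\theta(v_m)) \bigr) w
  = \tilde{\alpha}(T\pi_0 v_m)\, L_{\eta(m, \sigma\pi_0 m)}^{-1} w
\end{equation*}
for all $v_m\in TM$, $w\in W$; so I must produce such a $\tilde\alpha$ and check it is well defined, i.e.\ that the left-hand side depends on $v_m$ only through $T\pi_0 v_m$ and only $H$-equivariantly.

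\textbf{Key steps.} The second step is to define, in this local trivialization, $\tilde\alpha$ by the formula obtained by solving the displayed equation for $\tilde\alpha$, namely
\begin{equation*}
  \tilde\alpha(T\pi_0 v_m)
  := L_{\eta(m,\sigma\pi_0 m)}^{-1}\bigl(\alpha(v_m)-\lambda(\theta(v_m))\bigr)L_{\eta(m,\sigma\pi_0 m)}.
\end{equation*}
Restricting to $v_m$ of the form $T\sigma\,(\tilde v)$ makes $\eta(m,\sigma\pi_0 m)=e$ and $\theta(v_m)=0$, so this simply reads $\tilde\alpha(\tilde v)=\alpha(T\sigma\,\tilde v)$; this shows the formula is consistent and gives an honest $\End(W)$-valued 1-form on $\tilde U$. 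The third step is to check that two choices of local section $\sigma,\sigma'$ (differing by an $H$-valued transition function) produce the same $\tilde\alpha$ up to the correct gauge transformation, so the local pieces glue to a global connection on $W\times_H M$; this is where hypotheses (i) and (ii) do the real work. Condition (i), $\alpha(v_m)=\lambda(v_m)$ on vertical vectors, guarantees that the ``$\alpha-\lambda\theta$'' combination kills the vertical directions, so that $\tilde\alpha$ really is a 1-form pulled back from $T\tilde M$; condition (ii), $H$-equivariance of $\alpha$, is exactly what makes the conjugation-by-$L_{\eta}$ in the formula transform $\tilde\alpha$ consistently between trivializations. The final step is uniqueness: any connection on $W\times_H M$ whose pullback is $D$ must, by Lemma~\ref{lem:InducedConn1} applied in the forward direction, have connection 1-form satisfying the displayed equation, hence must coincide with the $\tilde\alpha$ just constructed.

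\textbf{Main obstacle.} The routine-but-delicate point is the gluing/well-definedness check: verifying that the locally defined $\tilde\alpha$'s agree on overlaps, which amounts to a careful bookkeeping of how $\eta$, $\theta$, and the chosen section transform, and confirming that conditions (i) and (ii) are jointly sufficient (not merely necessary) for the transition formula to close up. I expect that once the equivariance identity (ii) is written out against a transition function $h:\tilde U\cap\tilde U'\to H$ and combined with the cocycle property of $\eta$, the required identity for $\tilde\alpha$ will fall out; but this is the step that needs genuine care rather than a one-line appeal to the earlier lemma. Everything else — the local formula, the vertical vanishing from (i), and uniqueness — is immediate from the computation already carried out in the proof of Lemma~\ref{lem:InducedConn1}.
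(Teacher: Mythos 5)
Your proposal is correct and follows essentially the same route as the paper: solve the pullback equation of Lemma~\ref{lem:InducedConn1} for $\tilde\alpha$, then verify well-definedness by using (i) to kill the vertical directions (where $\theta(v_m)=v_m$) and (ii) together with the $H$-equivariance of $\theta$, $\lambda$, and $\eta$ to show independence of the chosen lift. The paper organizes the second check as invariance of the defining formula under $v_m\mapsto h\cdot v_m$ within one trivialization rather than as an overlap/gluing argument between two sections $\sigma,\sigma'$, but the computation is the same.
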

\begin{proof}
We use the same notation as in the proof of Lemma~\ref{lem:InducedConn1}. In particular, $\theta$ is the connection 1-form of the local trivialization of $W \times_H M \to \tilde{M}$ that is induced by a local section $\sigma$ of the principal bundle $M \to \tilde{M}$. Solving Eq.~\eqref{eq:PullbackConn} for $\tilde{\alpha}$, we obtain the condition
\begin{equation*}
  \tilde{\alpha}(T\pi_0 v_m)
  = L_{\eta(m, \sigma\pi_0 m)}^{-1} \bigl( 
    \alpha(v_m) - \lambda(\theta(v_m)) 
    \bigr) L_{\eta(m, \sigma\pi_0 m)}^{-1}
    \,.
\end{equation*}
We have to show that this equation can be viewed as a definition for $\tilde{\alpha}$.

First, we observe that for $v_m \in \ker T_m\pi_0 \cong \frakh$ we have $\theta(v_m) = v_m$, so that $\alpha(v_m) - \lambda(\theta(v_m)) = \alpha(v_m) - \lambda(v_m) = 0$ by assumption (i).

Secondly, we observe since $T\pi_0 (h \cdot v_m) = T\pi_0 v_0$, the right hand side evaluated on $h \cdot v_m$ must be independent of $h$. By construction, $\theta$ and $\lambda$ are $H$-equivariant
\begin{align*}
  \theta(h \cdot v_m) 
  &= \Ad_h \theta(v_m) \\
  \lambda(\Ad_h X) 
  &= L_h \alpha(X) L_h^{-1} \,,
\end{align*}
so that $\lambda(\theta(h \cdot v_m)) = L_h \lambda(\theta(h \cdot v_m)) L_h^{-1}$. By assumption (ii) $\alpha$ satisfies the analogous equivariance property. We conclude that
\begin{equation*}
\begin{split}
  \tilde{\alpha}(T\pi_0 (h \cdot v_m))
  &= L_{\eta(h \cdot m, \sigma\pi_0 (h \cdot m))}^{-1} \bigl( 
     \alpha(h \cdot v_m) - \lambda(\theta(h \cdot v_m)) 
     \bigr) L_{\eta(h \cdot m, \sigma\pi_0 (h \cdot m))}^{-1}
  \\
  &= L_{\eta(m, \sigma\pi_0 m)}^{-1} L_h^{-1} \bigl( 
     L_h \alpha(v_m) L_h^{-1} - L_h \lambda(\theta(v_m)) L_h^{-1} 
     \bigr) L_h L_{\eta(m, \sigma\pi_0 m)}^{-1}
  \\
  &= \tilde{\alpha}(T\pi_0 v_m) \,,
\end{split}
\end{equation*}
which shows that $\tilde{\alpha}$ is well-defined.
\end{proof}

\begin{Corollary}
\label{cor:TrivConnCompat}
The trivial connection on $W \times M$ is compatible with a connection on $W \times_H M$ if and only if the infinitesimal action of $\frakh$ on $W$ is trivial. In that case the compatible connection is that of the natural trivialization $W \times_H M  \cong W \times \tilde{M}$. 
\end{Corollary}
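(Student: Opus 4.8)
The plan is to read the corollary off directly from Lemmas~\ref{lem:InducedConn1} and~\ref{lem:InducedConn2} by specializing everything to the vanishing connection $1$-form $\alpha = 0$ on the trivial bundle $W\times M$. Together, the two lemmas say: a connection $D$ on $W\times M$ is the $\pi$-pullback of a connection on $W\times_H M$ for at most one connection on the quotient, whose $1$-form must then have the shape~\eqref{eq:PullbackConn}; and a given $D$ with $1$-form $\alpha$ descends precisely when conditions (i) and (ii) of Lemma~\ref{lem:InducedConn2} hold. So I only need to evaluate (i) and (ii) at $\alpha = 0$.

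First I would dispatch the ``only if'' direction. Condition (ii), $\alpha(h\cdot v_m) = L_h\alpha(v_m)L_h^{-1}$, becomes $0 = 0$ and is vacuous. Condition (i) says $\alpha(v_m) = \lambda(v_m)$ for vertical $v_m \in \ker T_m\pi_0 \cong \frakh$; at $\alpha = 0$ this is exactly $\lambda = 0$, i.e.\ the infinitesimal $\frakh$-action on $W$ is trivial. Necessity (as opposed to mere sufficiency) is what really needs the uniqueness clause of Lemma~\ref{lem:InducedConn1}: the pullback of \emph{any} connection on $W\times_H M$ has $1$-form of the form~\eqref{eq:PullbackConn}, whose restriction to vertical directions is $\lambda$, so if that pullback equals the trivial connection then $\lambda = 0$.

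For the converse I would assume $\lambda = 0$ and invoke Lemma~\ref{lem:InducedConn2} with $\alpha = 0$: hypothesis (i) holds because $\lambda(v_m) = 0 = \alpha(v_m)$ on vertical vectors, and (ii) holds trivially, so the trivial connection descends. To identify the descended connection I would solve~\eqref{eq:PullbackConn} for $\tilde\alpha$: with $\alpha = 0$ and $\lambda(\theta(v_m)) = 0$ one gets $\tilde\alpha = 0$ in the local trivialization~\eqref{eq:tildeAtriv}, which says exactly that the induced connection is the trivial one attached to the natural trivialization $W\times_H M \cong W\times\tilde M$ (glued from local pieces by a partition of unity, as in the proofs of the two lemmas).

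I do not expect a serious obstacle here; the content is bookkeeping layered on top of Lemmas~\ref{lem:InducedConn1} and~\ref{lem:InducedConn2}. The one point worth stating carefully is the logical one above — that conditions (i)--(ii) are \emph{necessary} and not just sufficient for the trivial connection to descend — so that the ``only if'' direction is genuinely closed rather than left dangling.
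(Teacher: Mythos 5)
Your proposal is correct and follows essentially the same route as the paper's proof, which also reads the statement off from Eq.~\eqref{eq:PullbackConn}: restricting that identity to vertical vectors forces $\alpha(v_m)=\lambda(v_m)$, so $\alpha=0$ is compatible with some quotient connection iff $\lambda=0$, in which case solving for $\tilde\alpha$ gives $\tilde\alpha=0$. Your explicit separation of the necessity direction (via the uniqueness clause of Lemma~\ref{lem:InducedConn1}) from the sufficiency direction (via Lemma~\ref{lem:InducedConn2}) is a slightly more careful bookkeeping of the same argument, not a different one.
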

\begin{proof}
The trivial connection on $W \times M$ is given by the zero 1-form $\alpha = 0$. Looking at Eq.~\eqref{eq:PullbackConn}, we see that $\alpha(v_m) = \lambda(v_m)$ for $v_m \in \ker T_m \pi_0$, which is zero for all $v_m \in \frakh$ if and only if $\lambda = 0$. In that case $\tilde{\alpha}$ must be zero as well.
\end{proof}

\subsection{Pullback of the hamiltonian structure on the reduced Lie algebroid}

We now have all the technical prerequisites to prove the main result:

\begin{Theorem}
\label{thm:HamReduced1}
For every (weakly) hamiltonian structure on the $H$-reduced action Lie algebroid $\frakg/\frakh \times_H M$ there is a (weakly) hamiltonian structure on $\frakg \ltimes M$, such that the quotient map $\frakg \ltimes M \to \frakg/\frakh \times_H M$ is a morphism of (weakly) hamiltonian Lie algebroids.
\end{Theorem}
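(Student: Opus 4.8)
The plan is to exhibit $\phi:\frakg\ltimes M\to\frakg/\frakh\times_H M$ from Cor.~\ref{cor:ReducedAlgd} as a morphism of (weakly) hamiltonian Lie algebroids by pulling the entire structure back along $\phi$, and then to quote Prop.~\ref{prop:HamPullback}. So, given a (weakly) hamiltonian structure $(\tilde A,\tilde\omega,\tilde D,\tilde\mu)$ on $\tilde A=\frakg/\frakh\times_H M$ over $\tilde M=M/H$, I set $\omega:=\pi_0^*\tilde\omega$ on $M$ (closed, hence presymplectic, since $d$ commutes with pullback) and $\mu:=\phi^*\tilde\mu\in\Gamma(M,A^*)$. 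By Cor.~\ref{cor:ReducedAlgd} the map $\phi$ is already a morphism of Lie algebroids, so the only missing ingredient is a connection $D$ on $A=\frakg\times M$ with $D\phi^*=\phi^*\tilde D$; once that is in place, Prop.~\ref{prop:MorphcompatConn} says $\phi$ is compatible with the connections, Prop.~\ref{prop:HamPullback} says $(A,\omega,D,\mu)$ is again (weakly) hamiltonian, and $\phi$ satisfies conditions (i)--(iii) of the definition of a morphism of (weakly) hamiltonian Lie algebroids by construction.

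To build $D$ I factor $\phi=\pi\circ p$, where $p:\frakg\times M\to\frakg/\frakh\times M$ is the fibrewise quotient over $\mathrm{id}_M$ and $\pi:\frakg/\frakh\times M\to\frakg/\frakh\times_H M$ is the quotient by the diagonal $H$-action over $\pi_0$. The bundle $\frakg/\frakh$ is genuinely an $H$-module, since $\Ad_h$ preserves $\frakh$ for $h\in H$ (no normality of $H$ is needed), so Lemma~\ref{lem:InducedConn1} with $W=\frakg/\frakh$ gives the unique connection $D'$ on $\frakg/\frakh\times M$ with $\pi^*\tilde D=D'\pi^*$. Then I choose a vector-space complement $\mathfrak{m}$ to $\frakh$ in $\frakg$; this identifies $\mathfrak{m}\times M\cong\frakg/\frakh\times M$ and splits $A=(\frakh\times M)\oplus(\mathfrak{m}\times M)$, and I put $D:=D_0\oplus D'$ with $D_0$ any connection on $\frakh\times M$ (e.g.\ the trivial one). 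For a direct-sum connection the fibrewise projection intertwines covariant derivatives, $p\circ D=D'\circ p$ on sections, equivalently $Dp^*=p^*D'$ on forms; combining this with $\pi^*\tilde D=D'\pi^*$ gives $\phi^*\tilde D=p^*\pi^*\tilde D=p^*D'\pi^*=Dp^*\pi^*=D\phi^*$, as required.

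With $D$ in hand the theorem follows by assembling the pieces: $\phi$ is a morphism of Lie algebroids (Cor.~\ref{cor:ReducedAlgd}) compatible with $D$ and $\tilde D$, so Prop.~\ref{prop:HamPullback} applies with $\omega=\phi_0^*\tilde\omega$ and $\mu=\phi^*\tilde\mu$; it tells us that if $(\tilde A,\tilde\omega)$ is presymplectically anchored with respect to $\tilde D$ then $(A,\omega)$ is presymplectically anchored with respect to $D$, and that if $\tilde\mu$ is a (bracket-compatible) $\tilde D$-momentum section then $\mu$ is a (bracket-compatible) $D$-momentum section. Hence $(A,\omega,D,\mu)$ is (weakly) hamiltonian and $\phi$ is a morphism of (weakly) hamiltonian Lie algebroids; the weak and the full case are handled simultaneously by the parenthetical clauses of Prop.~\ref{prop:HamPullback}. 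The one place requiring care is the construction of $D$: the $H$-quotient is dispatched by Lemma~\ref{lem:InducedConn1}, but the additional fibrewise quotient $\frakg\to\frakg/\frakh$ forces a non-canonical choice of complement $\mathfrak{m}$ (reflecting the fact that, unless $H$ is normal, the resulting connection on $\frakg\ltimes M$ cannot be the trivial one), and one must check that the two-step lift is compatible with the composite $\phi$ and not merely with $p$ and $\pi$ separately.
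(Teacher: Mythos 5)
Your proposal is correct and follows essentially the same route as the paper: factor $\phi$ through $\frakg/\frakh\times M$, invoke Lemma~\ref{lem:InducedConn1} to produce $D'$ with $\pi^*\tilde D = D'\pi^*$, lift $D'$ to a connection $D$ on $\frakg\times M$ by a (non-canonical) splitting of $\frakg\to\frakg/\frakh$, and conclude via Prop.~\ref{prop:HamPullback}. Your choice of a complement $\mathfrak m$ and the direct-sum connection $D_0\oplus D'$ is the same device as the paper's splitting $i:\frakg/\frakh\to\frakg$ and connection 1-form $\alpha(v_m)X := i\,\alpha'(v_m)(X+\frakh)$ (which is your construction with $D_0$ trivial), and your explicit check $\phi^*\tilde D = p^*D'\pi^* = D\phi^*$ is the compatibility with the composite that the paper asserts by construction.
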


\begin{proof}
The quotient map $\phi: A \to \tilde{A}$ factors as $\frakg \times M \to \frakg/\frakh \times M \stackrel{\pi}{\to} \frakg/\frakh \times_H M$ through the map $\pi$ to which we can apply lemma~\ref{lem:InducedConn1}. So if $D$ is a connection on $\tilde{A}$, then there is a unique connection $D'$ on $\frakg/\frakh \times M$ such that $\pi^* \tilde{D} = D' \pi^*$. 

Let $\alpha': TM \to \End(\frakg/\frakh)$ be the connection 1-form of $D'$. A connection 1-form $\alpha: TM \to \End(\frakg)$ is compatible with respect to the projection $\frakg \times M \to \frakg/\frakh \times M$ if
\begin{equation}
\label{eq:alphaproj}
  \alpha(v_m)X + \frakh = \alpha'(v_m)(X + \frakh) \,,
\end{equation}
for all $v_m \in TM$ and $X \in \frakg$. We can obtain such an $\alpha$ by choosing a splitting $i:\frakg/\frakh \to \frakg$ of the quotient map $p: \frakg \to \frakg/\frakh$ and setting
\begin{equation*}
  \alpha(v_m)(X) :=  i \alpha'(X + \frakh) \,,
\end{equation*}
so that $\alpha(v_m)X + \frakh = pi\alpha'(v_m)(X + \frakh) = \alpha'(v_m)(X + \frakh)$.

By construction, the connection 1-form $\alpha$ descends to the connection 1-form $\alpha'$, which in turn descends to $\tilde{\alpha}$. We conclude that there is a connection $D$ on $A$ satisfying $D \phi^* = \phi^* \tilde{D}$.

Let $\tilde{\omega}$ be a presymplectic form on $\tilde{M}$ and $\tilde{\mu}$ a (weakly) hamiltonian $\tilde{D}$-momentum section of $(\tilde{A}, \tilde{\omega})$. We can apply Proposition \ref{prop:HamPullback}, which states that $(A, \phi^* \tilde{\omega}, D, \phi^*\mu)$ is a (weakly) hamiltonian Lie algebroid.
\end{proof}

\begin{Remark}
The connection $D$ of Theorem  \ref{thm:HamReduced1} is not unique. In fact, the set of connections that are compatible with a given connection $\tilde{D}$ on $\tilde{A}$ is a vector space that can be described as follows: The connection 1-form $\alpha$ defined by Eq.~\eqref{eq:alphaproj} satisfies $\alpha(v_m)(\frakh) = 0$, so that $\alpha$ can be viewed as an element of $\Omega^1(M) \otimes \frakg \otimes \frakh^\circ \subset \Omega^1(M) \otimes \frakg \otimes \frakg^* \cong \Omega^1(M) \otimes \End(\frakg)$, where $\frakh^\circ = \{ \phi \in \frakg^*~|~ \phi(\frakh) = 0\}$ is the annihilator of $\frakh$. Moreover, the difference of two connection 1-forms $\alpha_1$ and $\alpha_2$ subject to Eq.~\eqref{eq:alphaproj} satisfies $(\alpha_1(v_m) - \alpha_2(v_m)) X \in \frakh$. We conclude that the space of compatible connections is isomorphic to $\Omega^1(M) \otimes \frakh \otimes \frakh^\circ$.
\end{Remark}

Before we state the next result, we recall that, if $H$ is a normal subgroup, then the adjoint action of $H$ on $H\backslash G$ is trivial because, for any coset $Hg$ and any $h\in H$, there is some $h' \in H$ for which $gh^{-1}=h'g$, and so
\begin{equation*}
h(Hg)h^{-1} = hH(gh^{-1})= hH(h'g) =(hHh')g = Hg\,.
\end{equation*}
This implies that the adjoint action of $H$ on $\frakg/\frakh$ is trivial as well, so that the associated $\frakg/\frakh$ bundle has a natural trivialization, $\frakg/\frakh \times_H M \cong \frakg/\frakh \times \tilde{M}$.

\begin{Proposition}
\label{prop:NormalTriv}
Let $H$ and $G$ be connected. Then the trivial connection on $\frakg \ltimes M$ is compatible with some connection on $\frakg/\frakh \times_H M$ if and only if $H$ is a normal subgroup. In that case the compatible connection is the trivial connection of the reduced action Lie algebroid $\frakg/\frakh \times_H M \cong \frakg/\frakh \ltimes \tilde{M}$.
\end{Proposition}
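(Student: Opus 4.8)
The plan is to deduce the statement from Corollary~\ref{cor:TrivConnCompat} by exploiting the factorization of the quotient map $\phi\colon\frakg\ltimes M\to\tilde A:=\frakg/\frakh\times_H M$ as $\phi=\pi\circ q$, where $q\colon\frakg\times M\to\frakg/\frakh\times M$ is the fibrewise-linear projection over $\mathrm{id}_M$ and $\pi\colon\frakg/\frakh\times M\to\frakg/\frakh\times_H M$ is the quotient map of Lemma~\ref{lem:InducedConn1} for $W=\frakg/\frakh$ with its (right) adjoint $H$-action. Throughout, ``compatible with the connections'' is understood in the sense of Prop.~\ref{prop:MorphcompatConn}, i.e.\ $D\phi^*=\phi^*\tilde D$.

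First I would check that $q$ intertwines the trivial connection $D$ on $\frakg\ltimes M$ with the trivial connection $D''$ on $\frakg/\frakh\times M$: a constant section of $(\frakg/\frakh)^*$ pulls back under $q$ to the constant section $p^*\ell$ of $\frakg^*$ (with $p\colon\frakg\to\frakg/\frakh$ the projection), which is $D$-horizontal, and $q^*$ is the identity on ordinary forms on $M$, so $Dq^*=q^*D''$ on generators, hence everywhere; moreover $q^*$ is injective since $p^*$ is a split injection. Writing $\phi^*=q^*\pi^*$, it then follows that for a connection $\tilde D$ on $\tilde A$ one has $D\phi^*=\phi^*\tilde D$ if and only if $D''\pi^*=\pi^*\tilde D$. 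Thus the trivial connection on $\frakg\ltimes M$ is compatible with some connection on $\tilde A$ if and only if the trivial connection on $\frakg/\frakh\times M$ is compatible, via $\pi$, with some connection on $W\times_H M=\tilde A$; and by Corollary~\ref{cor:TrivConnCompat} this holds if and only if the infinitesimal action of $\frakh$ on $W=\frakg/\frakh$ is trivial, the compatible connection then being that of the natural trivialization $\frakg/\frakh\times_H M\cong\frakg/\frakh\times\tilde M$.

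Next I would identify the infinitesimal $\frakh$-action on $\frakg/\frakh$ as $X\mapsto(Y+\frakh\mapsto[X,Y]+\frakh)$, which vanishes identically exactly when $[\frakh,\frakg]\subseteq\frakh$, i.e.\ when $\frakh$ is an ideal of $\frakg$. Using that $G$ and $H$ are connected, $\frakh$ is an ideal of $\frakg$ iff $\Ad_g\frakh=\frakh$ for all $g\in G$ (as $G$ is generated by $\exp\frakg$), and since $H$ is a closed connected subgroup this is equivalent to $gHg^{-1}=H$ for all $g$, i.e.\ to $H$ being normal; this gives the stated equivalence. Finally, when $H$ is normal the $H$-action on $\frakg/\frakh$ is trivial, so $\frakg/\frakh\times_H M=\frakg/\frakh\times\tilde M$ as vector bundles; by Prop.~\ref{prop:redLAtriv} the anchor is the descended $G/H$-action and the bracket formula~\eqref{eq:RedBracket} collapses to $[X+\frakh,Y+\frakh]=[X,Y]_\frakg+\frakh$ because $\theta(\rho(\cdot,\sigma))\in\frakh$ and $\frakh$ is an ideal, so $\tilde A\cong\frakg/\frakh\ltimes\tilde M$ and the natural-trivialization connection is precisely the trivial connection of this action Lie algebroid.

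The step I expect to require the most care is the reduction in the second paragraph — verifying that $\phi$-compatibility is genuinely equivalent to $\pi$-compatibility (so that applying Corollary~\ref{cor:TrivConnCompat} to $W=\frakg/\frakh$ is legitimate), and keeping the left/right adjoint conventions and signs from Sec.~\ref{sec:ReductionAction} straight; the Lie-theoretic facts invoked in the third paragraph are standard.
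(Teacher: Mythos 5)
Your proposal is correct and follows essentially the same route as the paper's proof: factor the quotient map through $\frakg/\frakh\times M$, observe that the trivial connection descends to the trivial connection there, and then invoke Corollary~\ref{cor:TrivConnCompat} to reduce everything to the vanishing of the adjoint $\frakh$-action on $\frakg/\frakh$, which by connectedness is equivalent to normality of $H$. Your write-up is somewhat more careful than the paper's (in particular the explicit injectivity of $q^*$ justifying the ``if and only if'' in the factorization step), but the underlying argument is the same.
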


\begin{proof}
The trivial connection on $\frakg \times M$ descends to the trivial connection on $\frakg/\frakh \times M$. By Corollary \ref{cor:TrivConnCompat} the trivial connection on $\frakg/\frakh \times M$ descends to a connection on $\frakg/\frakh \times_H M$ if and only if the adjoint action of $\frakh$ on $\frakg/\frakh$ is trivial. This is equivalent to $[\frakg,\frakh] \subset \frakh$, i.e.~$\frakh$ is a Lie algebra ideal. If $H$ and $G$ are connected this implies that $H$ is a normal subgroup.

If $H$ is a normal subgroup, then the $H$-reduced Lie algebroid is the action Lie algebroid of the induced action of $\frakg/\frakh$ on $\tilde{M}$. And by Corollary \ref{cor:TrivConnCompat} the trivial connection on $\frakg \times M$ descends to the trivial connection of $\frakg/\frakh \ltimes \tilde{M}$.
\end{proof}

From Proposition \ref{prop:NormalTriv} and Theorem \ref{thm:HamReduced1} we can draw the following remarkable conclusions:
\begin{itemize}

\item[(a)] The equivariant momentum map for a $G$-action cannot induce a hamiltonian Lie algebroid structure on the $H$-reduced Lie algebroid unless $H$ is normal.

\item[(b)] If the $H$-reduced Lie algebroid is hamiltonian, then the action Lie algebroid $\frakg \ltimes M$ is also hamiltonian but not with respect to the trivial connection unless $H$ is normal. 
 
\end{itemize}

In example~\ref{ex:cylinder2} we have already encountered the phenomenon that a Lie algebra action that is not hamiltonian in the usual sense may have a hamiltonian action Lie algebroid if we chose a connection other than the trivial one.  Observation (b) tells us that, for action groupoids with a hamiltonian reduction, this is the general case.

\section{Cohomological interpretation of hamiltonian Lie algebroids}
\label{sec:CohomInterp}

In their seminal paper \cite{AtiyahBott:1984} Atiyah and Bott made the following observation: The action of a Lie group $G$ or on a symplectic manifold is hamiltonian if and only if the symplectic form has a closed extension in a complex that computes the equivariant cohomology of the $G$-manifold $M$. This yields an insightful interpretation of the Duistermaat-Heckman formula for the symplectic volume of an integrable symplectic manifold as a localization formula of a more general type. In this section we will show that the Atiyah-Bott characterization of the equivariant momentum maps of Lie algebra actions generalizes to the bracket-compatible momentum sections of Lie algebroids.

\subsubsection{Conventions and notation for graded manifolds}
\label{sec:GradManNotation}

We adopt the following conventions for graded manifolds. All our graded manifolds arise from graded vector bundles, which are therefore used to denote the graded manifolds. (By Batchelor's theorem \cite{Batchelor1979}, this is always the case for $\bbZ_2$-graded manifolds; however, not all $\bbZ$-graded manifolds arise from graded vector bundles.) The structure rings involve a dualization of the fibre. That is, if $E$ is a graded vector bundle over the $n$-dimensional manifold $M$ with local trivialization $E|_U \cong U \times V$ over $U \subset M$, the structure sheaf is given by $\calO_{E}(U) = C^\infty(U) \otimes S(V^*)$, where $S(V^*)$ is the graded commutative algebra freely generated by the graded vector space $V^*$. Observe that the graded dualization inverts the degree, the degree $k$-component being given by $(V^*)_k = (V_{-k})^*$.

For degree shifts we use the cohomological postfix notation $V[k]$, whose degree $j$ component is given by $V[k]_j = V_{j+k}$. The shifted tangent functor $T[k]$ shifts only the degrees of the tangent directions: On a local trivialization we have $T[k]E|_U \cong U \times (\bbR^n[k] \oplus V \oplus V[k])$, where $\bbR^n$ is the fibre of $TU$. The degree shift of a graded manifold and of its tangent bundle commute, $T[j](E[k]) = (T[j]E)[k]$, so that the notation $T[j]E[k]$ is unambiguous. On a local trivialization we have
$T[j]E[k]|_U \cong U \times (\bbR^n[j] \oplus V[k] \oplus \times V[j+k] )$.

Besides $\bbZ$-gradings, we will also consider $(\bbZ \times \bbZ)$-graded manifolds, which will be called bigraded to mark the difference. (The sign rule is $\mathrm{sgn}(p,q) = (-1)^{p+q}$.) We will denote a shift by bidegree $(p,q)$ by $E[p,q]$, omitting the inner parentheses.

The global ring of functions on a supermanifold $E$ will be denoted by $\calO(E) \equiv \calO_E(M)$. A vector field of degree $k$ on a graded manifold $E$ is by definition a graded derivation of degree $k$ of $\calO(E)$. In this terminology, the de Rham differential on $\Omega(M)$ is a vector field of degree 1 on the graded manifold $T[1]M$.

Let $A \to M$ be an ordinary vector bundle, i.e.~with typical fibre $V$ concentrated in degree 0. On a trivialization $A \cong U \times V$ over $U \subset M$ we have the following structure rings:
\begin{align}
  \calO_{A[1]}(U)
  &= C^\infty(U) \otimes S(V[1]^*)
  \notag\\
  \calO_{T[1]A[1]}(U)
  &= C^\infty(U) \otimes S(\bbR^n[1]^* \oplus V[2]^* \oplus V[2]^*)
  \label{eq:OWeil1}\\
  \calO_{T[1,0]A[0,1]}(U)
  &= C^\infty(U) \otimes S(\bbR^n[1,0]^* \oplus V[0,1]^* \oplus V[1,1]^*)
  \,.
  \notag
\end{align}    
The graded commutative algebra generated by the odd vector space $V[1]^*$ is the exterior algebra $\wedge V^*$, the one generated by the even vector space $V[2]^*$ is the symmetric algebra $SV^*$.

\subsection{Generalization of equivariant cohomology models to Lie algebroids}

Constructions of models of the equivariant cohomology $H((M \times EG) / G)$ of a $G$-manifold $M$ proceed by a) constructing a model of the cohomology of $M \times EG$ which is built out of infinitesimal data yet big enough so that we can b) identify a differential subcomplex that is a model of the cohomology of the homotopy quotient $(M \times EG)/G$.

For step a) an infinitesimal model of the cohomology of $M \times EG$ can be given in terms of the action Lie algebroid $\frakg \ltimes M$ \cite{MehtaAlgebroids:2009}, in a way which generalizes to an arbitrary Lie algebroid $A$ as follows. The \textbf{Weil algebra} of $A$ is defined to be the graded algebra
\begin{equation*}
 W(A) := \calO(T[1]A[1]) \,,
\end{equation*}
of functions on the graded manifold $T[1]A[1]$, which is the graded algebra of differential forms on the graded manifold $A[1]$. For the purpose of this paper the most natural choice for the differential on $W(A)$ is
\begin{equation*}
 \DW = d + \Lie_{\DA} \,,
\end{equation*}
where $d$ is the de Rham differential and $\Lie_{\DA}$ is the Lie derivative with respect to the differential $\DA$ of the Lie algebroid cohomology of $A$, the latter considered as a vector field on $A[1]$. 
It is shown in \cite{MehtaAlgebroids:2009} (Corollary 5.16) that this differential complex is an infinitesimal model of  $H^\bullet(M) \cong H^\bullet(M \times EG)$. By analogy with the case of a group action it is  called there the BRST-model.

Step b) is more difficult. In the case of a group action we want to find the subcomplex of those elements of the model for $M \times EG$ that are the pullbacks of forms on $(M \times EG)/G$ along the bundle projection. To avoid technical subtleties about infinite dimensional manifolds, consider first a finite-dimensional $G$-principal bundle $P \to B$. Let $\alpha: \frakg \to \calX(P)$ be the corresponding infinitesimal action of the Lie algebra. A form $\phi \in \Omega(P)$ is the pullback of a form on $B$ if and only if (i) $\iota_{\alpha(X)} \phi = 0$ and (ii) $\Lie_{\alpha(X)} \phi = 0$ for all $X \in \frakg$. Forms which satisfy (i) are called \textbf{horizontal}, forms which satisfy (ii) are called \textbf{invariant}, and forms which satisfy both properties are called \textbf{basic}.\footnote{Note that, unlike horizontal and basic tangent vectors, horizontal and basic forms are well-defined without the use of a connection.} The space of basic differential forms is a differential subcomplex. When $G$ is connected and compact, this subcomplex is a model of the cohomology of $(M \times EG)/G$ (see \cite{GuilleminSternberg:1999} and references therein).

In the case of a general Lie algebroid $A$, we want to  generalize the definition of basic forms to obtain a subcomplex of $W(A)$ which, under assumptions analogous to compactness and connectedness of the group in the case of action Lie algebroids, is a model for the stack cohomology of the stack presented by the groupoid integrating $A$. The problem  of finding the most general assumptions such that the basic subcomplex which we will define below models the stack cohomology has been considered by a number of authors but remains open.\footnote{For related work see   \cite{AbadPhD:2008}, \cite{AbadCrainic:2011}, \cite{Crainic:2003}, \cite{LiblandMeinrenken:2014}, \cite{MehtaAlgebroids:2009}, \cite{WeinsteinXu:1991}.} For our purposes, this is not an issue, though, since we are working with the Lie algebroid itself, the role of the stack being merely heuristic.

\subsection{The basic subcomplex of the Weil algebra}

In the case that $A = \frakg \ltimes M$ is an action Lie algebroid, the shifted tangent bundle $T[1]A[1]$ naturally splits as a vector bundle over $M$
\begin{equation*}
T[1](M \times \frakg)[1] \cong T[1]M \oplus \frakg[1] \oplus \frakg[2].
\end{equation*}
Therefore, the Weil algebra factors as a tensor product, with
\begin{equation}
\label{eq:standardWeil}
 W(\frakg \ltimes M)_k
 \cong \bigoplus_{k = p + q + 2r} 
 \Omega^p(M) \otimes \wedge^q \frakg^* \otimes S^r \frakg^* 
 \,,
\end{equation}
which is Eq.~\eqref{eq:OWeil1} for $V = \frakg$.

\begin{Remark}
We write the action Lie algebroid as $\frakg \ltimes M$ with $\frakg$ on the left to indicate that it is a left action; i.e.~the anchor induces a homomorphism of Lie algebras (see also Section \ref{sec:conventions} on our conventions). In graded manifolds, however, and for de Rham cohomology with coefficients (as in Section \ref{sec:Category}) it is customary to place the fibre of a vector bundle on the right. We will follow this convention here, which is why we have to use the isomorphism of vector bundles $\frakg \ltimes M \cong M \times \frakg$ in the last two equations and in the next equation.
\end{Remark}

A graded function $\phi \in W(\frakg \ltimes M)$ is horizontal if $\IW_X \phi = 0$ for all $X \in \frakg$ where the interior derivative $\IW_X$ with respect to $X \in \frakg$ is given by inserting $X$ into the second factor, $\IW_X (\tau \otimes \alpha \otimes \beta) := (-1)^{|\tau|} \omega \otimes (\iota_X \alpha) \otimes \beta$. The subalgebra of horizontal functions is then given by
\begin{equation*}
 \bigl( W(\frakg \ltimes M)_{\mathrm{hor}} \bigr)_k 
 \cong \bigoplus_{k = p + 2r} 
 \Omega^p(M) \otimes S^r \frakg^* \,.
\end{equation*}

For a general vector bundle $A$, the tangent bundle $T[1]A[1]$ is not naturally a graded vector bundle over $M$, but it does have the natural structure of a graded manifold with core $M$ and structure ring $W(A) = \calO(T[1]A[1])$. A factorization like \eqref{eq:standardWeil} amounts to a splitting of the short exact sequence of graded manifolds,

\begin{equation}
\label{eq:TAseq}
\xymatrix{ 
0 \ar[r] 
& A[2] \ar[r]
& T[1]A[1] \ar[r] 
& T[1]M \oplus A[1] 
\ar[r] 
& 0 \,,
}
\end{equation}
by the horizontal  lift of a linear connection on $A$, which in the case of an action Lie algebroid we have taken to be the product connection on $M \times \frakg$.

\begin{Remark}
 A sequence of (non-negatively) graded manifolds with the same core like \eqref{eq:TAseq} is called exact if it induces an exact sequence of vector spaces on all degree $> 0$ components of the structure rings. Alternatively, \eqref{eq:TAseq} can be viewed as a short exact sequence of double vector bundles \cite{GraciasazMehta:2010}.
\end{Remark}

It is useful for our purposes to refine the grading to a bigrading $T[1,0]A[0,1]$. (The notation was explained in Section~\ref{sec:GradManNotation}.) The sequence~\eqref{eq:TAseq} then becomes a sequence of bigraded manifolds:
\begin{equation}
\label{eq:BigradedShortExact}
\xymatrix{ 
0 \ar[r] 
& A[1,1] \ar[r] 
& T[1,0]A[0,1] \ar[r]_-p 
& T[1,0]M \oplus A[0,1] \ar@<-1ex>[l]_-{h} 
\ar[r] 
& 0 \,.
}
\end{equation}
Since the horizontal lift $h: TM \oplus A \to TA$ of a linear connection on $A$ is linear in $TM$ and affine in $A$ it respects the bigrading, so that it is a right inverse of $p$ in the category of graded manifolds. In other words, $h$ induces an isomorphism $T[1,0]A[0,1] \cong T[1,0]M \oplus A[0,1] \oplus A[1,1]$.

On the rings of functions, the maps $p$ and $h$ induce morphisms
\begin{equation}
\label{eq:pStarhStar}
\xymatrix@C+2ex{
  W(A) = \calO(T[1,0]A[0,1])
  \ar@<+0.5ex>[r]^-{h^*}
  & \calO(T[1,0]M \oplus A[0,1])
  = \Omega(M,A) \ar@<+0.5ex>[l]^-{p^*}
} \,,
\end{equation}
where $\Omega(M,A)$ is the bigraded algebra of Section \ref{sec:Category}. The pullbacks satisfy $h^* p^* = (ph)^* = \id$. The map
\begin{equation*}
  (hp)^* : W(A) \longrightarrow W(A) \,,
\end{equation*}
is the projection onto the image of $p^*$. The complementary projection $\id - (hp)^*$ is the projection onto the kernel of $p^*$, which is a subalgebra of $W(A)$ that is isomorphic to $\calO(A[1,1])$. We conclude that the choice of a connection amounts to a factorization of the bigraded Weil algebra
\begin{equation}
\label{eq:Weilfactorization}
 W(A)_{p,q}
 \cong \bigoplus_{\substack{ p = j+l\\ q = k+l}}
 \Omega^j(M) \otimes_{C^\infty(M)} 
 \Gamma(\wedge^k A^*) \otimes _{C^\infty(M)}
 \Gamma(S^l A^*) \,,
\end{equation}
where we have replaced $\Omega(M,A)$ with its explicit form~\eqref{eq:OmegaMA}. 

For every section $a$ of $A[1]$ the \textbf{$A$-interior derivative} operator  $\IW_a$ on the factorized Weil algebra is defined by the insertion of $a$ into the second factor:
\begin{equation*}
\IW_a (\tau \otimes \alpha \otimes \beta) := (-1)^{|\tau|} \tau\otimes (\IA_a \alpha) \otimes \beta.
\end{equation*}
It will be used to define horizontal and invariant forms in Definition \ref{def:basic} below along with the \textbf{$A$-Lie derivative}, which is defined as the graded commutator
\begin{equation*}
 \LW_a := [\IW_a, \DW] = \IW_a \DW + \DW \IW_a \,, 
\end{equation*}
where $\DW = d + \Lie_{\DA}$ is the BRST-differential on $W(A)$. Since $d$ is of bidegree $(1,0)$ and $\Lie_{\DA}$ is of bidegree $(0,1)$, the $A$-Lie derivative has components of bidegrees $(1,-1)$ and $(0,0)$.

\begin{Definition}
\label{def:basic}
Let $A \to M$ be a Lie algebroid with a connection. A graded function $\phi \in W(A)$ is called
\begin{itemize}

\item[(i)] \textbf{horizontal} if $\IW_a \phi = 0$ for all sections $a$  of $A$,

\item[(ii)] \textbf{invariant} if $\LW_a \phi = 0$ for all sections $a$  of $A$,

\item[(iii)] \textbf{basic} if it is horizontal and invariant.

\end{itemize}
\end{Definition}

Since $\IW_a$ is a derivation of the same degree $(0,-1)$ for all $a$, the set of horizontal elements of $W(A)$ is a bigraded subalgebra. Since $\LW_a$ is a derivation of the same total degree $0$ for all $a$, the set of invariant elements is a graded (but not bigraded) subalgebra. It follows that  the set of all basic elements $W(A)$ is a graded subalgebra of $W(A)$. Moreover, for a basic function we have $\IW_a \DW \phi = \LW_a \phi = 0$ and $\LW_a \DW \phi = \DW \LW_a \phi = 0$. This implies the following:

\begin{Proposition}
The set $W(A)_\mathrm{bas}$ of basic elements of $W(A)$ is a differential graded subalgebra of $\bigl( W(A), \DW \bigr)$.
\end{Proposition}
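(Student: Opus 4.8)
The plan is to verify directly that $W(A)_{\mathrm{bas}}$ is closed under $\DW$, since the fact that it is a graded subalgebra is already established in the text immediately preceding the statement. So the only thing left to prove is: if $\phi \in W(A)$ is basic, then $\DW\phi$ is again basic, i.e.~$\DW\phi$ is both horizontal and invariant. This is exactly the content of the two displayed identities that appear just before the proposition in the remark, so the proof amounts to writing those out cleanly.

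First I would recall the two structural identities available: the definition $\LW_a = [\IW_a, \DW] = \IW_a\DW + \DW\IW_a$ (as a graded commutator of derivations, with $\IW_a$ of odd degree and $\DW$ of odd degree), and the fact that $\DW^2 = 0$, which holds because $\DW = d + \Lie_{\DA}$ with $d^2 = 0$, $\Lie_{\DA}^2 = \tfrac12\Lie_{[\DA,\DA]} = 0$ (as $\DA$ is a differential, $[\DA,\DA]=0$), and $[d,\Lie_{\DA}] = \Lie_{d\DA} = 0$ since $d$ and $\DA$ commute as derivations on the respective complexes. I would state $\DW^2=0$ as a known fact about the BRST model (it is implicit in the reference to Mehta's Corollary~5.16) rather than reprove it.

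The two verifications are then immediate. For horizontality of $\DW\phi$: assume $\phi$ is basic, so $\IW_a\phi = 0$ and $\LW_a\phi = 0$ for all sections $a$. Then
\begin{equation*}
  \IW_a \DW\phi = \LW_a\phi - \DW\IW_a\phi = 0 - \DW(0) = 0,
\end{equation*}
using the definition of $\LW_a$ and $\IW_a\phi=0$. For invariance of $\DW\phi$:
\begin{equation*}
  \LW_a \DW\phi = (\IW_a\DW + \DW\IW_a)\DW\phi = \IW_a\DW^2\phi + \DW\IW_a\DW\phi = \DW\LW_a\phi = \DW(0) = 0,
\end{equation*}
where I used $\DW^2 = 0$ in the first term and, in the second term, $\IW_a\DW\phi = \LW_a\phi - \DW\IW_a\phi = \LW_a\phi$ together with $\LW_a\phi = 0$, or more simply the relation $\DW\IW_a\DW = \DW\LW_a$ which follows from $\DW\IW_a = \LW_a - \IW_a\DW$ and $\DW^2=0$. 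Hence $\DW\phi$ is basic.

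I don't anticipate a genuine obstacle here: the statement is a formal consequence of the Cartan-calculus relations $[\IW_a,\DW] = \LW_a$ and $\DW^2 = 0$, both of which are either definitional or already in hand. The only point requiring a sentence of care is confirming $\DW^2 = 0$ on $W(A)$ (so that the second computation goes through), and noting that the subalgebra property has already been checked, so that $W(A)_{\mathrm{bas}}$ is a \emph{graded} (not merely bigraded) subalgebra on which $\DW$ restricts — making it a differential graded subalgebra of $(W(A),\DW)$ as claimed.
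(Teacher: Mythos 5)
Your proof is correct and follows essentially the same route as the paper, which establishes the proposition by exactly the two identities you write out: $\IW_a\DW\phi = \LW_a\phi = 0$ and $\LW_a\DW\phi = \DW\LW_a\phi = 0$ for basic $\phi$, together with the already-noted fact that the basic elements form a graded subalgebra. Your added remark that the second identity rests on $\DW^2 = 0$ (which follows from $d^2=0$, $[\DA,\DA]=0$, and $[d,\Lie_{\DA}]=0$ from the Cartan calculus) is a correct and worthwhile point of care that the paper leaves implicit.
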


\begin{Remark}
There are two bundles in play here. On the one hand, we have the vector bundle $A \to M$. On the other hand, we have the principal bundle $M \times EG \to (M \times EG)/G$ and, implicitly, its generalization to differentiable stacks. Therefore, we have also two notions of ``horizontal''. On the one hand, we have the notion of horizontal vectors of the connection $D$ on $A$. On the other hand, we have the notion of horizontal forms of Definition \ref{def:basic}. We believe that it is always clear from the context which one is meant, so that we do not distinguish them linguistically.
\end{Remark}

\subsection{Statement of the main theorem}

In the case of action Lie algebroids, equivariant momentum maps can be identified with closed basic extensions of symplectic forms regardless of whether the basic subcomplex is a model for equivariant cohomology or not.\footnote{Recall that we require a hamiltonian momentum map to be equivariant with respect to the action of the Lie algebra.} We will now prove that this statement generalizes to bracket-compatible momentum sections of Lie algebroids. Recall from~\eqref{eq:pStarhStar} that the projection $p: TA \to TM \oplus A$ and the horizontal lift $h: TM \oplus A \to TA$ of a connection induce morphisms $p^*$ and $h^*$ between the rings of graded functions.

\begin{Definition}
\label{def:WAextension}
Let $A \to M$ be a Lie algebroid with a connection. An \textbf{extension} of $\tau \in \Omega(M,A)$ to $W(A)$ is an element $\bar{\tau} \in W(A)$ of the same total degree as $\tau$ such that $h^* \bar{\tau} = \tau$.
\end{Definition}

\begin{Remark}
Since $h^*p^* = \id$, every $\tau \in \Omega(M,A)$ has the extension $p^* \tau \in W(A)$. However, $p^* \tau$ is generally not basic.
\end{Remark}

\begin{Theorem}
\label{thm:EquivExt}
A presymplectically anchored Lie algebroid $(A,D,\omega)$ is hamiltonian if and only if $\omega$ has a $\DW$-closed basic extension to $W(A)$.
\end{Theorem}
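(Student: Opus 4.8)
The plan is to construct the extension explicitly and show, by working in the bigraded Weil algebra $W(A)$ with its bigrading $W(A)_{p,q}$ from Eq.~\eqref{eq:Weilfactorization}, that the hamiltonian conditions of Prop.~\ref{prop:OmegaHamConds} correspond exactly to the basic-and-closed conditions. First I would recall from the splitting induced by the connection $D$ that an element of $W(A)$ of total degree $2$ decomposes into components of bidegree $(2,0)$, $(1,1)$, and $(0,2)$, living respectively in $\Omega^2(M)$, $\Omega^1(M,A^*)$, and $\Gamma(S^2 A^* \oplus \wedge^2 A^*)$ (the degree-$(0,2)$ piece splitting further into a symmetric ``curvature-type'' part from $A[1,1]$ and an antisymmetric part from $\wedge^2 A^*$). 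The natural candidate extension is
\begin{equation*}
 \bar\omega := p^*\omega + \mu_\sharp + \phi \,,
\end{equation*}
where $\mu_\sharp$ is the degree-$(1,1)$ element determined by the momentum section $\mu$ (more precisely, $\mu_\sharp$ should be built so that $h^*\bar\omega = \omega$, i.e.\ $\mu_\sharp$ and $\phi$ are in the kernel of $h^*$), and $\phi$ is the degree-$(0,2)$ ``moment'' component which in the action Lie algebroid case is precisely the component that Atiyah--Bott identify with the momentum map viewed as an element of $\frakg^*$. The condition $h^*\bar\omega = \omega$ is automatic by this construction, so $\bar\omega$ is an extension in the sense of Def.~\ref{def:WAextension}.

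Next I would compute $\IW_a\bar\omega$ and $\LW_a\bar\omega$ and $\DW\bar\omega$ and match them against Eqs.~\eqref{eq:HamConds2}. Horizontality $\IW_a\bar\omega = 0$ should pin down $\phi$ in terms of $\mu$: inserting $a$ into the $\mu_\sharp$ term produces (up to sign) $\langle\mu,a\rangle$ paired into a one-form, which must be cancelled by $\IW_a$ applied to $\phi$, forcing $\phi$ to be (essentially) the image of $\mu$ in $\Gamma(S^1 A^*)$ appearing in the degree-$(0,2)$ slot; this is the Weil-algebra incarnation of ``$\mu$ is the moment''. Then $\DW = d + \Lie_{\DA}$ applied to $\bar\omega$ has a bidegree-$(1,0)$ part, a bidegree-$(0,1)$ part, and a bidegree-$(1,-1)$ cross-term coming from the interplay of $d$ with the connection-dependent splitting; here the key is Prop.~\ref{prop:CommirhoD}, which expresses $\check D = [\iota_\rho,D] + \iota_T$, and Prop.~\ref{prop:Dcheckomega}, which says $\check D\omega = 0$ is equivalent to $D\gamma = 0$. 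Using $\gamma = -\iota_\rho\omega$ and these identities, the vanishing of the bidegree-$(1,0)$ component of $\DW\bar\omega$ should reduce to $D\gamma = 0$ (already assumed), the vanishing of the bidegree-$(0,1)$ component to $D\mu = -\iota_\rho\omega$ (the momentum-section condition), and the vanishing of the bidegree-$(1,-1)$ cross-term to $\check D\mu = -\tfrac12\iota_\rho\iota_\rho\omega$ (bracket-compatibility) --- the last one being where the torsion term $\iota_T$ in Prop.~\ref{prop:CommirhoD} enters, consistent with the torsion reformulation in Prop.~\ref{prop:TorsionSymp}. Invariance $\LW_a\bar\omega = 0$ should then follow for free from closedness plus horizontality, since $\LW_a = [\IW_a,\DW]$ and both $\IW_a\bar\omega$ and $\DW\bar\omega$ vanish.

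The converse direction runs the same computation backwards: given any $\DW$-closed basic extension $\bar\omega$, its bidegree decomposition $\bar\omega = p^*\omega + \mu_\sharp + \phi$ (the $(2,0)$-component being forced to be $p^*\omega$ by the extension property together with the fact that $h^*$ is the identity on the $(2,0)$-part) has horizontality forcing $\phi$ to be determined by $\mu := $ (the section of $A^*$ read off from $\mu_\sharp$), and closedness then delivers exactly the three equations of Prop.~\ref{prop:OmegaHamConds}, hence $(A,D,\omega,\mu)$ is hamiltonian. The main obstacle I anticipate is purely bookkeeping but genuinely delicate: getting the signs and combinatorial factors right in the bigraded decomposition, in particular the factor $\tfrac12$ in the third condition of \eqref{eq:HamConds2} and the sign in $\gamma = -\iota_\rho\omega$, and carefully tracking how the de Rham differential $d$ fails to respect the connection-induced splitting (this failure is exactly the curvature, and its contribution must be shown to land in the part of $W(A)$ that does not obstruct closedness once the other conditions hold, or else to be absorbed into $\DW$ of the $\phi$-term). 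A secondary subtlety is making precise the claim that invariance is automatic; this needs the observation that $\DW$ is itself built from $d$ and $\Lie_{\DA}$ in a way compatible with the $A$-interior derivatives, so that $[\IW_a,\DW]\bar\omega$ genuinely vanishes once $\IW_a\bar\omega = 0$ and $\DW\bar\omega = 0$, with no leftover connection-dependent term --- a point worth checking against the bidegree-$(1,-1)$ component of $\LW_a$ noted just before Def.~\ref{def:basic}.
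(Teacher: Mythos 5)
Your overall strategy---write down an explicit horizontal extension of $\omega$, decompose $\DW\bar\omega$ by bidegree, and match the resulting equations against Prop.~\ref{prop:OmegaHamConds}---is the right one and is essentially the paper's, and your observation that a horizontal $\DW$-closed element is automatically basic (so invariance comes for free) is correct. But your bookkeeping of where the momentum datum lives in $W(A)$ is wrong, and this derails the computation. In the factorization~\eqref{eq:Weilfactorization} the symmetric slot $\Gamma(S^1A^*)$ sits in bidegree $(1,1)$, not $(0,2)$: the generators of the kernel of $h^*$ are $\eta^i = \dot\theta^i + \Omega^i_{\alpha j}\dot x^\alpha\theta^j$, of bidegree $(1,1)$, and the horizontal subalgebra is locally $\Omega(U)\otimes\bbR[\eta^i]$, which contains \emph{no} bidegree-$(0,2)$ elements at all. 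Consequently your ansatz $\bar\omega = p^*\omega + \mu_\sharp + \phi$ with a $(0,2)$-component $\phi$ cannot be horizontal unless $\phi=0$ (the contraction $\IW_a\phi$ lands in bidegree $(0,1)$ and nothing else in $\bar\omega$ can cancel it), so the cancellation you propose between $\IW_a\mu_\sharp$ and $\IW_a\phi$ does not typecheck. The correct general horizontal extension is $\bar\omega = p^*\omega + \mu_i\eta^i = p^*\omega + dp^*\mu - p^*D\mu$, with the entire momentum datum in the $(1,1)$ slot; and the components of $\DW\bar\omega$ that must vanish have bidegrees $(3,0)$, $(2,1)$, $(1,2)$, yielding respectively $d\omega=0$, the momentum-section equation $D\mu=-\iota_\rho\omega$, and (part of) bracket-compatibility---not the $(1,0)$/$(0,1)$/$(1,-1)$ assignment you describe, and note that $D\gamma=0$ is not itself one of these components.

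The more serious gap is that the bidegree-$(1,2)$ component of $\DW\bar\omega$ splits under the complementary projections $(hp)^*$ and $\id-(hp)^*$ into \emph{two} conditions, and you account for only one. The $\bigl(\id-(hp)^*\bigr)$-part gives $\check D\mu + \tfrac12\iota_\rho\iota_\rho\omega = 0$, i.e.\ bracket-compatibility; but the $(hp)^*$-part gives the additional equation $[D,\check D]\mu = \langle\mu,\iota_\rho R + DT\rangle = 0$, which is \emph{not} one of the hamiltonian axioms and must be shown to be a consequence of the others. This is exactly where the hard work lies: one needs the identity $\iota_\rho\check D\omega + \tfrac12 D\iota_\rho\iota_\rho\omega - \iota_T\iota_\rho\omega = \tfrac12\iota_\rho\iota_\rho d\omega$ (Lemma~\ref{lem:Drhorhoomega}, a genuinely lengthy computation using $\check D = [\iota_\rho,D]+\iota_T$ and the definitions of $T$ and the dual $A$-connection), combined with $d\omega=0$, to conclude that the extra equation holds automatically. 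Your remark that the curvature contribution ``must be shown to land in the part of $W(A)$ that does not obstruct closedness'' gestures at this but does not identify the condition or supply an argument; without it, the ``if'' direction of the theorem is not established, since the candidate extension could fail to be closed even when all three hamiltonian conditions hold.
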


While the original result by Atiyah and Bott is more of an observation than a theorem, the proof of Theorem~\ref{thm:EquivExt} is quite involved. One method of proof is by lengthy calculations in local coordinates which involve the connection on $A$, its curvature, and its torsion. A better and more insightful proof, which we will present in Section \ref{subsec:proof} below, is obtained by relating the bigraded algebra of Section \ref{sec:Category} to the Weil algebra $W(A)$.

\subsection{The Cartan calculi}
\label{sec:CartanCalculi}

By definition, the algebra of functions on the graded manifold $A[1]$ is the graded algebra of exterior $A$-forms, $\calO(A[0,1])_k = \Gamma(M, \wedge^k A^* )$. The Lie algebroid interior derivative with respect to a section $a \in \Gamma(A)$ is the derivation of degree $-1$ given by $\IA_a \nu = \nu(a)$ for $\nu \in \Gamma(A^*)$. In the terminology of graded manifolds, $\IA_a$ is a vector field on $A[1]$ of degree $-1$. Geometrically, we may think of $\IA_a$ as the vector field constant on fibres of $A[1]$ whose value along each fibre is given by the value of $a$ at the basepoint in $M$. In this terminology the differential $\DA$ of Lie algebroid cohomology is a vector field on $A[1]$ of degree +1. The Lie algebroid Lie derivative by $a$ is then defined as the graded commutator $\LA_a = [\IA_a, \DA],$ which is a vector field of degree 0. Denoting the vector space of graded vector fields on $A[1]$ by $\calX(A[1])$ we have 
along with $ \DA   \in \calX(A[1])_{1} $ the vector fields
\begin{equation*}
 \IA_a \in \calX(A[1])_{-1} \,,\quad
 \LA_a \in \calX(A[1])_{0} \,,
\end{equation*}
for every section $a \in \Gamma(A)$. The subspace spanned by these derivations is a graded Lie subalgebra of $\calX(A[1])$ with commutation relations
\begin{equation*}
\label{eq:LieAlgdCartanCalc}
\begin{gathered}{}
  [\DA,\DA] = 0 \,,\\\
  [\IA_a, \IA_b] = 0 \,,\quad
  [\IA_a, \DA] = \LA_a \,,\\
  [\LA_a, \IA_b] = \IA_{[a,b]} \,,\quad
  [\LA_a, \LA_b] = \LA_{[a,b]} \,,\quad
  [\LA_a, d] = 0 \,,
\end{gathered}
\end{equation*}
for all $a, b \in \Gamma(A)$. We call this graded Lie algebra the \textbf{Lie algebroid Cartan calculus} or the \textbf{$A$-Cartan calculus}.

On $\calO(T[1,0]A[0,1])$, which is the bigraded algebra of differential forms on $A[0,1]$, we have the Cartan calculus of the de Rham complex consisting of the de Rham differential,
\begin{equation*}
 d \in \calX(T[1,0]A[0,1])_{1,0} \,,
\end{equation*}
together with the interior derivative and Lie derivative which are maps from bigraded vector fields on $A[0,1]$ to bigraded vector fields on $T[1,0]A[0,1]$,
\begin{equation*}
\begin{aligned}
 \iota: \calX(A[0,1]) &\longrightarrow 
 \calX(T[1,0]A[0,1])[-1,0] \,,\\ 
 \Lie: \calX(A[0,1]) &\longrightarrow 
 \calX(T[1]A[1])[0,0] \,.
\end{aligned}
\end{equation*}
The interior derivative is $\calO(A[0,1])$-linear and so, a fortiori, $C^\infty(M)$-linear. The de Rham differential, interior derivatives, and Lie derivatives also span a bigraded Lie subalgebra of the derivations on $W(A)$ with commutator relations 
\begin{equation*}
\label{eq:DeRhamCartanCalc}
\begin{gathered}{}
  [d,d] = 0 \,,\\
  [\iota_v, \iota_w] = 0 \,,\quad
  [\iota_v, d] = \Lie_v \,,\\
  [\Lie_v, \Lie_w] = \iota_{[v,w]} \,,\quad
  [\Lie_v, \Lie_w] = \Lie_{[v,w]} \,,\quad
  [\Lie_v, d] = 0 \,,
\end{gathered}
\end{equation*}
for all $v, w \in \calX(A[0,1])$. Moreover, the Lie derivative satisfies the Leibniz rule
\begin{equation*}
\label{eq:deRhamLeibniz}
\begin{split}
 \Lie_{fv}
 &= [\iota_{fv}, d] = f \iota_v d - (-1)^{|v|-1} d\, \iota_{fv} \\
 &= (-1)^{|v|} df\, \iota_v + f \Lie_v \,,
\end{split}
\end{equation*}
for every bigraded function $f$ and bigraded vector field $v$ on $A[0,1]$. We call this the \textbf{de Rham Cartan calculus} on $A[0,1]$.

Since the operations $\DA$, $\IA_a$, and $\LA_a$ of the Lie algebroid Cartan calculus are vector fields in $\calX(A[0,1])$, the interior derivative and Lie derivative maps of the de Rham Cartan calculus on $A[0,1]$ can be applied to them. We thus obtain for every section $a$ of $A$ the vector fields
\begin{equation*}
 \iota_{\IA_a}, \iota_{\LA_a}, \iota_{\DA}, \Lie_{\IA_a}, \Lie_{\LA_a}, \Lie_{\DA}, d \in \calX(T[1,0]A[0,1])
\end{equation*}
which have the following bidegrees:
\begin{alignat*}{3}
  \deg \iota_{\IA_a} &= (-1,-1) \,,\quad&
  \deg \iota_{\LA_a} &= (-1,0) \,,\quad&
  \deg \iota_{\DA} &= (-1,1)
  \\ 
  \deg \Lie_{\IA_a} &= (0,-1) \,,&
  \deg \Lie_{\LA_a} &= (0,0) \,,&
  \deg \Lie_{\DA} &= (0,1)
  \\ 
  && \deg d &= (1,0)\,. && 
\end{alignat*}
The subspace of $\calX(T[1,0]A[0,1])$ spanned by these bigraded vector fields is closed under the commutator bracket. The commutator relations are straight-forward to compute:

First, all interior derivatives commute:
\begin{equation*}
\begin{gathered}{}
 [\iota_{\IA_a}, \iota_{\IA_b}]
 = [\iota_{\IA_a}, \iota_{\LA_b}]
 = [\iota_{\IA_a}, \iota_{\DA}]
 = 0  \,,
\\
 [\iota_{\LA_a}, \iota_{\LA_b}]
 = [\iota_{\LA_a}, \iota_{\DA}] 
 = [\iota_{\DA}, \iota_{\DA}] = 0 \,.
\end{gathered}
\end{equation*}
The Lie derivative is a homomorphism of graded Lie algebras:
\begin{equation*}
\begin{gathered}{}
 [\Lie_{\DA}, \Lie_{\DA}] = 0 \,,\quad
 [\Lie_{\IA_a}, \Lie_{\IA_b} ] = 0 \,, 
 \quad [\Lie_{\IA_a}, \Lie_{\DA}] = \Lie_{\LA_a} \,, \\
 [\Lie_{\LA_a}, \Lie_{\DA}] = 0 \,, \quad
 [\Lie_{\LA_a}, \Lie_{\IA_b}] = \Lie_{\IA_{[a,b]} } \,,\quad
 [\Lie_{\LA_a}, \Lie_{\LA_b}] = \Lie_{ \LA_{[a,b]} } \,.
\end{gathered}
\end{equation*}
The Lie derivative acts on the interior derivative by the adjoint action on the argument:
\begin{equation*}
\begin{gathered}{}
 [\Lie_{\IA_a}, \iota_{\IA_b}] = 0 \,,\quad
 [\Lie_{\IA_a}, \iota_{\IA_b}] = \iota_{\IA_{[a,b]}} \,,\quad
 [\Lie_{\IA_a}, \iota_{\DA}] = \iota_{\LA_a} \,, 
\\
 [\Lie_{\LA_a}, \iota_{\IA_b}] = \iota_{\IA_{[a,b]}} \,,\quad
 [\Lie_{\LA_a}, \iota_{\LA_b}] = \iota_{\LA_{[a,b]}} \,,\quad
 [\Lie_{\LA_a}, \iota_{\DA}] = 0 \,,
\\
 [\Lie_{\DA}, \iota_{\IA_b}] = \iota_{\LA_b} \,,\quad
 [\Lie_{\DA}, \iota_{\LA_b}] = 0 \,,\quad
 [\Lie_{\DA}, \iota_{\DA}] = 0 \,.
\end{gathered}
\end{equation*}
The commutator of the interior derivative with the differential is the Lie derivative:
\begin{equation*}
 [\iota_{\IA_a}, d] = \Lie_{\IA_a} \,,\quad
 [\iota_{\LA_a}, d] = \Lie_{\LA_a} \,,\quad
 [\iota_{\DA}, d] = \Lie_{\DA} \,. 
\end{equation*}
Finally, the de Rham differential commutes with the Lie derivative:
\begin{equation*}
 [d, \Lie_{\IA_a}] = 
 [d, \Lie_{\LA_a}] = 
 [d, \Lie_{\DA}] = 0 \,. 
\end{equation*}

\subsection{Local bigraded coordinates}
\label{sec:LocCoord}

\subsubsection{Bigraded coordinates}

Let $\{x^\alpha\}$ be local coordinates on $U \subset M$ and $\{a_i \in \Gamma(U,A)\}$ a basis of local sections of $A$, which together define a local trivialization of the vector bundle $A$. Let $\{\theta^i\}$ be the basis of local sections of $A^*$ dual to $\{a_i\}$, $\theta^i(a_j) = \delta^i_j$. Then $\{x^\alpha, \theta^i \}$ are local bundle coordinates of $A$. We shall use Greek indices $\alpha, \beta$ to label the coordinates of base manifold $M$ and Latin indices $i,j,k$ for the fibre coordinates of $A$.

We view $x^\alpha$ and $\theta^i$ as local coordinate functions of the bigraded manifold $A[0,1]$ of degrees $\deg(x^\alpha) = (0,0)$ and $\deg(\theta^i) = (0,1)$ that generate the bigraded ring of functions $\calO(A[0,1])|_U = C^\infty(U) \otimes \Gamma(U, \wedge A^*)$. The corresponding coordinate vector fields have bidegrees $\deg(\frac{\partial}{\partial x^\alpha}) = (0,0)$ and $\deg(\frac{\partial}{\partial \theta^i}) = (0,-1)$. As it is customary for graded manifolds, we will denote the coordinate 1-forms dual to the coordinate vector fields by $\dot{x}^\alpha \equiv dx^\alpha$ and $\dot{\theta}^i \equiv d\theta^i$,
\begin{equation}
\label{eq:DualGradedCoord}
 \iota_{\frac{\partial}{\partial x^\alpha}}\dot{x}^\beta 
 = \delta^\beta_\alpha \,,\quad
 \iota_{\frac{\partial}{\partial \theta^k}} \dot{\theta}^j = \delta^j_k \,.
\end{equation}
The functions $\{ x^\alpha, \theta^i, \dot{x}^\alpha, \dot{\theta}^i \}$ are local coordinates of the graded manifold $T[1,0]A[0,1]$ of bidegrees
\begin{equation*}
 \deg(x^\alpha) = (0,0)\,,\quad
 \deg(\theta^i) = (0,1)\,,\quad
 \deg(\dot{x}^\alpha) = (1,0)\,,\quad
 \deg(\dot{\theta}^i) = (1,1) \,.
\end{equation*}

\subsubsection{The de Rham Cartan calculus}

The action of the de Rham differential on the local coordinate functions is given by
\begin{equation*}
 dx^\alpha = \dot{x}^\alpha \,,\quad 
 d\theta^i = \dot{\theta}^i \,, \quad
 d\dot{x}^\alpha = 0 \,,\quad
 d\dot{\theta}^i = 0 \,.
\end{equation*}
Viewed as bidegree $(1,0)$ vector field on $T[1,0]A[0,1]$, it can be written as
\begin{equation}
\label{eq:deRhamDcoord}
 d 
 = \dot{x}^\alpha \frac{\partial}{\partial x^\alpha} 
 + \dot{\theta}^i \frac{\partial}{\partial \theta^i} \,.
\end{equation}
The interior derivative of the graded de Rham complex of $A[1]$ is given by Eq.~\eqref{eq:DualGradedCoord} and by zero for all other interior derivatives of graded coordinates. Viewed as graded vector fields on $T[1]A[1]$ the interior derivatives of the coordinate vector fields are written as
\begin{equation*}
 \iota_{\frac{\partial}{\partial x^\alpha}} 
 = \frac{\partial}{\partial \dot{x}^\alpha}
 \,,\quad
 \iota_{\frac{\partial}{\partial \theta^k}} 
 = \frac{\partial}{\partial \dot{\theta}^k}\,.
\end{equation*}
The Lie derivative $\Lie_v = [\iota_v, d]$ with respect to the coordinate vector fields is given by
\begin{equation*}
 \Lie_{\frac{\partial}{\partial x^\alpha} } 
 = \frac{\partial}{\partial x^\alpha} \,,\quad
 \Lie_{\frac{\partial}{\partial \theta^i} } 
 = \frac{\partial}{\partial \theta^i} \,.
\end{equation*}

\subsubsection{The Lie algebroid Cartan calculus}

In terms of the basis of local sections of $A$ the Lie algebroid structure takes the form
\begin{equation*}
 [a_i, a_j] = c^k_{ij} a_k \,,\quad
 \rho(a_i) = \rho^\alpha_i \frac{\partial}{\partial x^\alpha} \,,
\end{equation*}
where $c^k_{ij}, \rho^\alpha_i \in C^\infty(M)$ are the structure functions. The Lie algebroid differential is given in terms of local bundle coordinates by
\begin{equation}
\label{eq:dAlocal}
 \DA
 = \rho^\alpha_i \theta^i \frac{\partial}{\partial x^\alpha}
 - \frac{1}{2} c^k_{ij} \theta^i \theta^j 
   \frac{\partial}{\partial\theta^k} \,.
\end{equation}
The Lie algebroid interior derivative is given by $\IA_{a_k} \theta^j = \theta^j(a_k) = \delta^j_k$, which can be viewed as graded vector field
\begin{equation*}
 \IA_{a_k} = \frac{\partial}{\partial\theta^k} \,.
\end{equation*}
For the Lie algebroid Lie derivative defined as $\LA_a = [\IA_a, \DA]$ we thus obtain
\begin{equation*}
 \LA_{a_i} 
 = \rho^\alpha_i \frac{\partial}{\partial x^\alpha}
 - c^k_{ij} \theta^j \frac{\partial}{\partial\theta^k} \,.
\end{equation*}

\subsubsection{The BRST-differential}

The de Rham interior derivative with respect to a vector field on $A[0,1]$ is given by
\begin{equation*}
  \iota_{\frac{\partial}{\partial x^\alpha}}
  = \frac{\partial}{\partial \dot{x}^\alpha} \,,\quad
  \iota_{\frac{\partial}{\partial \theta^i}}
  = \frac{\partial}{\partial \dot{\theta}^i} \,. 
\end{equation*}
For the interior derivative with respect to the Lie algebroid differential we thus obtain
\begin{equation}
\label{eq:iotaDA}
 \iota_{\DA} 
 = \rho^\alpha_i \theta^i \frac{\partial}{\partial \dot{x}^\alpha}
 - \frac{1}{2} c^k_{ij} \theta^i \theta^j 
   \frac{\partial}{\partial\dot{\theta}^k} \,.
\end{equation}
The de Rham Lie derivative with respect to $\DA$ is then given by Cartan's magic formula,
\begin{equation}
\label{eq:LiedCoord}
\begin{split}
 \Lie_{\DA}
 &=
 [\iota_{\DA}, d\, ] 
 = \Bigl[ \rho^\alpha_i \theta^i 
   \frac{\partial}{\partial \dot{x}^\alpha}
 - \frac{1}{2} c^k_{ij} \theta^i \theta^j 
   \frac{\partial}{\partial\dot{\theta}^k} ,
 \dot{x}^\beta \frac{\partial}{\partial x^\beta} 
 + \dot{\theta}^i \frac{\partial}{\partial \theta^i} \Bigr]
\\
 &= 
 \rho^\alpha_i \theta^i \frac{\partial}{\partial x^\alpha}
 - \frac{\partial \rho^\alpha_i}{\partial x^\beta} 
   \dot{x}^\beta \theta^i
   \frac{\partial}{\partial \dot{x}^\alpha} 
 - \rho^\alpha_i \dot{\theta}^i 
   \frac{\partial}{\partial \dot{x}^\alpha} 
 + \frac{1}{2}\frac{\partial c^k_{ij}}{\partial x^\beta} 
   \dot{x}^\beta \theta^i \theta^j 
   \frac{\partial}{\partial\dot{\theta}^k} 
\\
 &\quad
 - \frac{1}{2} c^k_{ij} \theta^i \theta^j 
   \frac{\partial}{\partial\theta^k}
 - c^k_{ij} \theta^i \dot{\theta}^j 
   \frac{\partial}{\partial\dot{\theta}^k}
\,.
\end{split}
\end{equation}
The BRST-differential in local coordinates is the sum of this expression and the one for the de Rham differential given in Eq.~\eqref{eq:deRhamDcoord}.

\subsection{Parallel projection of derivations}

Let $p^*$ and $h^*$ be the maps of~\eqref{eq:pStarhStar}. They induce a linear map between the spaces of bigraded endomorphisms,
\begin{equation}
\label{eq:HorProjLift}
\begin{aligned}
  P: \End\bigl( W(A) \bigr) 
  &\longrightarrow \End\bigl( \Omega(M,A) \bigr) 
  \\
  X &\longmapsto h^* X p^* \,.
\end{aligned}
\end{equation}
We call $P$ the \textbf{parallel projection} of bigraded endomorphisms.

\begin{Proposition}
\label{prop:DerivProject}
Let $A$ be a vector bundle with a linear connection. The parallel projection~\eqref{eq:HorProjLift} maps derivations to derivations.
\end{Proposition}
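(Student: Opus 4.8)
The plan is to show that if $X \in \End(W(A))$ is a derivation of the bigraded algebra $W(A)$, then $P(X) = h^* X p^*$ is a derivation of $\Omega(M,A)$. Since $\Omega(M,A) \cong \calO(T[1,0]M \oplus A[0,1])$ is, locally, freely generated as a graded-commutative algebra by the coordinate functions $x^\alpha, \theta^i, \dot x^\alpha$ (of bidegrees $(0,0), (0,1), (1,0)$), it suffices to check the Leibniz rule for $P(X)$ on products of these generators — indeed, as noted in Sec.~\ref{sec:CatLieAlgConn}, any bidegree-homogeneous linear map on $\Omega(M,A)$ satisfying the derivation laws \eqref{eq:DerDef} on generators extends uniquely to a derivation.

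The key observation is that $p^*\colon \Omega(M,A)\to W(A)$ is a morphism of bigraded algebras (it is the pullback along the bundle map $p$ of graded manifolds) and that $h^*p^* = \id$. So for any two homogeneous elements $\tau, \sigma \in \Omega(M,A)$ we compute
\begin{equation*}
  P(X)(\tau\sigma)
  = h^*\bigl( X(p^*\tau \cdot p^*\sigma) \bigr)
  = h^*\bigl( (Xp^*\tau)\cdot p^*\sigma + (-1)^{|X||\tau|} p^*\tau\cdot (Xp^*\sigma) \bigr),
\end{equation*}
using that $X$ is a derivation of $W(A)$ and that $p^*$ is multiplicative. Now $h^*$ is also a morphism of bigraded algebras, so it distributes over the two products on the right, giving
\begin{equation*}
  P(X)(\tau\sigma)
  = h^*(Xp^*\tau)\cdot h^*(p^*\sigma)
  + (-1)^{|X||\tau|} h^*(p^*\tau)\cdot h^*(Xp^*\sigma)
  = P(X)(\tau)\cdot\sigma + (-1)^{|X||\tau|}\tau\cdot P(X)(\sigma),
\end{equation*}
where in the last step we used $h^*p^* = \id$. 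This is exactly the graded Leibniz rule, with the sign dictated by the bidegree of $X$ (which is preserved by $P$, since $p^*$ and $h^*$ are of bidegree $(0,0)$). Homogeneity of $P(X)$ and $\bbR$-linearity of $P$ are immediate, and one notes $P(X)$ annihilates constants since $X$ does and $h^*,p^*$ fix $C^\infty(M)\subseteq\Omega^{0,0}$.

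There is essentially no obstacle here beyond bookkeeping: the whole argument rests on the two facts that $p^*$ and $h^*$ are algebra homomorphisms and that $h^*p^* = \id$. The only point requiring a word of care is the sign in the Leibniz rule — one should make sure $P(X)$ inherits the parity of $X$ so that the sign $(-1)^{|X||\tau|}$ survives unchanged through the application of $h^*$; this follows because $h^*$ and $p^*$ are both of total degree $0$ and degree-preserving. (If $X$ is not homogeneous, decompose it into bidegree-homogeneous components and apply the above to each.) One could alternatively phrase the proof without local coordinates by simply observing that $P(X)$ satisfies the derivation identity on all of $\Omega(M,A)$ by the computation above; the appeal to generators is only needed if one wants to invoke the extension-from-generators principle, and is not strictly necessary once the global Leibniz identity is established.
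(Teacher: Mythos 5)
Your proof is correct and is essentially the same as the paper's: both expand $P(X)(\tau\sigma)=h^*\bigl(X(p^*\tau\cdot p^*\sigma)\bigr)$ using that $p^*$ and $h^*$ are (bigraded) algebra morphisms, apply the Leibniz rule for $X$ on $W(A)$, and collapse $h^*p^*=\id$. The remarks about generators are harmless but, as you note yourself, unnecessary since the Leibniz identity is verified globally.
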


\begin{proof}
Let $X$ be a graded derivation on $W(A)$ and $\alpha,\beta \in \Omega(M,A)$. Then
\begin{equation*}
\begin{split}
  (PX)(\alpha \beta)
  &= (h^* X p^*)(\alpha \beta)
  = h^*\bigl[ X \bigl( (p^*\alpha)(p^*\beta) \bigr) \bigr]
  \\
  &= h^*\bigl[
  \bigl( X (p^*\alpha) \bigr)\, (p^*\beta)
  + (-1)^{|X|\,|\alpha|}
  (p^*\alpha)\, \bigl( X(p^*\beta) \bigr) 
  \bigr]
  \\
  &=
  h^*\bigl( X (p^*\alpha) \bigr)\, (h^*p^*\beta)
  + (-1)^{|X|\,|\alpha|}
  (h^*p^*\alpha) \, h^*\bigl( X(p^*\beta) \bigr)
  \\
  &=
  \bigl( (PX) \alpha \bigr)\, \beta
  + (-1)^{|PX|\,|\alpha|}
  \alpha \, \bigl( (PX)\beta \bigr) \,,
\end{split}
\end{equation*}
where we have used that $ph = \id$ and that $|PX| = |X|$.
\end{proof}

\begin{Remark}
For two derivations $X$, $Y$ of $W(A)$ we have $P(XY) = h^* XY p^*$ which is generally different from $(PX)(PY) = h^* X p^* h^* Y p^*$, since $p^* h^*$ is the projection onto the kernel of $h^*$. When $A$ has non-zero rank then $p^* h^*$ has a non-zero kernel, which shows that $P$ does not preserve the Lie bracket of derivations.
\end{Remark}

\subsubsection{Parallel projection in local coordinates}

Let $D: \Gamma(A) \to \Omega^1(M) \otimes_{C^\infty(M)} \Gamma(A)$ denote the covariant derivative of the connection on $A$. On the basis of local sections of $A$ the covariant derivative acts as
\begin{equation*}
  D a_i = \Omega^j_{\alpha i} dx^\alpha \otimes a_j \,,
\end{equation*}
where $\Omega^j_{\alpha j} \in C^\infty(M)$ are the connection coefficients. On dual coordinates the covariant derivative acts by $D \theta^i = - \Omega^i_{\alpha j} \dot{x}^\alpha \theta^j$. As we have seen in Section \ref{sec:CatLieAlgConn}, the covariant derivative can be extended to a derivation on $\Omega(M,A)$. In local coordinates this derivation is given by
\begin{equation}
\label{eq:DVectorField}
  D = \dot{x}^\alpha \frac{\partial}{\partial x^\alpha}
  - \Omega^j_{\alpha i} \dot{x}^\alpha 
  \theta^i \frac{\partial}{\partial \theta^j} \,.
\end{equation}

The covariant derivative and the horizontal lift of the connection are related by $D_v a = (Ta)(v) - h(v,a)$, where $Ta : TM \to TA$ is the derivative of the section $a:M \to A$. In local coordinates the horizontal lift of $v = v^\alpha \frac{\partial}{\partial x^\alpha}$ to $a = a^i a_i$ is, therefore, given by
\begin{equation*}
  h(v,a) = v^\alpha \frac{\partial}{\partial x^\alpha} 
  - \Omega^i_{\alpha j} v^\alpha a^j \frac{\partial}{\partial \theta^i} \,.
\end{equation*}
It follows that the pullback along $h$ acts on the coordinate 1-forms as
\begin{equation*}
\begin{aligned}
 (h^*\dot{x}^\alpha)(v,a) 
 &= v^\alpha = \dot{x}^\alpha(v) \,, \\ 
 (h^*\dot{\theta}^i)(v,a) 
 &= - \Omega^i_{\alpha j} v^\alpha a^j
 = - \Omega^i_{\alpha j} 
   \dot{x}^\alpha(v)\, \theta^j(a) \,.
\end{aligned}
\end{equation*}
Moreover, since $h(0_m, a) = 0_m$, the pullback along $h$ preserves the coordinates $x^\alpha$ and $\theta^i$. We conclude that $h^*$ acts on the local generators of $W(A)$ by
\begin{equation*}
 h^* x^\alpha = x^\alpha \,,\quad
 h^* \theta^i = \theta^i \,,\quad
 h^* \dot{x}^\alpha = \dot{x}^\alpha \,,\quad
 h^* \dot{\theta}^i = - \Omega^i_{\alpha j} \dot{x}^\alpha \theta^j \,.
\end{equation*}
The projection $p: T[1,0]A[0,1] \to T[1,0]M \oplus A[0,1]$ is given $p(v_a) = (T\pi\, v, a)$, where $\pi: A \to M$ is the bundle projection. The pullback along $p$ then acts on the the generators of $\Omega(M,A) = \calO(T[1,0]M\oplus A[0,1])$ by
\begin{equation*}
  p^* x^\alpha = x^\alpha \,,\quad
  p^* \dot{x}^\alpha = \dot{x}^\alpha \,,\quad
  p^* \theta^i = \theta^i \,.
\end{equation*}
With the formulas for $h^*$ and $p^*$ we can compute the parallel projection of the coordinates,
\begin{equation}
\label{eq:hpstarcoord}
  (hp)^* x^\alpha = x^\alpha \,,\quad
  (hp)^* \theta^i = \theta^i \,,\quad
  (hp)^* \dot{x}^\alpha = \dot{x}^\alpha \,,\quad
  (hp)^* \dot{\theta}^i = - \Omega^i_{\alpha j} \dot{x}^\alpha \theta^j \,.
\end{equation}
For the parallel projection of the coordinate vector fields we obtain
\begin{equation*}
  P\Bigl(\frac{\partial}{\partial x^\alpha}\Bigr)
  = \frac{\partial}{\partial x^\alpha} 
  \,,\quad
  P\Bigl(\frac{\partial}{\partial \dot{x}^\alpha}\Bigr)
  = \frac{\partial}{\partial \dot{x}^\alpha} 
  \,,\quad
  P\Bigl(\frac{\partial}{\partial \theta^i}\Bigr)
  = \frac{\partial}{\partial \theta^i} 
  \,,\quad
  P\Bigl(\frac{\partial}{\partial \dot{\theta}^i}\Bigr)
  = 0
   \,.
\end{equation*}

\subsection{The horizontal subalgebra}

According to the factorization~\eqref{eq:Weilfactorization}, the bigraded subalgebra of horizontal elements is isomorphic to the tensor product over $C^\infty(M)$ of two factors: The first factor is the image of $\Omega(M)$ under the pullback along $T\pi: T[1,0]A[0,1] \to T[1,0]M$. The second factor is the algebra of functions on the kernel of $p$. In local coordinates the first factor is generated by $\{x^\alpha, \dot{x}^\alpha\}$. The second factor is the image of the projection $\id - (hp)^*: W(A) \to W(A)$, which is generated as $C^\infty(M)$-algebra by
\begin{equation*}
  \eta^i 
  := (\id - p^* h^*) \dot{\theta^i}
  = \dot{\theta}^i 
 + \Omega^i_{\alpha j} \dot{x}^\alpha \theta^j \,.
\end{equation*}
We conclude that the subalgebra of horizontal functions of $W(A)$ is given on a local coordinate neighborhood $U \subset M$ by
\begin{equation*}
  W(A)_\mathrm{hor} \bigr|_U  
  \cong C^\infty(U) \otimes 
  \bbR[\Dot{x}^\alpha, \eta^i] 
  \cong \Omega(U) \otimes \bbR[\eta^i]\,.
\end{equation*}
From this we can deduce that the $A$-interior derivative with respect to a section $a = a^i a_i$ of $A$ is given in local coordinates by
\begin{equation*}
  \IW_{a} 
  =
    a^i\frac{\partial}{\partial \theta^i}
    + a^i \Omega^j_{\alpha i}
    \dot{x}^\alpha \frac{\partial}{
      \partial \dot{\theta}^j}
   \,.
\end{equation*}

In order to give a coordinate free interpretation of $\IW_a$ we observe that when $a$ is a horizontal section of $A$, that is, $d a^i + a^j\Omega^i_{\alpha j} \dot{x}^\alpha = 0$, then
\begin{equation*}
\begin{split}
  \Lie_{\IA_a} 
  &= [\iota_{\IA_a}, d] 
  \\
  &= [a^i \iota_{\IA_{a_i}}, d] 
  \\
  &= -(da^i) \iota_{\IA_{a_i}} +  a^i \Lie_{\IA_{a_i}}
  \\
  &= a^j \Omega^i_{\alpha j} \dot{x}^\alpha 
    \frac{\partial}{\partial \dot{\theta}^i}
     + a^i \frac{\partial}{\partial \theta^i}
  \\
  &= \IW_{a} \,.
\end{split}
\end{equation*}
We see that the $A$-interior derivative with respect to a horizontal section $a$ is the prolongation of the vertical vector field $\IA_a = a^i \frac{\partial}{\partial \theta^i}$ on $A$ to the tangent bundle $TA$. This follows the same approach as many other definitions in the paper: The definition of the interior derivative with respect to a Lie algebra is generalized to a Lie algebroid by requiring the usual condition to hold for all horizontal sections. In order to get a coordinate free expression for $\IW_a$ that holds for arbitrary sections $a$ we have to subtract the terms coming from the covariant derivative of $a$. This leads to
\begin{equation*}
  \IW_a = \Lie_{\IA_a} +\, \iota_{[D, \IA_a]} \,,
\end{equation*}
where $D$ is given by Eq.~\eqref{eq:DVectorField} and where we have implicitly extended the de Rham interior derivative $\Omega(M)$-linearly from $\calX(A)$ to $\Omega(M) \otimes_{C^\infty(M)} \calX(A)$, similar to what we did in Section \ref{sec:CatLieAlgConn}.

\subsection{Relation to the bigraded algebra of Section \ref{sec:Category}}

In Section \ref{sec:Category} we have constructed two  derivations $D$ and $\check{D}$ on the bigraded algebra $\Omega(M,A)$, which can be viewed as the algebra of bigraded functions on $T[1,0]M \oplus A[0,1]$. These derivations are related to the Cartan calculus on $W(A)$ as follows:

\begin{Proposition}
\label{prop:BRSTproject}
Let $A$ be a Lie algebroid with a connection. The bigraded derivations $D$ and $\check{D}$ on $\Omega(M,A) = \calO(T[1,0]M \oplus A[0,1])$ are the parallel projections
\begin{equation*}
  D = P(d) \,,\quad \check{D} = P(\Lie_\DA)
\end{equation*}
of the two components of the BRST-differential.
\end{Proposition}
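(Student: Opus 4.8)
The plan is to prove both equalities by the standard fact that two derivations of $\Omega(M,A)$ which agree on a generating set coincide, with the comparison carried out in the local bundle coordinates of Sec.~\ref{sec:LocCoord}. First I would record that $P(d)$ and $P(\Lie_{\DA})$ are bigraded derivations of $\Omega(M,A) = \calO(T[1,0]M \oplus A[0,1])$: they are derivations by Prop.~\ref{prop:DerivProject}, and they are homogeneous of bidegrees $(1,0)$ and $(0,1)$ respectively because $P = h^*(\Empty)p^*$ is built from the bidegree-preserving maps $h^*$, $p^*$ and because $d$ has bidegree $(1,0)$ and $\Lie_{\DA}$ has bidegree $(0,1)$. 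Since $D$ and $\check{D}$ are themselves derivations of these bidegrees (Sec.~\ref{sec:CatLieAlgConn}) and $\Omega(M,A)$ is, locally, the free graded-commutative $C^\infty(M)$-algebra on its parts of bidegree $(0,0)$, $(1,0)$, $(0,1)$, it suffices to compare the two sides on the local generators $x^\alpha$, $\dot{x}^\alpha = dx^\alpha$, and $\theta^i$.

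For $D = P(d)$ I would feed the coordinate expression $d = \dot{x}^\alpha\,\partial_{x^\alpha} + \dot{\theta}^i\,\partial_{\theta^i}$ of Eq.~\eqref{eq:deRhamDcoord} into $P$, using that $P(g\,\xi) = (h^* g)\,P(\xi)$ for any $g \in W(A)$ and coordinate vector field $\xi$, the values $P(\partial_{x^\alpha}) = \partial_{x^\alpha}$, $P(\partial_{\theta^i}) = \partial_{\theta^i}$, $P(\partial_{\dot{\theta}^i}) = 0$, and $h^*\dot{\theta}^i = -\Omega^i_{\alpha j}\,\dot{x}^\alpha\theta^j$. This gives $P(d) = \dot{x}^\alpha\,\partial_{x^\alpha} - \Omega^i_{\alpha j}\,\dot{x}^\alpha\theta^j\,\partial_{\theta^i}$, which is precisely the local expression \eqref{eq:DVectorField} for the extended covariant derivative $D$ after relabeling indices.

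For $\check{D} = P(\Lie_{\DA})$ I would apply $P$ term by term to the coordinate expression \eqref{eq:LiedCoord} of $\Lie_{\DA}$. The two terms containing $\partial_{\dot{\theta}^k}$ vanish since $P(\partial_{\dot{\theta}^k}) = 0$; the terms $\rho^\alpha_i\theta^i\,\partial_{x^\alpha}$ and $-\tfrac12 c^k_{ij}\theta^i\theta^j\,\partial_{\theta^k}$ are left unchanged by $P$ and together reproduce the local expression \eqref{eq:dAlocal} of $\DA$; and the term $-\rho^\alpha_i\dot{\theta}^i\,\partial_{\dot{x}^\alpha}$ becomes, via $h^*\dot{\theta}^i$, the piece $\rho^\alpha_i\Omega^i_{\beta j}\,\dot{x}^\beta\theta^j\,\partial_{\dot{x}^\alpha}$, which adds to the surviving $-\partial_{x^\beta}\rho^\alpha_i\,\dot{x}^\beta\theta^i\,\partial_{\dot{x}^\alpha}$. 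Since $\check{D}$ acts as $\DA$ on functions and on Lie algebroid $1$-forms, the first two groups already match $\check{D}$ on the generators $x^\alpha$ and $\theta^i$, and it remains only to check that the $\partial_{\dot{x}^\alpha}$-part of $P(\Lie_{\DA})$ equals $\check{D}(dx^\alpha)$. For this I would compute $\check{D}_{a_i}(dx^\alpha)$ from the defining relation of the opposite $A$-connection (Def.~\ref{def:OppConn}), using $D_{\partial_{x^\beta}}a_i = \Omega^k_{\beta i}a_k$, and then pass to $\check{D}(dx^\alpha) \in \Omega^{1,1}(M,A)$ by means of the sign rule $\langle \check{D}_a\tau, v\rangle = -\langle \check{D}\tau, v\otimes a\rangle$ used in the proof of Prop.~\ref{prop:CommirhoD}; the resulting coefficients coincide with those found above after the same index bookkeeping.

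The one genuinely delicate point — and the step I would treat most carefully — is this last comparison, owing to the extra sign in the dual opposite $A$-connection on $1$-forms; everything else is routine. Conceptually, the same local computation shows $P(\iota_{\DA}) = \iota_\rho$, so that $P(\Lie_{\DA}) = P([\iota_{\DA}, d])$ differs from $[P(\iota_{\DA}), P(d)] = [\iota_\rho, D]$ exactly by $\iota_T$, consistent with Prop.~\ref{prop:CommirhoD} and with the remark following Prop.~\ref{prop:DerivProject} on the failure of $P$ to preserve the commutator.
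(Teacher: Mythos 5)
Your proposal is correct and follows essentially the same route as the paper's own proof: both reduce the claim to a comparison of derivations on the local generators $x^\alpha$, $\dot{x}^\alpha$, $\theta^i$ (justified by Prop.~\ref{prop:DerivProject}), compute $P(d)$ and $P(\Lie_{\DA})$ from the coordinate expressions \eqref{eq:deRhamDcoord} and \eqref{eq:LiedCoord} together with $h^*\dot{\theta}^i = -\Omega^i_{\alpha j}\dot{x}^\alpha\theta^j$, and match the $\partial_{\dot{x}^\alpha}$-component against $\check{D}\dot{x}^\alpha$ computed from the opposite $A$-connection with the sign rule $\langle\check{D}_a\tau,v\rangle = -\langle\check{D}\tau,v\otimes a\rangle$, which is exactly the delicate step you single out. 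The closing remark relating $P(\iota_{\DA})=\iota_\rho$ and the failure of $P$ to preserve commutators to the $\iota_T$ term in Prop.~\ref{prop:CommirhoD} is a correct and pleasant addition, though not needed for the proof.
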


\begin{proof}
$P(d) = h^* d p^*$ acts on the local coordinate functions as
\begin{align*}
  P(d) x^\alpha 
  &= h^* \dot{x}^\alpha = \dot{x}^\alpha = Dx^\alpha
  \\
  P(d) \dot{x}^\alpha &= 0 = D\dot{x}^\alpha
  \\
  P(d) \theta^i 
  &= h^* \dot{\theta}^i
  = - \Omega^i_{\alpha j} \dot{x}^\alpha 
  =  D \theta^i \,.
\end{align*}
We see that the actions of $D$ and $P(d)$ on the coordinate functions are equal. By Proposition \ref{prop:DerivProject}, $P(D)$ is a derivation. Since two derivations are equal if they are equal on the generators of an algebra, it follows that $D = P(d)$.

For the second equation we compare the action of $\check{D}$ and $P(\Lie_\DA)$ on the local coordinate functions. On the one hand, $\check{D}$ acts on $x^\alpha$ and $\theta^i$ as the Lie algebroid differential,
\begin{equation*}
  \check{D} x^\alpha = \rho^\alpha_i \theta^i
  \,,\quad
  \check{D}\theta^i = - \frac{1}{2} c^i_{jk} \theta^j \theta^k
  \,.
\end{equation*}
Let now $v = v^\alpha \frac{\partial}{\partial x^\alpha}$ be an arbitrary vector field on $M$ and $a = a^i a_i$ an arbitrary section of $A$. Then
\begin{equation*}
\begin{split}
  \langle \check{D}_a \dot{x}^\alpha, v \rangle
  &= \rho a \cdot \langle \dot{x}^\alpha, v \rangle
  - \langle \dot{x}^\alpha, \check{D}_a v \rangle 
  \\
  &= \rho a \cdot \langle \dot{x}^\alpha, v \rangle
  - \langle \dot{x}^\alpha, [\rho a, v] + \rho(D_v a) \rangle 
  \\  
  &= \langle \Lie_{\rho a} \dot{x}^\alpha, v \rangle
  - \langle \dot{x}^\alpha, \rho(D_v a) \rangle 
  \\  
  &= v^\beta \frac{\partial (a^i \rho^\alpha_i)}{\partial x^\beta}
  - \rho^\alpha_i v^\beta \Bigl( 
  \frac{\partial a^i}{\partial x^\beta}
  + a^j \Omega^i_{\beta j} \Bigr)
  \\
  &= \Bigl\langle (\IA_a \theta^i) \Bigl(
  \frac{\partial \rho^\alpha_i}{\partial x^\beta}
  - \rho^\alpha_i \Omega^i_{\beta j} \Bigr) \dot{x}^\beta,
  v \Bigr\rangle
  \,,
\end{split}
\end{equation*}
from which it follows that
\begin{equation*}
  \check{D} \dot{x}^\alpha
  = \Bigl(
  - \frac{\partial \rho^\alpha_i}{\partial x^\beta}
  + \rho^\alpha_j \Omega^j_{\beta i} \Bigr) 
  \dot{x}^\beta \theta^i
  \,.
\end{equation*}
On the other hand, applying $P(\Lie_\DA)$ to $x^\alpha$, $\dot{x}^\alpha$, and $\theta^i$ using the local coordinate expression~\eqref{eq:LiedCoord}, we obtain
\begin{align*}
  P(\Lie_\DA) x^\alpha 
  &= h^* \Lie_\DA x^\alpha 
  = h^* \rho^\alpha_i \theta^i = \rho^\alpha_i \theta^i 
  \\
  P(\Lie_\DA) \dot{x}^\alpha 
  &= h^* \Lie_\DA \dot{x}^\alpha 
  = h^* \Bigl( 
   - \frac{\partial \rho^\alpha_i}{\partial x^\beta} 
   \dot{x}^\beta \theta^i
 - \rho^\alpha_i \dot{\theta}^i 
  \Bigr)  
  = \Bigl( 
   - \frac{\partial \rho^\alpha_i}{\partial x^\beta} 
   + \rho^\alpha_j \Omega^j_{\beta i} 
  \Bigr) \dot{x}^\beta \theta^i
  \\
  P(\Lie_\DA) \theta^i 
  &= h^* \Lie_\DA \theta^i
  = h^* \Bigl(
   - \frac{1}{2} c^i_{jk} \theta^j \theta^k 
  \Bigr)
  =  - \frac{1}{2} c^i_{jk} \theta^j \theta^k 
  \,. 
\end{align*}
Since the action of $\check{D}$ and $P(\Lie_\DA)$ on the local coordinates of $\Omega(M,A)$ are equal, we conclude that $\check{D} = P(\Lie_\DA)$.
\end{proof}

\begin{Proposition}
\label{prop:irhocommute}
Let $\iota_\rho$ be the derivation on $\Omega(M,A)$ defined in Section \ref{sec:CatLieAlgConn}. Then
\begin{equation*}
  p^* \iota_\rho = \iota_{\DA} p^* \,.
\end{equation*}
\end{Proposition}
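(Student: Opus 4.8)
The plan is to recognise both sides of the asserted identity as \emph{$p^*$-twisted derivations} $\Omega(M,A)\to W(A)$ and to match them on algebra generators. Since $p^*\colon\Omega(M,A)\to W(A)$ is a homomorphism of bigraded algebras, since $\iota_\rho$ (defined in Sec.~\ref{sec:CatLieAlgConn}) is a derivation of $\Omega(M,A)$ of bidegree $(-1,1)$, and since $\iota_{\DA}$ is a derivation of $W(A)$ of the same bidegree $(-1,1)$ (see the table in Sec.~\ref{sec:CartanCalculi}), both $p^*\iota_\rho$ and $\iota_{\DA}p^*$ satisfy the Leibniz rule
\begin{equation*}
  X(\alpha\beta)=(X\alpha)\,(p^*\beta)+(p^*\alpha)\,(X\beta)
  \qquad(\alpha,\beta\in\Omega(M,A)),
\end{equation*}
the bidegree $(-1,1)$ having even total degree so that no signs intervene. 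Because $\Omega(M,A)$ is, locally, freely generated as a bigraded-commutative $C^\infty(M)$-algebra by $\Omega^{1,0}(M,A)$ and $\Omega^{0,1}(M,A)$, a $p^*$-twisted derivation is determined by its values on functions $f\in\Omega^{0,0}(M,A)$, on differential $1$-forms $\tau\in\Omega^{1,0}(M,A)$, and on exterior $A$-$1$-forms $\theta\in\Omega^{0,1}(M,A)$; hence it suffices to check the identity on these three kinds of generators.

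On a function $f$ one has $\iota_\rho f=0$ by the definition of $\iota_\rho$, while $\iota_{\DA}p^*f=\iota_{\DA}f=0$ because the de Rham interior product of a $0$-form vanishes. On $\theta\in\Gamma(A^*)=\Omega^{0,1}(M,A)$ one again has $\iota_\rho\theta=0$ by definition, and $\iota_{\DA}p^*\theta=0$ for bidegree reasons, since $p^*\theta$ has bidegree $(0,1)$ and $\iota_{\DA}$ lowers the de Rham degree. The only case requiring a computation is $\tau\in\Omega^1(M)=\Omega^{1,0}(M,A)$. By definition $\iota_\rho\tau$ is the section of $A^*$ given by $a\mapsto\langle\tau,\rho a\rangle$, while $p^*\tau$ is the pullback of $\tau$ along the composite $T[1,0]A[0,1]\to T[1,0]M$, i.e.\ the ordinary pullback $\pi^*\tau$ of the $1$-form $\tau$ along the bundle projection $\pi\colon A\to M$. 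Working in the local bigraded coordinates of Sec.~\ref{sec:LocCoord}, $p^*\tau=\tau_\alpha\,\dot x^\alpha$ for $\tau=\tau_\alpha\,dx^\alpha$, so that by Eq.~\eqref{eq:iotaDA}
\begin{equation*}
  \iota_{\DA}p^*\tau=\tau_\alpha\,\iota_{\DA}\dot x^\alpha
  =\tau_\alpha\,\rho^\alpha_i\,\theta^i,
\end{equation*}
whereas $p^*\iota_\rho\tau=p^*\bigl(\tau_\alpha\rho^\alpha_i\theta^i\bigr)=\tau_\alpha\rho^\alpha_i\theta^i$, using $\iota_\rho\,dx^\alpha=\rho^\alpha_i\theta^i$ together with $p^*x^\alpha=x^\alpha$ and $p^*\theta^i=\theta^i$. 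The two expressions coincide, which completes the verification on generators and hence proves the proposition.

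The work here is essentially bookkeeping, and the one point that should be stated carefully is the identification of $p^*\tau$ (a pullback along the projection $p$ of the sequence~\eqref{eq:BigradedShortExact}) with the classical pullback $\pi^*\tau$ along $\pi\colon A\to M$; once this is in place, the coordinate computation above is immediate, and one can equally phrase it invariantly as the evident equality $\langle\tau,\rho a\rangle=(\pi^*\tau)(\DA)\big|_a$. No use of the connection, the curvature, or the torsion is needed—$\iota_\rho$ and $\iota_{\DA}$ are connection-independent—so the argument is strictly simpler than that of Prop.~\ref{prop:BRSTproject}.
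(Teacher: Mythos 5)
Your proof is correct and follows essentially the same route as the paper, which simply asserts that the identity "is straightforward to check in local coordinates" using $\iota_\rho = \rho^\alpha_i \theta^i \frac{\partial}{\partial x^\alpha}$ and Eq.~\eqref{eq:iotaDA}; you have supplied the details of that check, organized via the (valid) observation that both sides are $p^*$-twisted derivations determined by their values on the generators $f$, $\tau$, $\theta$. The identification of $p^*\tau$ with $\pi^*\tau$ and the coordinate computation $\iota_{\DA}\dot x^\alpha = \rho^\alpha_i\theta^i = \iota_\rho\, dx^\alpha$ are exactly what the paper's one-line proof relies on.
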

\begin{proof}
In local coordinates $\iota_\rho = \rho^\alpha_i \theta^i \frac{\partial}{\partial x^\alpha}$, whereas $\iota_\DA$ is given by Eq.~\eqref{eq:iotaDA}. The relation is now straightforward to check in local coordinates.
\end{proof}

\subsection{Some technical lemmas}
For the proof of Theorem  \ref{thm:EquivExt}, we need the following technical lemmas.

\begin{Lemma}
\label{lem:vertdvanish}
Let $\tau \in \Omega^{p,q}(M,A)$ with $q > 0$. The following are equivalent:
\begin{itemize}
\item[(i)] $\bigl(\id - (hp)^*\bigr)d p^* \tau = 0$.
\item[(ii)] $\tau = 0$.
\end{itemize}
\end{Lemma}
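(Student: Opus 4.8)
The plan is to reduce everything to a local computation — both (i) and (ii) can be tested on the members of an open cover of $M$ — after which the statement becomes a transparent piece of (graded-)polynomial algebra. The implication (ii)$\Rightarrow$(i) is immediate, so I would concentrate on (i)$\Rightarrow$(ii). Fix local coordinates $\{x^\alpha\}$ on $U\subseteq M$ and a local frame $\{a_i\}$ of $A$ with dual frame $\{\theta^i\}$, as in Sec.~\ref{sec:LocCoord}, so that $W(A)|_U$ has local coordinate functions $x^\alpha,\theta^i,\dot x^\alpha,\dot\theta^i$ while $\Omega^{p,q}(M,A)|_U$ has $x^\alpha,\theta^i,\dot x^\alpha$. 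Recall from Eq.~\eqref{eq:deRhamDcoord} that $d=\dot x^\alpha\partial_{x^\alpha}+\dot\theta^i\partial_{\theta^i}$ on $W(A)$, that $p^*$ fixes $x^\alpha,\dot x^\alpha,\theta^i$, and that by Eq.~\eqref{eq:hpstarcoord} the algebra homomorphism $(hp)^*$ fixes $x^\alpha,\dot x^\alpha,\theta^i$ and sends $\dot\theta^i\mapsto-\Omega^i_{\alpha j}\dot x^\alpha\theta^j$.

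The first step is to compute $d\,p^*\tau$. Since $p^*\tau$ involves no $\dot\theta^i$, applying $d$ gives
\[
  d\,p^*\tau \;=\; p^*\!\bigl(\dot x^\alpha\partial_{x^\alpha}\tau\bigr)\;+\;\dot\theta^i\,p^*\!\bigl(\partial_{\theta^i}\tau\bigr),
\]
where I use that $p^*$ intertwines $\partial_{x^\alpha}$ and $\partial_{\theta^i}$ with the corresponding derivations on $\Omega(M,A)|_U$ (each identity holds because both sides are the same $p^*$-twisted derivation, so it suffices to check them on the generators $x^\alpha,\dot x^\alpha,\theta^i$). Now apply $\id-(hp)^*$. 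The summand $p^*(\dot x^\alpha\partial_{x^\alpha}\tau)$ lies in the image of $p^*$, which $(hp)^*$ fixes, so it is annihilated; for the other summand, using that $(hp)^*$ is an algebra homomorphism,
\[
  \bigl(\id-(hp)^*\bigr)\bigl(\dot\theta^i\,p^*(\partial_{\theta^i}\tau)\bigr)
  \;=\;\bigl(\dot\theta^i+\Omega^i_{\alpha j}\dot x^\alpha\theta^j\bigr)\,p^*(\partial_{\theta^i}\tau)
  \;=\;\eta^i\,p^*(\partial_{\theta^i}\tau),
\]
with $\eta^i=\dot\theta^i+\Omega^i_{\alpha j}\dot x^\alpha\theta^j$ the generators of the horizontal subalgebra. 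Hence $\bigl(\id-(hp)^*\bigr)d\,p^*\tau=\sum_i\eta^i\,p^*(\partial_{\theta^i}\tau)$.

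Finally I would invoke the factorization~\eqref{eq:Weilfactorization}: locally $W(A)|_U$ is a polynomial algebra over $\Omega(U)\otimes_{C^\infty(U)}\Gamma(U,\wedge A^*)$ in the even, algebraically independent generators $\eta^i$, and that base subalgebra is precisely the local image of $p^*$. Therefore $\sum_i\eta^i c_i=0$ with each $c_i$ in that subalgebra forces $c_i=0$ for every $i$; taking $c_i=p^*(\partial_{\theta^i}\tau)$ and using the injectivity of $p^*$ (its left inverse is $h^*$) yields $\partial_{\theta^i}\tau=0$ for all $i$. Since $\tau$ is homogeneous of degree $q$ in the $\theta$'s, the Euler identity $\sum_i\theta^i\partial_{\theta^i}\tau=q\,\tau$ then gives $q\,\tau=0$, so $\tau=0$ because $q>0$. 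I do not expect a genuine obstacle here: the argument is mechanical once set up in coordinates, and the only point that needs a little care — the sign bookkeeping in commuting $\partial_{\theta^i}$ past the odd generators $\dot x^\alpha$, i.e.~the identity $p^*\partial_{\theta^i}=\partial_{\theta^i}p^*$ — is routine.
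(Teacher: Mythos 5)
Your proof is correct and follows essentially the same route as the paper's: work in the local coordinates of Sec.~\ref{sec:LocCoord}, observe that $\bigl(\id-(hp)^*\bigr)dp^*\tau$ collects exactly the terms where $d$ hits a $\theta^i$ and that these organize as $\eta^i$ times elements of the image of $p^*$, then use the freeness of the $\eta^i$ over that subalgebra. The only difference is cosmetic: you extract the coefficients via the derivations $\partial_{\theta^i}$ and the Euler identity rather than by expanding $\tau$ in explicit multi-index components as the paper does.
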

\begin{proof}
In local coordinates $\tau = \tau_{AI} \dot{x}^{\alpha_1} \cdots \dot{x}^{\alpha_p}
\theta^{i_1}\cdots \theta^{i_q}$, where $A = (\alpha_1, \ldots, \alpha_p)$ and $I = (i_1, \ldots, i_q)$ are multi-indices and $\tau_{AI}$ are smooth functions on $M$ that are totally antisymmetric in $A$ and $I$. Using the Leibniz rule for $d$, the antisymmetry of $\tau_{AI}$ in $A$, Eqs.~\eqref{eq:hpstarcoord}, and the definition of $\eta^i$, we obtain 
\begin{equation*}
  \bigl(\id - (hp)^*\bigr) dp^*\tau 
  = (-1)^p
  \tau_{AI} \dot{x}^{\alpha_1} \cdot \dot{x}^{\alpha_p}
  \eta^{i_1} \theta^{i_2}\cdots \theta^{i_q}
  \,.
\end{equation*}
This vanishes if and only if $\tau_{AI} = 0$, that is, if and only if $\tau = 0$.
\end{proof}

\begin{Lemma}
\label{lem:dpstar}
Let $\tau \in \Omega^{p,q}(M,A)$ with $q > 0$. Then $dp^* \tau = 0$ if and only if $\tau = 0$.
\end{Lemma}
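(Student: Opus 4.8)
The plan is to deduce this immediately from Lemma~\ref{lem:vertdvanish}. Suppose $d p^* \tau = 0$. Applying the linear operator $\id - (hp)^*$ to both sides gives $\bigl(\id - (hp)^*\bigr) d p^* \tau = 0$, which is precisely condition~(i) of Lemma~\ref{lem:vertdvanish}; since $q > 0$ by hypothesis, that lemma applies and yields $\tau = 0$. The converse is trivial: $\tau = 0$ forces $p^* \tau = 0$ and hence $d p^* \tau = 0$. That is the whole argument, and it is the route I would actually take.

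If one prefers a self-contained proof, it is the same local computation that underlies Lemma~\ref{lem:vertdvanish}. Writing $\tau = \tau_{AI}\, \dot x^{\alpha_1} \cdots \dot x^{\alpha_p}\, \theta^{i_1} \cdots \theta^{i_q}$ in the local bigraded coordinates of Sec.~\ref{sec:LocCoord}, with $\tau_{AI}$ totally antisymmetric in the multi-indices $A = (\alpha_1,\dots,\alpha_p)$ and $I = (i_1,\dots,i_q)$, and using the Leibniz rule together with $d x^\alpha = \dot x^\alpha$ and $d\theta^i = \dot\theta^i$, one sees that $d p^* \tau$ splits as a sum of a term $\frac{\partial \tau_{AI}}{\partial x^\beta}\, \dot x^\beta \dot x^{\alpha_1}\cdots\dot x^{\alpha_p}\theta^{i_1}\cdots\theta^{i_q}$ built only from $x^\alpha,\dot x^\alpha,\theta^i$ (coming from differentiating the coefficient) and a term containing a $\dot\theta$ factor, the latter having coefficients that are nonzero scalar multiples of the functions $\tau_{AI}$ (it is here that $q > 0$ is used, so that at least one $\theta$ is available to differentiate). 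Since $\{x^\alpha,\theta^i,\dot x^\alpha,\dot\theta^i\}$ are genuine local coordinates on $T[1,0]A[0,1]$, monomials containing a $\dot\theta$ are linearly independent of those that do not, so the two terms must vanish separately; vanishing of the second forces $\tau_{AI} = 0$, i.e.\ $\tau = 0$.

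There is no real obstacle here beyond this piece of bookkeeping, and that bookkeeping has already been carried out in the proof of Lemma~\ref{lem:vertdvanish}; the only thing worth flagging is that the hypothesis $q>0$ is essential (for $q=0$ the form $p^*\tau$ is just the pullback of a differential form on $M$, which can be closed without being zero), and this is automatically respected by the reduction to Lemma~\ref{lem:vertdvanish}.
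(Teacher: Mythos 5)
Your argument is correct and is exactly the paper's proof: apply $\id - (hp)^*$ to $dp^*\tau = 0$ and invoke Lemma~\ref{lem:vertdvanish}, with the converse being trivial. The extra coordinate computation you sketch is just the content of Lemma~\ref{lem:vertdvanish} itself, so nothing further is needed.
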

\begin{proof}
If $\tau = 0$ then $dp^*\tau = 0$. Conversely, if $dp^*\tau = 0$ then $\bigl(\id - (hp)^*\bigr)dp^*\tau = 0$. Lemma~\ref{lem:vertdvanish} now implies that $\tau = 0$.
\end{proof}

\begin{Lemma}
\label{lem:pstarhor}
If $\phi \in W(A)$ is of bidegree $(p,0)$ or $(0,q)$, then it satisfies $\phi = (hp)^* \phi$.
\end{Lemma}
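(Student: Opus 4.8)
The plan is to read the statement off directly from the connection-induced factorization~\eqref{eq:Weilfactorization} of the bigraded Weil algebra, or equivalently from the local coordinate formulas~\eqref{eq:hpstarcoord}. Recall that $(hp)^* = p^*h^*$ is the projection of $W(A)$ onto the image of $p^*$, which under~\eqref{eq:Weilfactorization} is precisely the summand with $l = 0$, namely $\Omega^p(M)\otimes_{C^\infty(M)}\Gamma(\wedge^q A^*)$; the complementary projection $\id - (hp)^*$ annihilates exactly the summands with $l > 0$ (the $\calO(A[1,1])$-direction). So it suffices to check that a homogeneous element of bidegree $(p,0)$ or $(0,q)$ involves only the $l=0$ summand.

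First I would carry out the degree count. A homogeneous element of $W(A)_{p,q}$ decomposes into pieces in $\Omega^j(M)\otimes_{C^\infty(M)}\Gamma(\wedge^k A^*)\otimes_{C^\infty(M)}\Gamma(S^l A^*)$ with $j+l = p$, $k+l = q$, and $j,k,l\ge 0$. If the bidegree is $(p,0)$, then $k+l = 0$ forces $k = l = 0$, leaving only $\Omega^p(M)$; if the bidegree is $(0,q)$, then $j+l = 0$ forces $j = l = 0$, leaving only $\Gamma(\wedge^q A^*)$. In both cases $\phi$ lies in the image of $p^*$, hence is fixed by $(hp)^*$. Alternatively, and more concretely, one may argue in the local bigraded coordinates $\{x^\alpha,\theta^i,\dot{x}^\alpha,\dot{\theta}^i\}$ of Sec.~\ref{sec:LocCoord}, of bidegrees $(0,0)$, $(0,1)$, $(1,0)$, $(1,1)$: a function of bidegree $(p,0)$ contains no factor of $\theta^i$ or $\dot{\theta}^i$ and so is a polynomial in $x^\alpha$ and $\dot{x}^\alpha$ only, while a function of bidegree $(0,q)$ contains no factor of $\dot{x}^\alpha$ or $\dot{\theta}^i$ and so is a polynomial in $x^\alpha$ and $\theta^i$ only. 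Since by~\eqref{eq:hpstarcoord} the algebra homomorphism $(hp)^*$ fixes each of $x^\alpha$, $\theta^i$, $\dot{x}^\alpha$ (only $\dot{\theta}^i$ is altered), it fixes $\phi$ in either case; as $(hp)^*$ is a single globally defined operator, the identity $\phi = (hp)^*\phi$ then holds globally.

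There is essentially no obstacle here: the lemma is a pure bookkeeping statement about the bigrading, so the only point requiring any care is to make sure the local-coordinate version is patched consistently, which is automatic because $(hp)^*$ is defined independently of the chart and the conclusion is just the equality of that operator with the identity on a homogeneous subspace. I would present the argument via~\eqref{eq:Weilfactorization} as the clean version and mention the coordinate computation as a check.
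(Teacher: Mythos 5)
Your argument is correct and is essentially the paper's own proof: the paper also observes that an element of bidegree $(p,0)$ or $(0,q)$ has no $\dot{\theta}^i$ factor in local coordinates and then invokes Eqs.~\eqref{eq:hpstarcoord}, which show that $(hp)^*$ fixes the remaining generators. Your additional degree count through the factorization~\eqref{eq:Weilfactorization} is a clean coordinate-free restatement of the same bookkeeping, not a different method.
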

\begin{proof}
If $\phi$ is of bidegree $(p,0)$ or $(0,q)$ then its local coordinate expression does not contain any factor $\dot{\theta}$. The relation $\phi = (hp)^* \phi$ then follows from Eqs.~\eqref{eq:hpstarcoord}.
\end{proof}

\subsection{Proof of Theorem~\ref{thm:EquivExt}}
\label{subsec:proof}
We start by observing that in local coordinates every horizontal extension of $\omega$ is for degree reasons of the form
\begin{equation*}
  \bar{\omega} 
  = \tfrac{1}{2}\omega_{\alpha\beta} 
    \dot{x}^\alpha \dot{x}^\beta 
    + \mu_i \eta^i
\end{equation*}
for some smooth functions $\mu_i \in C^\infty(M)$. Every horizontal element of $W(A)$ which is $\DW$-closed is automatically basic, so that we only have to determine the conditions for $\DW\bar{\omega} = 0$.

The second term of the extension can be written as
\begin{equation*}
\begin{split}
  \mu_i \eta^i
  &= (\id - p^* h^*) d (\mu_i \theta^i)
  = (\id - p^* h^*) d p^* \mu \\
  &=  d p^* \mu - p^* D\mu \,,
\end{split}
\end{equation*}
where $\mu := \mu_i \theta^i$ is an arbitrary exterior 1-form on $A$. Our goal is now to show that
\begin{equation*}
  \bar{\omega} = p^* \omega + d p^* \mu - p^* D\mu
\end{equation*}
is $\DW$-closed if and only if $d\omega = 0$ and $\mu$ is a bracket-compatible $D$-momentum section.

Applying $\DW = d + \Lie_{\DA}$ to $\bar{\omega}$ produces terms of bidegrees $(3,0)$, $(2,1)$, and $(1,2)$ which have to vanish separately for $\DW \bar{\omega}$ to vanish. The three equations we thus obtain are:
\begin{subequations}
\begin{align}
 0 &= d p^*\omega 
 \label{eq:ExtCond1}\\
 0 &= \Lie_{\DA} p^* \omega - d p^* D\mu
 \label{eq:ExtCond2}\\
 0 &= \Lie_{\DA} (d p^* \mu - p^* D\mu)  \label{eq:ExtCond3}
\end{align}
\end{subequations}

For Eq.~\eqref{eq:ExtCond1} we use lemma~\ref{lem:pstarhor}, which yields
\begin{equation*}
  d p^*\omega = (hp)^* d p^*\omega 
  = p^* D\omega = p^* d\omega \,.
\end{equation*}
Since $p$ is surjective, its pullback $p^*$ is injective so that $p^* d\omega$ vanishes if and only if $d\omega = 0$. We conclude that Eq.~\eqref{eq:ExtCond1} is satisfied if and only if $d\omega = 0$.

We turn to Eq.~\eqref{eq:ExtCond2}. Using Cartan's magic formula and Eq.~\eqref{eq:ExtCond1}, we can rewrite it as
\begin{equation}
\label{eq:AtBo1}
\begin{split}
  0 &=
  \Lie_{\DA} p^* \omega - d p^* D\mu
  = (\iota_\DA d - d\iota_\DA) p^* \omega - d p^* D\mu
  \\
  &= - d p^* (\iota_\rho \omega + D\mu)
  \,,
\end{split}
\end{equation}
where in the last step we have used Proposition \ref{prop:irhocommute}. Applying lemma~\ref{lem:dpstar} to Eq.~\eqref{eq:AtBo1}, we conclude that Eq.~\eqref{eq:ExtCond2} holds if and only if $\iota_\rho \omega + D\mu = 0$. This is precisely the condition in Proposition \ref{prop:OmegaHamConds} for $\mu$ to be a momentum section.

We finally turn to Eq.~\eqref{eq:ExtCond3}, which we split into two equations by applying the complementary projections $(hp)^*$ and $\id - (hp)^*$. The first equation we obtain is
\begin{equation}
\label{eq:AtBo2}
\begin{split}
  0 
  &=
  (\id - p^*h^*) \Lie_{\DA} (d p^* \mu - p^* D\mu)
  \\
  &=
    (\id - p^*h^*) (-d \Lie_{\DA} p^* \mu 
    - \Lie_{\DA} p^* D\mu)
  \\
  &=
  - (\id - p^*h^*) (d p^* h^* \Lie_{\DA} p^* \mu 
    + \Lie_{\DA} p^* D\mu)
  \\
  &= - (\id - p^*h^*) ( dp^* \check{D} \mu 
  + \Lie_{\DA} p^* D\mu)
  \,,
\end{split}
\end{equation}
where we have used $\Lie_\DA d = - d \Lie_\DA$, then applied lemma~\ref{lem:pstarhor} to $\Lie_\DA p^* \mu$ which is of bidegree $(0,2)$, and finally used Proposition \ref{prop:BRSTproject}. In oder to rewrite the second term, we need the relation
\begin{equation*}
\begin{split}
  d p^* \iota_\rho \iota_\rho \omega
  &= d \iota_\DA \iota_\DA p^* \omega
  = (\iota_\DA d - \Lie_\DA) \iota_\DA 
    p^* \omega
  = - (\iota_\DA \Lie_\DA + \Lie_\DA \iota_\DA) 
    p^* \omega
  \\
  &= - 2 \Lie_\DA \iota_\DA p^* \omega
  \,,
\end{split}
\end{equation*}
where we have used $dp^*\omega = 0$. By substituting $\iota_\rho \omega = - D\mu$, we get
\begin{equation*}
  \Lie_\DA p^* D\mu
  = - \Lie_\DA p^* \iota_\rho \omega
  = - \Lie_\DA \iota_\DA p^* \omega
  = d p^* \tfrac{1}{2} \iota_\rho \iota_\rho \omega
  \,.
\end{equation*}
Inserting this into Eq.~\eqref{eq:AtBo2}, we arrive at the condition
\begin{equation*}
\label{eq:AtBo3}
  0 = (\id - p^*h^*) dp^* (\check{D} \mu 
  + \tfrac{1}{2} \iota_\rho \iota_\rho \omega )
  \,.
\end{equation*}
By lemma~\ref{lem:vertdvanish}, this equation is satisfied if and only if $\check{D} \mu + \tfrac{1}{2} \iota_\rho \iota_\rho \omega = 0$. This is precisely the condition in Proposition \ref{prop:OmegaHamConds} for the momentum section $\mu$ to be bracket-compatible.

We now apply the complimentary projection $(hp)^*$ to Eq.~\eqref{eq:ExtCond3} and obtain
\begin{equation*}
\begin{split}
  0 
  &=
  (hp)^* \Lie_{\DA} (d p^* \mu - p^* D\mu)
  \\
  &=  p^* h^* (- d \Lie_\DA p^* \mu 
  - \Lie_{\DA} p^* D\mu)
  \\
  &= - p^* h^* (d p^* h^* \Lie_\DA p^* \mu 
  + \Lie_{\DA} p^* D\mu)
  \\
  &= - p^* ( D \check{D} + \check{D} D)\mu \,,
\end{split}
\end{equation*}
where we have used the same identities and lemmas as before. Since $p^*$ is injective, this equation holds if and only if $[D, \check{D}] \mu = 0$. We have already established that $d\omega = 0$, $D\mu = - \iota_\rho \omega$, and $\check{D}\mu = -\tfrac{1}{2} \iota_\rho \iota_\rho \omega$ are necessary conditions for $\bar{\omega}$ to be a closed basic extension of $\omega$. With these three relations we obtain
\begin{equation*}
\begin{split}
  [D,\check{D}]\mu
  &= 
  D (-\tfrac{1}{2} \iota_\rho \iota_\rho \omega)
  + \check{D} (-\iota_\rho \omega)
  \\
  &= 
  -  \tfrac{1}{2} D \iota_\rho \iota_\rho \omega
  - (\iota_\rho D - D \iota_\rho D
  + \iota_T ) \iota_\rho \omega
  \\
  &= 
    \iota_\rho \check{D} \omega
  + \tfrac{1}{2} D \iota_\rho \iota_\rho \omega
  - \iota_T \iota_\rho \omega
  \,,
\end{split}
\end{equation*}
where we have used $D\omega = 0$, which implies that $\check{D}\omega = (\iota_\rho D - D \iota_\rho - \iota_T)\omega = - D\iota_\rho \omega$. We now need the following lemma:

\begin{Lemma}
\label{lem:Drhorhoomega}
Every 2-form $\omega \in \Omega^{2,0}(M,A)$ satisfies
\begin{equation}
\label{eq:AtBo4}
 \iota_\rho \check{D} \omega 
 + \tfrac{1}{2} D\iota_\rho \iota_\rho \omega
 - \iota_T \iota_\rho \omega
 = \tfrac{1}{2} \iota_\rho \iota_\rho d \omega
\end{equation}
\end{Lemma}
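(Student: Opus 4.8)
The plan is to avoid the "lengthy calculations in local coordinates" by reducing the identity to a clean operator statement and then exploiting the derivation calculus of Sec.~\ref{sec:Category}.

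First I would rewrite the left-hand side with Prop.~\ref{prop:CommirhoD}, $\check{D} = [\iota_\rho, D] + \iota_T$. Since $\omega$ is a pure differential form we have $D\omega = d\omega$ and $\iota_T\omega = 0$, so $\iota_\rho\check{D}\omega = \iota_\rho\iota_\rho\,d\omega - \iota_\rho D\iota_\rho\omega$. Substituting this into the left-hand side of Eq.~\eqref{eq:AtBo4} and collecting the $d\omega$-terms, the claimed identity becomes $\tfrac12\bigl(\iota_\rho\iota_\rho D - 2\,\iota_\rho D\iota_\rho + D\iota_\rho\iota_\rho\bigr)\omega = \iota_T\iota_\rho\omega$, that is,
\begin{equation*}
  \tfrac12\,[\iota_\rho,[\iota_\rho,D]]\,\omega = \iota_T\iota_\rho\,\omega .
\end{equation*}
Using $[\iota_\rho,D] = \check{D} - \iota_T$ and $[\iota_\rho,\iota_T]\omega = \iota_\rho\iota_T\omega - \iota_T\iota_\rho\omega = -\iota_T\iota_\rho\omega$ (again by $\iota_T\omega=0$), this is equivalent to the single operator identity
\begin{equation*}
  [\iota_\rho,\check{D}]\,\omega = \iota_T\iota_\rho\,\omega \qquad \text{for all } \omega \in \Omega^{2,0}(M,A).
\end{equation*}
None of these manipulations uses that $\omega$ is closed; they only use Prop.~\ref{prop:CommirhoD}, that $D|_{\Omega^{\bullet,0}} = d$, and that $\iota_T$ annihilates $\Omega^{\bullet,0}(M,A)$.

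Next I would reduce this to $1$-forms. The operator $[\iota_\rho,\check{D}]$ is a degree-$1$ derivation of $\Omega(M,A)$ which kills $C^\infty(M)$, since $[\iota_\rho,\check{D}]f = \iota_\rho\DA f = \iota_\rho\iota_\rho\,df = 0$ (because $\iota_\rho$ annihilates $\Omega^{0,\bullet}(M,A)$); the composition $\iota_T\iota_\rho$ also kills $C^\infty(M)$. Hence both sides are $C^\infty(M)$-linear in $\omega$, so it suffices to check the identity on the local generators $dx^i\wedge dx^j$; and on a wedge of two $1$-forms both sides obey the same Leibniz-type rule $X(\tau_1\wedge\tau_2) = (X\tau_1)\wedge\tau_2 - \tau_1\wedge(X\tau_2)$ — for $[\iota_\rho,\check{D}]$ because it is a degree-$1$ derivation, and for $\iota_T\iota_\rho$ by direct expansion, using that $\iota_\rho$ has total degree $0$ (no Koszul sign) while $\iota_T$ kills $\Omega^{\bullet,0}$. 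So it is enough to prove $[\iota_\rho,\check{D}]\tau = \iota_T\iota_\rho\tau$ for $\tau\in\Omega^{1,0}(M,A)$, and by $C^\infty(M)$-linearity again only for exact $1$-forms $\tau = dh$.

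For $\tau = dh$ the verification is short: $\iota_\rho\,dh = \DA h \in \Omega^{0,1}(M,A)$, and $\check{D}$ restricted to $\Omega^{0,1}$ is $\DA$, so $\check{D}\iota_\rho\,dh = \DA^2 h = 0$ and $[\iota_\rho,\check{D}]\,dh = \iota_\rho\check{D}\,dh$. Expanding $\check{D}\,dh$ via Prop.~\ref{prop:CommirhoD} and using $D\,dh = d\,dh = 0$, $\iota_T\,dh = 0$ gives $\check{D}\,dh = -D\DA h$, hence $[\iota_\rho,\check{D}]\,dh = -\iota_\rho D\DA h$. On the other hand $(\iota_\rho D + \iota_T)\DA h = (\check{D} + D\iota_\rho)\DA h = \DA^2 h + D(\iota_\rho\DA h) = 0$, again because $\iota_\rho$ kills $\Omega^{0,1}$; so $\iota_\rho D\DA h = -\iota_T\DA h$ and therefore $[\iota_\rho,\check{D}]\,dh = \iota_T\DA h = \iota_T\iota_\rho\,dh$, which finishes the proof. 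The computation is light; the one real point to watch is the first step — rearranging Eq.~\eqref{eq:AtBo4} into the form $[\iota_\rho,\check{D}]\omega = \iota_T\iota_\rho\omega$ without sign errors — and the key bookkeeping fact is that $\iota_\rho$ is even (its graded Leibniz rule carries no sign), whereas $D$, $\check{D}$ and $\iota_T$ all have total degree $1$.
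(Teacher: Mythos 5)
Your proof is correct, and it takes a genuinely different route from the paper's. The paper proves Eq.~\eqref{eq:AtBo4} by brute force: it evaluates the $(1,2)$-form identity on a vector field $v$ and two sections $a,b$, expands each of the three summands using the definitions of $\check{D}$, the dual $A$-connection, and the torsion, and recombines everything into $\iota_v\iota_{\rho b}\iota_{\rho a}d\omega$. You instead use Prop.~\ref{prop:CommirhoD} to collapse the whole statement into the single operator identity $[\iota_\rho,\check{D}]\,\omega=\iota_T\iota_\rho\,\omega$ on $\Omega^{2,0}(M,A)$, observe that both sides are $C^\infty(M)$-linear, kill functions, and satisfy the same Leibniz-type rule on products of $1$-forms, and thereby reduce the verification to exact $1$-forms $dh$, where it is a two-line consequence of $\DA^2=0$ and $\iota_\rho|_{\Omega^{0,\bullet}}=0$. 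I checked the two delicate points — the sign bookkeeping in rewriting \eqref{eq:AtBo4} as $\tfrac12[\iota_\rho,[\iota_\rho,D]]\omega=\iota_T\iota_\rho\omega$ and then as $[\iota_\rho,\check{D}]\omega=\iota_T\iota_\rho\omega$ (using $[\iota_\rho,\iota_T]\omega=-\iota_T\iota_\rho\omega$ on $\Omega^{2,0}$), and the final identity $[\iota_\rho,\check{D}]\,dh=\iota_T\iota_\rho\,dh$ — and both are right; the latter also agrees with a direct coordinate computation on $dx^\alpha$. What your approach buys is conceptual clarity: it isolates the reusable fact that the failure of $\iota_\rho$ to commute with $\check{D}$ on pure differential forms is exactly $\iota_T\iota_\rho$, it makes manifest that no closedness of $\omega$ is used (as in the paper's version, but more transparently), and it replaces the paper's page of index gymnastics with derivation calculus already set up in Sec.~\ref{sec:Category}. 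What it costs is that the reader must trust the graded-commutator formalism (in particular that $[\iota_\rho,\check{D}]$ is again a derivation and that $\iota_\rho$ is even, so its Leibniz rule carries no Koszul sign), whereas the paper's computation, though longer, is entirely elementary.
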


\begin{proof}
The proof follows from a lengthy calculation. Eq.~\eqref{eq:AtBo4} is an equation of elements in $\Omega^{1,2}(M,A)$, so we can evaluate it on a vector field $v$ and two sections $a$, $b$ of $A$. We consider every summand on the left hand side of~\eqref{eq:AtBo4} separately. For the first summand we first establish the relation
\begin{equation*}
\begin{split}
  \iota_v \iota_b \iota_a \iota_\rho
  &= \iota_v \iota_b (\iota_{\rho a} + \iota_\rho \iota_a)
  =  \iota_v \bigl(-\iota_{\rho a} \iota_b  
    + ( \iota_{\rho b} + \iota_\rho \iota_b) \iota_a \bigr)
  \\
  &=  \iota_v (\iota_{\rho b} \iota_a - \iota_{\rho a} \iota_b  
      + \iota_\rho \iota_b \iota_a ) 
  \,,
\end{split}
\end{equation*}
where we have used  $[\iota_a ,\iota_\rho] = \iota_a \iota_\rho -  \iota_\rho \iota_a = \iota_{\rho a}$. With this relation we compute
\begin{equation*}
\begin{split}
  A 
  &:= 
  \iota_v \iota_b \iota_a \iota_\rho \check{D} \omega
  = \iota_v \iota_{\rho b} \iota_a \check{D}\omega
   - \iota_v \iota_{\rho a} \iota_b \check{D}\omega
  \\
  &= \rho a \cdot \omega(\rho b, v) 
   - \omega(\check{D}_a \rho b, v)
   - \omega(\rho b, \check{D}_a v)
  \\
  &{}
   - \rho b \cdot \omega(\rho a, v)
   + \omega(\check{D}_b \rho a, v)
   + \omega(\rho a, \check{D}_b v)
  \,.
\end{split}
\end{equation*}
For the second summand, we recall from the proof of Proposition \ref{prop:OmegaHamConds} that $\iota_b \iota_a \tfrac{1}{2} \iota_\rho \iota_\rho \omega 
= \omega(\rho a, \rho b)$. Using the definition~\eqref{eq:DualAconn} of the dual connection of the $A$-connection $\check{D}$ on $TM$, we obtain for the second term
\begin{equation*}
\begin{split}
  B 
  &=
  \iota_v \iota_b \iota_a D \tfrac{1}{2} \iota_\rho \iota_\rho \omega
  = \iota_b \iota_a  \iota_v D \tfrac{1}{2} \iota_\rho \iota_\rho \omega
  = \iota_b \iota_a  D_v \tfrac{1}{2} \iota_\rho \iota_\rho \omega
  \\
  &= v \cdot \omega(\rho a, \rho b) 
     - \omega(\rho D_v a , \rho b) - \omega(\rho a, \rho D_v b)
  \,.
\end{split}
\end{equation*}
For the third term we use the definition~\eqref{eq:DefTorsion2} of the torsion,
\begin{equation*}
\begin{split}
  C 
  &:= 
  - \iota_v \iota_b \iota_a \iota_T \iota_\rho \omega
  = 
  - \iota_v \iota_{T(a,b)} \omega
  \\
  &= 
  - \omega(\rho D_{\rho a} b - \rho D_{\rho b} a - [\rho a, \rho b], v)
  \,.
\end{split}
\end{equation*}
Using the definition $\check{D}_v a = [\rho a, v] + \rho D_v a$ of the $A$-connection~\eqref{eq:DcheckDef}, the  sum of the three terms can be written as
\begin{equation*}
\begin{split}
  A + B + C 
  &= \rho a \cdot \omega(\rho b, v) 
   - \rho b \cdot \omega(\rho a, v)
        + v \cdot \omega(\rho a, \rho b)
  \\
  &{}\quad
  - \omega(\check{D}_a \rho b - \rho D_{\rho b} a, v)
  + \omega(\check{D}_b \rho a - \rho D_{\rho b} a, v)
  + \omega([\rho a, \rho b], v)
  \\
  &{}\quad
  + \omega(\check{D}_a v - \rho D_v a, \rho b) 
  - \omega(\check{D}_b v - \rho D_v b, \rho a)
  \\
  &= \rho a \cdot \omega(\rho b, v) 
   - \rho b \cdot \omega(\rho a, v)
        + v \cdot \omega(\rho a, \rho b)
  \\
  &{}\quad
  - \omega([\rho a, \rho b] , v)
  + \omega([\rho b, \rho a], v)
  + \omega([\rho a, \rho b], v)
  \\
  &{}\quad
  + \omega([\rho a, v], \rho b) 
  - \omega([\rho b, v], \rho a)
  \\
  &= \rho a \cdot \omega(\rho b, v) 
   - \rho b \cdot \omega(\rho a, v)
        + v \cdot \omega(\rho a, \rho b)
  \\
  &{}\quad
  - \omega([\rho a, \rho b] , v)
  + \omega([\rho a, v], \rho b) 
  - \omega([\rho b, v], \rho a)
  \\
  &= \iota_v \iota_{\rho b}\iota_{\rho a} d\omega
  \,.
\end{split}
\end{equation*}
With the relation
\begin{equation*}
  \tfrac{1}{2} \iota_a \iota_b \iota_\rho \iota_\rho
  = \iota_{\rho b} \iota_{\rho a} 
  + \iota_\rho (\iota_b \iota_{\rho a} - \iota_a \iota_{\rho b})
  + \tfrac{1}{2} \iota_\rho \iota_\rho \iota_b \iota_a \,,
\end{equation*}
which can be checked by a straightforward calculation, we get
\begin{equation*}
  \iota_v \iota_{\rho b}\iota_{\rho a} d\omega
  = \iota_ v \iota_a \iota_b  \tfrac{1}{2} \iota_\rho \iota_\rho d\omega \,.
\end{equation*}
We thus obtain the relation $A + B + C = \iota_ v \iota_a \iota_b \tfrac{1}{2}  \iota_\rho \iota_\rho d\omega$, which is Eq.~\eqref{eq:AtBo3} evaluated on $v$, $a$, and $b$.
\end{proof}

From Eq.~\eqref{eq:AtBo4} it follows that $[D,\check{D}]\mu = \tfrac{1}{2}  \iota_\rho \iota_\rho d\omega = 0$.

To conclude: We have established that every horizontal extension of $\omega$ is of the form $p^*\omega + dp^* \mu - p^* D\mu$ for some $\mu \in \Omega^1(A)$ and proved that this extension is closed if and only if the conditions $d\omega = 0$, $D\mu = - \iota_\rho \omega$, and $\check{D}\mu = -\tfrac{1}{2} \iota_\rho \iota_\rho \omega$ are satisfied. Since $(A,D,\omega)$ is assumed presymplectically anchored in the statement of Theorem~\ref{thm:EquivExt}, we conclude that $\omega$ has a closed basic extension if and only if $(A,D,\omega)$ is hamiltonian. This finishes the proof of Theorem~\ref{thm:EquivExt}. \qed

\begin{Remark}
The conditions for a horizontal extension of $\omega$ to be closed that we have derived in the proof of Theorem \ref{thm:EquivExt} are independent of whether the Lie algebroid is presymplectically anchored or not. So we have actually shown the following:
\end{Remark}

\begin{Proposition}
Let $A \to M$ be a Lie algebroid with connection $D$. A differential 2-form $\omega$ on $M$ has a closed basic extension to $W(A)$ if and only if $\omega$ is closed and $A$ has a bracket-compatible momentum section.
\end{Proposition}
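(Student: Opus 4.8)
The plan is to isolate the computation already carried out inside the proof of Thm.~\ref{thm:EquivExt} and observe that the hypothesis that $(A,D,\omega)$ be presymplectically anchored enters there only in the very last sentence, where the result is rephrased in the vocabulary of (weakly) hamiltonian Lie algebroids. The chain of equations characterizing when a horizontal extension of $\omega$ is $\DW$-closed uses only $d\omega=0$ at the relevant moment, never $D\gamma=0$. So I would reorganize that chain into a self-contained biconditional, essentially re-deriving it with the anchoring hypothesis dropped.

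First I would record two elementary reductions. A basic element of $W(A)$ is in particular horizontal, and conversely a horizontal $\DW$-closed element is automatically basic (this is observed in the proof of Thm.~\ref{thm:EquivExt}, since $\LW_a\phi=[\IW_a,\DW]\phi$ vanishes when $\phi$ is horizontal and closed); hence ``$\omega$ has a closed basic extension'' is equivalent to ``$\omega$ has a $\DW$-closed horizontal extension''. And, as shown at the start of the proof of Thm.~\ref{thm:EquivExt}, every horizontal extension of $\omega$ is of the form $\bar\omega = p^*\omega + d\,p^*\mu - p^* D\mu$ for a unique exterior $1$-form $\mu\in\Omega^1(A)$ (locally $\bar\omega=\tfrac12\omega_{\alpha\beta}\dot x^\alpha\dot x^\beta+\mu_i\eta^i$).

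Next I would run the bidegree decomposition of $\DW\bar\omega=0$ verbatim. The $(3,0)$-component is \eqref{eq:ExtCond1}, equivalent via Lem.~\ref{lem:pstarhor} and injectivity of $p^*$ to $d\omega=0$. The $(2,1)$-component \eqref{eq:ExtCond2}, rewritten through Cartan's magic formula and Prop.~\ref{prop:irhocommute} as \eqref{eq:AtBo1}, is by Lem.~\ref{lem:dpstar} equivalent to $\iota_\rho\omega+D\mu=0$; since $\gamma=-\iota_\rho\omega$ (Prop.~\ref{prop:OmegaHamConds}) this is precisely the defining relation $D\mu=\gamma$ of a $D$-momentum section --- and here I would cite the remark following Eq.~\eqref{eq:MomentumMapDef} that this relation alone does not force $D\gamma=0$, so no presymplectic anchoring is slipped in. The $(1,2)$-component \eqref{eq:ExtCond3} splits under the complementary projections $\id-(hp)^*$ and $(hp)^*$: the first piece is equivalent, via Lem.~\ref{lem:vertdvanish} and Prop.~\ref{prop:BRSTproject}, to $\check D\mu+\tfrac12\iota_\rho\iota_\rho\omega=0$, i.e.\ bracket-compatibility of $\mu$ (Prop.~\ref{prop:OmegaHamConds}); the second piece is equivalent to $[D,\check D]\mu=0$, which by Lem.~\ref{lem:Drhorhoomega} is automatic once $d\omega=0$ (hence $D\omega=0$), $D\mu=-\iota_\rho\omega$ and $\check D\mu=-\tfrac12\iota_\rho\iota_\rho\omega$ all hold, because then $[D,\check D]\mu=\tfrac12\iota_\rho\iota_\rho d\omega=0$. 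Thus $\DW\bar\omega=0$ holds if and only if $\omega$ is closed and $\mu$ is a bracket-compatible $D$-momentum section. For the converse direction I would simply set $\bar\omega:=p^*\omega+d\,p^*\mu-p^* D\mu$ from such a $\mu$, note it is horizontal by construction, and invoke the same equivalences to get $\DW\bar\omega=0$, so that $\bar\omega$ is a closed, hence basic, extension of $\omega$.

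The main --- and really the only --- obstacle is bookkeeping rather than computation: one must verify that $D\gamma=0$ (equivalently $\check D\omega=0$, by Prop.~\ref{prop:Dcheckomega}) is genuinely used nowhere above, in contrast with the \emph{statement} of Thm.~\ref{thm:EquivExt}. Concretely this amounts to checking that the proof of Lem.~\ref{lem:Drhorhoomega} and the closing display of the proof of Thm.~\ref{thm:EquivExt} only ever invoke $D\omega=d\omega=0$, not $\check D\omega=0$. Granting that, the Proposition is a straight relabeling of what has already been established, with no new calculation required.
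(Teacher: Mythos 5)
Your proposal is correct and follows essentially the same route as the paper: the paper's own justification for this Proposition is precisely the remark that the equivalences derived in the proof of Thm.~\ref{thm:EquivExt} (horizontal extensions have the form $p^*\omega + d\,p^*\mu - p^*D\mu$, closed horizontal implies basic, and the bidegree components of $\DW\bar\omega=0$ amount to $d\omega=0$, $D\mu=-\iota_\rho\omega$, and $\check D\mu=-\tfrac12\iota_\rho\iota_\rho\omega$, with $[D,\check D]\mu=\tfrac12\iota_\rho\iota_\rho d\omega=0$ automatic by Lem.~\ref{lem:Drhorhoomega}) nowhere use $D\gamma=0$. Your bookkeeping check that only $D\omega=d\omega=0$, and never $\check D\omega=0$, enters the computation is exactly the point the paper relies on.
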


\begin{Remark}
The expression $[D,\check{D}]\mu$ that appears in the proof of Theorem \ref{thm:EquivExt} can be rewritten in more familiar terms as follows:
\begin{equation*}
\begin{split}
  [D,\check{D}] 
  &= [D, [\iota_\rho, D] + \iota_T] \\
  &= [[D, \iota_\rho], D] + [\iota_\rho,[D,D]] + [D,\iota_T] \\
  &= [-\check{D} + \iota_T, D] + [\iota_\rho, [D,D]] + [D,\iota_T] \\
  &= - [D,\check{D}] + [\iota_\rho, [D,D]] + 2 [D,\iota_T]
  \,,
\end{split}
\end{equation*}
where we have used Eq.~\eqref{eq:DcheckTorsion}. Recalling that $\tfrac{1}{2} [D,D] = D^2 = R$ is the curvature operator defined in Section \ref{sec:ConnectionReview} and using $[D,\iota_T] = \iota_{DT}$, we obtain
\begin{equation}
\label{eq:DDcheckCurv}
  [D,\check{D}]\mu
  = \iota_\rho R\mu + \iota_{DT}\mu 
  = \langle\mu, \iota_\rho R + DT \rangle
  \,.
\end{equation}
\end{Remark}

\begin{Proposition}
\label{prop:CurvTorsBrack}
Let $A$ be a Lie algebroid with connection $D$ over a presymplectic manifold $(M,\omega)$. A bracket-compatible $D$-momentum section $\mu$ satisfies
\begin{equation}
\label{eq:CurvTorsBrackcomp}
  \langle\mu, \iota_\rho R + DT \rangle = 0 \,,
\end{equation}
where $R$ is the curvature operator defined in Eq.~\eqref{eq:CurvDef} and $T$ the torsion form defined in Eq.~\eqref{eq:DefTorsion2}. When, moreover, $D$ is presymplectically anchored, then $\langle\mu, DT \rangle = 0$.
\end{Proposition}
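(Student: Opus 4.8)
The plan is to read off both statements from the identity~\eqref{eq:DDcheckCurv}, which expresses $[D,\check{D}]\mu = \langle\mu,\iota_\rho R + DT\rangle$ for \emph{every} section $\mu$ of $A^*$. Thus the first statement amounts to showing $[D,\check{D}]\mu = 0$, and the second amounts to showing that the summand $\langle\mu,\iota_\rho R\rangle$ vanishes on its own.

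For the first statement, the key observation is that a bracket-compatible $D$-momentum section over a presymplectic base satisfies exactly the three relations used near the end of the proof of Thm.~\ref{thm:EquivExt}: $d\omega = 0$ because $\omega$ is presymplectic, while $D\mu = -\iota_\rho\omega$ and $\check{D}\mu = -\tfrac{1}{2}\iota_\rho\iota_\rho\omega$ are, by Prop.~\ref{prop:OmegaHamConds}, equivalent to $\mu$ being a momentum section and being bracket-compatible. With these three relations in hand, the computation carried out there---applying $D$ and $\check{D}$ to $\mu$, rewriting via Prop.~\ref{prop:CommirhoD} and $D\omega = d\omega = 0$, and then invoking Lemma~\ref{lem:Drhorhoomega} (which holds for an arbitrary $2$-form)---gives $[D,\check{D}]\mu = \tfrac{1}{2}\iota_\rho\iota_\rho d\omega = 0$. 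Hence $\langle\mu,\iota_\rho R + DT\rangle = 0$ by~\eqref{eq:DDcheckCurv}.

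For the second statement, suppose in addition that $D$ is presymplectically anchored, so that $\check{D}\omega = 0$ by Prop.~\ref{prop:Dcheckomega}. I would then compute $D^2\mu$ directly: since $D\mu = -\iota_\rho\omega$, we have $D^2\mu = -D\iota_\rho\omega = -[D,\iota_\rho]\omega - \iota_\rho D\omega$; using Prop.~\ref{prop:CommirhoD} to write $[D,\iota_\rho] = \iota_T - \check{D}$, together with $D\omega = d\omega = 0$ and the fact that $\iota_T$ annihilates the differential form $\omega$, this collapses to $D^2\mu = \check{D}\omega$, which vanishes since $D$ is presymplectically anchored. As the curvature operator acts on a section of $A^*$ by $R(v,w)\mu = \iota_w\iota_v D^2\mu$, it follows that $R\mu = 0$, hence $\langle\mu,\iota_\rho R\rangle = \iota_\rho(R\mu) = 0$; subtracting this from the first statement leaves $\langle\mu, DT\rangle = 0$.

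All of these computations are short and follow mechanically from the cited results; the only point requiring attention is the bookkeeping of the graded-commutator signs when rewriting $D\iota_\rho\omega$ through Prop.~\ref{prop:CommirhoD}, and the elementary but easily overlooked fact that $\iota_T\omega = 0$ because $\iota_T$ is defined to vanish on differential forms. I do not anticipate any genuine obstacle.
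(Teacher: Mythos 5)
Your argument is correct and follows essentially the same route as the paper's own proof: both parts reduce to Eq.~\eqref{eq:DDcheckCurv}, with $[D,\check{D}]\mu=0$ imported from the end of the proof of Thm.~\ref{thm:EquivExt} and $\langle\mu,\iota_\rho R\rangle=0$ deduced from $R\mu=0$ together with the tensoriality of $R$. The only (immaterial) difference is that the paper obtains $D^2\mu = D\gamma = 0$ directly from the definitions of a momentum section and of presymplectic anchoring, whereas you rederive the same vanishing via $D^2\mu = \check{D}\omega = 0$ using Props.~\ref{prop:CommirhoD} and~\ref{prop:Dcheckomega}; both are one-line observations.
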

\begin{proof}
In the last step of the proof of Theorem \ref{thm:EquivExt}, we have shown that a bracket-compatible momentum section $\mu$ satisfies $[D, \check{D}]\mu = 0$. It follows from Eq.~\eqref{eq:DDcheckCurv} that Eq.~\eqref{eq:CurvTorsBrackcomp} holds.

When, moreover, $A$ is presymplectically anchored, then every momentum section satisfies $0 = D\gamma = D^2 \mu = R\mu$. Since $R$ is a tensor, $\langle R(v,w)\mu, a \rangle = \langle \mu, R(v,w) a\rangle$ for all vector fields $v$, $w$ and all sections $a$ of $A$. (Recall, that we do not distinguish notationally between the connection on $A$ and the dual connection on $A^*$, so that $R$ denotes the curvature operator on sections of $A$ as well as on sections of $A^*$.) It follows that $R\mu = 0$ implies that $\langle \mu, \iota_\rho R a\rangle = \iota_\rho \langle \mu, R a\rangle = \iota_\rho \langle R \mu, a\rangle = 0$ for all sections $a$ of $A$. With this relation, Eq.~\eqref{eq:CurvTorsBrackcomp} becomes $\langle \mu, DT \rangle = 0$.
\end{proof}

\begin{Remark}
Assume that the connection is flat, $R = 0$, so that the condition~\eqref{eq:CurvTorsBrackcomp} becomes $\langle \mu, DT \rangle = 0$. Let $a$ and $b$ be horizontal sections of $A$. Then the covariant derivative of the torsion in the direction of a vector field $v$ is given by
\begin{equation*}
\begin{split}
  (D_v T)(a,b)
  &= D_v T(a,b) - T(D_v a, b) - T(a, D_v b)
  \\
  &= D_v \bigl( D_{\rho a} b - D_{\rho b} a - [a,b] \bigr)
  \\
  &= - D_v [a,b]
  \,.
\end{split}    
\end{equation*}
If $DT = 0$, then the bracket of horizontal sections is again horizontal, so that $A$ is locally an action Lie algebroid. This leaves the possibility that there are more general hamiltonian Lie algebroids with a flat connection for which $D_v T$ is not zero but in the kernel of $\mu$.  (The simplest case of this is where the anchor is zero, in which case the Lie algebroid is hamiltonian with respect to any connection, with $\mu = 0$.)
\end{Remark}

\section{Open questions, conjectures, outlook}
\label{sec:OpenQuestions}

\subsection{Hamiltonian foliations}

\begin{Question} Viewing a regular foliation as a Lie algebroid with injective anchor, we obtain the notion of a \textbf{hamiltonian foliation} of a symplectic manifold.  In Proposition \ref{prop:FoliHam} (ii), we have shown that a symplectically complete foliation $\calF$ is weakly hamiltonian if there is a vector field $n$ that is a symmetry of the symplectic orthogonal foliation $\calF^\perp$ and is transverse to $\calF^\perp$.  If, in addition, the pullback of $\Lie_n \omega + \omega$ to $\calF$ vanishes, then $\calF$ is hamiltonian. Locally, such an $n$ always exists. What are the global obstructions?
\end{Question}

Note that $n$ is the presymplectic version of a Liouville vector field.  In the symplectic case, transversality of $n$ to $\calF^\perp$ is simply the nowhere vanishing of $n$.  In this case, that of hamiltonian tangent bundles, the existence of such an $n$ has been established, as we explained in Section \ref{sec:tangent}.

\subsection{Obstruction theory}

For the action of a Lie algebra $\frakg$ on $M$ the obstructions to the existence of an equivariant momentum map are the following. The action gives rise to a class in $H^1(\frakg)\otimes H^1(M)$ which must vanish for the action to be weakly hamiltonian. When it vanishes, the obstruction to it being hamiltonian lies in $H^2(\frakg)\otimes H^0(M)$. What is the obstruction theory for hamiltonian structures on Lie algebroids?

When there is a \emph{flat} symplectically anchored connection, the first obstruction can be easily generalized. For a flat connection, the operator $D$ on the bigraded algebra $\Omega^{\bullet,\bullet}(M,A)$ of Section \ref{sec:Category} is a differential. Then the Lie algebroid has a momentum section if and only if the $D$-cohomology class of the dualized anchor $[\iota_\rho \omega] \in H^{1,1}_D(M,A)$ is zero.

The failure of a $D$-momentum section $\mu$ to be bracket-compatible is given by the 2-form
\begin{equation*}
  \Theta := \check{D}\mu + \tfrac{1}{2}\iota_\rho \iota_\rho \omega 
  \in \Omega^{0,2}(M,A)\,.
\end{equation*}
On the edge complex $\Omega^{0,\bullet}(M,A)$ of $A$-forms, $\check{D}$ acts as the Lie algebroid differential $\DA$. Moreover, $\rho: A \to TM$ is a morphism of Lie algebroids, so that the pullback on forms $\rho^*: \Omega^{p,0}(M,A) \to \Omega^{0,p}(M,A)$ intertwines the differentials $\DA \rho^* = \rho^* d$. In particular, $\DA \tfrac{1}{2} \iota_\rho \iota_\rho \omega =  \DA \rho^* \omega = \rho^* d\omega = 0$. This shows that $\Theta$ is $\check{D}$-closed:
\begin{equation*}
  \check{D}\Theta 
  = \DA^2 \mu + \DA \tfrac{1}{2}\iota_\rho \iota_\rho \omega
  = 0 \,.
\end{equation*}
Applying the differential $D$ yields
\begin{equation*}
\begin{split}
  D\Theta
  &= D \check{D} \mu + D\tfrac{1}{2}\iota_\rho \iota_\rho \omega
  \\
  &= D(\iota_\rho D - D \iota_\rho + \iota_T) \mu 
   + D\tfrac{1}{2}\iota_\rho \iota_\rho \omega 
  \\
  &= - D \iota_\rho \iota_\rho \omega + D\iota_T \mu
    + D\tfrac{1}{2}\iota_\rho \iota_\rho \omega
  \\
  &= D\iota_T \mu
    - D\tfrac{1}{2}\iota_\rho \iota_\rho \omega
  \\
  &= D\iota_T \mu - \iota_T \iota_\rho \omega
  \\
  &= D\iota_T \mu + \iota_T D\mu 
  \\
  &= [D,\iota_T] \mu \,,
\end{split}
\end{equation*}
where we have used $D^2 = 0$, $D\mu = - \iota_\rho \omega$, and lemma \ref{lem:Drhorhoomega}. For a cohomological interpretation of $\Theta$, we will make the additional assumption that $[D, \iota_T] = \iota_{DT}$, which is equivalent to the condition that the torsion tensor be $D$-invariant, i.e.~$DT = 0$. This implies that $\Theta$ is $D$-closed. Moreover, we infer from Eq.~\eqref{eq:DDcheckCurv} that $[D,\check{D}] = 0$, so that $\check{D} = \DA$ induces a differential $\bar{\DA}$ on the graded vector space of $D$-cohomology classes $H_D^{0,\bullet}(M,A)$ at the edge of the bicomplex. We conclude that $\Theta$ represents a double cohomology class
\begin{equation*}
  [[\Theta]] \in H^2\bigl( H_D^{0,\bullet}(M,A), \bar{\DA} \bigr) \,.
\end{equation*}
This cohomology class is zero if and only if there is a $D$-closed 1-form $\nu \in \Omega^{0,1}(M,A)$ such that $\check{D} \nu = \check{D}\mu + \frac{1}{2} \iota_\rho \iota_\rho \omega$, which is the case if and only if $\mu' = \mu - \nu$ is a bracket-compatible momentum section. We conclude that $[[\Theta]]$ is the obstruction for the existence of a bracket-compatible momentum section.

The assumptions $D^2 =0$ and $DT = 0$ are quite strong, since they are equivalent to $A$ being locally an action Lie algebroid. For instance, they hold for a bundle of Lie algebras with zero anchor only if the parallel translation given by $D$ gives Lie algebra isomorphisms.

So the following question remains open:

\begin{Question}
What are the obstructions to the existence of a (bracket-compatible) $D$-momentum section in the general case?
\end{Question}

A small hint as to a general obstruction theory is given in Proposition \ref{prop:Rank1Obstr}, which states (for the rank 1 case) that non-vanishing curvature of $D$ at a singular point of the anchor is an obstruction to the existence of a $D$-momentum section. A more difficult problem yet is that any obstruction theory formulated in terms of the bigraded algebra $\Omega^{\bullet,\bullet}(M,A)$ will depend on the choice of $D$. As we have seen in Example~\ref{ex:cylinder2}, when there is a nonzero obstruction to a $D$-momentum section, it may still happen that there is another connection for which it does vanish.

\subsection{The Poisson case}

In Propositions \ref{prop:PointSymp} and \ref{prop:PointHam} we have shown that the axioms for a hamiltonian Lie algebroid $A$ can be derived from the following simple geometric principle. Given a connection $D$ on $A \to M$, we require the usual axioms for a hamiltonian action for every point $m \in M$ and all sections of $A$ that are horizontal at $m$. This principle can be applied without modification to a Lie algebroid over a Poisson manifold, which leads to the following definition:

\begin{Definition}
\label{def:HamLAPoiss}
Let $(A,\rho, [~,~])$ be a Lie algebroid over a Poisson manifold $(M,\Pi)$. Let $D$ be a connection on $A$ and $\check{D}$ the opposite $A$-connection on $TM$ (Definition \ref{def:OppConn}).
\begin{itemize}

\item[(P1)]
$A$ is \textbf{Poisson anchored} with respect to $D$ if
\begin{equation*}
  \check{D}\Pi = 0 \,.
\end{equation*}

\item[(P2)]
A section $\mu \in \Gamma(A^*) = \Omega^0(M, A^*)$ is a \textbf{$D$-momentum section} if
\begin{equation*}
  \tilde{\Pi}\circ D\mu = \rho \,,
\end{equation*}
where $\tilde{\Pi}: T^*M \to TM$ is the bundle map associated to $\Pi$ and $D\mu$ is viewed as map from $A \to T^* M$.

\item[(P3)]
A $D$-momentum section $\mu$ is \textbf{bracket-compatible} if
\begin{equation*}
  \langle \mu, T(a,b) \rangle = \Pi(\langle D\mu, a\rangle, \langle D\mu, b\rangle)
\end{equation*}
for all sections $a$ and $b$ of $A$
\end{itemize}
A Lie algebroid together with a connection $D$ and a section $\mu$ of $A^*$ satisfying (P1) and (P2) is called \textbf{weakly hamiltonian}. It is called \textbf{hamiltonian} if it satisfies (P1)-(P3).
\end{Definition}

Hamiltonian Lie algebroids over Poisson manifolds will be studied in a forthcoming paper \cite{BlohmannWeinstein:PoissonHamLA}.

\subsection{Hamiltonian Lie groupoids}

One of the good properties of a proper hamiltonian action of a Lie group $G$ is that the singularities of the zero locus $Z$ of the momentum map are conical, i.e.~there is always a local chart in which $Z$ is the zero locus of a quadratic polynomial in the coordinates. This is a simple consequence of the symplectic version \cite{Weinstein:1977} of Bochner's linearization theorem \cite{Bochner:1945}, which states that the action of $G$ on the neighborhood of a fixed point can be linearized by a choice of canonical coordinates, so that the momentum functions generating the fundamental vector fields of this linear action must be quadratic.

The linearizability of proper group actions generalizes to the linearization of proper Lie groupoids \cite{CrainicStruchiner:2013, Weinstein:2002, Zung:2006}. For an analogous statement about the singularities of the zero locus of the momentum section of a hamiltonian Lie algebroid we, therefore, need a suitable notion of hamiltonian Lie groupoid.The result we would like to be able to prove under reasonable assumptions is the following.

\begin{Conjecture}
\label{conj:HamGrpLin}
If a hamiltonian Lie algebroid over a symplectic manifold integrates to a proper hamiltonian Lie groupoid, then the zero locus of the momentum section has only conical singularities.
\end{Conjecture}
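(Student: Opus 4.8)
\textbf{Proof proposal for Conjecture~\ref{conj:HamGrpLin}.}

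The plan is to mimic the proof in the group-action case, where conicality of the zero locus follows from Bochner's equivariant linearization together with the observation that the momentum functions generating a \emph{linear} symplectic action are necessarily quadratic. The main new ingredient we would need is a notion of \emph{hamiltonian Lie groupoid} that (a) integrates a hamiltonian Lie algebroid in a suitable sense, (b) admits a linearization theorem near points of the zero locus when the groupoid is proper, and (c) is such that the ``linear model'' near such a point has a momentum section that is fiberwise quadratic. Once these three are in place, the argument is essentially a direct transcription of the classical one. So the first step of the proof would be to fix the definition: a hamiltonian Lie groupoid over a symplectic base should be a symplectic groupoid-type object $(\Gamma \rightrightarrows M, \omega, \mu)$ whose differentiation gives back the hamiltonian Lie algebroid $(A, D, \omega, \mu)$ of Def.~\ref{def:HamLAcomplete}, with the connection $D$ and momentum section $\mu$ encoded by appropriate multiplicativity/compatibility data on $\Gamma$; this is where one has to be careful, since the connection is extra data not intrinsic to the Lie algebroid.

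Second, I would invoke the linearization theorem for proper Lie groupoids \cite{CrainicStruchiner:2013, Weinstein:2002, Zung:2006} at a point $m$ of the zero locus $Z$ of the momentum section. Properness gives a local model for $\Gamma$ near the isotropy group $\Gamma_m$, namely (the transformation groupoid of) a linear action of the compact isotropy group $G_m := \Gamma_m$ on a slice $S_m$, which carries a $G_m$-invariant symplectic form by the symplectic slice theorem; one must check that the hamiltonian structure, and in particular the momentum section, can be transported into this local model, so that near $m$ the hamiltonian Lie algebroid is isomorphic to the action Lie algebroid $\mathfrak g_m \ltimes S_m$ with its standard (trivial) connection and a momentum section $\mu$ which is the restriction of $\mu$. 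Then Theorem~\ref{thm:CoisoPresymp} identifies the (clean) zero locus with $Z = \mu^{-1}(0)$, and the image of the anchor at a point of $Z$ is the tangent to the $G_m$-orbit; so locally we are exactly in the setting of a momentum map for the linear hamiltonian $G_m$-action on $S_m$.

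Third, I would apply Bochner's linearization theorem \cite{Bochner:1945} in its symplectic form \cite{Weinstein:1977}: the $G_m$-action on a neighborhood of the fixed point can be linearized by a choice of Darboux-type coordinates, so the momentum functions generating the fundamental vector fields of the now-linear symplectic $G_m$-action are homogeneous quadratic polynomials in these coordinates (their differentials at the fixed point vanish, and their Hessians are constant because the linearized hamiltonian vector fields are linear). Hence $Z = \mu^{-1}(0)$ is, in this chart, the common zero locus of a finite family of quadratic polynomials, which is precisely the meaning of a conical singularity. Assembling this over all points of $Z$ completes the argument.

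The main obstacle, and the reason this is stated as a conjecture rather than a theorem, is step one together with the transport claim in step two: one must find the ``right'' definition of hamiltonian Lie groupoid so that differentiation recovers exactly the data of a hamiltonian Lie algebroid \emph{including the connection}, and so that the connection and momentum section descend to the linear local model produced by the groupoid linearization theorem. The connection is auxiliary data on the Lie algebroid and is not canonically determined, so it is not obvious that a proper hamiltonian Lie groupoid forces the connection to be ``linearizable'' near $Z$ in the way needed; making this compatibility precise (perhaps by building the connection into the groupoid-level structure, e.g.\ as a multiplicative Ehresmann connection, or by first proving the zero-locus statement is independent of the choice of $D$) is where the real work lies. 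A secondary technical point is the interaction between the clean-zero-locus hypothesis of Theorem~\ref{thm:CoisoPresymp} and the smoothness statements coming from properness, but this should be manageable once the framework is set up.
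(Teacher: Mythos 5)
This statement is a \emph{conjecture}: the paper gives no proof of it, and explicitly defers both the precise definition of a hamiltonian Lie groupoid and the identification of the hypotheses that would make the statement true to future work. So there is no proof in the paper against which to compare yours; what you have written is a strategy, and it is essentially the same strategy the authors sketch in the surrounding text (linearization of proper Lie groupoids, symplectic Bochner linearization, quadratic momentum functions for a linear action). To your credit, you flag the same obstruction the authors do.

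The genuine gap is that steps one and two are not arguments but placeholders. First, no definition of ``hamiltonian Lie groupoid'' is fixed, so the hypothesis of the conjecture has no content yet; in particular it is undetermined how the connection $D$ --- which is auxiliary data on $A$, not determined by the groupoid --- is encoded at the groupoid level and hence what ``transporting the hamiltonian structure to the linear model'' could mean. Second, and more concretely, your claim that near $m$ the structure becomes the action Lie algebroid $\mathfrak{g}_m\ltimes S_m$ \emph{with the trivial connection} and with $\mu$ a classical momentum map is in tension with the paper's own results: the Remark following Corollary~\ref{cor:HamSympSpaceB} shows that even for $T\bbR^{2n}$ over a symplectic vector space the connection of a hamiltonian structure cannot be the trivial one, and Proposition~\ref{prop:NormalTriv} shows that the trivial connection is compatible with reduction only in the normal-subgroup case. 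Since the condition (H2) that $\mu$ be a momentum section depends on $D$, and the functions $\langle\mu,a\rangle$ generate $\rho a$ only for sections horizontal at a point (Proposition~\ref{prop:PointHam}), the second-order (Hessian) information needed to conclude that $Z$ is cut out by quadratics is exactly the information that a pointwise-horizontality statement does not supply. Closing the conjecture requires either building the connection into the groupoid data so that it linearizes along with everything else, or proving that the conical structure of $Z$ is independent of the choice of $D$; neither is done here, so the proposal does not constitute a proof.
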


By analogy with the case of hamiltonian Lie algebroids, it seems that the notion of hamiltonian Lie groupoid should involve a connection for the submersion given by the source map, such that each bisection horizontal at a groupoid element $g$ induces a diffeomorphism of the base which is presymplectic at the source of $g$. Moreover, we should assume that the identity bisection is horizontal. Then the linearization of the connection at the identity bisection will induce  a presymplectically anchored connection on the Lie algebroid. 

The notion of momentum section for a Lie groupoid should be simply that for the Lie algebroid. The condition of bracket compatibility, however, will have to be replaced by a global condition on the Lie groupoid, just as the $\frakg$-equivariance of a hamiltonian momentum map for a Lie algebra action has to be replaced by $G$-equivariance for a Lie group action.

Determining the precise axioms for hamiltonian Lie groupoids and the additional conditions that make Conjecture~\ref{conj:HamGrpLin} true is a goal of ongoing work on generalized hamiltonian structures.

\subsection{Structure theory of reduced action groupoids}

The construction of Proposition~\ref{prop:GroupReduce} to reduce an action groupoid by a subgroup is quite simple and straightforward. The question of which Lie groupoids arise in this way, however, is not that simple.

In Proposition~\ref{prop:MoritaReduce} we have seen that the reduced groupoid is Morita equivalent to the action groupoid. A proper Lie groupoid is Morita equivalent to an action groupoid of a compact group if and only if it admits a faithful representation on a vector bundle of finite rank \cite{Trentinaglia:2010}. But even if this is the case, the Lie groupoid may not arise by reduction. For example, every transitive groupoid is equivalent to a gauge groupoid of a principal bundle, which is Morita equivalent to the gauge group, i.e.~to the action groupoid of the trivial action of the gauge group on a point. However, not every gauge groupoid arises by reduction.

In fact, let $P$ be a right $K$-principal bundle over $\tilde{M}$. Then the conditions for $\tilde{\Gamma} := \mathrm{Gauge}(P) = (P\times P)/K$ to be the reduction of an action groupoid are the following:
\begin{itemize}

\item[(i)] $P \cong H\backslash G$ is a homogeneous space.

\item[(ii)] The right action of $K$ on $P \cong H\backslash G$ lifts to a left $G$-equivariant right action of $K$ on $G$. 

\end{itemize}
Condition (i) implies that $\tilde{M} \cong H\backslash G/K$ is isomorphic to the quotient of $M := G/K$ by an $H$-action. Moreover, every left $G$-equivariant isomorphism of $G$ is given by the right multiplication by a unique element of $G$, so condition (ii) implies that $K$ is a subgroup of $G$ which acts by right multiplication. The action of $K$ on $P \cong H \backslash G$ is free, which implies that $K$ has trivial intersection with all conjugacy classes of $H$. All this shows that $\tilde{\Gamma}$ is isomorphic to the gauge groupoid of the principal $K$ bundle $H \backslash G \to H\backslash G / K$. By example~\ref{ex:TransReduce} this is the general form of the reduction of a \emph{transitive} action groupoid.

Every Lie groupoid is the union of the transitive Lie groupoids over its orbits.  In the case of an $H$-reduced groupoid of a $G$-action on $M$, the orbit groupoids are isomorphic to the gauge groupoids $(H \backslash G \times H\backslash G)/K_x$, where $K_x$ is a smooth family of subgroups of $G$ that is parametrized locally by a submanifold $X \subset M/H$ transverse to the orbits. Joining the orbit groupoids to a smooth manifold, however, is generally very subtle. For example, the dimension of the orbits and, hence, the gauge group may not be locally constant. But even in the regular case when the subgroups $K_x$ are all isomorphic to a fixed group $K$, the embedding of $K_x$ into $G$ may depend on the parameter $x$. It would be interesting to better understand the structure transverse to the orbits of reduced action groupoids. For example the following question is related to the question of hamiltonian Lie groupoids and relevant for the application to general relativity.

\begin{Question}
How is the linearization of a proper $G$-action on $M$ around an orbit related to the linearization of the $H$-reduced groupoid around the corresponding orbit in $M/H$. 
\end{Question}

\begin{Question}
Find a hamiltonian Lie algebroid which explains the coisotropic property of the constraint set for the initial value problem of Einstein's equations.
\end{Question}

\section{Related work}
\label{sec:related}

In \cite{LevinOlshanetsky:2000,Olshanetsky:2002}, Levin and Olshanetsky defined a  hamiltonian algebroid to be a vector bundle $A \to M$ over a symplectic manifold together with an antisymmetric bracket $[~,~]$ on the sections of $A$ and a bundle map $h: A \to M \times \bbR$, such that
\begin{equation*}
  h([a,b]) = \{ h(a), h(b)\}  \,,\qquad
  [a,fb] = f\,[a,b] + \{ h(a), f \}\, b \,, 
\end{equation*}
for all sections $a$, $b$ of $A$, all functions $f$ on $M$, where $\{~,~\}$ denotes the Poisson bracket. Since the bracket is not assumed to satisfy the Jacobi identity and since the map $\Gamma(A) \to \calX(M)$, $a \mapsto \{h(a), ~\}$ is not $C^\infty(M)$-linear, a hamiltonian algebroid is not a Lie algebroid. In fact, the requirement that $h$ be compatible with the brackets of all sections precludes the possibility that $A$ is a Lie algebroid in all but trivial cases.

In \cite[Def.~1.30]{Bos:2007}, Bos introduced a notion of hamiltonian action of a Lie algebroid on a fibre bundle. Let $\alpha: A \times_M S \to TS$ be an action of the Lie algebroid $A \to M$ on the bundle $S \to M$. Assume that the bundle $S \to M$ is equipped with a smooth family $\omega$ of symplectic forms on the $A$-orbits. The first condition \cite[Eq.~(1.8)]{Bos:2007} for a hamiltonian action is that the pullback of the family of symplectic forms along the action to a form on the action Lie algebroid $A \ltimes S$ is exact in the Lie algebroid cohomology, $\DA \mu = - \alpha^* \omega$, where $\mu \in \Gamma(S,A^* \times_M S)$ is a momentum ``map''. The second condition \cite[Eq.~(1.9)]{Bos:2007} is that $d\langle\mu, a\rangle = -\iota_{\alpha(a)} \omega$ holds on the fibres of $S \to M$.
To compare this with our notion of hamiltonian Lie algebroids we consider the case that $S = M \to M$ is the trivial bundle, so that $A \ltimes S = A$ and $\alpha = \rho$. The family $\omega$ of symplectic forms on the orbits of $A$ now pulls back to a Lie algebroid 2-form on $A$ given by $(\rho^* \omega)(a,b) = \omega(\rho a, \rho b)$ for all sections $a$ and $b$ of $A$. The first condition \cite[Eq.~(1.8)]{Bos:2007} is our condition (H3) for a momentum section to be bracket-compatible. The second condition \cite[Eq.~(1.9)]{Bos:2007} for the Lie algebroid action to be hamiltonian is vacuous, since the fibres of $S = M \to M$ are trivial. The upshot is that from the notion of hamiltonian Lie algebroid action we retrieve only condition (H3).

In March of 2015, we sent an early draft of this paper to Kotov and Strobl, the authors of \cite{KotovStrobl:2015,KotovStrobl:2016}, who were working independently on related questions. Our condition~(H1) for $A$ to be symplectically anchored appears in \cite[Eq.~(6)]{KotovStrobl:2016} and in the form of Proposition \ref{prop:Dcheckomega} in \cite[Eq.~(52)]{KotovStrobl:2016}.  The defining equation~(H2) for a momentum section appears in \cite[Eq.~(7)]{KotovStrobl:2016}. The study of Lie algebroids over symplectic manifolds satisfying these two conditions, weakly hamiltonian Lie algebroids in our terminology, was postponed in \cite[p.~7]{KotovStrobl:2016} to possible future work.

In \cite[Def.~2.4]{Lu:2008} the following notion of $H$-quotient of an action Lie algebroid $\frakg \ltimes M$ was introduced by Lu.  Let $H$ be a subgroup of a group $G$ that integrates the Lie algebra $\frakg$. Let $\frakh \subset \frakg$ be the Lie algebra of $H$. Assume further that there is a free and proper $H$-action on $M$ which integrates the action of $\frakh$, such that the action map $\frakg \to \calX(M)$ is $H$-equivariant (see Remark \ref{rmk:Lu}). Under these assumptions, it is shown in \cite[Lemma~2.3]{Lu:2008} that there is a Lie algebroid structure on the vector bundle $\frakg/\frakh \times_H M \to M/H$ such that the projection $\frakg \ltimes M \to \frakg/\frakh \times_H M$ is a homomorphism of Lie algebroids. As we have shown in Corollary \ref{cor:ReducedAlgd}, this is the infinitesimal counterpart of our construction of an analogous quotient for an action groupoid.

\bibliographystyle{hplain}
\bibliography{HamLAs}

\end{document}